\documentclass[11pt, reqno]{amsart}
\usepackage{amsmath, amssymb, amsthm}
\usepackage{accents}

\usepackage{enumerate, enumitem}
\usepackage[margin=1in]{geometry}
\usepackage{esint}
\usepackage{xcolor}
\usepackage{hyperref}
\hypersetup{colorlinks = true, linkbordercolor = red}

\pdfoutput=1

\newtheorem{theorem}{Theorem}[section]
\newtheorem{prop}[theorem]{Proposition}
\newtheorem{lemma}[theorem]{Lemma}
\newtheorem{coro}[theorem]{Corollary}
\newtheorem{defn}[theorem]{Definition}
\newtheorem{rem}[theorem]{Remark}

\numberwithin{equation}{section}

\def\R{\mathbb{R}}
\def\C{\mathbb{C}}

\def\e{\varepsilon}

\def\G{\Gamma}
\def\D{\Delta}
\def\O{\Omega}

\def\a{\alpha}
\def\b{\beta}
\def\g{\gamma}
\def\d{\delta}
\def\s{\sigma}
\def\th{\theta}
\def\f{\varphi}
\def\k{\kappa}
\def\l{\lambda}
\def\o{\omega}
\def\r{\rho}
\def\t{\tau}
\def\z{\zeta}
\def\H{\mathcal{H}}

\def\X{\mathcal{X}}
\def\Y{\mathcal{Y}}
\def\Z{\mathcal{Z}}

\def\I{\mathcal{I}}
\def\N{\mathcal{N}}

\def\p{\partial}
\def\n{\nabla}
\def\la{\langle}
\def\ra{\rangle}

\def\id{\mathrm{id} \,}
\def\im{\mathrm{im} \,}
\def\ind{\mathrm{ind}}

\newcommand{\ol}[1]{\overline{#1}}

\newcommand{\emb}{\hookrightarrow}

\newcommand{\chrc}[1]{\mathrm{1}_{#1}}

\def\de{\partial}

\def\Xint#1{\mathchoice
{\XXint\displaystyle\textstyle{#1}}%
{\XXint\textstyle\scriptstyle{#1}}%
{\XXint\scriptstyle\scriptscriptstyle{#1}}%
{\XXint\scriptscriptstyle\scriptscriptstyle{#1}}%
\!\int}
\def\XXint#1#2#3{{\setbox0=\hbox{$#1{#2#3}{\int}$ }
\vcenter{\hbox{$#2#3$ }}\kern-.6\wd0}}

\def\dashint{\Xint-}

\title[Nontrivial periodic solutions to the one-phase FBP]{Nontrivial periodic solutions to the one-phase free boundary problem}

\author{Jos\'e Basulto}
\address{Jos\'e Basulto, Facultad de Matem\'aticas, Pontificia Universidad Cat\'olica de Chile, Avenida Vicu\~na Mackenna 4860, Santiago 7820436, Chile}
\email{jos.basulto@uc.cl}

\author{Nikola Kamburov}
\address{Nikola Kamburov, Facultad de Matem\'aticas, Pontificia Universidad Cat\'olica de Chile, Avenida Vicu\~na Mackenna 4860, Santiago 7820436, Chile}
\email{nikamburov@uc.cl}

\begin{document}

\begin{abstract}
We construct new classical global solutions of the one-phase Bernoulli free boundary
problem in $\R^3$ whose positive phases are periodic perturbations of the product of
the Hauswirth--H\'elein--Pacard double hairpin domain with a line. In contrast with
previous constructions of undulating cylinders, the free boundary of the base solution
is not compact modulo translations. More generally, we show that the same scheme yields
periodic solutions in $\R^{n+1}$, $n \geq 3$, bifurcating from product solutions
$U(x,t)=u(x)$, when the base $u:\R^n\to [0,\infty)$ has nontrivial finite Morse index, is
nondegenerate within a symmetry class, and satisfies certain natural geometric
conditions at infinity. In dimensions $n\in\{3,4\}$, where the stable Bernstein theorem
is available, the recent classification of finite Morse index solutions by Fern\'andez-Real, Florit-Simon and Serra shows that every such solution satisfies these geometric conditions. Thus, the existence of undulating solutions in $\R^{4}$ and $\R^{5}$ bifurcating from their respective products reduces to a nondegeneracy property of the base solution. In order to confirm that the perturbed solutions are positive, we prove that finite Morse index solutions with noncompact free boundary satisfy a strong distance nondegeneracy estimate.
\end{abstract}

\subjclass[2020]{35R35, 35B32, 35N25, 35B10, 35J25, 47J15}
\keywords{one-phase free boundary problem, Bernoulli problem, global solutions, double hairpin, finite Morse index, overdetermined problems, Crandall--Rabinowitz bifurcation}


\bibliographystyle{alpha}
\maketitle

\section{Introduction}
The classical one-phase (also known as ``Bernoulli'') free boundary problem (FBP) seeks continuous functions $u:D\to \R$ on a domain $D\subseteq \R^n$ that solve 
\begin{equation}\label{FBP}
\left\{
\begin{aligned}
 u\geq 0 & \quad \mbox{in } D, \\
   \Delta  u=0 & \quad \mbox{in } \omega:=\{x \in \R^n: u(x)>0\}, \\
   \de_\nu u = -1 & \quad \mbox{on }  \text{FB}(u):=\de \o\cap D,
  \end{aligned}\right.
  \end{equation}
where the free boundary $\text{FB}(u)$ separates the positive phase $\o$ of a solution $u$ from its zero phase, all of which depend on $u$. Here $\nu$ denotes the \emph{outer} unit normal and we will be interested in the \emph{classical} solutions of \eqref{FBP}, that is, those for which $\o$ is a smooth domain. 

The problem \eqref{FBP} is one of the most widely studied FBPs, appearing in physical models ranging from fluid dynamics to electrostatics, and has deep ties to optimization, semilinear PDEs and minimal surfaces.  Formally, \eqref{FBP} is the Euler--Lagrange equation of the Alt--Caffarelli functional
\begin{equation}\label{eq:AC}
J(v,D):=\int_D |\n v|^2 + \chrc{\{v>0\}}\, dx, \quad v: D\to [0,\infty).
\end{equation}
and, since the foundational work of Alt and Caffarelli \cite{AC}, the existence and regularity theory for \emph{minimizers} of \eqref{eq:AC} has been developed extensively (see, e.g., \cite{CafSalsa, VelichkovOP}). 

Recent years, however, have seen increased interest in the FBP beyond the energy-minimizing setting (\cite{JK16, JK19, engelstein2023graphical, KW24, BK24, Kriventsov-Weiss25, Figalli-et-al26-Bern3D, ERZ26, FR-FS-Serra26, FR-Serra26-Bern4D}). Taking limits of rescaled solutions leads to \emph{global solutions} $u:\R^n \to \R_{\geq 0}$ of \eqref{FBP}, so that understanding the latter is a central step of the analysis. 

In $n=2$ dimensions, the only known global classical solutions to \eqref{FBP} of connected positive phase are, up to a similarity transformation:
\begin{itemize}
\item the one-plane solution $P(x)=x_1^+$;
\item the disk-complement solution $L_2(x)=(\log |x|)^+$;
\item the \emph{double hairpin} solution $HP(x)$ of Hauswirth--H\'elein--Pacard \cite{HHP}, whose positive phase is trapped between the two catenary curves
\[
\{x=(x_1,x_2)\in \R^2: x_2 = \pm(\pi/2 + \cosh x_1)\}
\]
and which satisfies $HP(x_1,x_2)\approx |x_1|$ as $|x|\to \infty$;
\item the Baker--Saffman--Sheffield \cite{BSS76} one-parameter family of solutions $\{W_s(x_1, x_2)\}_{s\in (0,1)}$, which are $1$-periodic in the $x_2$-variable, $1$-connected in each period cell, and satisfy asymptotically $W_s(x_1,x_2)\approx s |x_1|$ as $x_1\to \infty$; see also \cite{Traizet,JK19} for the description of this family.
\end{itemize}
In fact, the first three solutions were shown by Traizet \cite{Traizet} to be the only ones, under a connected positive phase and a \emph{finite connectivity} hypothesis on the free boundary. Whether this list of solutions is exhaustive of all possible classical solutions in $\R^2$ remains an open problem.

In \cite{BK24} we showed that the solutions $P$, $L_2$ and $HP$ are also the only ones of connected positive phase in the plane that have \emph{finite Morse index}. The Morse index $\ind(u)$ of a classical solution $u:D \to \R_{\geq 0}$ to \eqref{FBP} is defined to be the index of the second variation form associated to the Alt-Caffarelli functional, (\cite{CJK}):
\[
Q(\f, \f; D) := \int_{\o} |\n \f|^2 \, dx - \int_{\text{FB}(u)} H \f^2 \, d\s, \quad \text{for } \f \in C^1_c(\ol{\o}\cap D),
\]
where $H = \text{div}(\n u/|\n u|)=-u_{\nu\nu}$ denotes the mean curvature of $\text{FB}(u)$ with respect to the outer unit normal $\nu$. That is, $\ind(u)$ is the largest dimension of a vector subspace $V\subset C^1_c(\ol{\o}\cap D)$, on which $Q$ is negative definite, and if $Q$ is non-negative definite on all of $C^1_c(\ol{\o}\cap D)$, we say that $u$ is a \emph{stable} solution  in $D$. 

In higher dimensions $n \geq 3$, one expects a wider variety of solutions, including the ball complement solution $L_n(x):=\frac{1}{n-2}(1-|x|^{2-n})^+$, the axially symmetric solution constructed by Liu, Wang and Wei \cite{LWW21}, as well as products $U(x,t):=u(x)$, $(x,t)\in \R^{n-1}\times \R$, generated by lower-dimensional solutions $u:\R^{n-1}\to \R_{\geq 0}$. In dimension $n\geq 7$ there are also non-planar global energy-minimizing solutions (\cite{DSDJShah, EdeSpoVel24}) that desingularize energy-minimizing singular homogeneous solutions.

The list is further enriched by solutions on $\mathbb{R}^{n+1}$, whose free boundaries are \emph{undulating cylinders}, i.e.\ nontrivial periodic perturbations, along the axial $t$-direction, of the product solution $U(x,t)=u(x)$ with disk/ball complement base $u(x)=L_n(x)$, $n\ge 2$. The constructions were carried out by Fall--Minlend--Weth \cite{FMW23} for $n=2$  and by Minlend--Weth--Wu \cite{MWethWu26} for $n\geq 3$, and utilize the bifurcation method: the nontrivial branches of periodic solutions bifurcate away from the trivial branch of the product solution, at certain values of the period, treated as a bifurcation parameter. 

Over the last 15 years, the bifurcation method has emerged as a powerful tool for constructing symmetry breaking solutions to semilinear overdetermined  problems:
\[
-\D u = f(u), \quad u>0 ~ \text{ in } \O\subset \R^d, \qquad u=\text{const}, ~~ \de_\nu u = \text{const} ~\text{ on } \de\O.
\]
The approach has been successfully wielded to produce counterexamples to the Berestycki--Caffarelli--Nirenberg conjecture (\cite{Sic10, SS12, RRS20, RSW22}), has yielded nontrivial solutions to Serrin's problem (\cite{FMW17, KS21}), as well as symmetry breaking solutions to Schiffer-type overdetermined eigenvalue problems on $\mathbb{S}^2$, $\R^2$, and cylinders (\cite{FMW24, EFRS25}). 

The goal of our paper is to investigate if periodic solutions like the ones constructed by \cite{FMW23, MWethWu26} in the context of the one-phase FBP can more generally arise from other base configurations, in particular ones that have \emph{noncompact} free boundary. Our first result answers this in the affirmative for the base given by the double hairpin solution $HP$ in $n=2$. 

\begin{theorem}\label{thm:mainHP} Let  $U(x,t):=HP(x)$ for $(x,t)\in \R^2\times \R$. There exists a family of classical solutions of \eqref{FBP} $$\{U^s:\R^3\to \R_{\geq 0}: U^0=U\}_{s\in (-\e,\e)}$$ 
that are  even in $x_1, x_2$ and $t$, nonconstant periodic in the axial variable $t$ for $s\neq 0$, and such that the free boundary $\text{FB}(U^s)$ is a normal graph over $\text{FB}(U)$ of a function $\check{\f}_s$, converging in $C^{2,\a}$ to $0$ as $s\to 0$. Moreover, $\check{\f}_s= c(s)+\phi_s$, where $c(s)\in \R$ and $\phi_s(x,t)\to 0$ uniformly in $t$, at the rate $|x|^{-\b}$ for a fixed $\b\in (0,1)$.
\end{theorem}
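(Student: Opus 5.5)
We will construct the family $U^s$ by a Crandall--Rabinowitz bifurcation argument in the period, following the scheme of \cite{FMW23, MWethWu26}. The new ingredient is a functional-analytic setting adapted to the noncompact free boundary of $HP$. Rescale the axial variable so that all candidate solutions are $2\pi$-periodic in $t$, with period parameter $\l>0$. Then look for positive phases of the form
\[
\o_\f=\{(x,t): x\in\o_{HP}\ \text{perturbed along the normal to } \text{FB}(HP) \text{ by } \f(x,t)\}.
\]
Here $\f$ is even in $x_1,x_2,t$, $2\pi$-periodic in $t$, and lies in a weighted H\"older space $C^{2,\a}_{\b}$. This space consists of a constant plus a function decaying like $|x|^{-\b}$ along $\text{FB}(HP)$; that structure is the decomposition $c(s)+\phi_s$ claimed in Theorem \ref{thm:mainHP}. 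For each such $\f$ we solve the Dirichlet problem $\D_{\l} v=0$ in $\o_\f$, with $v=0$ on $\de\o_\f$ and $v\sim |x_1|$ at infinity. This is normalized via the decomposition $v=HP\circ\Psi_\f^{-1}+w$ with $w$ in a weighted space, using a diffeomorphism $\Psi_\f$ that straightens the domain. Finally we define
\[
F(\f,\l)=|\n_\l v_\f|-1 \quad \text{on } \de\o_\f,
\]
pulled back to $\text{FB}(HP)\times S^1$. Then $F(0,\l)=0$ for all $\l$, and zeros of $F$ are solutions of \eqref{FBP}.

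The linearization $L_\l=D_\f F(0,\l)$ is a Dirichlet-to-Neumann type operator associated to the Jacobi operator of $HP$. Concretely, $L_\l\f=\de_\nu\psi+H\f$, where $\psi$ is harmonic in $\o_{HP}\times S^1$ (with $\l$-scaled $t$) and $\psi=\f$ on the boundary. Expanding in Fourier modes $\f=\sum_k \f_k(x)\cos(kt)$ yields operators
\[
L^{(k)}_\l=\mathcal{D}_{\l^2k^2}+H,
\]
where $\mathcal{D}_\mu$ is the Dirichlet-to-Neumann map for $-\D+\mu$ on $\o_{HP}$. Modes $k\ge2$ and $k=0$ must be shown invertible on the symmetric class. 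For $k=0$, invertibility is where nondegeneracy of $HP$ within the even class enters: the kernel elements come from translations (odd), rotations (odd), and dilations. The dilation mode is not decaying and is absorbed by the constant $c$ in the space, which explains why the space carries a constant part.

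For $k=1$ we use the finite Morse index of $HP$ (index one, by \cite{BK24}) in the even class. The lowest eigenvalue $\s_1(\mu)$ of the quadratic form $Q_\mu(\f)=\int|\n\psi|^2+\mu\psi^2-\int H\f^2$ is negative at $\mu=0$, tends to $+\infty$ as $\mu\to\infty$, and is strictly increasing in $\mu$. Hence there is a unique $\mu_*>0$ with $\s_1(\mu_*)=0$, and this eigenvalue is simple. Setting $\l_*=\sqrt{\mu_*}$, the kernel of $L_{\l_*}$ is one-dimensional, spanned by $\f_*(x)\cos t$. Since $\de_\l$ of the mode gives $2\l_*\int\psi_*^2>0$, the transversality condition holds.

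The Fredholm property of $L_\l$ (index zero) between $C^{2,\a}_\b$ and $C^{1,\a}_\b$ is the main obstacle, because the free boundary is noncompact. Near infinity $\text{FB}(HP)$ is asymptotic to the planar solution (two half-lines on each side), where the $k\ge1$ modes are coercive: $\mu>0$ makes $\mathcal{D}_\mu$ invertible with exponential decay. The $k=0$ mode is only conformally invertible, requiring $\b\in(0,1)$ away from indicial roots. To handle this, we first prove a priori weighted Schauder estimates using barriers and blow-down to the one-plane solution, then use a Liouville theorem for the planar limit to obtain Fredholmness. For this step we would first try the weight $|x|^{-\b}$ and the explicit Weierstrass representation of $HP$ to compute decay.

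With these ingredients in place, Crandall--Rabinowitz gives the branch $(\f_s,\l(s))$ with $\f_s=s\f_*\cos t+o(s)$. This branch is nonconstant in $t$ for $s\ne0$ and converges to $0$ in $C^{2,\a}$. Classicality of the resulting solutions follows from the smoothness of $\o_{\f_s}$. Positivity of $v$ in the perturbed phase holds since $HP\gtrsim \operatorname{dist}(\cdot,\text{FB})$ uniformly, which is the distance nondegeneracy estimate mentioned in the abstract.
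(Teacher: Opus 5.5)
\paragraph{Verdict.} There is a genuine gap at the $t$-independent mode $k=0$, which is exactly where the one-dimensionality of the kernel in (CR1) is decided. The rest of your scheme matches the paper in substance: Crandall--Rabinowitz in the period, Fourier modes in $t$, the $k=1$ mode from the index-one ground state, transversality, and positivity via distance nondegeneracy. The paper keeps the corrector $v$ as unknown and eliminates $\varphi$ through the Dirichlet condition, whereas you use $\varphi$ and a Dirichlet-to-Neumann operator; that is a legitimate alternative.

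\paragraph{The $k=0$ kernel.} Your claim that the dilation Jacobi field is ``absorbed by the constant $c$'' is false, and it would be fatal if it were true. With $HP=\mathrm{Re}\cosh y$ and $z=y+\sinh y$, one computes
\[
HP-x\cdot\nabla HP=\mathrm{Re}\big(\cosh y-(y+\sinh y)\tanh(y/2)\big)=1-\mathrm{Re}\,y+o(1)\approx 1-\log(2|x|),
\]
and on the free boundary this field equals $x\cdot\nu\approx 1-|x_1|$. So this even field grows logarithmically and is not in $C^{2,\alpha}_\beta\oplus\R$. Had it been in the space, it would be an even $k=0$ kernel element, and the kernel at the bifurcation value would be at least two-dimensional. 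What you actually need, and do not supply, is a Liouville-type nondegeneracy theorem. It must say that every bounded Jacobi field of $HP$ with finite Dirichlet energy is a combination of $\partial_{x_1}HP$ and $\partial_{x_2}HP$. Every kernel element in $C^{2,\alpha}_\beta\oplus\R$ qualifies, since its gradient is $O(|x|^{-1-\beta})$, and evenness then kills both translations. Your list ``translations, rotations, dilations'' is an unproved assertion. The paper proves the needed statement with the conformal map $\Phi=2\partial_z HP$ of $\omega$ onto the unit disk. Under it, $H\,d\sigma$ becomes arclength, so the kernel equation becomes $\Delta v=0$ in $\mathbb{D}$ with $\partial_r v=v$ on $\partial\mathbb{D}$. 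The energy identity $\sum_k (k-1)(c_k^2+d_k^2)=0$ then leaves only linear functions.

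\paragraph{Role of the constants and the Fredholm step.} The constant part of the space has a different origin from the one you give. For $n=2$, the operator $(-\Delta,\partial_\nu)$ on decaying periodic functions has range inside the kernel of the flux functional $l(f,g)=\int_{\Omega_{\rm per}}f+\int_{\Gamma_{\rm per}}g$. Constants are adjoined to restore index zero. Your Fredholm sketch, a blow-down to a one-plane solution plus a planar Liouville theorem, does not see three features the paper handles explicitly.
\begin{itemize}
\item The blow-down of $HP$ is $|x_1|$: the positive phase degenerates to two half-planes meeting at a point, not to a single one-plane solution.
\item The $t$-independent Neumann problem, solved in the strip with the explicit Green's function, has solutions tending to opposite constants $\pm\mu$ at the two ends, where $\mu=\frac{1}{2\pi}\int_{\partial\mathcal{I}}y_1'g_*\,dy_1'$. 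This is removed only if the data are even in $x_1$.
\item $K=(0,H)$ is compact on $C^{2,\alpha}_\beta\oplus\R$, and the nonlinear map is defined there, only because $H=O(|x|^{-2})$. That comes from the improved bound $|D^2HP|\lesssim\langle x\rangle^{-2}$.
\end{itemize}

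\paragraph{Smaller points.} The target of your linearization must be $C^{1,\alpha}_{\beta+1}$, not $C^{1,\alpha}_\beta$, or the range is not closed. Also, if $\varphi\to c\neq0$ at infinity, then $HP\circ\Psi_\varphi^{-1}$ fails to be harmonic in the cutoff's transition layer, at order $c|x|^{-2}$ even for a cutoff at scale $|x|$. So your remainder $w$ cannot decay. The paper avoids this by pulling back the metric, so that $U\circ\Phi_\varphi$ is exactly $g_{T,\varphi}$-harmonic and the constant is carried by $v\in C^{2,\alpha}_\beta\oplus\R$.
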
 
These solutions are constructed by the means of the Crandall--Rabinowitz Theorem (\cite{crandall1971bifurcation}) and bifurcate from the double hairpin product at a certain value of the period. This is made precise in the body of Theorem \ref{thm:bif}, of which Theorem \ref{thm:mainHP} is an immediate corollary. 

Clearly, such a construction cannot start from a one-plane solution base $x_1^+$, since the corresponding product solution is a one-plane solution itself; nearby solutions would thus have to possess asymptotically flat free boundary, which upgrades to fully flat by the regularity theory of the one-phase FBP (\cite{AC, DS2011}). It is generally recognized that the existence of undulating solutions is related to an underlying instability (\cite{Dancer01}) and our second main result provides broad sufficient conditions for the bifurcation scheme to produce new global periodic solutions in $\R^{n+1}$, starting from certain unstable global solutions in $\R^n$, for $n \geq 3$.

\begin{theorem}\label{thm:maingen}
Let $n\geq 3$ and let $u:\R^n \to \R_{\geq 0}$ be a global classical solution to \eqref{FBP} that satisfies the following hypotheses:
\begin{enumerate}[label=(H\arabic*), ref=H\arabic*]
\item \label{H1} $u$ has the Hessian decay
\[
|D^2 u(x)| = o(|x|^{-1}), \quad \text{as } x\to \infty~ \text{ in } ~\o=\{u>0\}.
\]

\item\label{H2} There exists $\d_0>0$, such that the rays $\{\{p+z\nu(p): 0< z < \d_0\}\}_{p\in \de\o}$ into the zero phase of $u$ are disjoint.

\item\label{H3} The free boundary of $u$ satisfies
\[
\lim_{R\to \infty} \sup_{x\in \text{FB}(u), |x|\geq R} (x/|x|)\cdot \nu \leq 0,
\]
where $\nu$ is the outer unit normal to $\text{FB}(u)=\de\o$.

\item\label{H4} The Morse index $\ind(u)$ relative to the Alt-Caffarelli functional is nontrivial and finite: $$1\leq \ind(u)<\infty.$$

\item\label{H5} There exists a group $G_0\subset O(n)$ of isometries of $\R^n$, such that $u$ is $G_0$-invariant and there are no nontrivial $G_0$-invariant Jacobi fields that decay to $0$ at infinity, i.e.\ if $w\in C^2(\o)\cap C^1(\ol{\o})$ is an element in the kernel of the associated linear operator to the second variation form $Q$:
\begin{equation*}
\left\{
\begin{aligned}
- \Delta w &= 0 && \text{ in } \o, \\
\de_\nu w - Hw &= 0 && \text{ on } \de\o,
\end{aligned}
\right.
\end{equation*}
such that $\lim_{x\to\infty} w(x) = 0$ and $w(g\cdot x)=w(x)$ for all $g\in G_0$, then $w\equiv 0$.
\end{enumerate}
Denote its corresponding product solution $$U(x,t):=u(x)  \quad \text{for  }(x,t)\in \R^n\times \R.$$ 
Then there exists a family of classical solutions $\{U^s:\R^{n+1}\to \R_{\geq 0}: U^0=U\}_{s\in (-\e,\e)}$  to \eqref{FBP}, which are $G_0$-invariant in $x$, even and nonconstant periodic in $t$ for $s\neq 0$, and such that the free boundary $\text{FB}(U^s)$ is normal graph over $\text{FB}(U)$ of a function $\check{\f}_s$, which converges in $C^{2,\a}$ to $0$, as $s\to 0$. Moreover, if $\text{FB}(u)$ is unbounded, $\check{\f}_s(x,t) \to 0$ uniformly in $t$, at the rate of $|x|^{-\b}$ for a fixed $\b\in \left(\frac{n-3}{2}, \min(\frac{n}{2}, n-2)\right)$.
\end{theorem}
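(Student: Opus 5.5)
We will prove Theorem \ref{thm:maingen} with the Crandall--Rabinowitz theorem, taking the period as the bifurcation parameter. Rescale the axial variable so that the period is fixed at $2\pi$. For a $G_0$-invariant function $\f(x,t)$ on $\de\o\times\R$ that is even and $2\pi$-periodic in $t$, and for a frequency parameter $\l>0$, let $\o_\f$ be the domain whose boundary is the normal graph of $\f$ over $\de\o\times\R$. The graph is well defined for small $\f$ thanks to the disjoint-ray condition (\ref{H2}). On $\o_\f$ we solve the Dirichlet problem $\D_x w+\l^2\p_t^2 w=0$, $w=0$ on $\de\o_\f$, requiring that $w-U$ decay in a suitable sense at infinity. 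Then define
\[
F(\l,\f):=|\n_\l w|-1 \ \text{ on } \de\o_\f,
\]
pulled back to $\de\o\times\R$. We have $F(\l,0)=0$ for every $\l$. The function spaces are weighted H\"older spaces $C^{2,\a}_\b$ and $C^{1,\a}_\b$ on the free boundary, with decay weight $|x|^{-\b}$. The Hessian decay (\ref{H1}) and the asymptotic condition (\ref{H3}) are what make the Hanzawa-type change of variables and the boundary geometry uniformly controlled at infinity. They also give the linearization a good model operator: asymptotically it looks like the Dirichlet-to-Neumann map of an exterior-type domain.

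The linearization is $\p_\f F(\l,0)=\mathcal{L}_\l$. On the Fourier mode $\cos(kt)$ it acts as $\f\mapsto \p_\nu \psi - H\f$. Here $\psi$ solves $\D\psi=k^2\l^2\psi$ in $\o$ with $\psi=\f$ on $\de\o$, up to sign conventions. This is the Jacobi (Dirichlet-to-Neumann) operator shifted by $k^2\l^2$. The key spectral input is as follows. By (\ref{H4}) together with a $G_0$-symmetrization, the quadratic form $Q$ restricted to $G_0$-invariant functions has a negative direction. We need this negative direction to survive in the symmetry class; if necessary, shrink $G_0$ or argue via the principal eigenfunction, which is $G_0$-invariant by uniqueness. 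Finite index then gives a lowest eigenvalue $-\mu_1<0$ of the $G_0$-invariant weighted eigenvalue problem
\[
-\D\psi+\mu\psi=0 \ \text{ in } \o,\qquad \p_\nu\psi-H\psi=0 \ \text{ on } \de\o .
\]
Its eigenfunction decays exponentially. The critical value is $\l_*=\sqrt{\mu_1}$, at which the kernel of $\mathcal{L}_{\l_*}$ is spanned by $\psi_1(x)\cos t$. The modes $k\ge2$ are invertible because $k^2\mu_1>\mu_1$ lies outside the spectrum below $0$: we need $\mu_1$ to be a simple eigenvalue, and the other negative eigenvalues, finite in number, must be avoided. This is arranged by taking $\mu_1$ to be the bottom eigenvalue, simple by positivity. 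The mode $k=0$ is exactly the Jacobi operator on $\o$. Its injectivity on decaying functions is precisely (\ref{H5}), and surjectivity follows from a Fredholm argument in the weighted spaces. Transversality is immediate, since $\p_\l\mathcal{L}_\l$ applied to the kernel element pairs with $\psi_1\cos t$ to give $2\l_*\int\psi_1^2\neq 0$.

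The main obstacle is the Fredholm theory in weighted spaces for the $k=0$ mode on a domain with noncompact free boundary. We need a Schauder-type estimate $\|\f\|_{C^{2,\a}_\b}\le C\|\mathcal{L}_0\f\|_{C^{1,\a}_\b}+C\|\f\|_{\text{compact}}$ that controls the far region. The plan is to blow down: by (\ref{H1}) and (\ref{H3}), rescalings at infinity converge to solutions whose boundary is flat at scale $|x|$ and whose curvature vanishes. For such limits the Dirichlet-to-Neumann operator acts like $|\xi|$ on the free boundary with weight $|x|^{-\b}$. The admissible range $\b\in(\frac{n-3}{2},\min(\frac n2,n-2))$ is exactly the range avoiding the indicial roots of the harmonic model in $\R^n$ and in the half-space. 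Injectivity of the blow-down problems gives the estimate by contradiction and compactness. For $k=0$, constants (the term $c(s)$ in Theorem \ref{thm:mainHP}) may have to be added to the space when the base is planar-like at infinity, but for $n\ge3$ the decay weight suffices.

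It remains to check positivity of $U^s$ and the graph property. The solution $w$ is positive in $\o_\f$ provided $U$ does not degenerate near the free boundary. This needs a distance nondegeneracy estimate $u(x)\ge c\,\mathrm{dist}(x,\de\o)$, which should follow from finite Morse index and noncompactness of the free boundary. Finally, $C^{2,\a}$ smallness of $\check\f_s$ follows from continuity of the Crandall--Rabinowitz curve.
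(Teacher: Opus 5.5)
Your bifurcation skeleton matches the paper's. The period is the parameter, and the kernel at the critical period is spanned by the positive ground state of the Robin problem times a cosine in $t$. Higher modes are excluded because they would give eigenvalues below the bottom one, the axially constant mode is excluded by (H5), and transversality comes from pairing with the kernel element. Your Dirichlet-to-Neumann formulation in the unknown $\f$ is a legitimate alternative to the paper's. The paper instead keeps the interior corrector $v$ as unknown and eliminates $\f$ through the Dirichlet condition, so the linearization is the Robin operator $w\mapsto(-\D_T w,\p_\nu w-Hw)$ and no weighted Dirichlet theory is needed.

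In either formulation, multiplication by $H$ is compact thanks to (H1). So everything hinges on showing that the Neumann operator $(-\D,\p_\nu):C^{2,\a}_{\b,G}(\ol{\O})\to C^{0,\a}_{\b+2,G}(\ol{\O})\times C^{1,\a}_{\b+1,G}(\G)$ is an isomorphism for the stated $\b$. This is the step your proposal does not supply, and the blow-down sketch cannot supply it as written:
\begin{itemize}
\item \emph{The weights are wrong.} A bump of height $R^{-\b}$ and width $R$ placed at distance $R$ has unit $C^{2,\a}_\b$ norm, but its Dirichlet-to-Neumann image has size $R^{-\b-1}$. So your estimate with $C^{1,\a}_\b$ on the right fails; the target must be $C^{1,\a}_{\b+1}$.
\item \emph{No surjectivity.} Even with correct weights, a contradiction-compactness estimate gives only finite-dimensional kernel and closed range, not surjectivity or index zero.
\item \emph{No limit model.} Nothing in (H1)--(H3) identifies the blow-down limits, so you would need Liouville theorems on limit domains you cannot name. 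Moreover, under rescaling by $R$ the period becomes $T/R\to0$, so the $t$-dependent part has no limit model at all.
\item \emph{Modes do not sum.} Invertibility mode by mode for $k\ge1$ does not give a bounded inverse in H\"older norms without uniform control in $k$.
\end{itemize}

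The stated range is also not an indicial-root range. Only the endpoint $n-2$ has that flavour; on axially mean-zero functions the operator has a spectral gap and no indicial roots. In the paper the data are split into their axial average and a mean-zero remainder.
\begin{itemize}
\item The mean-zero part is solved by the Riesz representation theorem in the space $\H$ of periodic, axially mean-zero $H^1$ functions. This uses Poincar\'e in $t$ and the trace inequality $\int_{\de\o}\psi\,d\s\le\int_\o|\n\psi|\,dx$, which follows from $|\n u|\le1$. The data must be square-integrable over a period cell, which is where $\b>\frac{n-3}{2}$ comes from. Local boundedness then converts the energy bound into decay $R^{-n/2}$, which is why $\b\le\frac n2$.
\item The average part is the Neumann problem on $\o$, solved by exhaustion with the explicit barrier $h=a|x|^{-\b}-2u|x|^{-\b-1}$. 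The bound $-\D h\gtrsim|x|^{-\b-2}$ requires $0<\b<n-2$. The bound $\p_\nu h=(2-a\b\,\frac{x}{|x|}\cdot\nu)|x|^{-\b-1}\ge|x|^{-\b-1}$ is exactly where (H3) enters.
\end{itemize}
That your proposal never gives (H3) a concrete job is a symptom of the missing step.

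Two smaller points.
\begin{itemize}
\item The decay of the ground state needs proof. The paper shows decay faster than any power (Theorem~\ref{thm:eigdecay}), via weighted energy estimates and the same trace inequality.
\item Your positivity step relies on an unproved estimate $u\ge c\,d(x)$ and ignores compact free boundaries. In the paper that estimate is Theorem~\ref{thm:nondeg}(b), proved from Theorems~\ref{thm:KW} and~\ref{thm:KrivW} plus a blow-down argument. In your setting you can bypass it. Since $w-U\to0$ uniformly in $t$, and $U\ge -C|\f|\to0$ on the thin band $\o_\f\setminus\o$, a negative infimum of $w$ would be attained at an interior point by periodicity. The strong minimum principle then already gives $w>0$.
\end{itemize}
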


\begin{rem} Using separation of variables, one can show that  the ball complement solution $u=L_n(x)$, $n\geq 3$, has Morse index $1$ and all its bounded Jacobi fields are a linear combination of 
\[
w_0(x) = \frac{1-(n-1)|x|^{2-n}}{n-2} \quad \text{and}\quad  w_i(x)=\frac{e_i\cdot x}{|x|^n}, \quad i=1,\ldots,n, \quad \text{for } x\in\o=\R^n\setminus B_1.
\]
Note that $w_0(x) = u(x)-\n u \cdot x$ is precisely the field generated by the dilation invariance of \eqref{FBP} while $w_i=\de_{x_i} u$ is the field generated by the translation invariance of the problem along $e_i$. Thus, a Jacobi field that tends to $0$ at infinity must be a linear combination of the $\{w_i\}_{i=1}^n$ only. Taking $G_0=O(n)$, we confirm that the only $G_0$-invariant Jacobi field, tending to $0$ at infinity, is the trivial one. Therefore, for $G_0=O(n)$, $u=L_n$ meets all the hypotheses (\ref{H1}-\ref{H5}) (the first three trivially satisfied), and Theorem \ref{thm:maingen} gives another construction of undulating cylinder solutions, next to the one in \cite{MWethWu26}.
\end{rem}

The group of hypotheses (\ref{H1}--\ref{H3}) are statements on the asymptotic geometry of the base solution. The natural scaling-invariant bound on the Hessian is $|D^2 u(x)| = O(|x|^{-1})$, as $|x|\to\infty$, and yields uniform local estimates for $\text{FB}(u)$ at every scale; hypothesis (\ref{H1}) requires a slightly faster decay at infinity to guarantee the Fredholm property of the linearized operator. Note that the double hairpin solution also falls under (\ref{H1}) as we show that $|D^2 HP(x)| = O(|x|^{-2})$ in Lemma \ref{lem:Hessdecay}. The condition in (\ref{H2}) is technical and allows us to perturb $\text{FB}(u)$ up to distance $\d_0$ in the normal direction without getting self-intersections\footnote{This is not directly implied by (\ref{H1}) as the zero-phase could  be arbitrarily thin at infinity a priori.}. Hypothesis (\ref{H3}) is a one-sided angle condition which holds, in particular, when the free boundary is asymptotically conical; it is used to construct an appropriate barrier for the linearized operator. 

The remaining two conditions (\ref{H4}) and (\ref{H5}) are spectral in nature. Hypothesis (\ref{H5}) allows us to sidestep perturbations of the product solution $U$, generated by the continuous symmetries of the base $u$. Requiring that $G_0\subset O(n)$ is no loss of generality: any group of isometries leaving $u$ invariant fixes the barycenter $\int_\o x w_{\l_1}^2$ of the $L^2$-normalized ground state $w_{\l_1}$ (unique and rapidly decaying at infinity) of the Jacobi operator, which we may thus translate to the origin.

The finite nontrivial Morse index condition (\ref{H4}) is the key hypothesis that underlies the construction. We note that the finiteness of the index implies that $u$ is stable  outside a large ball (see \cite{BK24}), which in turn yields the natural Hessian decay for dimensions $n\geq 2$, where the \emph{stable Bernstein-type theorem} for the one-phase FBP holds (see \cite{KW24, Figalli-et-al26-Bern3D}): 
\begin{equation}\label{eq:bern}
\text{If } u:\R^n \to \R_{\geq 0} \text{ is a stable classical solution to } \eqref{FBP} \text{, then FB}(u) \text{ is flat}.   \tag{BT}
\end{equation}
The groundbreaking works \cite{Figalli-et-al26-Bern3D} and \cite{FR-Serra26-Bern4D} have extended the validity of \eqref{eq:bern} up to dimension $n=4$. Using the local Hessian bounds this entails, a novel improvement of flatness technique in annuli and the moving plane method, Fern\'andez-Real, Florit-Simon and Serra \cite{FR-FS-Serra26} have recently managed to completely characterize finite Morse index solutions for dimensions $n$ where \eqref{eq:bern} holds. Their result says that, unless $u$ is the one-plane or the ball complement solution, $u$ must be axially symmetric with noncompact zero phase given by
\begin{equation}\label{eq:hphigher}
\{u=0\} = \{(x', x_n)\in \R^n: |x'|\geq 1, |x_n|\leq g(|x'|)\},
\end{equation}
where $g=g(r):\R_{\geq 1} \to \R$ is smooth for $r>1$, continuous up to $r=1$ with $g(1)=0,$ and has the asymptotic expansion 
\begin{equation}\label{eq:exp}
g(r) =  b-cr^{2-n} + O(r^{1-n}), \quad \text{for some }b,c>0 \quad  \text{as } r\to\infty,
\end{equation}
with the corresponding expansions for $g'$ and $g''$. From it, one can check that (\ref{H1}--\ref{H3}) are automatically satisfied, as we do below.  As a result, Theorem \ref{thm:maingen} reduces the existence of undulating solutions in $\R^{n+1}$, $n\in\{3,4\}$, to a nondegeneracy property of the base solution:
 
\begin{coro}\label{coro:FRFSS}
Let $n\in\{3,4\}$ and let $u:\R^n\to\R_{\geq 0}$ be a classical solution of \eqref{FBP} of connected positive phase and finite Morse index, which is neither a one-plane nor a ball complement solution. After a rigid motion, $u(x',x_n)$ is $G_0$-invariant, where
\[
G_0:=O(n-1)\times\{\mathrm{id},\,x_n\mapsto -x_n\},
\]
acts on $x=(x',x_n)\in\R^{n-1}\times\R$. If $u$ has no nontrivial $G_0$-invariant Jacobi field that tends to $0$ at infinity, then the conclusion of Theorem \ref{thm:maingen} holds for $u$ with this group $G_0$. In particular, there exist undulating solutions of \eqref{FBP} in $\R^{n+1}$, which are $T_s$-periodic perturbations of the product $U(x,t)=u(x)$, $(x,t)\in\R^n\times\R$.
\end{coro}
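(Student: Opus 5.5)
The plan is to check the hypotheses (\ref{H1})--(\ref{H5}) of Theorem \ref{thm:maingen} for $u$ and then apply that theorem. First, for $n\in\{3,4\}$ the stable Bernstein theorem \eqref{eq:bern} holds by \cite{KW24, Figalli-et-al26-Bern3D, FR-Serra26-Bern4D}. So the classification of \cite{FR-FS-Serra26} applies. Since $u$ is neither a one-plane nor a ball complement solution, after a rigid motion $u$ is axially symmetric about the $x_n$-axis, its zero phase is given by \eqref{eq:hphigher}, and $g$ has the expansion \eqref{eq:exp}, with the corresponding expansions for $g'$ and $g''$. I take the reflection symmetry $x_n\mapsto -x_n$ to be part of that classification (it comes from the moving plane method), so $u$ is $G_0$-invariant. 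Hypothesis (\ref{H5}) is exactly the assumption of the corollary. Hypothesis (\ref{H4}) holds because $\ind(u)<\infty$ by assumption and $\ind(u)\geq 1$: otherwise $u$ would be stable, and \eqref{eq:bern} would force $u$ to be a one-plane solution.

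Next, the geometric hypotheses. The free boundary consists of the two graphs $x_n=\pm g(|x'|)$ for $|x'|>1$, joined smoothly at the rim $|x'|=1$.

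\begin{itemize}
\item \textbf{(\ref{H3}).} On the upper sheet the outer normal (into the zero phase) is $\nu=(g'(r)\,x'/r,\,-1)/\sqrt{1+g'^2}$, with $g'(r)=O(r^{1-n})$. Hence
\[
\frac{x}{|x|}\cdot \nu=\frac{r g'(r)-g(r)}{|x|\sqrt{1+g'^2}}\longrightarrow 0 ,
\]
since $r g'=O(r^{2-n})$, $g\to b$ and $|x|\sim r$. The lower sheet is the same by symmetry, and the limsup is $0\le 0$.
\item \textbf{(\ref{H2}).} The free boundary is a compact smooth piece near the rim together with two asymptotically horizontal sheets. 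These sheets are separated by a slab of width tending to $2b>0$, and their curvatures are $O(r^{-n})\to 0$. So there is a uniform tubular neighborhood of size $\d_0<b$ on the zero-phase side, on which the normal rays are disjoint. On the compact part this is simply the normal neighborhood theorem.
\end{itemize}

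The main obstacle is (\ref{H1}): I need $|D^2u(x)|=o(|x|^{-1})$ everywhere in $\o$, not just near the free boundary. My first attempt is a blow-down argument. Let $x_k\to\infty$ with $|x_k|\,|D^2u(x_k)|\geq c>0$, and set $u_k(y)=u(x_k+|x_k|y)/|x_k|$. Because $u$ is stable outside a large ball \cite{BK24} and \eqref{eq:bern} holds, the local Hessian bounds give uniform $C^{2,\a}$ control of $u_k$ on $B_{1/2}$. By the expansion of $g$, the zero phase of $u_k$ collapses to the hyperplane $\{y_n=-x_{k,n}/|x_k|\}$, counted twice. A subsequential limit is then a solution on $B_{1/2}$ whose free boundary is a flat hyperplane, or which is harmonic with no free boundary. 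Near the plane, the improvement of flatness of \cite{DS2011} forces the limit to be $|y_n-a|$-like, i.e.\ linear on each side, so $D^2u_k(0)\to 0$. Away from the plane, the functions are harmonic, and the limit $u_\infty$ is a blow-down of $u$. I would argue $u_\infty$ equals $|y_n|$ up to a constant multiple, using $u\approx |x_n|$ at infinity, which follows from \eqref{eq:exp} and a barrier (as for $HP$). Either way $D^2u_k(0)\to 0$, a contradiction. If this compactness route leaks, the fallback is more concrete: write $u=|x_n|-b+v$ in each half of $\o$ far away, where $v$ is harmonic with boundary data of size $O(r^{2-n})$ coming from \eqref{eq:exp}. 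Then derive $|D^2v|=O(|x|^{-2})$ by interior and boundary Schauder estimates at scale $|x|$, which would mirror Lemma \ref{lem:Hessdecay}.

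With (\ref{H1})--(\ref{H5}) verified, Theorem \ref{thm:maingen} yields the family $U^s$. These are $G_0$-invariant in $x$, even and nonconstant $T_s$-periodic in $t$ for $s\neq 0$, which is the claimed undulating solutions in $\R^{n+1}$.
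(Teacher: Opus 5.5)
Your treatment of the $G_0$-invariance and of (\ref{H2})--(\ref{H5}) is the same as the paper's. For (\ref{H3}) the paper records $x\cdot\nu\to -b$, which is equivalent to your $(x/|x|)\cdot\nu\to 0$. The genuine difference is in (\ref{H1}).

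The paper does no blow-down there. From the graph description it computes $|H|=O(r^{-n})$. It combines this with the area bound $\sigma(B_r\cap\de\o)=O(r^{n-1})$ of Proposition \ref{prop:areagrowth} to get finite total mean curvature $\int_{\de\o}|H|^{n-1}\,d\sigma<\infty$. It then quotes \cite[Lemma 5.7]{FR-FS-Serra26}, which turns this into $|D^2u|=o(|x|^{-1})$.

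Your contradiction argument at scale $|x_k|$ is a valid and more self-contained alternative. It needs only two inputs: the natural bound \eqref{eq:Hess}, which is available from stability outside a ball together with \eqref{eq:bern}, and the fact that outside a compact set the zero phase lies in a slab of bounded width. It does not use the precise rates in \eqref{eq:exp} or an external lemma. The paper's route is shorter because that analysis is outsourced to \cite{FR-FS-Serra26}.

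Two of your sub-steps should be argued differently.

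\emph{Near the limiting plane.} The right tool is not improvement of flatness from \cite{DS2011}. Use instead the uniform $C^{k,\a}$ bounds for the harmonic extensions across the free boundary from Proposition \ref{prop:asymp}, at scale $\la p\ra\sim|x_k|$. The one-sided limits are then harmonic across a flat piece carrying Cauchy data $u=0$, $\n u=\pm e_n$, so they are linear by unique continuation. The $C^2$ convergence of the extensions then gives $D^2u_k(0)\to 0$, even when $d(x_k,\de\o)=o(|x_k|)$.

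\emph{Away from the plane.} You do not need $u\approx|x_n|$, a barrier, or the slope being $1$. The global limit is $1$-Lipschitz, harmonic in each open half-space, and zero on the whole plane, because the collapsed zero phase covers it. The odd reflection of each one-sided piece is therefore an entire harmonic function with bounded gradient, hence affine, so $D^2u_\infty(0)=0$.

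Your fallback via $u=|x_n|-b+v$ is unnecessary. As stated it is also incomplete, since boundary data alone do not control the interior decay of $v$.
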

\begin{proof}
By \cite{Figalli-et-al26-Bern3D, FR-Serra26-Bern4D}, $\ind(u)\geq 1$ and \cite{FR-FS-Serra26} implies that $\text{FB}(u)$ has the asymptotic profile \eqref{eq:exp}. We only need to check (\ref{H1}--\ref{H3}) and apply Theorem \ref{thm:maingen}. (\ref{H2}) is immediate from the zero phase description \eqref{eq:hphigher}-\eqref{eq:exp}. To get (\ref{H3}), one computes using the axial symmetry that at the upper leaf $\{x_n = g(|x'|)\}$ of the free boundary
\[
x\cdot \nu_{\text{FB}(u)} = (r, g)\cdot (g', -1)/\sqrt{1+g'^2} = - b+(n-1)c r^{2-n} + O(r^{1-n}) \to -b<0 \quad \text{as } r\to \infty,
\]
By symmetry, this also holds for the lower leaf. Finally, in order to show (\ref{H1}) one estimates the mean curvature at $(x', g(|x'|))$ with $r=|x'|$,
\[
|H| = |\text{div}(\nu)| = \left|r^{1-n}\left(r^{n-1}g'/\sqrt{1+g'^2}\right)'\right| = O(r^{-n}).
\]
Since the free boundary has $(n-1)$-dimensional area growth $\s(B_r\cap \de\o) = O(r^{n-1})$ (see Proposition \ref{prop:areagrowth}), we conclude that $u$ has \emph{finite total mean curvature} 
\begin{equation}\label{eq:ftmc}
\int_{\de\o} |H|^{n-1} \, d\s <\infty. 
\end{equation}
By \cite[Lemma 5.7]{FR-FS-Serra26}, \eqref{eq:ftmc} implies $|D^2 u(x)| = o(|x|^{-1})$ and we confirm (\ref{H1}), as well.
\end{proof}

\begin{rem}
The Liu--Wang--Wei axially symmetric solution (\cite{LWW21}), which exists in every dimension $n\geq 3$, satisfies the asymptotic description \eqref{eq:hphigher}, as pointed out by \cite[Remark 1.5]{FR-FS-Serra26}. It has finite total mean curvature, which implies finite Morse index (\cite{BK26_ftm}). If it is shown to possess no nontrivial $O(n-1)\times \mathbb{Z}_2$-invariant Jacobi fields that decay to $0$ at infinity, then Theorem \ref{thm:maingen} will construct a branch of undulating solutions, bifurcating from the corresponding product solution in $\R^{n+1}$.
\end{rem}

\subsection{Overview of the construction}
We follow the general bifurcation scheme applied to the product solution $\bar{U}(x,t)=u(x)$, but several new difficulties arise from the noncompactness of the free boundary cross-section and from the linear growth of $u$ at infinity. In particular, it is not a priori clear what the correct conditions at infinity are for the perturbed solutions, nor in which function spaces the linearized problem enjoys the Fredholm properties required by the Crandall--Rabinowitz theorem. 

We are looking for perturbations of $\O$ that are $T$-periodic in the axial variable $t$, for some parameter $T>0$. In order to work on fixed function spaces, we rescale the problem $t\to t/T$, so that we are looking for $1$-periodic normal perturbations of $\O$ in the rescaled metric $g_T := dx^2 + T^2 dt^2$. We then perturb $\O:=\o\times \R$ along a normal vector field $h_\f$ to $\G:=\de\O$, where $h_\f|_\G=\f \nu$ is parametrized via Fermi coordinates by a function $\f$ on $\G$. Let $\Phi_{h_\f}(x) := x + h_\f(x)$ denote the resulting diffeomorphism. Pulling \eqref{FBP} back to the fixed domain $\O$ and endowing it with the pull-back metric $g_{T,\f}:=\Phi^*_{h_\f} g_T$, we seek the solution in the form $$\tilde{U}=U\circ \Phi_{h_\f} + v,$$ 
where $U(x,t):=\bar{u}(x)$ with $\bar{u}$ being a harmonic extension of $u|_\o$ across a unit-width band around $\de\o$.  Since $\nabla U\neq 0$ on $\G$, the Dirichlet condition $\tilde{U}=0$ on $\G$ determines the normal perturbation function $\f$ implicitly in terms of $v|_\G$ (with $\f[v]\approx v|_\G$), so that for each fixed $T$, we are reduced to solving an equation
\[
M(T, v):= \left(-\D_{g_{T, \f[v]}} \tilde{U}, \left(1-|d\tilde{U}|^2_{g_{T,\f[v]}}\right)/2\right) = (0,0)
\]
for the single unknown $v$, whose linearization at $v=0$ is precisely the rescaled Jacobi operator $$\mathcal{L}_T: w\to \left(-\D_T w, w_\nu - Hw\right), \qquad \D_T := \D_x +T^{-2}\de_t^2.$$

The equation for $M(T,\cdot)$ is formulated in weighted H\"older spaces $C^{k,\a}_\b$ of functions on $\O$ and $\G$ that decay like $\la x\ra^{-\b}$ in the cross-sectional variables, with the natural scaling of derivatives, and in their subspaces, invariant under a group $G=G_0\times G_1$, where $G_1$ acts on the axial variable and contains the translation $t\to t+1$. Under hypotheses (\ref{H1}) and (\ref{H2}), $M(T,v)$ is well defined for $v$ in a neighbourhood of $0$ in $C^{2,\a}_{\b}(\ol{\O})$ into $C^{0,\a}_{\b+2}(\ol{\O})\times C^{1,\a}_{\b+1}(\G)$, and is jointly smooth in $T,v$ (in fact, affine in $T^{-2}$). 

We split its linearization as $L-K$, with $Lw:=(-\D_T w, \de_\nu w)$ and a compact part $Kw:=(0,Hw)$, and the aim is to find appropriate values of the decay rate $\b>0$ such that the Neumann problem operator $L$ is Fredholm of index zero, which it then passes on to the linearization $\mathcal{L}_T$. This issue occupies the bulk of the paper and just like for the construction of the undulating cylinders (\cite{FMW23, MWethWu26}), there is a difference between dimensions $n=2$ and $n\geq 3$.  We find that when $n\geq 3$, $L$ is an isomorphism for decay rates $\b \in \left(\frac{n-3}{2}, \min(\frac{n}{2}, n-2)\right)$, under (\ref{H1}) and (\ref{H3}), (see Theorem \ref{thm:iso-higher}).

For the case $n = 2$, relevant to Theorem \ref{thm:mainHP}, the operator $L$ is injective  but has a nontrivial cokernel for every $\beta > 0$. Indeed, it is not hard to see by testing the weak formulation of $L w = (f,g)$ against logarithmic cut-offs approximating the constant function $1$ on a period cell $\Omega_{\text{per}} = \omega \times (-1/2, 1/2)$ that every $(f,g)$ in the range of $L$ must satisfy the flux condition
\[
l(f,g) := \int_{\Omega_{\mathrm{per}}} f \, dX + \int_{\Gamma_{\mathrm{per}}} g \, dS = 0.
\]
To restore Fredholm index zero, we enlarge the domain to $C^{2,\alpha}_\beta(\ol{\Omega}) \oplus \R$, allowing the corrector function $v$ (and thus the normal perturbation $\f$) to tend to a constant at infinity. This is possible thanks to the \emph{improved} decay $$|D^2 HP(x)| \le C \langle x \rangle^{-2}, \quad \text{for all } x\in \o\subset \R^2,$$ enjoyed by the double hairpin, which ensures that the nonlinear operator $M$ remains well defined and smooth on the enlarged space. It is what motivates the careful and systematic asymptotic analysis of the nonlinear operators done in Section \ref{sec:setup}. We then show that $L : C^{2,\alpha}_{\beta,G}(\ol{\Omega}) \oplus \R \to C^{0,\alpha}_{\beta+2,G}(\ol{\Omega}) \times C^{1,\alpha}_{\beta+1,G}(\Gamma)$, $\beta \in (0,1)$, is Fredholm of index zero, with kernel the constants and range $\ker l$ (see Theorem \ref{thm:Fred}).
 
Separating variables, the kernel of $\mathcal{L}_T$ is nontrivial when $\l + (2\pi k / T)^2 = 0$ for some eigenvalue $\l$ of the cross-sectional Robin problem
\begin{equation}\label{eq:Robin0}
-\Delta_x w = \l w \ \text{ in } \o, \qquad w_\nu - H w = 0 \ \text{ on } \de\o.
\end{equation}
By \cite{BK24}, a finite Morse index $\ind(u)=m \geq 1$ produces exactly $m$ negative eigenvalues $\l_1 < \l_2 \le \dots \le \l_m$ of \eqref{eq:Robin0}, with $\l_1$ simple, and $L^2$-normalized eigenfunctions decaying at infinity. Combining this with the natural curvature bound, local boundary estimates and rescaling, we show that these eigenfunctions in fact decay faster than any power of $\la x\ra$, so that they belong to the weighted spaces above. In particular, a bifurcation always occurs at the lowest eigenvalue $\l_1$, for $T_0 = 2\pi / \sqrt{-\l_1}$. Contributions of the zero eigenspace of \eqref{eq:Robin0} to the kernel are excluded by the nondegeneracy hypothesis (\ref{H5}). For the double hairpin, we verify directly that there are no nontrivial Jacobi fields with the symmetries $x_i \mapsto -x_i$, $i = 1,2$. Imposing in addition evenness in $t$, we guarantee that the kernel at $T_0$ is $1$-dimensional, spanned by
\[
W(x,t) := w_{\l_1}(x) \cos(2\pi t), \quad (x,t)\in \o\times \R,
\]
where $w_{\l_1}$ is the ground state eigenfunction of \eqref{eq:Robin0}. At this stage, we invoke the Crandall--Rabinowitz bifurcation theorem and obtain the branch of nontrivial periodic solutions that we have sought.

One final caveat is that the resulting perturbed solutions $\tilde{U} = U\circ \Phi_{h_\f[v]} + v$ solve the boundary value problem, yet it is not immediately clear if they are positive in $\O$. For small enough perturbations, $\tilde{U}>0$ up to a unit distance from $\G$ in $\O$, whereas the region beyond is taken care of by the fact that $U$ is uniformly positive a fixed distance away from $\G$. When the base $u$ has compact free boundary, this is a straightforward argument. In the case of noncompact \text{FB}(u), it is the result of the following \emph{distance nondegeneracy estimate} for such finite Morse index solutions, which is of independent interest.

\begin{theorem}\label{thm:nondeg} Let $D\subset \R^n$ be a domain and let $u$ be a classical solution of \eqref{FBP} in $D$. Denote by $d(x):=d(x, \text{FB}(u))$.
\begin{enumerate}[label=(\alph*)]
\item If $u$ is stable in $D$, then for every $x\in D\cap \{u>0\}$, $y\in \text{FB}(u)$ such that $|x-y|=d(x)$ and $B_{4d(x)}(y)\subset D$, we have $u(x)\geq c d(x)$, for some constant $c=c(n)>0$. In particular, if $u$ is a global stable solution of \eqref{FBP} with $\text{FB}(u)\neq \emptyset$, then 
\[
u(x)\geq c d(x)  \quad \text{for all } x\in \{u>0\}.
\]

\item If $D=\R^n$, $u$ has finite Morse index and $\text{FB}(u)\neq \emptyset$ is noncompact, then 
\begin{equation*}
u(x)\geq c_u d(x) \quad \text{for all } x\in \{u>0\}, 
\end{equation*}
for some positive constant $c_u$, depending on $u$.
\end{enumerate}
\end{theorem}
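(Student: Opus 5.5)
Part (a) goes by contradiction and compactness, using the curvature estimates for stable solutions together with a rigidity result for the blow-up limit; part (b) is then reduced to (a) away from a compact set.

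\emph{Part (a).} Suppose the estimate fails. Then there are stable classical solutions $u_k$ in domains $D_k$, points $x_k\in\{u_k>0\}$ and $y_k\in \text{FB}(u_k)$ with $|x_k-y_k|=d_k(x_k)$, $B_{4d_k(x_k)}(y_k)\subset D_k$, and $u_k(x_k)\leq k^{-1}d_k(x_k)$. Rescale by $\tilde u_k(z)=u_k(y_k+d_k z)/d_k$. Each $\tilde u_k$ is a stable classical solution in $B_4$, with $0\in\text{FB}(\tilde u_k)$ and $\tilde x_k\in\partial B_1$. The ball $B_1(\tilde x_k)$ lies in the positive phase, and $\tilde u_k(\tilde x_k)\to 0$.

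Stable solutions satisfy interior curvature estimates in low dimensions. Where these are unavailable, I would use the uniform Lipschitz bound, nondegeneracy of the free boundary, and compactness of stable solutions, relying on the stable Bernstein-type theory cited. With these, I extract a limit $\tilde u$ in $B_3$. It is harmonic and nonnegative in the limit of the balls $B_1(\tilde x_k)$, vanishes at the interior point $\bar x=\lim\tilde x_k$ of that ball, and so by the Harnack inequality $\tilde u\equiv 0$ on $B_1(\bar x)$.

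This contradicts the free boundary condition. At $y_k$ the exterior normal derivative is $1$ in absolute value, and the interior ball $B_1(\tilde x_k)$ touches at $0$. Hopf-type comparison, or the Lipschitz nondegeneracy $\sup_{B_r(0)}\tilde u_k\geq cr$ combined with the $C^{1,\a}$ convergence of free boundaries given by the curvature estimates, forces $\tilde u\geq c\,(1-|z-\bar x|)$ near $0$ inside $B_1(\bar x)$. The main delicacy is making this step rely only on estimates valid for stable solutions in every dimension $n$. For that I would use the linear nondegeneracy near $y_k$ together with a barrier in the touching ball: the function $\tilde u_k$ is harmonic in $B_1(\tilde x_k)$, so Harnack spreads a lower bound from near $0$ to $\tilde x_k$. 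The global statement in (a) follows because $D=\R^n$ admits every ball.

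\emph{Part (b).} Finite index gives stability outside some ball $B_{R_0}$ \cite{BK24}. For $x$ with $B_{4d(x)}(y)\subset \R^n\setminus \ol{B_{R_0}}$, part (a) gives $u(x)\geq c\,d(x)$. The remaining points are those with $d(x)$ comparable to or larger than $|y|-R_0$. There are two regimes.
\begin{itemize}
\item \emph{$x$ in a bounded region.} Here $d$ is bounded and $u>0$ is continuous. Near the free boundary, the Hopf lemma and $|\nabla u|=1$ give $u\geq c\,d$; the compact set away from it is handled by the positive minimum of $u$.
\item \emph{$|x|$ large with $d(x)\gtrsim |x|$.} I blow down along a sequence $x_k$ and argue by contradiction. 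Since $\text{FB}(u)$ is noncompact, I pick free boundary points $y'_k$ far out at a distance comparable to $d(x_k)$, where stability holds on balls of that scale. I then apply (a) at those points and propagate the bound to $x_k$ by Harnack along a chain in the positive phase of controlled length, using $d\gtrsim$ scale along the chain.
\end{itemize}
Noncompactness is exactly what supplies such far free boundary points. Without it the estimate would fail: for $L_n$, $u\approx$ const while $d\sim |x|$.

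I expect this last chaining argument to be the main obstacle: controlling the Harnack chain uniformly at large scales, possibly via a blow-down limit that is a stable cone solution with nonempty free boundary.
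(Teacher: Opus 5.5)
Your contradiction-and-rescaling setup in (a) and your reduction in (b) are right. The decisive step in each part, however, is missing.

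\textbf{Part (a).} After passing to the limit you must rule out a ``ghost'' ball: $\tilde u_k>0$ on $B_1(\tilde x_k)$ with $\tilde u_k\to 0$ there, while the $\tilde u_k$ stay nondegenerate at free boundary points. None of your mechanisms does this in general dimension.
\begin{itemize}
\item Hopf's lemma goes the wrong way. It only gives $1=|\nabla\tilde u_k(0)|\geq c\,\tilde u_k(\tilde x_k)$, which is no contradiction.
\item A barrier confined to the touching ball cannot work. The first-order data at the single point $0$ do not control the value at the centre, even under a uniform Lipschitz bound. For instance, boundary data $\approx a|\zeta|$ supported in $\partial B_1(e_n)\cap\{\eta^2<|\zeta|<\eta\}$, with $a\log(1/\eta)\approx 1$, give normal derivative $\approx 1$ at $0$ but central value $O(a\eta^n)$.
\item The bound $\sup_{B_r(0)}\tilde u_k\geq c_0 r$ only places a large value somewhere in $B_r(0)\cap\{\tilde u_k>0\}$. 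That point may lie outside the touching ball, or inside it at distance $\ll r$ from its boundary, so Harnack does not carry the bound to $\tilde x_k$.
\item Curvature estimates would close the argument. But they are known only where the stable Bernstein theorem holds ($n\le 4$). The statement claims $c=c(n)$ in every dimension, and (b) is needed in Theorem \ref{thm:maingen} for all $n\geq 3$.
\end{itemize}
The missing ingredient is the Kriventsov--Weiss compactness theorem (Theorem \ref{thm:KrivW}): for uniformly Lipschitz classical solutions with nontrivial limit, $\chrc{\{\tilde u_k>0\}}\to\chrc{\{\tilde u>0\}}$ in $L^1_{\mathrm{loc}}$. Theorem \ref{thm:KW} gives $\sup_{B_{1/4}}\tilde u_k\geq c_0/4$, so $\tilde u\not\equiv 0$. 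Then $B_{1/2}(\bar x)$ lies in every $\{\tilde u_k>0\}$ for large $k$ but in $\{\tilde u=0\}$, which contradicts the $L^1_{\mathrm{loc}}$ convergence.

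\textbf{Part (b).} The bounded regime is fine, though it follows from $u(x)=d(x)(1+o(1))$ by continuity of $\nabla u$ up to $\text{FB}(u)$, not from Hopf. The large-scale regime is exactly what you leave open, and the chaining idea fails as stated. Noncompactness gives no free boundary points at distance comparable to $d_k=d(x_k)$; the next ones may lie at a scale $\rho_k\gg d_k$. Even if such points existed, a Harnack chain from them to $x_k$ is uncontrolled, for the same ghost-region reason.

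The paper instead blows down at scale $d_k$:
\begin{itemize}
\item If the limit is nontrivial, Kriventsov--Weiss gives the contradiction exactly as in (a).
\item If the limit vanishes, Theorem \ref{thm:KW} forces $\text{FB}(u)\cap\{6d_k\leq|x|\leq Nd_k\}=\emptyset$ for $k\geq k_0(N)$. Set $\rho_k:=\min\{|y|:y\in\text{FB}(u),\ |y|\geq 6d_k\}$, attained at some $q_k$; this is where noncompactness enters. Then $\rho_k/d_k\to\infty$, and $\{6d_k<|x|<\rho_k\}\subset\omega$ because every component of $\omega$ is unbounded.
\item The rescalings $u(\rho_k z)/\rho_k$ converge to a bounded nonnegative harmonic function on $B_1\setminus\{0\}$ that vanishes at $0$, since $0\in\text{FB}(u)$. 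By removability and the strong maximum principle it vanishes identically.
\item This contradicts part (a) at $\tfrac{7}{8}q_k/\rho_k$, where the rescaled solution is at least $c/8$.
\end{itemize}
No Harnack chain back to $x_k$ is needed.
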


The statement of Theorem \ref{thm:nondeg}(a) strengthens the nondegeneracy bound for stable solutions, proved in \cite{KW24} and is obtained from it by employing the Kriventsov--Weiss compactness theorem (\cite{Kriventsov-Weiss25}) for solutions of \eqref{FBP}. Part (b) is established combining (a) with a different compactness argument, in which the unboundedness of $\text{FB}(u)$ rules out trivial blowdown limits for $u$. The noncompactness is a necessary condition, as manifested by the disk complement solution, which has $\ind(L_2)=1$, but grows only logarithmically in the distance to the free boundary. 


The paper is organized as follows. In Section \ref{sec:prelim} we collect preliminaries on classical solutions of \eqref{FBP} and their regularity theory in order to obtain asymptotic free boundary bounds under (\ref{H1}); we also provide specific details on the double hairpin solution and show its improved Hessian decay. Section \ref{sec:morse} focuses on finite Morse index solutions of \eqref{FBP}: it discusses the spectral properties of \eqref{eq:Robin0} and the decay of its eigenfunctions associated to negative eigenvalues. There we also establish the nondegeneracy growth estimates of Theorem \ref{thm:nondeg}. In Section \ref{sec:setup} we introduce the weighted H\"older spaces and formulate the free boundary problem perturbed around the given product solution as an operator equation, reducing to $M(T, v) = 0$. In Sections \ref{sec:linHP} and \ref{sec:lingen} we study the linearized operator in $v$ and establish the Fredholm properties, respectively, for the case $n=2$ of the product double hairpin and for the general case $n\geq 3$ of Theorem \ref{thm:maingen}. For the purpose, we develop the solvability theory of the inhomogeneous Neumann problem $Lw=(f,g)$ in the corresponding function spaces.  Section \ref{sec:bif} contains the bifurcation argument and the proofs of our main results, Theorems \ref{thm:mainHP} and \ref{thm:maingen}. In Appendix \ref{app:apriori} we provide a priori estimates for the Dirichlet and Neumann problems in our weighted H\"older spaces. In Appendix \ref{app:Sobolev} we give some $L^2$-Sobolev trace inequalities, specific for the one-phase FBP context; both sets of results are necessary to develop the solvability theory for the Neumann problem in Sections \ref{sec:linHP} and \ref{sec:lingen}. In Appendix \ref{app:technical}, we collect a few technical lemmas, concerning deformations with horizontal vector fields, necessary for Section \ref{sec:setup}. 

\section{Preliminaries on the one-phase free boundary problem}\label{sec:prelim}
In this section we collect several results about solutions to the one-phase FBP \eqref{FBP}, which will be used heavily later on in the construction of the undulating solutions. We first discuss the asymptotic free boundary bounds entailed by a Hessian bound, and then provide background on the double hairpin solution.

\subsection{Asymptotic free boundary bounds}\label{subsec:asympfb}
As the base of our construction, we will be using global solutions $u:\R^n\to [0,\infty)$ to \eqref{FBP} whose Hessian obeys the bound
\begin{equation}\label{eq:Hess}
|D^2 u(x)| \leq K(x) \la x \ra^{-1}, \qquad \la x\ra:=\sqrt{1+|x|^2}, \qquad \text{for all } x\in \o:=\{x\in \R^n: u(x)>0\},
\end{equation}
where $K:\o\to[0,\infty)$ is a bounded function, and we set $\bar{K}:=\|K\|_{L^\infty(\o)}$. When ${\bar K}<\infty$, $u$ has the natural, scaling-invariant Hessian decay. The free boundary bounds below are governed by the \emph{local} size of $K$, which we record through
\begin{equation}\label{eq:kx}
\k(x):=\sup\{K(y):\ y\in\o,\ |y-x|\leq \la x\ra/2\}\leq \bar K,\qquad x\in\ol{\o}.
\end{equation}
Two instances of \eqref{eq:Hess} will be relevant:
\begin{itemize}
\item Hypothesis (\ref{H1}) means precisely that \eqref{eq:Hess} holds with $K(x)\to 0$ as $|x|\to\infty$; then also $\k(x)\to 0$ as $|x|\to\infty$.
\item For the double hairpin solution $u=HP$, \eqref{eq:Hess} holds with $K(x)=C_0\la x\ra^{-1}$ (Lemma \ref{lem:Hessdecay} below); then $\k(x)\leq 2C_0\la x\ra^{-1}$, since $\la y\ra\ge\la x\ra/2$ whenever $|y-x|\le\la x\ra/2$.
\end{itemize}
In this subsection, we show how \eqref{eq:Hess} entails higher-order bounds for the free boundary $\text{FB}(u)$, which improve at the rate $\k(x)$.

For a set $S\subset \R^n$, $x\in S$ and $r>0$, we denote by $S_r(x)$ the connected component of $S\cap B_r(x)$ containing $x$ (in the relative topology of $S$). Let $u_R(x):=R^{-1}u(Rx)$ denote the Lipschitz rescaling of $u$ by $R>0$, and denote $\o^R:= \o/R = \{u_R>0\}$ and $x^R:=x/R$.

The bounds are a consequence of the classical $\e$-regularity theory for \eqref{FBP}.

\begin{prop}[$\e$-regularity]\label{prop:epsreg} Let $u$ be a classical solution to \eqref{FBP} in $B_1\subset \R^n$, such that $0\in\text{FB}(u)$ and $e_n = \n u(0)$. Denote by $V$ the connected component of $\{u>0\}\cap B_1$ bordering $0$. There exists $\e_0 = \e_0(n)>0$ such that, if
\begin{equation}\label{prop:epsreg:hessbd}
\e:=\|D^2 u\|_{L^\infty(V)}\leq \e_0,
\end{equation}
then $V\cap B_{1/2}$ is the supergraph of a function $g:\R^{n-1}\to \R$,
\[
V\cap B_{1/2} = \{x=(x', x_n) \in B_{1/2}: x_n> g(x')\},
\]
with $g(0)=|\n g(0)|=0$ and $\|g\|_{C^{k,\a}}\leq \bar{c}_k \e$, for some $\bar c_k=\bar c_k(n)>0$. Moreover, $u|_{V\cap B_{1/2}}$ has an extension as a harmonic function $\bar{u}$ to all of $B_{1/2}$ with
\begin{equation*}
\|\bar{u}\|_{C^{k,\a}(B_{1/2})} \leq \bar c_k.
\end{equation*}
\end{prop}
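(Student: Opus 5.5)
Since $u$ is a classical solution with $|\n u|=1$ on $\text{FB}(u)$ and $|D^2u|\le \e$ on $V$, the plan is to view $u$ near $0$ as a small perturbation of the one-plane solution $x_n^+$ and then upgrade the information via boundary regularity for the Dirichlet/Neumann problem.

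\emph{Step 1 (closeness to a plane).} Since $u(0)=0$ and $\n u(0)=e_n$, Taylor expansion along segments inside $V$ gives $|u(x)-x_n|\le \e|x|^2/2$ and $|\n u(x)-e_n|\le \e|x|$ for points $x\in V$ joined to $0$ by a segment in $V$. To avoid connectivity issues I would argue along normal rays and along the free boundary. The tangent plane at $0$ is $\{x_n=0\}$. The second fundamental form of $\text{FB}(u)$ is $-D^2u$ restricted to tangent directions divided by $|\n u|=1$, so it is bounded by $\e$. Hence $\text{FB}(u)$ near $0$ is a graph $x_n=g(x')$ over $|x'|<3/4$, with $|\n g|\le C\e$ and $|D^2g|\le C\e$. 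The graph cannot turn over before reaching radius $1/2$ because the curvature is $\le\e_0$ small.

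Next I would show that $V\cap B_{1/2}$ is exactly the supergraph $\{x_n>g(x')\}$. Points with $x_n>g(x')+$ small lie on vertical segments from the graph along which $\p_n u\ge 1-C\e>0$, so $u>0$ there and they belong to $V$. Below the graph, points near it have $u=0$ by the sign of the normal derivative. The main obstacle is excluding other pieces of $\text{FB}(u)$, or other components, entering $B_{1/2}$ from within $V$. I would handle it as follows. Every point of $\de V\cap B_1$ carries the same curvature bound. A second sheet of $\de V$ inside $B_{1/2}$ lying above the graph would have to bound $V$ from above. Following $u$ upward from the first sheet, $u$ increases at rate $\approx 1$ along $e_n$ (using $|\n u-e_n|\le C\e$ propagated through $V$ by the Hessian bound). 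This is incompatible with $u$ vanishing on an upper sheet whose outward normal is nearly $+e_n$, within distance $<1$. This gives $C^{1,1}$ control: $\|g\|_{C^{1,1}}\le C\e$.

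\emph{Step 2 (harmonic extension and higher regularity).} I would extend $u$ across the graph by solving the Cauchy problem $\Delta\bar u=0$ with $\bar u=0$ and $\p_\nu\bar u=-1$ on the graph. Equivalently, I would use a hodograph/partial Legendre transform $y=(x',u(x))$, $x_n=\psi(y)$, which turns the FBP into a nonlinear elliptic equation on a half-ball with the oblique nonlinear Neumann condition $|\n u|=1$. Standard Schauder bootstrapping (Agmon--Douglis--Nirenberg, or Kinderlehrer--Nirenberg--Spruck) then gives analyticity of $g$, with $\|g\|_{C^{k,\a}}\le \bar c_k\e$. Here the linear dependence on $\e$ comes from linearizing around $\psi_0=y_n$, whose deviation is $O(\e)$ in $C^{1,1}$.

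Since $g$ is analytic, the Cauchy–Kovalevskaya theorem, or more quantitatively the Schwarz reflection for harmonic functions across analytic hypersurfaces, gives a harmonic extension $\bar u$ to a fixed neighborhood. This neighborhood has size controlled by the analyticity radius, which is uniform because the data are $O(\e)$-close to the flat case. After shrinking the ball harmlessly, or rescaling, this yields $B_{1/2}$ with $\|\bar u\|_{C^{k,\a}(B_{1/2})}\le\bar c_k$. Alternatively, one can avoid analyticity by extending $g$'s graph-adapted coordinates and solving a Dirichlet problem. The nontrivial point, again, is the uniform size of the extension domain, which follows from the uniform analytic estimates of the hodograph equation.
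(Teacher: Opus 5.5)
Your Step 1 is essentially the paper's argument. Since $|\n u|=1$, one has $|A|\le|D^2u|\le\e$ on $\G$. A second sheet of $\de V$ above $\G$ is then ruled out by following the vertical segment from $\G$ to the first exit point from $V$: along it $\de_n u$ cannot drop from $\approx 1$ to $\le 0$.

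From there the paper does not bootstrap by hand. It propagates $|\n u-e_n|\le c\e$ along polygonal paths of bounded length in $V$, which gives $|u-x_n|\le c\e$ on $V$. It then quotes the standard $\e$-regularity lemma (\cite[Lemma 3.7]{Figalli-et-al26-Bern3D}) for both the $C^{k,\a}$ bounds on $g$ and the harmonic extension. Your partial hodograph transform with Schauder bootstrapping, linearized at $\psi_0=y_n$, is a legitimate self-contained alternative for the bounds $\|g\|_{C^{k,\a}}\le\bar c_k\e$. It has the advantage of starting directly from the $C^{1,1}$ control you already have.

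\textbf{The gap is the harmonic extension to \emph{all} of $B_{1/2}$.}

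\emph{Why Cauchy--Kovalevskaya falls short.} Even with uniform analytic estimates for the hodograph equation, it only continues $u$ into a collar $\{d(x,\G)<\rho_0\}$ below $\G$. Here $\rho_0=\rho_0(n)$ is fixed but has no reason to be $\geq 1/2$. The depth to which the Cauchy problem $\bar u=0$, $\de_\nu\bar u=-1$ on $\G$ can be solved is limited by the complex width of analyticity of the data along $\G$. Already in the plane, the solution of $\D v=0$ with $v=\phi_0$, $\de_{x_2}v=0$ on $\{x_2=0\}$ is $\mathrm{Re}\,\phi_0(x_1+ix_2)$, which exists only for $|x_2|$ below that width. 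But $B_{1/2}$ contains points at depth almost $1/2$ below $\G$.

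\emph{Why your workarounds don't close it.} Shrinking the ball changes the statement. Rescaling does not help either: the hypothesis lives only on $B_1$, and recentring at other free boundary points just reproduces a collar around $\G$. Your other two suggestions also fail:
\begin{itemize}
\item For $n\geq 3$, a point-to-point Schwarz reflection for harmonic functions exists only across hyperplanes and spheres.
\item Gluing $u$ to the solution of a Dirichlet problem below $\G$ gives a function whose normal derivatives do not match across $\G$, so it is not harmonic.
\end{itemize}

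\emph{What is missing.} You need a genuinely global continuation estimate, namely analyticity of the data at scale comparable to $1$, not merely at some uniform scale. Alternatively, as the paper does, reduce to $\e$-flatness and appeal to the known result. In $n=2$ it can be done by hand: with $w=u+iv$, the function $h=\log(dz/dw)$ has $\mathrm{Re}\,h=-\log|\n u|=0$ on $\{\mathrm{Re}\,w=0\}$. So $h$ reflects via $h(w)=-\overline{h(-\bar w)}$, and $z(w)=\int_0^w e^{h}$ yields the extension to $B_{1-C\e}$.

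For the later uses in the paper a collar of fixed width would in fact suffice, but the statement as written asserts $B_{1/2}$.
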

\begin{proof}
We claim that, for small enough $\e_0>0$, $V$ is the supergraph within $B_1$ of a function $g:\R^{n-1}\to \R$ with
\begin{equation}\label{prop:epsreg:g}
\|g\|_{C^2(\R^{n-1})} \leq C\e,
\end{equation}
for some $C=C(n)>0$. Indeed, denote by $\G$ the connected component of the free boundary containing $0$, and let $A$ denote its second fundamental form. Since $|\n u|=1$ on $\G$, the Hessian bound \eqref{prop:epsreg:hessbd} gives
\[
|A| = |\n u| |A| \leq |D^2 u| \leq \e \quad \text{on } \G,
\]
so that $\G$ is the graph of a function $g$ satisfying \eqref{prop:epsreg:g}. We show that, when $\e_0$ is small enough, $\de V\cap B_1$ contains no free boundary points other than those of $\G$. Note that \eqref{prop:epsreg:g} implies that the gradient $\n u = (-\n_{x'} g, 1)/\sqrt{1+|\n_{x'}g|^2}$ deviates little from $e_n$ along $\G$:
\begin{equation}\label{prop:epsreg:grad}
|\n u - e_n| \leq C\e \quad \text{on } \G.
\end{equation}
Assume, to the contrary, that there is a free boundary point $p=(y', y_n) \in \de V\cap B_1$, not on $\G$, lying vertically above the point $p_0=(y', g(y'))\in \G$, and choose it with the smallest vertical coordinate $y_n> g(y')$. Then the vertical segment from $p_0$ to $p$ lies in $V$, and $\de_{x_n} u(p) \leq 0$, since $u$ vanishes at $p$ and is positive just below it. The fundamental theorem of calculus gives
\[
\de_{x_n}u(p_0) - \de_{x_n}u(p)\leq \|D^2 u\|_{L^\infty(V)} |y_n-g(y')| \leq 2\e.
\]
However, $\de_{x_n}u(p_0) - \de_{x_n}u(p) \geq \de_{x_n}u(p_0)\geq 1/2$ by \eqref{prop:epsreg:grad}, when $\e$ is small enough, a contradiction.

Because $V$ is the supergraph of a function of small $C^1$-norm, every point $p\in V$ is connected to $0$ by a polygonal path in $V$ of length $l\leq c(n)$. Hence
\[
|\n u (p) - e_n| = |\n u (p) - \n u(0)| \leq l \|D^2 u\|_{L^\infty(V)} \leq c \e \quad \text{for all } p\in V,
\]
and integrating $\n u$ along the same path, we get $|u(p) - p_n| \leq c^2 \e$ for all $p\in V$. Thus, $u|_V$ is $c^2\e$-close to the one-dimensional solution $x_n^+$, and the conclusion follows from the well known $\e$-regularity result, as stated in \cite[Lemma 3.7]{Figalli-et-al26-Bern3D}, building upon \cite{KindNiren, AC, DS2011}.
\end{proof}

\begin{prop}\label{prop:asymp}
Let $u:\R^n\to [0,\infty)$ be a global solution of \eqref{FBP}, satisfying \eqref{eq:Hess}, and let $k$ be as in \eqref{eq:kx}. Then there exist constants $\d=\d(\bar{K}, n) \in (0,1/8)$ and $C_k=C_k(\bar{K},n)>0$, $k\in \mathbb{N}$, with the following property. For every $x\in \text{FB}(u)$, with $R:=\la x\ra$, the connected component $[\ol{\o^R}]_{2\d}(x^R)$ is, in a suitable Euclidean coordinate system centred at $x^R$, the supergraph within $B_{2\d}(x^R)$ of a function $g_x:\R^{n-1}\to \R$ with $g_x(0)=|\n g_x(0)|=0$ and
\begin{equation}\label{prop:asymp:eq:g}
\|g_x\|_{C^{k,\a}(\R^{n-1})}\leq C_k\, \k(x).
\end{equation}
Moreover, $u_R|_{[\ol{\o^R}]_{2\d}(x^R)}$ has an extension to all of $B_{2\d}(x^R)$ as a harmonic function $\bar{u}$ with
\begin{equation}\label{prop:asymp:eq:ubds}
\| \bar{u} \|_{C^{k,\a}(B_{2\d}(x^R))} \leq C_k.
\end{equation}
\end{prop}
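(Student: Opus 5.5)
The plan is to reduce Proposition \ref{prop:asymp} to Proposition \ref{prop:epsreg} by Lipschitz rescaling and a covering/scaling argument. Fix $x\in\text{FB}(u)$ and $R=\la x\ra$, and consider $u_R(y)=R^{-1}u(Ry)$. This is again a classical solution of \eqref{FBP}, with $x^R\in\text{FB}(u_R)$ and $|x^R|\le 1$. Its Hessian satisfies $D^2u_R(y)=R\,D^2u(Ry)$. For $y\in\o^R$ with $|y-x^R|\le 1/2$ we have $|Ry-x|\le R/2=\la x\ra/2$, so $\la Ry\ra\geq \la x\ra/2$ (up to a harmless adjustment of constants). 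Then \eqref{eq:Hess} and \eqref{eq:kx} give
\[
|D^2u_R(y)|\le R\,K(Ry)\la Ry\ra^{-1}\le 2\k(x)\le 2\bar K \quad\text{on } \o^R\cap B_{1/2}(x^R).
\]
So at unit scale around $x^R$ the Hessian is bounded by $2\k(x)$, and in particular uniformly by $2\bar K$.

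Next, I would rescale once more to enter the smallness regime of Proposition \ref{prop:epsreg}. Choose $\d=\d(\bar K,n)<1/8$ so small that $8\d\cdot 2\bar K\le\e_0$. Rotate coordinates so that $\n u_R(x^R)=e_n$, and consider $w(z):=(4\d)^{-1}u_R(x^R+4\d z)$ on $B_1$. Then $0\in\text{FB}(w)$, $\n w(0)=e_n$, and $|D^2w|\le 4\d\cdot 2\k(x)\le\e_0$ on the component $V$ of $\{w>0\}\cap B_1$ bordering $0$.

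Proposition \ref{prop:epsreg} then gives that $V\cap B_{1/2}$ is the supergraph of some $g$ with $\|g\|_{C^{k,\a}}\le\bar c_k\cdot 8\d\k(x)$, and that $w$ has a harmonic extension to $B_{1/2}$ with $C^{k,\a}$ norm at most $\bar c_k$. Scaling back by the factor $4\d$ shows that the component of $\o^R\cap B_{2\d}(x^R)$ at $x^R$ is the supergraph of $g_x(y')=4\d\, g(y'/(4\d))$. Derivatives of order $j$ pick up the factor $(4\d)^{1-j}$, which depends only on $\bar K,n$. This yields \eqref{prop:asymp:eq:g}, with $g_x(0)=|\n g_x(0)|=0$ inherited from $g$. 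The extension $\bar u(y)=4\d\,\bar w((y-x^R)/(4\d))$ is harmonic on $B_{2\d}(x^R)$ and satisfies \eqref{prop:asymp:eq:ubds} with constants depending on $\d$, hence on $\bar K,n$.

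The main obstacle is a bookkeeping point about which set the conclusion describes. The statement concerns the relative connected component $[\ol{\o^R}]_{2\d}(x^R)$ of the closure, while Proposition \ref{prop:epsreg} controls the component $V$ of the open positive phase bordering the free-boundary point. I would check that the closure of $V\cap B_{2\d}(x^R)$ is a connected relatively closed set containing $x^R$. Conversely, any point of $\ol{\o^R}\cap B_{2\d}(x^R)$ connected to $x^R$ within that set must lie in $\ol V$: otherwise it would lie in another component of the positive phase, whose closure would have to touch $\ol V$ inside $B_{2\d}(x^R)$. That contact is excluded because $\ol V\cap B_{1/2}$ (in $w$-coordinates) is a closed supergraph whose boundary consists of free boundary points with $|\n u|=1$ pointing into $V$. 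Near such points the zero phase occupies the full subgraph, by the graph description of Proposition \ref{prop:epsreg} together with the fact that $\{u>0\}$ is open and $u$ vanishes there. The other item to verify is that the Hessian bound holds on all of $V$, not just near $x^R$. This follows because $V\subset\o^R\cap B_{1/2}(x^R)$, where the bound was established.
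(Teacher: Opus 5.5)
Your proposal is correct and is essentially the paper's proof. Your two rescalings compose to the paper's single rescaling $\tilde u(y)=u(x+4\d R y)/(4\d R)$, which satisfies $|D^2\tilde u|\le 8\d\,\k(x)$ on $B_1\cap\{\tilde u>0\}$; no adjustment of constants is needed, since $\la\cdot\ra$ is $1$-Lipschitz. Proposition~\ref{prop:epsreg} is then applied and scaled back exactly as you describe.

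Your extra check identifying $[\ol{\o^R}]_{2\d}(x^R)$ with the closed supergraph is a point the paper passes over silently. The reason no other component of the positive phase can touch the graph is that $\de\o$ is a smooth hypersurface with $\o$ locally on one side, which is part of being a classical solution. Openness of $\{u>0\}$ together with vanishing of $u$ on the graph would not suffice.
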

\begin{proof}
Let $\e_0=\e_0(n)$ be the constant from Proposition \ref{prop:epsreg} and let $\d\in(0,1/8)$, to be fixed. Take $x\in \text{FB}(u)$, set $R:=\la x \ra$, and consider the recentered and rescaled solution
\[
\tilde{u}(y) := \frac{u(x + 4\d R y)}{4\d R}, \qquad y\in \R^n.
\]
For $y\in B_1$, the point $x':=x+4\d Ry$ satisfies $|x'-x|\leq 4\d R\leq R/2$, so that $\la x'\ra\geq R/2$ and $K(x')\leq \k(x)$ whenever $x'\in\o$. Hence \eqref{eq:Hess} gives
\[
|D^2 \tilde{u}(y)| = 4\d R\, |D^2 u(x')| \leq 4\d R\, \k(x)\,(R/2)^{-1} = 8 \d\, \k(x) \leq 8\d\bar{K} \quad \text{for all } y\in B_1\cap \{\tilde{u}>0\}.
\]
We fix $\d=\d(\bar{K},n)$ so small that $8\d\bar{K}\leq \e_0$. Proposition \ref{prop:epsreg}, applied to $\tilde u$ (after a rotation taking $\n\tilde u(0)$ to $e_n$), shows that $(\ol{\{\tilde{u}>0\}})_{1/2}(0)$ is the supergraph of a function $\tilde g$ with $\|\tilde g\|_{C^{k,\a}}\leq \bar c_k\,8\d\,\k(x)$, and that $\tilde{u}$ has a harmonic extension $\tilde{u}_e$ to $B_{1/2}$ with $\|\tilde{u}_e\|_{C^{k,\a}(B_{1/2})}\leq \bar c_k$. The map $y\mapsto x^R + 4\d y$ takes $B_{1/2}$ onto $B_{2\d}(x^R)$, and $u_R(x^R+4\d y)=4\d\,\tilde u(y)$. Therefore, $g_x(z'):=4\d\,\tilde g(z'/(4\d))$ and $\bar u(z):=4\d\,\tilde u_e((z-x^R)/(4\d))$ have the required properties, with $C_k$ depending only on $\bar c_k$ and $\d$.
\end{proof}

Since the free boundary is locally the graph of $g_x$, its curvatures are controlled by \eqref{prop:asymp:eq:g}. We write this down for the mean curvature $H$ of $\text{FB}(u)$ with respect to the outer unit normal $\nu$, which is the coefficient appearing in the second variation (Section \ref{sec:morse}) and in the linearized operator.

\begin{coro}\label{coro:curv}
Under the hypotheses of Proposition \ref{prop:asymp}, for every $l\in\mathbb{N}_0$ there is $C_l=C_l(\bar{K},n)$ such that the intrinsic derivatives of $H$ satisfy
\begin{equation}\label{coro:curv:eq}
|\n^l H(p)|\leq C_l\, \k(p)\,\la p\ra^{-1-l}\qquad\text{for all } p\in\text{FB}(u).
\end{equation}
\end{coro}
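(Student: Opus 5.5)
The plan is to deduce the bound directly from Proposition \ref{prop:asymp} by rescaling. Fix $p\in\text{FB}(u)$ and set $R:=\la p\ra$. By Proposition \ref{prop:asymp}, near $p^R$ the rescaled free boundary $\de\o^R=\text{FB}(u_R)$ is, in suitable coordinates centred at $p^R$, the graph of $g_p$ over $B'_{2\d}$. Moreover $g_p(0)=|\n g_p(0)|=0$ and $\|g_p\|_{C^{k,\a}}\le C_k\,\k(p)$ for every $k$. Denote by $H^R$ the mean curvature of $\text{FB}(u_R)$ with respect to the outer normal. Since $u_R(y)=R^{-1}u(Ry)$, the free boundary is simply dilated by $R^{-1}$. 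Hence $H^R(q^R)=R\,H(q)$, and for the intrinsic derivatives $\n^l H^R(q^R)=R^{1+l}\,\n^l H(q)$, because each intrinsic derivative scales like an inverse length.

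Next, express $H^R$ through the graph. In graph coordinates,
\[
H^R = -\mathrm{div}'\!\left(\frac{\n' g_p}{\sqrt{1+|\n' g_p|^2}}\right)
\]
(up to the sign convention for the outer normal, which points into the zero phase, below the graph). This is a smooth function $F(\n' g_p, D'^2 g_p)$ that is linear in $D'^2g_p$ and vanishes when $D'^2 g_p=0$. The intrinsic derivatives $\n^l H^R$ at $p^R$ are combinations of the coordinate derivatives $\de^j H^R$, $j\le l$. The coefficients involve the metric $\delta+\n' g_p\otimes\n' g_p$, its inverse and Christoffel symbols, all of which are bounded in terms of $\|g_p\|_{C^{l+1}}\le C_{l+1}\k(p)\le C_{l+1}\bar K$. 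Each term in $\de^j F$ contains at least one factor $\de^{i}D'^2 g_p$ with $i\le j$. Therefore
\[
|\n^l H^R(p^R)|\le C\,\|g_p\|_{C^{l+2}}\le C_l\,\k(p),
\]
with $C_l$ depending only on $\bar K$, $n$ and $l$, since $\|g_p\|_{C^{l+2}}\le \bar K C_{l+2}$ controls all the nonlinear coefficients uniformly.

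Undoing the scaling gives $|\n^l H(p)|=R^{-1-l}|\n^l H^R(p^R)|\le C_l\,\k(p)\la p\ra^{-1-l}$, which is \eqref{coro:curv:eq}. The main (mild) obstacle is bookkeeping. One must make sure that the intrinsic derivatives in graph coordinates are controlled linearly by $\k(p)$ and not merely bounded. The reason is that every term carries a factor of a second or higher derivative of $g_p$, while the remaining factors are uniformly bounded via $\k\le\bar K$. One must also confirm that the rescaling exponent is exactly $1+l$.
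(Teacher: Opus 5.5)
Your proposal is correct and follows essentially the same route as the paper. Both arguments rescale by $R=\la p\ra$, use that $H^R$ is a smooth expression in $Dg_p$ that is linear in $D^2g_p$ together with $\|g_p\|_{C^{k,\a}}\le C_k\,\k(p)$, and then undo the scaling via $H(q)=R^{-1}H^R(q/R)$. Your extra bookkeeping only spells out what the paper compresses into one line: that every term of $\de^j F$ carries a factor $D^{\geq 2}g_p$, and how intrinsic derivatives are expressed in coordinates.
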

\begin{proof}
Let $p\in\text{FB}(u)$ and $R:=\la p\ra$. The mean curvature $H^R$ of $\text{FB}(u_R)$ at the point $p^R$ is a smooth expression in $D g_p$ and $D^2g_p$ that is linear in $D^2 g_p$, and the same holds for its derivatives. Hence \eqref{prop:asymp:eq:g} gives $|\n^l H^R|\leq C_l \k(p)$ on $\text{FB}(u_R)\cap B_{\d}(p^R)$. The scaling $H(q)=R^{-1}H^R(q/R)$ yields $|\n^lH(q)|=R^{-1-l}|\n^lH^R(q/R)|$, and \eqref{coro:curv:eq} follows by taking $q=p$. 
\end{proof}

\begin{rem}\label{rem:norm_coords} Assume that the solution $u$ satisfies (\ref{H1}) and (\ref{H2}). The local structure result in Proposition \ref{prop:asymp} implies that there exists a small $\theta = \theta(n,\bar{K})\in (0, \d/2)$ such that the exponential map from the normal bundle $\de\o\times \R$ of the free boundary $\text{FB}(u)=\de\o$ to $\R^{n}$,
\[
\de\o\times \R \ni (p,z)\to x=p-z\nu(p),
\]
when restricted to the set
$$
\mathcal{O}:=\{(p,z) \in \de\o \times \R: - \d_0 < z < 4 \theta \la p \ra \},
$$
is a diffeomorphism onto its image, which we denote by $\N_0$. In other words, every point $x$ in the tubular neighbourhood $\N_0$ is uniquely represented in the Fermi coordinates $(p,z)$. Furthermore, possibly reducing $\d_0$, we may assume that $u|_\o$ has a harmonic extension $\bar{u}$ to $\o\cup \N_0$ satisfying
\begin{equation}\label{eq:ext-decay}
|D^k \bar{u}(x)| \leq C_k \la x \ra^{1-k} \quad \text{for all } x\in \N_0, \quad \text{for each } k\in \mathbb{N}_0.
\end{equation}
\end{rem}

We also recall that classical solutions of \eqref{FBP} satisfy an interior Lipschitz estimate with a constant $L=L(n)>0$ and when they are global, they achieve the sharp constant $L=1$.

\begin{prop}\label{prop:grad} Let $D\subset\R^n$ be a domain and let $u:D\to [0,\infty)$ be a classical solution of \eqref{FBP} in $D$, with $0\in \text{FB}(u)$.
\begin{enumerate}[label=(\alph*)]
\item If $D=B_2$, then $\|\n u\|_{L^\infty(B_1)} \leq L$ for some $L=L(n)>0$.
\item If $D=\R^n$, then  $\|\n u\|_{L^\infty(\R^n)} = 1$. 
\end{enumerate} 
\end{prop}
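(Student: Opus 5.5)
The plan is to prove part (a) with a barrier argument at the scale of the distance to the free boundary, combined with Harnack and interior gradient estimates. Part (b) then follows by rescaling (a) and ruling out $\sup|\n u|>1$ through a compactness/maximum principle argument. Throughout, set $\o=\{u>0\}$ and $d(x)=d(x,\text{FB}(u))$.

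\emph{Step 1 (linear growth away from the free boundary).} Fix $x\in\o\cap B_1$ and write $d:=d(x)$. Since $0\in\text{FB}(u)$, we have $d\le|x|<1$, so $B_d(x)\subset\o\cap B_2$. Choose $y\in\text{FB}(u)$ with $|x-y|=d$, and set $M:=u(x)/d$. By Harnack's inequality, $u\ge c_nMd$ on $B_{d/2}(x)$. Let $h$ be the radial harmonic function on the annulus $B_d(x)\setminus B_{d/2}(x)$ with $h=c_nMd$ on $\de B_{d/2}(x)$ and $h=0$ on $\de B_d(x)$. Since $u\ge0$, the maximum principle gives $u\ge h$ on the annulus, and both functions vanish at $y$. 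Comparing normal derivatives at $y$ yields
\[
1=|\n u(y)|\ge|\n h(y)|=c'_n M.
\]
Hence $u(x)\le C_n d(x)$ for every $x\in\o\cap B_1$.

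\emph{Step 2 (gradient bound).} By Harnack in $B_d(x)$, $\sup_{B_{d/2}(x)}u\le C_nu(x)\le C_n d$. The interior gradient estimate for harmonic functions then gives $|\n u(x)|\le C_n d^{-1}\sup_{B_{d/2}(x)}u\le L(n)$. This proves (a).

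\emph{Step 3 (part (b)).} The upper bound comes first. For $D=\R^n$, take any $x\in\o$ and a free boundary point $y$. Apply (a) to $R^{-1}u(y+R\,\cdot)$ with $R>|x-y|$; the Lipschitz rescaling preserves \eqref{FBP}, so $|\n u|\le L$ everywhere. Let $M^2:=\sup_\o|\n u|^2$ and suppose $M>1$. Pick $x_k\in\o$ with $|\n u(x_k)|\to M$ and set $d_k:=d(x_k)$. Consider the rescalings $u_k(z):=u(x_k+d_kz)/d_k$, which are harmonic in $B_1$ with $|\n u_k|\le M$, $u_k(0)\le C$ by Step 1 at large scale, and $|\n u_k(0)|\to M$. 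Interior estimates give a subsequence converging in $C^2_{\rm loc}(B_1)$ to a harmonic $u_\infty$ with $|\n u_\infty|\le M$ and $|\n u_\infty(0)|=M$. After a rotation, $\n u_\infty(0)=Me$, so the harmonic function $\de_e u_\infty\le M$ attains its maximum at an interior point. By the strong maximum principle $\n u_\infty\equiv Me$, and $u_\infty$ is affine with slope $M$ on $B_1$.

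The main obstacle is to turn this rigidity into a contradiction with the free boundary condition at the touching point $y_k$, which maps to a point of $\de B_1$. This requires convergence up to the boundary of $B_1$: one needs $\n u_k\to\n u_\infty$ uniformly up to the touching point, or some control of $|\n u_k|$ near it. My first attempt would be to rerun the argument on the slightly larger region $B_1\cup B_{1/2}(\hat y_k)$, where $\hat y_k:=(y_k-x_k)/d_k$. Applying the local theory of Step 1 and (a) around $\hat y_k$ at a small scale, together with the uniform linear growth, should give uniform two-sided linear bounds near $\hat y_k$. One then compares $u_k$ with the affine limit $\ell(z)=M(z-\hat y_\infty)\cdot e$, using a barrier tangent at $\hat y_\infty$, to force $|\n u_k(y_k)|\ge M-o(1)>1$, which contradicts $|\n u|=1$ on $\text{FB}(u)$. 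If this local argument proves delicate, the fallback is the standard fact that classical solutions are viscosity solutions, together with the known compactness of Lipschitz viscosity solutions. In that setting the limit of the rescalings around $y_k$ is a global solution with gradient constantly $Me$ on its positive phase near a free boundary point, which is impossible.

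\emph{Step 4 (equality).} Finally, $|\n u|=1$ at any point of $\text{FB}(u)\ni0$, so the supremum equals $1$.
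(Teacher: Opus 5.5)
Your part (a) is correct. The annulus comparison at a nearest free boundary point, followed by Harnack and the interior gradient estimate, is the standard proof of the local Lipschitz bound; the paper just cites Caffarelli--Salsa for it. In (b), the global bound $|\nabla u|\le L$ by rescaling and the blow-up to an affine limit of slope $M$ on $B_1$ are also fine.

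\textbf{The gap is the decisive step of (b).} All of the content of the sharp bound $|\nabla u|\le 1$ lies in transferring the slope $M>1$ of the limit to a free boundary point of $u_k$. You only outline this (``should give'', ``the fallback is''), and the ingredients you name do not work as stated:
\begin{itemize}
\item Step~1 and (a) give only \emph{upper} linear bounds. A uniform lower linear bound at free boundary points is a nondegeneracy property that they do not provide (compare Theorem~\ref{thm:nondeg} and the example $L_2$).
\item Convergence of $\nabla u_k$ up to $\partial B_1$ is not available either.
\item The viscosity fallback invokes an unspecified compactness theorem and then declares the limit configuration ``impossible''. That assertion is precisely the claim that a one-sided free boundary condition survives the limit, which is the thing to be proved.
\end{itemize}

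\textbf{What is needed is much less than either route.} Since $u$ is globally $M$-Lipschitz, the $u_k$ are $M$-Lipschitz on $\mathbb{R}^n$. Hence they converge \emph{uniformly on $\overline{B_1}$}, not only in $C^2_{\rm loc}(B_1)$. Combining $u_\infty\ge 0$ in $B_1$ with $u_\infty(\hat y_\infty)=0$ identifies $u_\infty=\ell:=M(1+z\cdot e)$ and $\hat y_\infty=-e$. Then argue one-sidedly at $\hat y_k$:
\begin{itemize}
\item Set $\varepsilon_k:=\sup_{\overline{B_1}}|u_k-\ell|\to 0$, and let $w_k$ be the harmonic function in $B_1$ with boundary values $(\ell-\varepsilon_k)^+$. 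Then $w_k\le u_k$, and $w_k(\hat y_k)=0=u_k(\hat y_k)$ because $\ell(\hat y_k)\le\varepsilon_k$.
\item Write $w_k=\ell-z_k$, where $z_k$ has boundary data $\min(\ell,\varepsilon_k)$. Estimate the Poisson kernel on a cap of radius $\sqrt{\varepsilon_k/M}$ around $\hat y_k$; the linear part of $\ell(\sigma)-\ell(\hat y_k)$ cancels by symmetry.
\item This gives an inward normal derivative of $w_k$ at $\hat y_k$ of at least $-M\,e\cdot\hat y_k-C\sqrt{M\varepsilon_k}\to M$.
\item Therefore $1=|\nabla u_k(\hat y_k)|\ge\partial_{-\hat y_k}u_k(\hat y_k)\ge M-o(1)$, contradicting $M>1$.
\end{itemize}

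Alternatively, slide a strictly subharmonic barrier until it first touches $u_k$. The barrier should have gradient of size about $M'\in(1,M)$ on its zero set and lie below $\ell$, touching it only at $-e$. Contact cannot occur where the barrier is positive, so it occurs at a free boundary point of $u_k$. There the classical condition gives $1\ge M'-o(1)$.

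This touching-from-below comparison is exactly the part of the free boundary condition that passes to limits; the paper itself simply cites \cite{KW24} for (b).
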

\begin{proof}
The statement in (a) is covered in \cite[Lemma 1.5]{CafSalsa}. The sharp estimate in (b) was proved in \cite[Proposition A.5]{KW24} in the more general framework of viscosity solutions. 
\end{proof}

As a consequence of this sharp Lipschitz estimate, the area of the free boundary in $B_R$ is bounded by the area of $\de B_R$.
\begin{prop}\label{prop:areagrowth}
Let $u:\R^n\to [0,\infty)$ be a classical solution of \eqref{FBP} with $\text{FB}(u)\neq \emptyset$. Then for every $R>0$,
\[
\s(B_R\cap \text{FB}(u)) \leq  \s(\de B_R) = c(n) R^{n-1}.
\]
\end{prop}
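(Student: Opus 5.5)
The plan is to use the divergence theorem on the vector field $\n u$ over $\o\cap B_R$. Since $u$ is harmonic in $\o$ and $\de_\nu u=-1$ on $\text{FB}(u)$, the flux through the free boundary equals exactly its area, and the sharp Lipschitz bound of Proposition \ref{prop:grad}(b) controls the flux through the sphere.

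The first step is to write down the identity. Since $|\n u|=1$ on $\text{FB}(u)$ and $\nu=-\n u/|\n u|$ there, integrating $\D u=0$ over $\o\cap B_R$ gives
\[
0=\int_{\o\cap B_R}\D u\,dx=\int_{\text{FB}(u)\cap B_R}\de_\nu u\,d\s+\int_{\o\cap\de B_R}\n u\cdot\frac{x}{R}\,d\s.
\]
Rearranging,
\[
\s(\text{FB}(u)\cap B_R)=\int_{\o\cap \de B_R}\n u\cdot \tfrac{x}{R}\,d\s\leq \|\n u\|_{L^\infty}\,\s(\de B_R)=\s(\de B_R),
\]
where the last equality is Proposition \ref{prop:grad}(b).

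The second step, and the only real obstacle, is justifying the divergence theorem on the domain $\o\cap B_R$, which may be nonsmooth where $\de B_R$ meets $\text{FB}(u)$ non-transversally. I would handle this in one of two ways.

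One option is Sard's theorem. The function $|x|$ restricted to the smooth hypersurface $\text{FB}(u)$ has a.e. regular values, so for a.e. $R$ the sphere $\de B_R$ meets $\text{FB}(u)$ transversally. The domain $\o\cap B_R$ is then a Lipschitz domain, $u$ is smooth up to its boundary, and the identity holds. The inequality then extends to every $R$ by monotonicity: approximate $R$ from above by regular values $R_j\downarrow R$, and use $\s(\text{FB}(u)\cap B_R)\le\s(\text{FB}(u)\cap B_{R_j})\le c(n)R_j^{n-1}$.

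Alternatively, one can test with a radial cutoff $\eta_\e(|x|)$ that approximates $\chrc{B_R}$, write
\[
\int_\o \n u\cdot\n\eta_\e\,dx=\int_{\text{FB}(u)}\eta_\e\,d\s,
\]
and let $\e\to0$. Using $|\n u|\le1$, the left side is bounded by $\int|\eta_\e'|\,\s(\de B_r)\,dr\to\s(\de B_R)$. This route avoids any regularity issues at the intersection.
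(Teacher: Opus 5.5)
Your proof is correct and is the paper's argument: the divergence theorem for $\nabla u$ on $\omega\cap B_R$, with $u_\nu=-1$ on the free boundary and $|u_\nu|\le 1$ on $\partial B_R\cap\omega$; your Sard and cutoff discussions supply the justification that the paper leaves implicit when it calls $\omega\cap B_R$ piecewise smooth. One cosmetic slip: in the cutoff version the identity should read $\int_\omega \nabla u\cdot\nabla\eta_\varepsilon\,dx=-\int_{\text{FB}(u)}\eta_\varepsilon\,d\sigma$ (since $\partial_\nu u=-1$), which is harmless because you only bound absolute values.
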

\begin{proof}
Apply the Divergence theorem to $\n u$ in the piecewise smooth domain $\o\cap B_R$, using $u_\nu = -1$ on $\de\o\cap B_R$ and $|u_\nu|\leq 1$ on $\de B_R\cap\o.$
\end{proof}

\subsection{The double hairpin solution}\label{subsec:hairpin}

Using the standard identification $\R^2\simeq \C$, the double hairpin solution $HP:\mathbb{R}^2\to [0,\infty)$ of Hauswirth, H\'elein and Pacard \cite{HHP} is a global classical solution of \eqref{FBP} in $\R^2$, defined via the holomorphic map
\begin{equation}\label{HHPconfdifeo}
z = F(y) := y + \sinh y, \quad y=y_1+i y_2 \in \C,
\end{equation}
which is a conformal diffeomorphism from the strip $\I:=\{y=y_1+i y_2\in \C: |y_2|<\pi/2\}$ onto the planar domain
$$
\o := \{z=x_1+i x_2\in \C: |x_2|< \pi/2 + \cosh(x_1)\}.
$$
Denote $u_0(y) := \cosh y$. Then $HP$ is defined as
$$
HP(z) := \begin{cases} \text{Re}(u_0(y)) = \cosh y_1 \cos y_2, & \text{when } z= F(y)\in \o,\\ 0, & \text{otherwise.}\end{cases}
$$
Its free boundary $\text{FB}(HP)=\de\o$ consists of the two catenary curves
\[
x_2 = \pm (\pi /2 + \cosh x_1).
\]
In particular, it is not hard to verify that
\begin{equation*}
\exists\,\d_0>0 \text{ such that the rays } \{p+t\nu: 0\leq t < \d_0\}_{p\in \de\o}  \text{ into } \{HP=0\} \text{ are pairwise disjoint},
\end{equation*}
so that the double hairpin solution satisfies hypothesis (\ref{H2}). We also note that $HP(x_1, x_2)$ is invariant under the group of isometries of the plane generated by the reflections $x_i\mapsto -x_i$, $i=1,2$.

\begin{rem}\label{rem:disk-confcoords}
The solution $u=HP$ itself provides natural conformal coordinates for $\o$ on the unit disk. Indeed, since $u$ is harmonic and smooth up to the free boundary $\de\o$, the map $$\Phi(z):=2\de_z u = \de_{x_1}u - i \de_{x_2} u, \quad z:=x_1+ix_2 \in \o,$$ is holomorphic in $\omega$ up to the boundary $\de\o$, and satisfies
\[
|\Phi| = |\nabla u| = 1 \quad \text{on } \partial\o,
\]
on account of the free boundary condition. Moreover, $\Phi$ maps $\o$ biholomorphically onto the open unit disk $\mathbb{D}=\{\zeta\in \C: |\zeta|<1\}$ and extends to a smooth bijection from $\ol{\o}$ onto $\ol{\mathbb{D}}\setminus \{\pm 1\}$ (see  \cite[Section 4]{HHP}). Changing variables $\de \o \ni z \mapsto \zeta = \Phi(z)\in \de\mathbb{D}$, one gets the following transformation rule for the arclength measure $|dz|$ of the free boundary (see \cite[Lemma 5.1]{BK24}):
\begin{equation}\label{eq:transform}
|d\zeta| = |\Phi'(z)| |dz| = H(z) |d z|,
\end{equation}
where $H$ is the curvature of the free boundary $\de \o$ with respect to the outer unit normal $\nu$.
\end{rem}

The double hairpin enjoys an improved Hessian decay: it satisfies \eqref{eq:Hess} with $\k(x)=C_0\la x\ra^{-1}$.

\begin{lemma}\label{lem:Hessdecay}
Let $u=HP$, $\o=\{u>0\}$. Then
\begin{equation*}
|D^2 u(x)| \leq C_0\la x\ra^{-2} \quad \text{for all } x\in \o,
\end{equation*}
for some absolute constant $C_0>0$. In other words, $u$ satisfies \eqref{eq:Hess} with $\k(x)=C_0\la x\ra^{-1}$, and in particular (\ref{H1}).
\end{lemma}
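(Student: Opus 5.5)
We compute the Hessian in the conformal coordinate $y$ of \eqref{HHPconfdifeo}. Since $u$ is harmonic in the plane, $D^2u$ is determined by the holomorphic function $\de_z^2 u$; precisely, $|D^2 u| = \sqrt{2}\,|u_{x_1x_1} - i u_{x_1x_2}| = 2\sqrt{2}\,|\de_z^2 u|$. Write $f := 2\de_z u = \Phi$ from Remark \ref{rem:disk-confcoords}. Then $|D^2u| = \sqrt2\,|f'(z)|$. On the other hand, $u = \mathrm{Re}\,u_0(y)$ with $u_0(y)=\cosh y$ holomorphic, so $2\de_z u = u_0'(y)\,\frac{dy}{dz} = \frac{\sinh y}{1+\cosh y}$, using $F'(y)=1+\cosh y$. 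Hence
\[
f(z) = \frac{\sinh y}{1+\cosh y} = \tanh(y/2), \qquad f'(z) = \frac{d}{dy}\tanh(y/2)\cdot\frac{1}{F'(y)} = \frac{1}{2\cosh^2(y/2)\,(1+\cosh y)} = \frac{1}{(1+\cosh y)^2}.
\]
So the whole statement reduces to the explicit bound
\[
|D^2 u(z)| = \frac{\sqrt2}{|1+\cosh y|^2}\leq C_0 \la F(y)\ra^{-2}\quad \text{for } y\in \I .
\]

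Next we compare $|1+\cosh y|$ with $\la F(y)\ra$ on the closed strip $|y_2|\le\pi/2$. For $y=y_1+iy_2$, the identity $|\cosh y|^2=\sinh^2 y_1+\cos^2 y_2$ and $\mathrm{Re}\cosh y=\cosh y_1\cos y_2\ge 0$ give
\[
|1+\cosh y|^2 = 1 + 2\cosh y_1\cos y_2 + \sinh^2y_1+\cos^2y_2 \geq 1+\sinh^2 y_1 = \cosh^2 y_1 .
\]
In particular $1+\cosh y$ never vanishes on $\ol{\I}$. For the other side, $|F(y)|\le |y|+|\sinh y| \le |y_1|+\pi/2+\cosh y_1\le C\cosh y_1$, so $\la F(y)\ra\le C'\cosh y_1$. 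Combining the two bounds gives
\[
|D^2u|\le \sqrt2\,\cosh^{-2}y_1\le C_0\la F(y)\ra^{-2},
\]
which is the claim. The conclusion that \eqref{eq:Hess} holds with $K=C_0\la x\ra^{-1}$, and hence (\ref{H1}), is immediate from this bound.

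The main point needing care is the derivative chain rule and the identity $|D^2u|=\sqrt2|f'|$. The latter follows because $u_{x_1x_1}=-u_{x_2x_2}$, so the Hessian has Frobenius norm $\sqrt{2(u_{11}^2+u_{12}^2)}$, and $f'=u_{11}-iu_{12}$. Everything else is elementary. Uniformity up to $\de\o$ holds because the estimates are valid on the closed strip.
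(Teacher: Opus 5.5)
Your proposal is correct and follows essentially the same route as the paper: the holomorphic computation $|D^2u| = \sqrt{2}\,|1+\cosh y|^{-2}$ via $u=\mathrm{Re}\,(\cosh\circ F^{-1})$, followed by a comparison of $|1+\cosh y|$ with $\langle x\rangle$ on the closed strip. The only difference is cosmetic. You use the explicit bounds $|1+\cosh y|\geq \cosh y_1$ and $\langle F(y)\rangle\leq C\cosh y_1$, whereas the paper combines $|1+\cosh y|\geq 1$ with a continuity argument showing that $|y+\sinh y|/|1+\cosh y|$ is bounded on the closed strip.
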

\begin{proof}
Write $z= x_1 + i x_2=F(y)=y+\sinh y$ and let $U(z)=u_0(F^{-1}(z)) = \cosh y$ denote the holomorphic extension of $u$, so that $u=\text{Re}\, U$. The Cauchy--Riemann equations and the chain rule give
\begin{align*}
|D^2 u|^2 = 2 |\de_z^2 U |^2 = 2 \left| \de_z \left(\frac{u_0'(y)}{F'(y)}\right)\right|^2 = 2 \left|\de_z \frac{\sinh y}{1+\cosh y}\right|^2 = 2 \left|\frac{1}{1+\cosh y}\frac{1}{F'(y)}\right|^2 = \frac{2}{|1+\cosh y|^4}.
\end{align*}
For $y\in\ol{\I}$, we have $\text{Re}\cosh y=\cosh y_1\cos y_2\geq 0$, so $|1+\cosh y|\geq 1$. Moreover, the ratio $|y+\sinh y|/|1+\cosh y|$ is continuous on $\ol{\I}$ and tends to $1$ as $|y_1|\to\infty$, so it is bounded by an absolute constant. Hence $\la x\ra\leq 1+|y+\sinh y|\leq C|1+\cosh y|$, and
\[
|D^2 u(x)| = \frac{\sqrt{2}}{|1+\cosh y|^2} \leq \frac{C_0}{\la x\ra^2} \quad \text{for all } x\in \o.
\]
\end{proof}

Combining Lemma \ref{lem:Hessdecay} with Proposition \ref{prop:asymp} and Corollary \ref{coro:curv}, for which $\k(x)\leq 2C_0\la x\ra^{-1}$, we obtain improved higher-order free boundary bounds for the double hairpin.

\begin{coro}\label{coro:doublehpbounds} Let $u=HP$, $\o=\{HP>0\}$. There exist a small constant $\d>0$ and constants $C_k>0$, $k\in \mathbb{N}$, such that for any $x\in \text{FB}(u)$, the rescaled domain $(\ol{\o^R})_{2\d}(x^R)$, with $R:=\la x\ra$, is the supergraph within $B_{2\d}(x^R)$ of a function $g_x:\R \to \R$ with
\begin{equation*}
\|g_x\|_{C^{k,\a}(\R)}\leq C_k/ R,
\end{equation*}
and $u_R|_{(\ol{\o^R})_{2\d}(x^R)}$ has an extension to all of $B_{2\d}(x^R)$ as a harmonic function $\bar{u}_R$ satisfying
\begin{equation*}
\| \bar{u}_R \|_{C^{k,\a}(B_{2\d}(x^R))} \leq C_k.
\end{equation*}
Moreover, the curvature of $\text{FB}(u)$ satisfies $|\n^lH(p)|\leq C_l\la p\ra^{-2-l}$ for $p\in\text{FB}(u)$, $l\in\mathbb{N}_0$.
\end{coro}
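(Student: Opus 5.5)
This follows by plugging the explicit bound of Lemma \ref{lem:Hessdecay} into Proposition \ref{prop:asymp} and Corollary \ref{coro:curv}; nothing new has to be proved. The only inputs to track are the size of $\bar K$, which fixes $\d$ and the constants, and the local quantity $\k(x)$, which gives the improved $1/R$ rate.

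First, Lemma \ref{lem:Hessdecay} gives $|D^2u(x)|\le C_0\la x\ra^{-2}$. This is \eqref{eq:Hess} with $K(x)=C_0\la x\ra^{-1}$, so $\bar K\le C_0$ is an absolute constant. For $|y-x|\le \la x\ra/2$ we need $\la y\ra\ge \la x\ra/2$. Assuming $\la\cdot\ra$ is $1$-Lipschitz,
\[
\la y\ra\ \ge\ \la x\ra-|x-y|\ \ge\ \la x\ra/2 ,
\]
and hence $\k(x)\le 2C_0\la x\ra^{-1}$, as already remarked after \eqref{eq:kx}.

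Second, apply Proposition \ref{prop:asymp} with this $\bar K$. It yields $\d=\d(C_0,2)\in(0,1/8)$ and constants $C_k$ such that, for each $x\in\text{FB}(u)$ with $R=\la x\ra$, the component $(\ol{\o^R})_{2\d}(x^R)$ is a supergraph of $g_x$. The graph bound is
\[
\|g_x\|_{C^{k,\a}}\le C_k\k(x)\le 2C_0C_k/R .
\]
The proposition also gives the harmonic extension $\bar u_R$ with $\|\bar u_R\|_{C^{k,\a}(B_{2\d}(x^R))}\le C_k$. Enlarging the constants gives the first two claims.

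Third, for the curvature, Corollary \ref{coro:curv} gives $|\n^lH(p)|\le C_l\k(p)\la p\ra^{-1-l}$. Inserting $\k(p)\le 2C_0\la p\ra^{-1}$ yields $|\n^lH(p)|\le C_l'\la p\ra^{-2-l}$.

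There is no real obstacle. The only point to check is the Lipschitz property of $\la\cdot\ra$ used above, which holds since $\la x\ra=|(1,x)|$ is the Euclidean norm of a vector depending $1$-Lipschitz on $x$. Since all constants depend only on $C_0$ and $n=2$, they are absolute.
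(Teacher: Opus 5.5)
Your proposal is correct and is exactly the paper's argument: the bound $\k(x)\le 2C_0\la x\ra^{-1}$, which follows from Lemma \ref{lem:Hessdecay} and the $1$-Lipschitz property of $\la\cdot\ra$, is inserted into Proposition \ref{prop:asymp} and Corollary \ref{coro:curv}. The only details to check are that $\bar K\le C_0$ is an absolute constant and that $\la y\ra\ge\la x\ra/2$ on the relevant ball, and you handle both correctly.
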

\begin{proof}
This is Proposition \ref{prop:asymp} and Corollary \ref{coro:curv} with $\k(x)\leq 2C_0\la x\ra^{-1}$.
\end{proof}

\section{Second variation form and finite index solutions}\label{sec:morse}

In this section we provide some background results on finite Morse index solutions to the one-phase FBP. First, we prove that if a solution meets the natural Hessian bound \eqref{eq:Hess}, then the eigenfunctions associated to negative eigenvalues of Jacobi operator, i.e.\ of the Robin problem \eqref{eq:Robin0}, decay rapidly at infinity. Second, we focus on the double hairpin solution and establish the kernel nondegeneracy of its associated Jacobi operator.

Recall that the second variation form relative to the Alt--Caffarelli functional, evaluated at a classical solution $u:D \to [0,\infty)$ of \eqref{FBP}, is given by
\[
Q(\f, \f) = \int_{\o} |\n \f|^2 \, dx - \int_{\text{FB}(u)} H \f^2 \, d\s, \quad \text{for } \f \in C^1_c(\ol{\o}\cap D),
\]
where $\o:=\{u>0\}$ and $H$ denotes the mean curvature of $\text{FB}(u)$ with respect to the outer unit normal $\nu$. The \emph{Morse index} of $u$ in $D$ is defined to be the index of the quadratic form $Q$, that is
\[
\ind(u) = \sup\{\text{dim }V : V \text{ a subspace of } C^1_c(\ol{\o}\cap D), \text{ with } Q(\f, \f)< 0 \text{ for all } \f\in V\setminus \{0\}\}. 
\]

In \cite{BK24}, we showed that if the Morse index of a global classical solution $u: \R^n \to [0,\infty)$ is finite, then $\ind(u)$ is precisely the number of negative $L^2(\o)$ eigenvalues (counted with multiplicity) of the Jacobi operator associated to $Q$, that is, the number of eigenvalues $\l<0$, counted with multiplicity, of the Robin eigenvalue problem
\begin{equation}\label{eq:Robineig}
\left\{
\begin{aligned}
- \Delta w &= \l w && \text{ in } \o, \\
\de_\nu w - Hw &= 0 && \text{ on } \text{FB}(u)=\de\o,
\end{aligned}
\right.
\end{equation}
having associated eigenfunctions $w\in L^2(\o)$. 

In the next theorem we will strengthen this result by showing that the $L^2(\o)$-eigenfunctions associated to negative eigenvalues of \eqref{eq:Robineig} actually decay faster than any negative power at infinity, provided that the solution $u$ satisfies the natural Hessian decay \eqref{eq:Hess}. 

\begin{theorem}\label{thm:eigdecay}
Let $n\geq 2$ and let $u:\R^n \to [0,\infty)$ be a classical solution to \eqref{FBP} that satisfies the Hessian bound \eqref{eq:Hess}. Assume that the positive phase $\o\neq \R^n$. Let $w\in L^2(\o)\cap C^\infty(\ol{\o})$ be an eigenfunction of \eqref{eq:Robineig} with $\|w\|_{L^2(\o)}=1$, associated to a negative eigenvalue $\l<0$. Denote the rescales $u_R(x):=R^{-1}u(Rx)$ and $w_R(x):= w(Rx)$, $R>0$. Then for any $\a\in (0,1)$ and $\mu\geq 0$, there exists a constant $C_\mu >0$, independent of $R$, such that 
\begin{equation}\label{thm:eigdecay:eq:main}
R^{\mu}\|w_R\|_{C^{2,\a}(\{u_R>0\}\cap (B_2\setminus \ol{B_1}))} \leq C_\mu \quad \text{for all } R\geq 1. 
\end{equation}
In particular, $|w(x)|\leq C_\mu |x|^{-\mu}$ for all $x\in \o$. 
\end{theorem}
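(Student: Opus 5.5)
The plan is to combine an $L^2$ decay estimate with local elliptic estimates at the scale $R$, performed uniformly by Proposition \ref{prop:asymp}. The key point is that at a scale $R$ the rescaled eigenfunction $w_R$ solves
\[
-\D w_R = R^2\l\, w_R \ \text{ in } \{u_R>0\}, \qquad \de_\nu w_R - H^R w_R = 0 \ \text{ on } \de\{u_R>0\}.
\]
Here $H^R = R H(R\,\cdot)$ is bounded uniformly in $R$ by Corollary \ref{coro:curv}. Since $\l<0$, the zeroth-order term $R^2|\l|$ is huge and coercive. This suggests that $w$ is essentially exponentially small far out, and in any case smaller than any power of $R$.

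\textbf{Step 1: weighted $L^2$ decay (Agmon type).} Choose a cutoff $\eta$ supported outside $B_\rho$ and test the equation against $\eta^2 w$. With $Q$ as above this gives
\[
\int_\o |\n(\eta w)|^2 - \int_{\de\o} H\eta^2 w^2 + \l\int_\o \eta^2 w^2 = \int_\o |\n \eta|^2 w^2 .
\]
I need a boundary trace bound of the form $\int_{\de\o\setminus B_\rho} |H|\, f^2 \le \e(\rho)\int |\n f|^2 + C\int f^2$ with $\e(\rho)$ small. The intended source is $|H|\le C\la x\ra^{-1}$ together with a local trace inequality on the uniformly graphical pieces from Proposition \ref{prop:asymp}. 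At scale $r=\la p\ra$ one has $\int_{\de\o\cap B} f^2 \le C(r\int_B|\n f|^2 + r^{-1}\int_B f^2)$, so $|H|f^2$ costs only $C\int|\n f|^2 + Cr^{-2}\int f^2$. Alternatively, since $\ind(u)<\infty$ implies stability outside a compact set (\cite{BK24}), $Q(\eta w,\eta w)\ge 0$ when $\eta$ is supported far enough out. I prefer this route because it avoids the trace constant. It yields $|\l|\int\eta^2w^2 \le \int|\n\eta|^2 w^2$.

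Next I iterate with $\eta = \eta_0\, e^{\gamma\la x\ra}$ for $\gamma^2<|\l|$, or more simply with polynomial weights $\eta=\la x\ra^{\mu}\eta_0$ truncated and then sent to the limit. Using $|\n\eta|\le (\gamma+C/\rho)\eta$, this gives $\int_{\o\setminus B_\rho} e^{2\gamma|x|}w^2<\infty$. The stability-outside-a-ball step is stated only for finite index solutions. If the index is not assumed finite here, I would fall back on the trace argument, where $|H|\le C\la x\ra^{-1}$ gives $\int|H|f^2\le \frac12\int|\n f|^2 + C\int \la x\ra^{-2}f^2$ after summing over a Whitney-type cover. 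The last term is absorbed by $|\l|\int\eta^2w^2$ for $\rho$ large, so no index assumption is needed.

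\textbf{Step 2: local regularity at scale $R$.} Cover $\{u_R>0\}\cap(B_2\setminus \ol{B_1})$ by balls of radius $\d$, with $\d$ from Proposition \ref{prop:asymp}, either interior or centred at free boundary points. In each ball the domain is a $C^{k,\a}$ graph with bounds uniform in $R$, and the Robin coefficient $H^R$ is uniformly bounded in $C^{1,\a}$. Write the equation as $-\D w_R = f:=R^2\l w_R$. The main obstacle is that the factor $R^2$ appears in the right-hand side of the Schauder estimates. This is harmless here: we only need polynomial decay, and Schauder estimates for the Robin problem on uniformly smooth domains give
\[
\|w_R\|_{C^{2,\a}} \le C R^{N}\|w_R\|_{L^2(B_3\setminus B_{1/2})}
\]
for some fixed $N$, by bootstrapping $L^2\to H^2\to\dots\to C^{2,\a}$ with constants polynomial in $R^2|\l|$. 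Finally, $\|w_R\|_{L^2(B_3\setminus B_{1/2})}^2 = R^{-n}\int_{B_{3R}\setminus B_{R/2}} w^2 \le R^{-n}e^{-\gamma R}$ by Step 1. This beats any $R^{N+\mu}$ and gives \eqref{thm:eigdecay:eq:main}. The pointwise bound then follows by taking $R\approx|x|$.
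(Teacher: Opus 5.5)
Your argument is correct in substance, but it takes a different route from the paper's. The paper uses polynomial weights rather than exponential ones, and it never deals with the large coefficient $R^2|\lambda|$. It first shows $w\in H^1(\o)$. It then proves $\int_\o(|\nabla w|^2+w^2)\langle x\rangle^k\,dx<\infty$ for every $k$ by induction, testing with $\eta_R^2 w\langle x\rangle^k$. Each step gains exactly one power because $|H|\langle x\rangle$ is bounded. The boundary term is therefore dominated by $\int_{\de\o}w^2\langle x\rangle^{k-1}$, which the global trace inequality \eqref{prop:trace:L1} (with $L=1$) controls by the weighted $H^1$ norm at level $k-1$. So the paper never absorbs the boundary term into $|\lambda|\int w^2$. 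Next, the paper applies interior and Robin boundary estimates at \emph{unit} scale. There the free boundary is uniformly smooth and the equation has fixed coefficients, which gives $\|w\|_{C^m}\lesssim\langle x\rangle^{-\mu}$ near every point. Only afterwards does it rescale, bounding the H\"older seminorm of $D^2w_R$ by $\sup|D^3w_R|$ along short paths. Your Agmon estimate buys more: exponential $L^2$ decay for any $\gamma<\sqrt{|\lambda|}$. That extra decay is exactly what makes your scale-$R$ Schauder bootstrap, with its polynomial-in-$R$ losses, harmless. The paper's route is more elementary and needs neither the absorption step nor any tracking of constants in $R^2|\lambda|$.

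A few details need fixing when you write it out.
\begin{itemize}
\item The energy identity should carry $-\lambda\int\eta^2w^2=+|\lambda|\int\eta^2w^2$ on the left, not $+\lambda\int\eta^2w^2$.
\item The scale-$r$ trace bound as you state it yields $C\int|\nabla f|^2$ with $C=C(\bar K,n)$, which cannot be absorbed. You need its $\varepsilon$-version. More simply, apply \eqref{prop:trace:L1} to $\psi=\langle x\rangle^{-1}f^2$ and use Young to get $\int_{\de\o}|H|f^2\le\frac12\int_\o|\nabla f|^2+C\int_\o\langle x\rangle^{-2}f^2$.
\item No weighted integrability, nor even $w\in H^1$, is known a priori. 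So run the Agmon argument with bounded weights such as $e^{\gamma\langle x\rangle/(1+\varepsilon\langle x\rangle)}$ times a large cutoff $\chi_M$, letting $M\to\infty$ and then $\varepsilon\to0$.
\end{itemize}
As you noted yourself, the stability-outside-a-ball shortcut is unavailable, because finite index is not assumed in this statement.
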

\begin{proof}
The constants below may depend on $u$, $n$, $k$, $\mu$, $\l$, $\a$ or $w$, but never on $R$ or on the points considered. By Corollary \ref{coro:curv}, the mean curvature and its tangential derivatives satisfy:
\begin{equation*}
|D^l H(x)| \leq C_l \la x \ra^{-l-1} \quad \text{for } l\in \mathbb{N}_0,
\end{equation*}
for some constants $C_l>0$. We will show that these imply the rapid decay of $w$ in the steps below. 
 
\begin{proof}[Step 1] First, we claim that $w\in H^1(\o)$. Indeed, plugging in the weak formulation of \eqref{eq:Robineig}
    \begin{equation}\label{eq:Robin:weak}
        \begin{aligned}
            \int_{\p \o} Hw \phi\,d\s = \int _{\o} \nabla w \cdot \n \phi + |\l| w \phi \,dx \qquad \forall \phi \in C^\infty_c(\overline{\o}),
        \end{aligned}
    \end{equation}
the test function $\phi=w\eta_R^2$, where $\eta_R\in C^\infty_c(\R^n)$ is a standard cutoff function, such that $0\leq \eta_R\leq 1$, $\eta_R \equiv 1$ in $B_R$, $\eta_R \equiv 0$ in $B_{2R}^c$ and $|\n \eta_R|\leq C/R$, yields
\begin{align}
\int _{\o} |\nabla w|^2 \eta_R^2 + |\l| w^2 \eta_R^2 \,dx & \leq \int _{\o} \nabla w \cdot \n  (w \eta_R^2) + |\l| w(w \eta_R^2) \, dx + 2 \int_\o |\n w| \eta_R |\n \eta_R||w| \, dx \notag \\
& =  \int_{\p \o} Hw^2\eta_R^2 \,d\s + 2 \int_\o |\n w| \eta_R |\n \eta_R||w|\, dx  \notag \\
& \leq \|H\|_{L^\infty(\de\o)} \int_{\de\o}w^2\eta_R^2 \,d\s + \frac{1}{4} \int_\o |\n w|^2 
\eta_R^2 \, dx + 4 \int_{\o} w^2 |\n \eta_R|^2\,dx, \label{thm:eigdecay:eq1}
\end{align}
where in the last line we used the boundedness of $H$ and Young's inequality. To estimate the boundary integral, we use the trace inequality \eqref{prop:trace:L1} from Proposition \ref{prop:trace} with $L=\|\n u\|_{L^\infty(\o)} = 1$ (the gradient bound from Proposition \ref{prop:grad}), in a combination with Young's inequality:
\begin{align}
 \int_{\de\o}w^2\eta_R^2 \,d\s &\leq \int_\o |\n (w\eta_R)^2|\, dx \leq  2 \int_\o |\n w| w \eta_R^2 + |\n \eta_R|\eta_R w^2 \, dx \notag \\
& \leq \frac{1}{4 \|H\|_{L^\infty(\de\o)}} \int_\o |\n w|^2 \eta_R^2 \, dx \notag  \\
& \quad + (4 \|H\|_{L^\infty(\de\o)}+1) \int_\o w^2 \eta_R^2 \, dx + \int_\o |\n \eta_R|^2 w^2 \, dx. \label{thm:eigdecay:eq2}
\end{align}
Combining \eqref{thm:eigdecay:eq1} and \eqref{thm:eigdecay:eq2}, we obtain
\begin{align*}
\int _{\o} (|\nabla w|^2 + |\l| w^2) \eta_R^2 \,dx & \leq \frac{1}{2} \int _{\o} |\nabla w|^2 \eta_R^2 + C\int_\o w^2 (\eta_R^2 + |\n \eta_R|^2)\, dx,
\end{align*}
so that absorbing the gradient squared term on the left-hand side gives
\begin{equation*}
\int _{\o} (|\nabla w|^2 + |\l| w^2) \eta_R^2 \,dx \leq 2 C \|w\|_{L^2(\o)}^2 = 2C.
\end{equation*}
Taking $R\to \infty$, we get that $\|w\|_{H^1(\o)}<\infty$.
\end{proof}

\begin{proof}[Step 2] We will now show that for every $k\in \mathbb{N}$,
\begin{equation}\label{eq:weightedbd}
\int _{\o} (|\nabla w|^2 + w^2) \langle x\rangle^k \, dx <\infty.
\end{equation}
Argue by induction: from Step 1 we know that \eqref{eq:weightedbd} is true for $k=0$ and assume that it is true up to $k-1$. Taking as a test function $\phi= \eta_R^2 w \langle x\rangle^k$ in \eqref{eq:Robin:weak}, we obtain
\begin{align}\label{eq:indstep1}
\int _{\o} (|\nabla w|^2+ |\l| w^2) \eta_R^2   \la x \ra^k \,dx & \leq \|H \la x \ra \|_{L^\infty(\de\o)} \int_{\de\o} w^2 \eta_R^2 \la x\ra^{k-1}\, d\s \notag \\ & + \int_\o |\n w| w \left(2 \eta_R |\n \eta_R| \la x\ra^k + \eta_R^2 |\n \la x\ra^k|\right) \,dx, 
\end{align}
where we used the fact that $\|H \la x \ra \|_{L^\infty(\de\o)}<\infty$. Given that $|\n \eta_R| \la x \ra^{k}\leq c \la x \ra^{k-1}$ and that $|\n \la x \ra^k |\leq k \la x \ra^{k-1}$, we can bound using Young's inequality
\begin{equation}\label{eq:indstep2}
\int_\o |\n w| w \left(2 \eta_R |\n \eta_R| \la x\ra^k + \eta_R^2 |\n \la x\ra^k|\right) \,dx \leq C_k \int_{\o} (|\n w|^2 + w^2) \la x \ra^{k-1} \, dx, 
\end{equation}
for some $C_k>0$. On the other hand, we can estimate the boundary integral using the trace inequality \eqref{prop:trace:L1}: 
\begin{align}\label{eq:indstep3}
\int_{\de\o} w^2 \eta_R^2 \la x\ra^{k-1}\, d s &\leq \int_{\o} |\n w^2| \eta_R^2 \la x\ra^{k-1}\, dx + \int_{\o} w^2 \left( |\n\eta_R^2| \la x\ra^{k-1} + \eta_R^2 |\n \la x\ra^{k-1}| \right) \, dx \notag \\
& \leq \tilde{C}_k \int_\o (w^2 + |\n w|^2) \la x\ra^{k-1}\, dx.
\end{align}
Combining \eqref{eq:indstep1}--\eqref{eq:indstep3}, and letting $R\to\infty$, we conclude the induction step, proving \eqref{eq:weightedbd}. We remark that the validity of \eqref{eq:weightedbd} trivially extends to noninteger $k\geq 0$.
\end{proof}
 
\begin{proof}[Step 3: pointwise decay of all derivatives.]
We show that for every $m\in\mathbb{N}_0$ and $\mu\geq 0$,
\begin{equation}\label{thm:eigdecay:eq:ptwise}
\|w\|_{C^m(S)}\leq C_{m,\mu}\la x\ra^{-\mu},
\end{equation}
where $S=S(x)$, $x\in\o$, is one of the following sets: $S=B_{\d/2}(x)$ if $d(x,\de\o)\geq\d$; and $S=S_\d(p)$ if $d(x,\de\o)<\d$, where $p\in\de\o$ is a point closest to $x$. 
 
The function $w$ solves $\D w+\l w=0$ with the fixed coefficient $\l$. If $d(x,\de\o)\ge\d$, interior elliptic estimates give $\|w\|_{C^m(B_{\d/2}(x))}\leq C_m\|w\|_{L^2(B_\d(x))}$. If $d(x,\de\o)<\d$, $w$ solves the Robin problem $\de_\nu w-Hw=0$ on the boundary portion $\de S_{2\d}(p)\cap B_{2\d}(p)$, which is a graph with $C^k$-norms bounded uniformly in $p$, while $H$ has uniformly bounded $C^k$-norms. The local regularity theory for the oblique derivative problem (\cite{Liebermanbook}) and the Sobolev embedding on the uniformly Lipschitz domain $S_\d(p)$ give $\|w\|_{C^m(S_\d(p))}\le C_m\|w\|_{L^2(S_{2\d}(p))}$. In both cases, the $L^2$-norm is taken over $y$ in a subset of $\o\cap B_{3\d}(x)$, on which $\la y\ra\geq\la x\ra-3\d\ge\la x\ra/2$ (recall $\d<1/8$). Hence, by \eqref{eq:weightedbd} with $k=2\mu$,
\[
\|w\|_{L^2(\o\cap B_{3\d}(x))}^2\leq 2^{2\mu}\la x\ra^{-2\mu}\int_\o w^2\la y\ra^{2\mu}\,dy\leq C_\mu\la x\ra^{-2\mu},
\]
which proves \eqref{thm:eigdecay:eq:ptwise}.
 
Now fix $R\ge 1$ and set $A_R:=\{u_R>0\}\cap(B_{5/2}\setminus\ol{B_{1/2}})$. Since $D^jw_R(y)=R^j(D^jw)(Ry)$ and $\la Ry\ra\geq R/2$ for $y\in A_R$, the bound \eqref{thm:eigdecay:eq:ptwise}, applied with $\mu+3$ in place of $\mu$ at the point $x=Ry$, gives the \emph{pointwise} estimates
\begin{equation}\label{thm:eigdecay:eq:rescaledptwise}
|D^jw_R(y)|\leq C\,R^j\la Ry\ra^{-\mu-3}\leq C_\mu R^{-\mu},\quad y\in A_R,\quad j=0,1,2,3.
\end{equation}
These control the $C^2$-part of the norm in \eqref{thm:eigdecay:eq:main}.
 
It remains to estimate the H\"older seminorm of $D^2w_R$, for which we use pieces of size comparable to $\d$ in the rescaled picture, as follows. Since $\la q\ra$ is comparable to $R$ for $q\in \text{FB}(u)\cap(B_{3R}\setminus B_{R/4})$, Proposition~\ref{prop:asymp} shows that each point $y\in\{u_R>0\}\cap(B_2\setminus\ol{B_1})$ lies in a set $P\subset A_R$ of one of the following two types:
\begin{itemize}
\item $P=B_{\d/2}(y)$, if $d(y,\text{FB}(u_R))\geq\d/2$;
\item $P$ is the connected component of $B_{\d}(q)\cap\{u_R>0\}$ that borders $q$, where $q\in\text{FB}(u_R)$ is a point closest to $y$; $P$ is the part of $B_\d(q)$ above a graph with Lipschitz constant at most $c_1\d\leq 1/2$.
\end{itemize}
In either case, any two points $y,y'\in P$ are joined by a path in $P$ of length at most $C|y-y'|$, with $C$ independent of $R$. Thus, by \eqref{thm:eigdecay:eq:rescaledptwise},
\[
\frac{|D^2w_R(y)-D^2w_R(y')|}{|y-y'|^\a}\leq C\,\sup_{P}|D^3w_R|\,|y-y'|^{1-\a}\leq C_\mu R^{-\mu},\quad y,y'\in P.
\]
As $\{u_R>0\}\cap(B_2\setminus\ol{B_1})$ is covered by a finite number of pieces $P$, depending only on $n$ and $\d$, the standard covering argument yields \eqref{thm:eigdecay:eq:main}. Finally, the pointwise bound $|w(x)|\le C_\mu|x|^{-\mu}$ is the case $m=0$ of \eqref{thm:eigdecay:eq:ptwise}.
\end{proof}
\end{proof}

As a consequence, the lowest eigenvalue $\l<0$ is simple, the associated ground state eigenfunction $w$ is of constant sign, it inherits the symmetry invariance of $u$ and has rapid decay at infinity. 

\begin{coro}\label{coro:grounddecay}
Let $u:\R^n\to [0,\infty)$ be a classical solution to the one-phase FBP, satisfying the Hessian bound \eqref{eq:Hess}. Assume that $0<\ind(u)<\infty$. Then the lowest $L^2(\o)$ eigenvalue $\l<0$ of \eqref{eq:Robineig} is simple and any eigenfunction $w\in L^2(\o)$, associated to $\l$, is either strictly positive or strictly negative in $\o$. Thus, there exists a unique positive, $L^2$-normalized eigenfunction $w$, associated with $\l$, and if $G_0$ is a group of isometries of $\R^n$ which leaves $u$ invariant, then $w$ is $G_0$-invariant, as well. Furthermore, for any $\a\in(0,1)$, $\mu\geq 0$, we have
\begin{equation}\label{coro:grounddecay:eq}
R^\mu \|w_R\|_{C^{2,\a}(\{u_R>0\}\cap (B_2\setminus \ol{B_1}))} \leq C_\mu
\end{equation}
for some constants $C_\mu>0$ independent of $R\geq 1$.
\end{coro}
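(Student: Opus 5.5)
The plan is the standard variational characterization of the ground state, combined with Theorem \ref{thm:eigdecay} for the decay. Since $0<\ind(u)<\infty$, the result of \cite{BK24} recalled above guarantees that \eqref{eq:Robineig} has finitely many negative $L^2(\o)$ eigenvalues, counted with multiplicity. Let $\l$ be the lowest one and $w$ an associated eigenfunction. By Step 1 of the proof of Theorem \ref{thm:eigdecay}, $w\in H^1(\o)$. The trace inequality (Proposition \ref{prop:trace}) makes
\[
Q(\f,\f)=\int_\o|\n\f|^2-\int_{\de\o}H\f^2
\]
a well-defined, lower-bounded quadratic form on $H^1(\o)$. I would identify $\l$ with the infimum of $Q(\f,\f)/\|\f\|^2_{L^2(\o)}$ over $H^1(\o)\setminus\{0\}$. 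This uses that the negative spectrum is discrete, which is part of the \cite{BK24} statement. Testing \eqref{eq:Robin:weak} with $\phi=w$, justified by cut-offs and the decay, gives $Q(w,w)=\l\|w\|^2$, so every $\l$-eigenfunction is a minimizer.

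\textbf{Constant sign.} Next, $|w|\in H^1(\o)$ with $|\n|w||=|\n w|$ a.e., and the boundary term is unchanged. Hence $Q(|w|,|w|)=Q(w,w)$, so $|w|$ is also a minimizer and therefore a weak solution of \eqref{eq:Robineig}. Elliptic regularity up to the boundary, as in Step 3 of Theorem \ref{thm:eigdecay}, makes $|w|$ smooth on $\ol\o$. The function $|w|\geq0$ satisfies $-\D|w|=\l|w|\ge \l |w|$, i.e.\ $-\D|w|-\l|w|=0$, so the strong maximum principle (applied to $-\D+|\l|$, which has nonnegative zeroth-order term) shows that $|w|>0$ in the connected set $\o$ or $|w|\equiv0$. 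Thus $w$ never vanishes in $\o$, and by continuity it has constant sign. Positivity on $\de\o$ is not needed, only in $\o$.

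\textbf{Simplicity.} If $w_1,w_2$ are two $\l$-eigenfunctions, pick a point $x_0\in\o$ and form $w_1-\frac{w_1(x_0)}{w_2(x_0)}w_2$. This is an eigenfunction vanishing at $x_0$, so by the sign property it is identically $0$. Hence the eigenspace is one-dimensional, and there is a unique positive $L^2$-normalized $w$.

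\textbf{Symmetry and decay.} For $g\in G_0$ with $u\circ g=u$, $g$ maps $\o$ and $\de\o$ to themselves, preserves $\nu$ and $H$, and preserves the Laplacian. So $w\circ g$ is again a positive, $L^2$-normalized $\l$-eigenfunction, and uniqueness gives $w\circ g=w$. The decay estimate \eqref{coro:grounddecay:eq} is exactly \eqref{thm:eigdecay:eq:main} of Theorem \ref{thm:eigdecay}, applied to this $w$. That theorem requires $\o\ne\R^n$, which holds because $\ind(u)\ge1$ excludes the trivial case where $u$ has empty free boundary: then $Q\ge0$ and $u$ would be stable.

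\textbf{Main obstacle.} The delicate step is the variational characterization of $\l$ on $H^1(\o)$, where $\o$ is unbounded. I must check that the Rayleigh infimum is attained and coincides with the bottom eigenvalue. I would get this from \cite{BK24}: the index is finite and equals the number of negative eigenvalues, so the spectrum below $0$ consists of finitely many eigenvalues of finite multiplicity. The infimum, being negative, is therefore the lowest eigenvalue and is attained. A second point to justify is the approximation of $|w|$ by $C^1_c$ functions, which follows from the rapid decay of $w$ and $\n w$.
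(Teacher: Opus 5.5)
Your proposal is correct and follows essentially the paper's route: the paper also takes $w\in H^1(\omega)$ from Step 1 of Theorem~\ref{thm:eigdecay} and defers simplicity and constant sign to the standard ground-state argument of \cite[Proposition 2.6]{BK24}, which your $|w|$-minimizer plus strong maximum principle argument spells out; it then obtains $G_0$-invariance from uniqueness and reads \eqref{coro:grounddecay:eq} off Theorem~\ref{thm:eigdecay}. One remark: like the paper, your sign and simplicity steps tacitly need $\omega$ to be connected, which the statement leaves implicit, since on a disconnected positive phase the ground state could vanish on an entire component.
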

\begin{proof}
The decay in \eqref{coro:grounddecay:eq} follows from the previous theorem and we only need to argue why $\l$ is simple and the associated eigenfunction is of constant sign. This is a standard argument (for instance, see \cite[Proposition 2.6]{BK24}), once one knows that $w\in H^1(\o)$. The $G_0$-invariance of $w$ is then a consequence of the uniqueness of $w$. 
\end{proof}

\subsection{Morse index and Jacobi operator kernel nondegeneracy for the double hairpin solution}

In our previous work \cite[Theorem 1.1]{BK24}, we showed that the double hairpin solution $HP$ has Morse index $1$, that is, the eigenvalue problem \eqref{eq:Robineig} has a unique negative $L^2$ eigenvalue $\l<0$, which is simple. Since the Hessian bound \eqref{eq:Hess} holds on account of Lemma \ref{lem:Hessdecay}, we may apply Corollary \ref{coro:grounddecay} to deduce that there is a unique $L^2(\o)$-normalized positive ground state $w_\l$ and it decays faster than any negative power at infinity. As $HP$ remains invariant under the action of the group $G_0$ of reflections $x_i\to -x_i$, $i=1,2$, we get that $w_\l$ is $G_0$-invariant. 

We will now proceed to establish the nondegeneracy of the kernel of the second variation form $Q$, associated to $\text{HP}$. Observe that $w_1 = HP_{x_1}$ and $w_2=HP_{x_2}$ solve \eqref{eq:Robineig} with $\l =0$; they are precisely the Jacobi fields arising from translations along the coordinate axes. We will show that any kernel element $w$ that is bounded and has finite Dirichlet energy
\begin{equation}\label{eq:finiteDir}
\int_\o |\n w|^2 \,dx <\infty,
\end{equation}
must be a linear combination of $w_1$ and $w_2$. 

\begin{prop}[Kernel nondegeneracy of the second variation]\label{prop:kernondeg} Let $u=HP$, $\o:=\{HP>0\}$. Let $w\in C^\infty(\ol{\o})$ be a solution of \eqref{eq:Robineig} for $\l = 0$. If $w$ is bounded and satisfies \eqref{eq:finiteDir}, then $$w=c_1 u_{x_1} + c_2 u_{x_2} \quad \text{for some } c_1, c_2\in \R. $$ 
\end{prop}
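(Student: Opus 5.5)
The plan is to transfer the problem to the unit disk by the conformal coordinates of Remark \ref{rem:disk-confcoords}, where the Jacobi operator becomes an explicit Steklov-type problem that can be solved by Fourier series. Let $\Phi=\de_{x_1}u - i\de_{x_2}u:\o\to\mathbb{D}$ and set $W:=w\circ\Phi^{-1}$ on $\ol{\mathbb{D}}\setminus\{\pm1\}$. Since $\Phi$ is conformal, $W$ is harmonic in $\mathbb{D}$ and $\int_{\mathbb{D}}|\n W|^2 = \int_\o|\n w|^2<\infty$ by conformal invariance of the Dirichlet integral. Also $W$ is bounded and smooth up to $\de\mathbb{D}\setminus\{\pm1\}$.

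\textbf{Transforming the boundary condition.} At a point $z\in\de\o$ the conformal map multiplies normal derivatives by $|\Phi'(z)|$. The outer normal of $\o$ is mapped to the outer radial direction of $\mathbb{D}$, since $\Phi$ maps $\ol{\o}$ onto $\ol{\mathbb{D}}\setminus\{\pm1\}$ preserving orientation. Hence $\de_\nu w(z)=|\Phi'(z)|\,\de_r W(\Phi(z))$. By \eqref{eq:transform}, $|\Phi'|=H$ on $\de\o$, and $H>0$ there (the catenaries are convex toward the zero phase). So the condition $\de_\nu w = Hw$ becomes
\[
\de_r W = W \quad \text{on } \de\mathbb{D}\setminus\{\pm1\}.
\]

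\textbf{Weak formulation across the punctures.} Next I will upgrade this to the weak identity
\[
\int_{\mathbb{D}}\n W\cdot\n\psi\,d\zeta = \int_{\de\mathbb{D}} W\psi\,d\theta \quad \text{for all } \psi\in C^\infty(\ol{\mathbb{D}}).
\]
This is the step I expect to be the main obstacle, since $W$ is a priori uncontrolled near $\pm1$. I will first prove the identity for $\psi\eta_\e$, where $\eta_\e$ is a logarithmic cutoff vanishing near $\pm1$, equal to $1$ outside $B_{\sqrt\e}(\pm1)$, with $\int|\n\eta_\e|^2\lesssim 1/|\log\e|$. The identity holds for $\psi\eta_\e$ because $\psi\eta_\e$ is supported away from the singular points. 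The error term $\int_{\mathbb{D}} \psi\,\n W\cdot\n\eta_\e$ is bounded by $\|\psi\|_\infty\|\n W\|_{L^2(\mathrm{supp}\,\n\eta_\e)}\|\n\eta_\e\|_{L^2}\to0$, using the finite Dirichlet energy. The boundary terms converge by dominated convergence, using that $W$ is bounded. This is exactly where boundedness and \eqref{eq:finiteDir} enter: the two points $\pm1$ have zero capacity in dimension two.

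\textbf{Fourier argument and identification.} Since $W$ is bounded and harmonic, write $W=a_0+\sum_{k\ge1} r^k(a_k\cos k\theta+b_k\sin k\theta)$, with boundary trace in $L^2$. Testing the weak identity with $\psi=1$ gives $0=2\pi a_0$. Testing with $\psi=r^k\cos k\theta$ and $\psi=r^k\sin k\theta$ gives $k\pi a_k=\pi a_k$ and $k\pi b_k=\pi b_k$, via Green's formula for the harmonic $\psi$: the left side equals $\int_{\de\mathbb{D}} W\de_r\psi = k\int_{\de\mathbb{D}} W\psi$. Hence only $k=1$ survives, and $W=a_1\,\mathrm{Re}\,\zeta+b_1\,\mathrm{Im}\,\zeta$. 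Pulling back through $\zeta=\Phi(z)$, we have $\mathrm{Re}\,\Phi = u_{x_1}$ and $\mathrm{Im}\,\Phi=-u_{x_2}$. Therefore $w=a_1u_{x_1}-b_1u_{x_2}$, which is the claim.
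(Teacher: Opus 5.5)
Your proof is correct and follows essentially the same route as the paper: pass to the disk via $\Phi=u_{x_1}-i u_{x_2}$, use $|\Phi'|=H$ on $\partial\omega$ to obtain the Steklov-type identity $\int_{\mathbb{D}}\nabla W\cdot\nabla\psi=\int_{\partial\mathbb{D}}W\psi$, extend it across the zero-capacity points $\pm1$ using boundedness and finite Dirichlet energy, and eliminate every Fourier mode except $k=1$. The differences are minor. You transform the boundary condition pointwise and use explicit logarithmic cutoffs, where the paper transforms the weak form and invokes density in $H^1(\mathbb{D})$. You also test against each harmonic polynomial $r^k\cos k\theta$, $r^k\sin k\theta$ separately rather than against $\psi=v$, which spares you the $H^1$-convergence of the Fourier series.
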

\begin{proof}
Let $z=x_1+i x_2$ and use the conformal change of variable $$z=x_1 + i x_2 \to y=\Phi(z) := 2\de_z u=u_{x_1} - i u_{x_2}$$ 
to map $\ol{\o}$ to the punctured closed disk $\ol{\mathbb{D}}_p :=\ol{\mathbb{D}}\setminus \{y=\pm 1\}$ (see Remark \ref{rem:disk-confcoords}). Then $v(y):=w(\Phi^{-1}(y))$ is bounded on $\ol{\mathbb{D}}_p$ and by the conformality of $\Phi$, we have
\[
\int_{\mathbb{D}} |\n_y v|^2 \, dy = \int_{\o} |\n_x w|^2 \, dx < \infty.
\]
Thus, $v\in H^1(\mathbb{D})$ and has a trace on $\de \mathbb{D}$ (which we again denote by $v$) lying in $H^{1/2}(\de\mathbb{D})$. The weak formulation of \eqref{eq:Robineig} for $\l=0$ says $Q(w,\f) = 0$ for all $\f\in C^1_c(\ol{\o})$, and transforms in the $y$-variable to 
\begin{align}\label{prop:kernondeg:eq:transf}
0 =\int_\o \n w \cdot \n \f \, dx - \int_{\de\o} H w\f \, d\s = \int_{\mathbb{D}} \n v \cdot \n \psi\, dy - \int_{\de \mathbb{D}} v \psi \, d\s
\end{align}
for all $\psi = \phi\circ \Phi^{-1} \in C^1_c(\ol{\mathbb{D}}_p)$, on account of the transformation rule \eqref{eq:transform} for the boundary measure. Since the singular set $\{\pm 1\}\subset \de\mathbb{D}$ has zero $H^1$-capacity, \eqref{prop:kernondeg:eq:transf} holds by density of $C^1_c(\ol{\mathbb{D}}_p)\subset H^1(\mathbb{D})$ for any $\psi\in H^1(\mathbb{D})$. Taking a constant test function $\psi \equiv 1$, we see that $\int_{\de\mathbb{D}} v = 0$, so that $v\in H^1(\mathbb{D})$ is a harmonic function with mean $0$. 

Expand the mean-zero trace $v|_{\de \mathbb{D}} = \sum_{k=1}^\infty (c_k e_k(\theta) + d_k f_k(\theta))$ in Fourier series, where $e_k(\theta):=\cos(k\theta)/\sqrt{\pi}$ and $f_k(\theta):=\sin(k\theta)/\sqrt{\pi}$, $k\in\mathbb{N}$, form an orthonormal set in $L^2(\de\mathbb{D})$. Given that $v|_{\de \mathbb{D}}\in H^{1/2}(\de\mathbb{D})$, we have $\sum_{k=1}^\infty k (c_k^2+d_k^2)<\infty$, meaning that we can express the harmonic $v$ in polar coordinates as a series, converging in $H^1(\mathbb{D})$:
\[
v(r,\theta) = \sum_{k=1}^\infty v_k(r,\theta), \quad \text{with } v_k(r,\theta):= r^k (c_k e_k(\theta) + d_k f_k(\theta)).
\]
Plugging in $\psi = v$ in \eqref{prop:kernondeg:eq:transf} and using the harmonicity of each $v_k$, we  get
\[
0 = \int_{\mathbb{D}} |\n v|^2 \, dy - \int_{\de \mathbb{D}} v^2 \, ds = \sum_{k=1}^\infty \int_{\de \mathbb{D}} (\de_r v_k - v_k) v_k \, ds =  \sum_{k=1}^\infty (k-1)(c_k^2 + d_k^2). 
\]
The above implies that $c_k=d_k=0$ for $k\geq 2$, so that 
$$
w(x)= v(y) = c_1 r e_1(\theta) + d_1 r f_1(\theta) = \tilde{c}_1 y_1 + \tilde{d}_1 y_2. 
$$
We complete the proof of the proposition by observing that $y_1 = u_{x_1}$ and $y_2 = - u_{x_2}$. 
\end{proof}

\subsection{Distance nondegeneracy for stable and finite index solutions}

In this subsection we establish Theorem \ref{thm:nondeg}. The starting point is the nondegeneracy estimate for stable classical solutions, obtained by Kelei Wang and the second-named author:
\begin{theorem}[\cite{KW24}]\label{thm:KW} There is $c_0=c_0(n)>0$ such that if $u$ is a stable classical solution of \eqref{FBP} in $B_2$, then for every $x\in \text{FB}(u)\cap B_{1/2}$, $r\in (0,1/2)$, 
\[
\sup_{B_r(x)} u \geq \dashint_{\de B_r(x)} u \geq c_0(n) r. 
\]
\end{theorem}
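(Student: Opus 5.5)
This result is quoted from \cite{KW24}, and the paper uses it as a black box. If I had to prove it, I would argue by compactness and contradiction. The plan rests on one identity. Since $\de_\nu u=-1$ on $\text{FB}(u)$ with $\nu$ the outer normal, the distributional Laplacian of $u$ is $\D u=\s\llcorner \text{FB}(u)\geq 0$, so $u$ is subharmonic. The first inequality $\sup_{B_r(x)}u\geq \dashint_{\de B_r(x)}u$ is then trivial. Since $u(x)=0$, the mean value formula for subharmonic functions gives
\[
\dashint_{\de B_r(x)} u = \frac{1}{n\o_n}\int_0^r s^{1-n}\,\s\big(\text{FB}(u)\cap B_s(x)\big)\, ds ,
\]
where $\o_n$ denotes the volume of the unit ball. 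So the second inequality is equivalent to a lower density bound $\s(\text{FB}(u)\cap B_s(x))\geq c\, s^{n-1}$ for a positive fraction of the scales $s\in(r/2,r)$. The statement is scale invariant, so after the rescaling $u\mapsto u(x+r\cdot)/r$ it suffices to prove the following. If $u$ is stable in $B_1$ (or a larger ball) with $0\in\text{FB}(u)$, then $\s(\text{FB}(u)\cap B_{1/2})\geq c_0(n)$.

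I would suppose the contrary: there are stable $u_k$ in $B_1$ with $0\in \text{FB}(u_k)$ and $\s(\text{FB}(u_k)\cap B_{1/2})\to 0$. By Proposition \ref{prop:grad}(a) the $u_k$ are uniformly Lipschitz, so a subsequence converges locally uniformly to some $u_\infty$. By the mean value formula $\dashint_{\de B_\rho} u_k\to 0$ for every $\rho\le 1/2$. The limit $u_\infty$ is subharmonic and nonnegative with $u_\infty(0)=0$ and zero spherical means, hence $u_\infty\equiv 0$ in $B_{1/2}$. Reaching a contradiction therefore amounts to ruling out this collapse. The disk complement $L_2$ shows that collapse really can happen without stability: its rescalings around a free boundary point degenerate logarithmically. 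So stability has to be used quantitatively, and this is the step I expect to be the main obstacle.

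To use stability, I would test $Q$ with $\f=|\n u_k|\eta$, or with a regularized version $\sqrt{|\n u_k|^2+\e^2}\,\eta$, where $\eta$ is a cutoff. The function $|\n u|$ satisfies the Bochner-type inequality $\D|\n u|\geq (|D^2u|^2-|\n|\n u||^2)/|\n u|$. On the free boundary one has $|\n u|=1$, and $\de_\nu|\n u|$ can be expressed through $H$. Combining these, the stability inequality should yield a Simons-type integral estimate: a weighted $L^2$ bound on the second fundamental form of $\text{FB}(u_k)$, or on $|D^2u_k|$, uniform in $k$. Such a bound should force the free boundary near $0$ to be locally a graph of controlled size in a ball of definite radius. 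Indeed, at $0$ we have $|\n u_k(0)|=1$, and stability forbids the highly curved neck configurations that produce small area. I would first try to reach this through an $\e$-regularity statement in the spirit of Proposition \ref{prop:epsreg}. Failing that, I would pass to a stable limit using the compactness theory for \eqref{FBP}. In that case the contradiction comes from nondegeneracy of the limit, which is inherited from the uniform gradient normalization $|\n u_k|=1$ on $\text{FB}(u_k)\ni 0$.

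Once the density lower bound $\s(\text{FB}(u)\cap B_s(x))\ge c\,s^{n-1}$ holds for $s\in(r/2,r)$, integrating in the mean value formula gives $\dashint_{\de B_r(x)}u\geq c_0(n)\,r$. This proves the theorem.
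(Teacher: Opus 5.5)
Your preliminary reductions are sound. $\D u$ is surface measure on $\text{FB}(u)$, so $u$ is subharmonic, and the spherical-mean identity is correct. A lower density bound for $\text{FB}(u)$ at scales comparable to $r$ does give the estimate. (To conclude $\dashint_{\de B_\rho}u_k\to 0$ you also need the upper bound $\s(\text{FB}(u_k)\cap B_s)\le Cs^{n-1}$ from the Lipschitz bound, to control small $s$.) But these steps only reformulate the theorem. Its whole content is the step you flag yourself: a quantitative use of stability that excludes the collapse $u_k\to 0$ with $0\in\text{FB}(u_k)$. Your proposal does not supply this step. The paper gives no proof to compare with; it imports the statement from \cite{KW24}. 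Neither of the two mechanisms you sketch closes the gap.

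(i) Take $\f=|\n u|\eta$, regularized, and use $\de_\nu|\n u|=H$ on $\text{FB}(u)$. Stability then gives $\int_\o(|D^2u|^2-|\n|\n u||^2)\eta^2\le\int_\o|\n u|^2|\n\eta|^2$. In the collapsing regime, Caccioppoli and the improved Kato inequality $|\n|\n u||^2\le\frac{n-1}{n}|D^2u|^2$ then yield $\|D^2u_k\|_{L^2}\to0$ on a smaller ball. This is not the pointwise smallness that Proposition \ref{prop:epsreg} needs, and it does not contradict collapse. For $n\ge 3$ the blow-downs $u_k(y)=k^{-1}L_n(e_1+ky)$ collapse while $\int_{B_{1/2}\cap\{u_k>0\}}|D^2u_k|^2\sim k^{2-n}\to 0$.

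Nor can ``uniform $L^2$ curvature bound $\Rightarrow$ controlled graph on a ball of definite radius'' hold for stable solutions in general. That would be a curvature estimate, and it is false for $n\ge 7$. Blow-downs of the smooth non-flat minimizers of \cite{DSDJShah, EdeSpoVel24} have free boundaries converging to a singular cone near its vertex, yet the theorem holds in every dimension.

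(ii) The compactness fallback is circular. You have already shown that the limit vanishes on $B_{1/2}$, so it has no free boundary near $0$ at which to be nondegenerate. Theorem \ref{thm:KrivW} says nothing when the limit is trivial. The normalization $|\n u_k|=1$ on $\text{FB}(u_k)$ is also shared by the collapsing blow-downs of $L_n$, so it cannot be the source of nondegeneracy.

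What is missing is an argument that $Q(\f,\f)<0$ for some explicit $\f$ along every collapsing sequence. For $L_n$ itself, $\f=|\n L_n|\eta_R$ does this: the left-hand side above is a fixed positive number, while $\int_\o|\n u|^2|\n\eta_R|^2\to0$ as $R\to\infty$. Making such a comparison work for an arbitrary collapsing stable sequence, without knowing the geometry of the zero phase, is the idea your proposal lacks.
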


The second ingredient is the following compactness result of Kriventsov and Weiss \cite{Kriventsov-Weiss25}.
\begin{theorem}[Theorem 9.3 in \cite{Kriventsov-Weiss25}]\label{thm:KrivW}
Let $u_k:B_1\to \R$ be a sequence of classical solutions of \eqref{FBP} in $B_1$, with $\sup_{k}\|u _k\|_{C^{0,1}(B_1)}<\infty$. 
Then, up to subsequence, $u_k\to u$ locally uniformly on $B_1$ and if $u\not\equiv 0$, then $\chrc{\{u_k>0\}}\to \chrc{\{u>0\}}$ in $L^1_\text{loc}(B_1)$.
\end{theorem}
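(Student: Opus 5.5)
The first claim is routine: the uniform Lipschitz bound together with Arzel\`a--Ascoli gives a subsequence $u_k\to u$ locally uniformly on $B_1$ with $u$ Lipschitz, $u\ge 0$. Moreover $u$ is harmonic in $\{u>0\}$, because on compact subsets of $\{u>0\}$ we eventually have $u_k>0$. Passing to a further subsequence, I will also extract the weak limits
\[
\chrc{\{u_k>0\}}\rightharpoonup^* \bar\chi\in L^\infty(B_1;[0,1]),\qquad |\n u_k|^2dx\rightharpoonup^* \mu,\qquad \n u_k\otimes\n u_k\, dx\rightharpoonup^* \mathcal{T},\qquad \Delta u_k=\mathcal H^{n-1}\llcorner \text{FB}(u_k)\rightharpoonup^*\Delta u .
\]
The last identity holds as measures because $|\n u_k|=1$ on $\text{FB}(u_k)$. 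On $\{u>0\}$ we clearly have $\bar\chi=1$. Hence the whole content is to show $\bar\chi=0$ a.e.\ on $\{u=0\}$, provided $u\not\equiv0$; by weak-$*$ convergence of characteristic functions this upgrades to $L^1_{\rm loc}$ convergence.

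The tool I would use is the domain-variation (inner variation) identity. Since classical solutions are critical for $J$ under domain variations, for every $X\in C^1_c(B_1;\R^n)$ we have
\[
\int |\n u_k|^2\,\mathrm{div}X-2\,\n u_k\cdot D X\,\n u_k+\chrc{\{u_k>0\}}\mathrm{div}X\,dx=0 .
\]
This passes to the limit, giving $\int (\mathrm{div}X)\,d\mu-2\int DX:d\mathcal{T}+\int\bar\chi\,\mathrm{div}X=0$. Let $\nu:=\mu-|\n u|^2dx\ge0$ and $\tau:=\mathcal T-\n u\otimes\n u\,dx$ be the defect measures. Using $\Delta(u_k^2/2)=|\n u_k|^2$ and $u_k\to u$ uniformly, we get $\int\eta\,d\mu=\int\eta|\n u|^2$ plus boundary contributions that vanish. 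So I expect $\nu=0$ and $\tau=0$, i.e.\ strong $H^1_{\rm loc}$ convergence, after checking that $\int \eta\, u_k\,\Delta u_k=0$ (true, since $u_k=0$ on $\text{FB}(u_k)$). With strong convergence the limit identity reads: $(u,\bar\chi)$ is stationary in the Alt--Caffarelli sense with $\bar\chi$ in place of $\chrc{\{u>0\}}$.

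Now restrict to the open set $\mathcal Z:=\mathrm{int}\{u=0\}$. There $\n u=0$, so the identity says $\int\bar\chi\,\mathrm{div}X=0$ for all $X\in C^1_c(\mathcal Z)$. Hence $\bar\chi$ is constant on each connected component of $\mathcal Z$. To handle $\{u=0\}\setminus \mathcal Z$, I would use the Weiss monotonicity formula, which holds for the stationary pair $(u,\bar\chi)$. Blowing up at a Lebesgue point of $\bar\chi$ in $\{u=0\}$ gives a homogeneous stationary pair $(u_0,\bar\chi_0)$ with $\bar\chi_0\equiv\theta$ constant near the blow-up point. Comparing the Weiss energies then forces $\theta\in\{0,1\}$, with $\theta=1$ only if $u_0$ is harmonic across, and in that case $u_0\equiv0$ and the energy equals $\theta|B_1|$. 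The case $\theta=1$ on a region where $u=0$ must then be excluded.

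This exclusion is the step I expect to be the main obstacle. A region where $u=0$ but $\bar\chi=1$ corresponds to thin, small-valued positive phases of $u_k$ that fill space in the limit, for example very oscillating level sets. The hypothesis $u\not\equiv0$ must enter exactly here. My first attempt would combine three facts: $\Delta u_k=\mathcal H^{n-1}\llcorner\text{FB}(u_k)$ has total mass locally bounded by the Lipschitz constant (by the divergence theorem, as in Proposition \ref{prop:areagrowth}); the limit measure $\Delta u$ vanishes on $\mathcal Z$; and a density of positive phase would require free boundary area of order $\mathrm{dist}^{-1}$ between layers of width $\to0$. The last point is because near the free boundary $u_k$ grows linearly with slope close to $1$, so layers of width $\epsilon$ with $u_k=O(\epsilon)$ must appear roughly $1/\epsilon$ times per unit length. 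This would contradict the mass bound unless the layers disappear. I would then propagate $\bar\chi=0$ from a neighbourhood of $\partial\{u>0\}$, where $u$ is nondegenerate by the limit stationarity, to all of $\mathcal Z$ using the constancy on components. The nondegeneracy is where $u\not\equiv0$ is used.
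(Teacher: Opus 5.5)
Your first half is sound. Arzel\`a--Ascoli, the identity $\Delta(u_k^2/2)=|\nabla u_k|^2$ giving strong $H^1_{\rm loc}$ convergence, the passage to the limit in the domain-variation identity, and the constancy of $\bar\chi$ on components of $\mathrm{int}\{u=0\}$ are all correct. (The paper gives no proof of this statement; it imports it from Kriventsov--Weiss, where it is a consequence of their structure theory for limits of classical solutions.)

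You already hold the bound that makes your Weiss-monotonicity step unnecessary. Since $\mathcal{H}^{n-1}(\mathrm{FB}(u_k)\cap B_r)=\Delta u_k(B_r)$ is bounded in terms of the Lipschitz constant, $\chrc{\{u_k>0\}}$ is bounded in $BV_{\rm loc}(B_1)$. Along a subsequence it therefore converges in $L^1_{\rm loc}$ to $\chrc{E}$ for a set $E\supset\{u>0\}$ of locally finite perimeter, so $\bar\chi\in\{0,1\}$ comes for free. As written, that step is also wrong. First, $(0,\theta)$ is a homogeneous stationary pair for \emph{every} constant $\theta$, so blow-ups at degenerate points carry no information about $\theta$. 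Second, $(|x_n|,1)$ (e.g.\ the blow-down of $HP$) is a homogeneous stationary pair with $\theta=1$ whose first component is neither harmonic across nor zero.

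The genuine gap is the step you flag yourself: showing $|E\setminus\{u>0\}|=0$ when $u\not\equiv 0$. Your layer-counting heuristic addresses the wrong scenario. Once $\bar\chi=\chrc{E}$, nothing has to oscillate. The danger is that $\{u_k>0\}$ converges nicely to a set $E$ strictly larger than $\{u>0\}$, while $u_k\to 0$ uniformly on $E\setminus\{u>0\}$.

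The heuristic also presupposes two things you do not have:
\begin{enumerate}
\item that $u_k$ grows with slope close to $1$ away from $\mathrm{FB}(u_k)$ on a $k$-independent scale;
\item that the limit $u$ is nondegenerate along $\partial\{u>0\}$ ``by stationarity''.
\end{enumerate}
Classical solutions carry no uniform curvature bounds, and a lower bound $u\gtrsim d$ is exactly what needs stability or finite index in Theorem \ref{thm:nondeg}. Moreover, limits of classical solutions can have degenerate free boundary points, where $u(x_0+r\,\cdot)/r\to 0$. Handling these is the core difficulty in Kriventsov--Weiss.

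To see what is really required, take the simplest bad configuration: $\chrc{E}=1$ a.e.\ in a ball $B$ that meets both $\{u>0\}$ and $\mathrm{int}\{u=0\}$. The limit identity then reads $\int_B \big(|\nabla u|^2\,\mathrm{div}X-2\nabla u\cdot DX\,\nabla u\big)\,dx=0$ for all $X\in C^1_c(B;\R^n)$. So $u$ is a domain-variation critical point of the Dirichlet energy that vanishes on an open set, and you need a unique continuation theorem forcing $u\equiv 0$ in $B$. In two dimensions this follows because $(\partial_z u)^2$ is then holomorphic. In higher dimensions it is a genuine unique-continuation problem, and one must also exclude that $\partial\{u>0\}$ has positive Lebesgue measure. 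In Kriventsov--Weiss this is done through a frequency formula for the Bernoulli problem and the resulting rectifiability and local finiteness of $\mathcal{H}^{n-1}(\partial\{u>0\})$, degenerate points included. Without an ingredient of this type your outline does not reach the conclusion.
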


Armed with these, we can provide a quick proof of Theorem \ref{thm:nondeg}(a).

\begin{proof}[Proof of Theorem \ref{thm:nondeg}(a)]
Assume by contradiction that the statement is false. Then there exist classical solutions $u_k$ in domains $D_k\subset \R^n$ and points $x_k\in \{u_k>0\}$, $y_k\in \text{FB}(u_k)$, such that for $d_k:=d(x_k, \text{FB}(u_k))=|x_k-y_k|$, one has  $B_{4d_k}(y_k)\subset D_k$, but $u_k(x_k)\leq d_k/k$. Let us rescale and recenter, setting $w_k(z):=u_k(y_k+d_k z)/d_k$ for $z\in B_{4}$. Then $0\in \text{FB}(w_k)$ and after a possible rotation, we may assume that $w_k(e_n)=u_k(x_k)/d_k\leq 1/k$, with $B_1(e_n)\subset \{w_k>0\}$.  The Harnack inequality implies that $w_k\leq c_1 /k$ in $B_{1/2}(e_n)\subset B_{3/2}$ for some $c_1=c_1(n)>0$. Since $w_k$ is stable in $B_4$ and $0\in \text{FB}(w_k)$, the nondegeneracy estimate in Theorem \ref{thm:KW} implies that $\sup_{B_{1/4}} w_k \geq c_0/4$. Furthermore,  as $w_k$ satisfies a uniform Lipschitz constant $L=L(n)$ in $B_2$ by Proposition \ref{prop:grad}(a), $(w_k)_k$ converges (up to taking a subsequence) uniformly to a Lipschitz-continuous function $w_\infty$ in $B_2$ with  $w_\infty = 0$ in $B_{1/2}(e_n)$, yet $\sup_{B_{1/4}} w_\infty \geq c_0/4$. Thus, $w_\infty\not\equiv 0$ in $B_2$, so by Theorem \ref{thm:KrivW}, $\chrc{\{w_k>0\}}\to \chrc{\{w_\infty>0\}}$ in $L^1(B_{1/2}(e_n))$. We have reached a contradiction, since $|\{w_k>0\}\cap B_{1/2}(e_n)| = |B_{1/2}|$, while $|\{w_\infty>0\}\cap B_{1/2}(e_n)| = 0$.
\end{proof}

We end this section with the proof of Theorem \ref{thm:nondeg}(b) regarding the nondegeneracy estimate for finite Morse index solutions of \eqref{FBP}, having noncompact free boundary.
\begin{proof}[Proof of Theorem \ref{thm:nondeg}(b)]
Since $u$ has finite Morse index, $u$ is stable outside a compact set \cite{BK24}. After recentering and rescaling, we may assume that $0\in \text{FB}(u)$ and $u$ is stable in $\R^n\setminus \ol{B_1}$. We observe that Theorem \ref{thm:nondeg}(a) applies for every $x\in \o:=\{u>0\}$ with $|x|\geq 1+ 5d(x)$, since then $|y|\geq |x|-d(x)\geq 1+ 4d(x)$ so that $B_{4d(x)}(y)\subset \R^n\setminus \ol{B_1}$ is inside the stability region of $u$. Thus,
\begin{equation}\label{thm:nondeg:b:eq:1}
u(x)\geq c d(x) \quad \text{whenever } x\in \o, \text{ with } |x|\geq 1 + 5 d(x). 
\end{equation}

Assume by contradiction that the theorem is false. Then, by \eqref{thm:nondeg:b:eq:1} there exist points $x_k\in E:=\{x\in \o: |x|<1+5d(x)\}$, for which $u(x_k)\leq d_k/k$, where  $d_k:=d(x_k)=|x_k-y_k|$ is the distance to the free boundary, realized at $y_k\in \de\o$.  Since $\de B_{d_k}(x_k)$ touches $\de\o$ at $y_k$, we have $\nu_k:= \frac{x_k-y_k}{d_k} = \n u(y_k)$. There are two possibilities: either $(d_k)_k$ (or a subsequence) is bounded, or $d_k\to \infty$.

In the case when the (sub)sequence $d_k\leq M<\infty$, we know that $(x_k)$ are contained in $\ol{B_{1+5M}}$. Passing, if necessary, to another subsequence, we have $x_k\to x_\infty\in\ol{\o}$. If $x_\infty\in \o$, then $u(x_k)/d_k\to u(x_\infty)/d(x_\infty)>0$. If $x_\infty\in\de\o$, then $d_k\to 0$, $y_k\to x_\infty$, and by uniform continuity of $\nabla u$ on $\ol{\o}\cap B_1(x_\infty)$,
\[
u(x_k)=\int_0^{d_k}\nabla u(y_k+t \nu_k)\cdot\nu_k\,dt=d_k\big(|\nabla u(y_k)|^2+o(1)\big)=d_k(1+o(1)).
\]
In both scenarios, we contradict $u(x_k)\leq d_k/k$. 

Therefore, it must be that $d_k\to \infty$. Since $0\in \text{FB}(u)$ and $x_k\in E$, we know that 
$$d_k\leq |x_k| < 1 + 5d_k\leq 6d_k,$$
so that $d_k$ and $|x_k|$ are comparably large. Consider the blowdowns $w_k(x):=u(d_k x)/d_k$ and set $z_k:=x_k/d_k \in B_6\setminus B_{1}$. Up to taking a subsequence, $z_k\to z_\infty \in \ol{B_6}\setminus B_{1}$. Then for all large $k$, $B_{2/3}(z_\infty)\subset B_{1}(z_k)\subset \{w_k>0\}$, so that by the Harnack inequality, $$\sup_{B_{1/2}(z_\infty)}w_k \leq c_1 w_k(z_k) \leq c_1 /k,$$ for a constant $c_1=c_1(n)>0$. Furthermore, as  $0\in \text{FB}(w_k)$ and $w_k$ are uniformly Lipschitz in $\R^n$, $(w_k)_k$ subconverges locally uniformly in $\R^n$ to a Lipschitz continuous $w_\infty:\R^n\to \R_{\geq 0}$. If $w_\infty\not\equiv 0$, then Theorem \ref{thm:KrivW} tells us that $\chrc{\{w_k>0\}} \to \chrc{\{w_\infty>0\}}$ in $L^1_\text{loc}$, which is impossible as $w_k>0$ in $B_{1/2}(z_\infty)$, while $w_\infty=0$ in $B_{1/2}(z_\infty)$. 

Hence, $w_\infty \equiv 0$. We will show that this contradicts the fact that $\text{FB}(u)$ is noncompact. First, claim that for any given $N>6$, 
\begin{equation*}
\text{FB}(w_k)\cap \{6 \leq |z| \leq N\} =\emptyset \quad \text{for all large enough } k\geq k_0(N).
\end{equation*}
Indeed, if it were the case that $p_k\in\text{FB}(w_k)\cap\{6\leq |z|\leq N\}$ for some subsequence $p_k\to p_\infty\in \{6\leq |z|\leq N\}$, then by the stability of $w_k$ in $\{4\leq |z|\leq N+2\}$, we would get from the nondegeneracy estimate in Theorem \ref{thm:KW} that $\sup_{B_{1}(p_\infty)} w_k\geq \sup_{B_{1/4}(p_k)} w_k \geq c_0/4$. This entails $\sup_{B_1(p_\infty)} w_\infty \geq c_0/4$ by uniform convergence, which contradicts the fact that $w_\infty \equiv 0$. We deduce that $\text{FB}(u)\cap \{6 d_k \leq  |x| \leq N d_k\} =\emptyset$ for large enough $k\geq k_0(N)$. 

Since $\text{FB}(u)$ is closed and unbounded, $$\rho_k:=\min\{|y|: y\in \text{FB}(u), |y|\geq 6 d_k\}$$ 
is attained at some $q_k\in \de B_{\rho_k}\cap \text{FB}(u)$, and by the above $\rho_k/d_k \to \infty$ (up to taking a subsequence). The annulus $A_k:=\{6 d_k<|x| < \rho_k\}$ is connected and is disjoint from $\text{FB}(u)$, so either $A_k\subset \o$ or $A_k\cap \o = \emptyset$. In the latter case, the component of $\o$ containing $x_k \in B_{6d_k}$ must lie inside $B_{6d_k}$, which is impossible, as every component of $\o$ is unbounded (a bounded domain cannot support a positive harmonic function that vanishes on its boundary). Therefore, $A_k\subset \o$.

Consider now the blowdowns $v_k(z):=u(\rho_k z)/\rho_k$, which are positive and harmonic on the annulus $\{6d_k/\rho_k< |z| < 1\}$, whose inner radius $6d_k/\rho_k \to 0$, and let $\bar{y}_k:=q_k/\rho_k\in \de B_1$. Along a subsequence, $v_k\to v_\infty$ locally uniformly, so that $v_\infty$ is a nonnegative bounded harmonic function in $B_1\setminus \{0\}$ with $v_\infty(0)=0$. The singularity is removable, thus $v_\infty\equiv 0$ in $B_1$ by the strong maximum principle. On the other hand, by Theorem \ref{thm:nondeg}(a), we know that at $\bar{x}_k:=\frac{7}{8}\bar{y}_k$, $$v_k(\bar{x}_k)\geq c d( \bar{x}_k, \text{FB}(v_k)) = c/8,$$ because $v_k$ is stable in $B_{1/2}(\bar{y}_k)$. Hence, $\sup_{B_1} v_k\geq c/8$, which contradicts the fact that $v_k\to 0$ in $B_1$, and completes the proof.
\end{proof}

\section{Setting up the FBP as a nonlinear operator equation\protect\footnote{The calculations in this section and the accompanying Appendix~\ref{app:technical} were first carried out by the authors and then streamlined with the help of Claude Opus 5.5.} }\label{sec:setup}

In this section we recast the problem of finding $T$-periodic solutions of \eqref{FBP}
near a given product solution as a family of nonlinear operator equations, acting between Banach spaces that do not depend on $T$. The period enters
through the anisotropic dilation $S_T(x,t)=(x,Tt)$. Pulling back by $S_T$ turns $T$-periodic
solutions into $1$-periodic solutions of \eqref{FBP} for the metric
$g_T:=dx^2+T^2dt^2$ (see Subsection~\ref{ss:rescaling}). We then deform the product domain in the normal direction by a function $\f$ on the free boundary, pull the problem back to the fixed product domain $\O=\o\times \R$ (Subsection~\ref{ss:operator}), and express the boundary value problem in $\O$ as an operator equation $$N(T,\f,v)=(N_1, N_2, N_3)(T,\f,v)=(0,0,0),$$ where $N_1$ stands for the transformed PDE in the interior of  $\O$, while $N_2$ and $N_3$ -- for the gradient and Dirichlet boundary conditions on $\G$. Eventually, we will be able to eliminate $\f$ via the Dirichlet condition (Subsection~\ref{ss:reduction}), reducing the problem to the operator $M(T,v)$ to which the bifurcation argument of Section \ref{sec:bif} is later applied. The resulting operators are \emph{affine} in $T^{-2}$ (Theorem~\ref{thm:N}), so their regularity in $T$ is immediate.

\subsection{Setting}\label{ss:setting}

Throughout this section the base solution $u$ falls under one of the following
two scenarios:
\begin{description}
\item[Case A] $u:\R^2\to\R$ is the double hairpin solution $u=HP$;
\item[Case B] $u:\R^n\to\R$, $n\geq 3$, satisfies the Hessian bound \eqref{eq:Hess} and (\ref{H2}).
\end{description}

Denote $\o:=\{u>0\}$, $\O:=\o\times\R$ and $\G:=\p\O=\p\o\times\R$.
The outer unit normal to $\G$ is horizontal, $\nu(p_x,t)=(\nu_\o(p_x),0)$. Let
\[
  \r(X):=\la x\ra,\qquad X=(x,t)\in\R^n\times\R.
\]
and note that $\r\geq 1$ is $G_0$-invariant because $G_0\subset O(n)$. Let $\bar u$ be the harmonic extension of $u|_\o$ to $\o\cup \N_0$, where $\N_0$ is the tubular neighbourhood of $\p\o$ from Remark \ref{rem:norm_coords}, and set
\[
  U(x,t):=\bar u(x)\qquad\text{for } (x,t)\in\mathcal D:=(\o\cup \N_0)\times\R.
\]
By \eqref{eq:ext-decay}
\begin{equation}\label{eq:U-bounds}
  |D^kU(X)|\le c_k\la x\ra^{1-k}\qquad\text{for } X=(x,t)\in\mathcal D,\ k\in\mathbb{N}_0.
\end{equation}
For a rescaling parameter $R>0$, a set $S\subset\R^{n+1}$ and a function
$f:S\to\R^d$ we write $$S^R:=\{X/R: X\in S\} \quad\text{and}\quad f_R(Y):=f(RY), ~ Y\in S^R.$$

\noindent\emph{The period as a parameter}. For $T>0$ let
\[
  S_T(x,t):=(x,Tt),\qquad g_T:=S_T^*g_0=dx^2+T^2dt^2,
\]
where $g_0=dx^2+dt^2$ is the Euclidean metric. We also write $S_T^*f:=f\circ S_T$ for functions. Thus $S_T:(\R^{n+1},g_T)\to(\R^{n+1},g_0)$ is an isometry, and a function $f$ is $T$-periodic in $t$ if and only if $S_T^*f$ is $1$-periodic. By this change of variables, \eqref{FBP} becomes the boundary value problem
\begin{equation}\label{eq:FBP-g}
  \D_{g_T} \tilde{U}=0\ \text{ and }\ \tilde U>0\ \text{ in }\tilde\O,\qquad
  \tilde U=0\ \text{ and }\ |d\tilde U|_{g_T}=1\ \text{ on }\p\tilde\O.
\end{equation}

The product solution is trivially periodic for any period: $S_T$ maps $\O$ and $\G$ onto themselves and $S_T^*U=U$. Moreover, since $U$ does not depend on $t$,
\begin{equation*}
  \D_{g_T}U=\D_x\bar u=0,\qquad |dU|_{g_T}=|\n_x\bar u|\qquad\text{in }\mathcal D,
\end{equation*}
so $(\O,U)$ solves \eqref{eq:FBP-g} with $g=g_T$ for every $T>0$. We will look for
$1$-periodic solutions of \eqref{eq:FBP-g}, $g=g_T$, near $(\O,U)$, treating $T$ as a parameter.

\emph{Symmetry groups.} For $\t>0$ let $G_1(\t)$ be the group of isometries of $\R$ generated by
$t\mapsto t+\t$ and $t\mapsto -t$, and set
\[
  G(\t):=G_0\times G_1(\t)
\]
Every element of $G(\t)$ is an isometry of $g_T$ for every $T>0$, and preserves $\O$, $\G$, $\nu$,
$\r$ and $U$. Moreover, $S_T^*$ maps $G(T)$-invariant functions to $G(1)$-invariant functions. The function spaces introduced below will consist of $G(1)$-invariant functions; in particular, they do not depend on $T$.

\subsection{Fermi coordinates and normal deformations}\label{ss:Fermi}

We measure the Fermi coordinate $z$ \emph{into} $\O$: every point $X$ in
\begin{equation}\label{eq:N1vicinity}
\N_1:=\{X=p-z\nu(p):\ p\in\G,\ -\d_0<z<4\th \rho(p)\}
\end{equation}
is uniquely represented as $X=p-z\nu(p)$ with $(p,z)$ as above, so that
$z>0$ in $\O$ and $z<0$ in the zero phase. That this is a diffeomorphism
follows from Remark \ref{rem:norm_coords}, since $\G$ has a product structure. We record that
\begin{equation*}
  B_{\d_0}(X)\subset\mathcal D\qquad\text{for every } X\in\ol{\O} .
\end{equation*}
Reducing $\d_0$ and $\theta$, if necessary, we may assume that $\d_0\leq \theta$ and by \eqref{eq:U-bounds} that
\begin{equation}\label{eq:theta-small}
  \tfrac12 \rho(p)\le  \r(X) \le 2 \rho(p) \quad\text{and}\quad
  |D^2U(X)|\le \frac{1}{16\,\th\, \rho(p)}\qquad\text{for } X=p-z\nu(p)\in \N_1.
\end{equation}

Fix a cutoff function $\chi\in C^\infty([0,\infty))$ with $0\le\chi\le1$, $\chi=1$ on $[0,2]$,
$\chi=0$ on $[3,\infty)$, and use it to extend any given $\f:\G\to\R$ and the outer unit normal $\nu$ to $\ol{\O}$ by
\begin{equation}\label{eq:extensions}
  (\bar\nu,\bar\f)(X):=
  \begin{cases}
    \chi\big(\tfrac{z}{\th\r(p)}\big)\,(\nu(p),\f(p)) & X=p-z\nu(p)\in\ol{\O}\cap \N_1,\\
    0 & X\in\ol{\O}\setminus \N_1.
  \end{cases}
\end{equation}
Now define the normal deformation field and the deformation map
\begin{equation*}
  h_\f:=\bar\f\,\bar\nu,\qquad \Phi_\f(X):=X+h_\f(X),\qquad X\in\ol{\O} .
\end{equation*}
Note that $h_\f$ is \emph{horizontal}, $h_\f=(h'_\f,0)$, with $h_\f|_\G=\f\nu$, and that $h_\f=0$ outside $\{0\le z\le 3\th\r(p)\}$.

\begin{lemma}\label{lem:deformation}
Let $\varrho_0:=\min\{\d_0/2,\ \th/(4\|\chi'\|_\infty)\}$ and let
$\f\in C^{2,\a}_{\rm loc}(\G)$ with $\|\f\|_{L^\infty(\G)}<\varrho_0$.
\begin{enumerate}
\item[(a)] $\Phi_\f$ preserves each normal line: in Fermi coordinates
  \begin{equation*}
    \Phi_\f\big(p-z\nu(p)\big)=p-\z_\f(p,z)\,\nu(p),\qquad
    \z_\f(p,z):=z-\chi\big(\tfrac{z}{\th\r(p)}\big)^2\f(p),
  \end{equation*}
  for $0\le z<4\th\r(p)$, $z\mapsto \z_\f(p,z)$ is increasing for each $p\in\G$, and $\Phi_\f=\id$ on $\ol{\O}\setminus \N_1$.
  
\item[(b)] $\Phi_\f$ is a $C^{2,\a}$-diffeomorphism of $\ol{\O}$ onto its image
  $\ol{\O_\f}$, where $\O_\f:=\Phi_\f(\O)\subset\mathcal D$. Moreover $\det D\Phi_\f>0$ and
  $\Phi_\f(p)=p+\f(p)\nu(p)$ for $p\in\G$.

\item[(c)] If $\g\in G(\t)$ for some $\t>0$, then $\Phi_{\f\circ\g}=\g^{-1}\circ\Phi_\f\circ\g$. In particular,
  if $\f$ is $G(\t)$-invariant, then $\Phi_\f$ commutes with every element of $G(\t)$.
\end{enumerate}
\end{lemma}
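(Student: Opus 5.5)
The plan is to work entirely in the Fermi coordinates $(p,z)$ on $\N_1$ from \eqref{eq:N1vicinity}, where all three claims reduce to statements about a scalar function of $z$ along each normal line.

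\textbf{Part (a).} For $X=p-z\nu(p)\in\ol{\O}\cap\N_1$ with $0\le z<4\th\r(p)$, the definition \eqref{eq:extensions} gives
\[
h_\f(X)=\chi\big(\tfrac{z}{\th\r(p)}\big)^2\f(p)\,\nu(p),
\qquad\text{so}\qquad
\Phi_\f(X)=p-\big(z-\chi^2\f(p)\big)\nu(p),
\]
which is the claimed formula for $\z_\f$. For monotonicity I will compute
\[
\p_z\z_\f=1-\frac{2\chi\chi'}{\th\r(p)}\,\f(p).
\]
Using $\r(p)\ge1$, $0\le\chi\le1$ and $|\f|<\varrho_0\le\th/(4\|\chi'\|_\infty)$, this gives $\p_z\z_\f\ge 1/2$. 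Outside $\{z\le3\th\r(p)\}$ we have $\chi=0$, and off $\N_1$ we have $h_\f=0$, so $\Phi_\f=\id$ there.

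\textbf{Part (b).} Here the choice $\varrho_0\le\d_0/2$ matters.
\begin{itemize}
\item \emph{Image.} Since $\z_\f(p,0)=-\f(p)>-\d_0/2$ and $\z_\f$ is increasing with $\z_\f=z$ for $z\ge3\th\r(p)$, each normal segment $\{0\le z<4\th\r(p)\}$ is mapped bijectively onto $\{-\f(p)\le\z<4\th\r(p)\}$. The latter lies inside $\N_1$, whose Fermi chart is a diffeomorphism, so the image lies in $\mathcal D$.
\item \emph{Injectivity.} Normal lines with distinct $p$ are disjoint in $\N_1$ and $\Phi_\f$ preserves each of them. Points outside $\N_1$ are fixed, and their images cannot collide with images of points in $\N_1$, because there $\z_\f=z$ near the outer edge.
\item \emph{Jacobian.} In the coordinates $(p,z)\mapsto(p,\z)$ the differential is triangular with diagonal entries $\mathrm{id}$ and $\p_z\z_\f\ge1/2$. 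Composing with the Fermi chart, which is orientation-consistent, gives $\det D\Phi_\f>0$, and the inverse function theorem then gives a diffeomorphism onto the image.
\item \emph{Regularity.} This is inherited from $\f\in C^{2,\a}_{\rm loc}$, the smoothness of $\nu$ and $\r$ on $\G$, and the smoothness of the Fermi chart.
\item \emph{Boundary values.} At $z=0$ we have $\chi(0)=1$, which yields $\Phi_\f(p)=p+\f(p)\nu(p)$.
\end{itemize}

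\textbf{Part (c).} Each $\g\in G(\t)$ is a Euclidean isometry preserving $\G$, $\nu$ and $\r$. Hence it maps normal lines to normal lines with $\g(p-z\nu(p))=\g p-z\nu(\g p)$, so Fermi coordinates transform as $(p,z)\mapsto(\g p,z)$. This gives $\bar\nu\circ\g=D\g\,\bar\nu$, $\overline{\f\circ\g}=\bar\f\circ\g$, and, because $\g$ is affine,
\[
\g\circ\Phi_{\f\circ\g}=\Phi_\f\circ\g,
\]
which is the claimed conjugation identity $\Phi_{\f\circ\g}=\g^{-1}\circ\Phi_\f\circ\g$. Invariance of $\f$ then means $\f\circ\g=\f$, so $\Phi_\f$ commutes with $\g$.

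The main obstacle is global injectivity and the containment of the image in $\mathcal D$ in part (b). This rests on Remark \ref{rem:norm_coords} providing a diffeomorphic Fermi chart on the full band $-\d_0<z<4\th\r(p)$, together with the displacement bound $|\f|<\d_0/2$.
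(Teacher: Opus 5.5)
Your proposal is correct and follows the paper's proof essentially step for step. It uses the bound $\p_z\z_\f\ge 1/2$ from $|\f|<\th/(4\|\chi'\|_\infty)$, the bijection of each normal segment onto $\{-\f(p)\le\z<4\th\r(p)\}\subset\N_1$, the triangular Jacobian in the Fermi chart, and the transformation $(p,z)\mapsto(\g p,z)$ yielding $h_{\f\circ\g}=D\g^{-1}(h_\f\circ\g)$. One small correction to the injectivity step: the reason images of points in $\N_1$ cannot meet the fixed points outside $\N_1$ is simply that $\Phi_\f(\ol{\O}\cap\N_1)\subset\N_1$, which you had already shown, not that $\z_\f=z$ near the outer edge.
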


\begin{proof}
Part (a) is simply a restatement of the definitions. For (b), we have
$$\p_z\z_\f=1-2\chi\chi'(z/\th\r)\,\f(p)/(\th\r(p))\ge 1/2,$$
while $\z_\f(p,0)=-\f(p)>-\d_0$ and $\z_\f(p,z)=z$ for $z\ge3\th\r(p)$.
Hence, for each $p$, $z\mapsto\z_\f(p,z)$ is an increasing bijection of
$[0,4\th\r(p))$ onto $[-\f(p),4\th\r(p))$. So $\Phi_\f$ maps
$\ol{\O}\cap N_1$ bijectively onto $\{p-\z\nu(p):-\f(p)\le\z<4\th\r(p)\}\subset \N_1$
and is the identity elsewhere; it is therefore injective on $\ol{\O}$.
In the Fermi chart it reads $(p,z)\mapsto(p,\z_\f(p,z))$ with $\p_z\z_\f>0$,
so $\det D\Phi_\f>0$. 

For (c), $\g$ is a Euclidean isometry that preserves $\O$, $\G$, the outer
normal ($\nu(\g p)=D\g\,\nu(p)$) and $\r$. Hence $\g$ maps the point with
Fermi coordinates $(p,z)$ to the one with Fermi coordinates $(\g p,z)$, and
$h_{\f\circ\g}=D\g^{-1}\,(h_\f\circ\g)$, which is the claim.
\end{proof}

We will now present the weighted H\"older spaces of functions, in which we will be working. 

\subsection{Weighted H\"older spaces}
We first define the relevant spaces over subsets of the base $\ol{\o}$.  
\begin{defn}
For a point $x\in S$ in a subset $S\subset \R^n$ and $r>0$, recall that $S_r(x)$ denotes the connected component of $S\cap B_r(x)$ (in the relative topology) containing $x$. 

Let $k\in \mathbb{N}_0$, $\a\in (0,1]$ be a fixed H\"older exponent and $\b\geq 0$ be a fixed decay rate. Let $S$ be an open set  of $\ol{\o}$ (respectively of $\de\o$) in the relative topology.  For $g\in C(S)$ and $f\in C^{k,\a}_\text{loc}(S)$, define the $\bar{C}_{\b}(S)$-norm and the $\bar{C}^{k,\a}_{\b}(S)$-norm respectively by 
\begin{align*}
\|g\|_{\bar{C}_{\b}(S)} &:= \sup_{x\in S} \{ R^\b |g(x)|: R=\la x \ra\} \quad \text{and} \\ 
\|f\|_{\bar{C}^{k,\a}_{\b}(S)} &:= \sup_{x\in S} \{R^\b \|f_R\|_{C^{k,\a}([S^R]_{1/4}(x^R))}: R=\la x \ra\}. 
\end{align*}
Define $\bar{C}_{\b}(S)$ to be space of functions $g\in C(S)$ of finite $\bar{C}_{\b}(S)$-norm, while $\bar{C}^{k,\a}_{\b}(S)$ -- the space of functions $f\in C^{k,\a}_\text{loc}(S)$ that have finite $\bar{C}^{k,\a}_\b(S)$-norm.  These are all Banach spaces  and
$$|D^lf(x)|\le\|f\|_{\bar C^{k,\a}_\b(S)}\la x\ra^{-(\b+l)} \quad \text{for } 0\le l\le k.$$
A subscript $G_0$ will denote the corresponding closed subspace of $G_0$-invariant functions.
\end{defn} 

In a similar fashion, we next define the weighted H\"older spaces on subdomains of the product domain $\O$ and its boundary $\G$. 

\begin{defn}
Let $k, \a,\b$ be as above. Let $S$ be an open subset of $\ol{\O}$ (respectively of $\G$) in the relative topology.  For $g\in C(S)$ and $f\in C^{k,\a}_\text{loc}(S)$ define
\begin{align*}
\|g\|_{C_\b(S)} &:=\sup_{X=(x,t)\in S} \{ R^\b |g(X)|: R=\la x \ra\} \quad \text{and} \\
\|f\|_{C^{k,\a}_{\b}(S)} &:=\sup_{X=(x,t)\in S}  \{R^\b \|f_R\|_{C^{k,\a}([S^R]_{1/4}(X^R))}: R=\la x \ra\},
\end{align*}
and let $C_\b(S)$, $C^{k,\a}_{\b}(S)$ be the corresponding Banach spaces. Vector- and matrix-valued functions are treated componentwise. Note that if $f\in C^{k,\a}_\b(S)$, then derivatives of $f$ of order $l=a+b\leq k$ belong to $C^{k-l, \a}_{\b+l}(S)$; in particular, they decay like:
\[
|\de_x^a \de_t^b f(x,t)| \leq \|f\|_{C^{k,\a}_\b(S)} \la x\ra^{-(\b+a+b)}, \quad (x,t)\in S, \quad  0\leq a+b\leq k.
\]
Moreover, $C^{k,\a}_{\b}(S)\emb C^{l,\a}_{\b'}(S)$ for $k\geq l$ and $\b\geq \b'$, and there is $C_*=C_*(k,\a)$ with
\begin{equation}\label{eq:product}
\|f_1 f_2\|_{C^{k,\a}_{\b_1+\b_2}(S)}\leq C_* \|f_1\|_{C^{k,\a}_{\b_1}(S)}\| f_2\|_{C^{k,\a}_{\b_2}(S)}.
\end{equation}
For $\b>0$ and $S=\ol{\O}$ or $\G$, we will also work with the Banach spaces $\X^{k,\a}_\b(S):= C^{k,\a}_\b(S) \oplus \R$, normed by 
\[
\|f+c\|_{\X^{k,\a}_\b(S)}:= \|f\|_{C^{k,\a}_\b(S)} + |c|,
\]
(the decomposition is unique as $\b>0$). A subscript $G(\t)$ denotes the closed subspace of $G(\t)$-invariant functions.
\end{defn}

\begin{rem}\label{rem:conncomp} In the definition of the weighted H\"older spaces, after rescaling, we take the connected component of the set $\o^R\cap B_{1/4}(x^R)$ in order to keep the norm local when the zero phase of the solution is thin, as for the double hairpin.
\end{rem}

We set for each of the two cases at the start of the section:
\begin{align}
  &(\textbf{Case A})\quad \X:=\X^{2,\a}_\b(\ol{\O}),\quad \Y:=\X^{2,\a}_\b(\G),\quad \b\in(0,1];\label{eq:spacesA}\\
  &(\textbf{Case B})\quad \X:=C^{2,\a}_\b(\ol{\O}),\quad \Y:=C^{2,\a}_\b(\G),\quad \b\in(0,\infty);\label{eq:spacesB}
\end{align}
and in both cases $\Z:=C^{0,\a}_{\b+2}(\ol{\O})\times C^{1,\a}_{\b+1}(\G)$.
Note that $\Y\emb C^{2,\a}_0(\G)$ with norm at most $1$, that
$w\mapsto w|_\G$ is bounded $\X\to\Y$, and that the trace operator $w\mapsto\n w|_\G$ is bounded
$\X\to C^{1,\a}_{\b+1}(\G)$.

We record the asymptotic properties of the geometric data. By Proposition \ref{prop:asymp} and Corollary \ref{coro:curv}, we have in Case B
\begin{equation}\label{eq:H-B}
  H\in C^{1,\a}_1(\G),\qquad \nu\in C^{2,\a}_0(\G),
\end{equation}
while in Case A the improved bounds of Corollary \ref{coro:doublehpbounds} give
\begin{equation}\label{eq:H-A}
  H\in C^{1,\a}_2(\G)\emb C^{1,\a}_{\b+1}(\G)\quad\text{for }\b\in(0,1].
\end{equation}
By Lemma~\ref{lem:hphi}, the map $\f\mapsto h_\f$ is bounded and linear from $\Y$ into the horizontal vector
fields in $C^{2,\a}_0(\O;\R^{n+1})$, with norm at most $C_1$.

\subsection{The pulled-back problem and the operator}\label{ss:operator}

Treating $\tilde{\O}$ in \eqref{eq:FBP-g} as a normal deformation $\tilde{\O}=\O_\f:= \Phi_\f(\O)$, it is convenient to pull the problem back to $\O$: 
\begin{equation}\label{eq:FBP-pback}
  \D_{g_{T,\f}} \tilde{U}=0\ \text{ and }\ \tilde U>0\ \text{ in }\O,\qquad
  \tilde U=0\ \text{ and }\ |d\tilde U|_{g_{T,\f}}=1\ \text{ on }\p\O,
\end{equation}
where $g_{T,\f}:=\Phi_\f^* g_T$ denotes the pullback metric on $\ol{\O}$. We will look closer at the dependence of $g_{T,\f}$ on both $T$ and $\f$. 

For the purpose, we work in Euclidean coordinates $X=(X_1,\dots,X_{n+1})$, $X_{n+1}=t$, write $\p_i=\p_{X_i}$, and sum over repeated indices in $\{1,\dots,n+1\}$; sums over horizontal indices only are written explicitly. For a horizontal
vector field $h=h^k\p_k$ ($h^{n+1}\equiv0$) let $\Phi_h:=\id+h$ and $M_h:=D\Phi_h$, $(M_h)^k_i=\d^k_i+\p_ih^k$.
By Lemma~\ref{lem:metric}, if $\|h\|_{C^{2,\a}_0(\ol{\O})}<\e_0$, then for every $T>0$ the metric $g_{T,h}$ has components
\begin{equation}\label{eq:metric-formulas-main}
  (g_{T,h})_{ij}=(g_T)_{ij}+(a_h)_{ij},\qquad g_{T,h}^{ij}=p_h^{ij}+T^{-2}q_h^{ij},\qquad \sqrt{|g_{T,h}|}=T\,|M_h|,
\end{equation}
where $(a_h)_{ij}=\p_ih^j+\p_jh^i+\sum_{k=1}^n\p_ih^k\p_jh^k$,
$p_h^{ij}=\sum_{k=1}^n(M_h^{-1})^i_k(M_h^{-1})^j_k$ and $q_h^{ij}=\xi_h^i\xi_h^j$ with $\xi_h^i=(M_h^{-1})^i_{n+1}$. None of these depend on $T$.
For ease of notation, we will write $g_{T,\f}:=g_{T,h_\f}$, and similarly $M_\f$, $p_\f$, $q_\f$, $\xi_\f$. 

Fix
$\varrho\in(0,\varrho_0]$ so small that $C_1\varrho<\min(\e_0,\d_0/2)$, and set
\begin{equation}\label{eq:V}
  V:=\{\f\in\Y:\ \|\f\|_{\Y}<\varrho\}.
\end{equation}
For $\f\in V$, $\Phi_\f:(\ol{\O},g_{T,\f})\to(\ol{\O_\f},g_T)$ is an isometry by
Lemma~\ref{lem:deformation}. We look for a solution $\tilde U$ of \eqref{eq:FBP-pback} in $\O$ of the form
\[
  \tilde U=U_\f+v,\qquad U_\f:=\Phi_\f^*U=U\circ\Phi_\f .
\]
As $\D_{g_{T,\f}}U_\f=\Phi_\f^*(\D_{g_T}U)=0$ in $\O$, we are led to define, for $T>0$, $\f\in V$ and $v\in\X$,
\begin{equation}\label{eq:N-def}
  N(T,\f,v):=(N_1,N_2,N_3)=\Big(-\D_{g_{T,\f}}v,\ \ \tfrac12\big(1-|d(U_\f+v)|^2_{g_{T,\f}}\big)\Big|_\G,\ \ (U_\f+v)|_\G\Big).
\end{equation}
We see that $N(T,\f, v)=0$ when $\tilde{U}=U_\f+v$ solves \eqref{eq:FBP-pback}. In particular, $N(T,0,0)=0$. Conversely, if $N(T,\f, v)=0$ and $\tilde{U}> 0$ in $\O$, then $\tilde{U}$ solves \eqref{eq:FBP-pback}. The next lemma says that if $\f$ and $v$ are small in their respective norms, then $\tilde{U}$ is positive up to a unit distance from $\G$ inside $\O$.

\begin{lemma}\label{lem:zeros-to-solutions}
Let $T>0$, $\f\in V$ and $v\in\X$ with $N(T,\f,v)=0$, and let
$\tilde U:=U_\f+v$.
\begin{enumerate}
\item[(a)] If $\|\n v\|_{L^\infty(\O)}\leq 1/4$ and $\|\f\|_{L^\infty(\G)} \leq \theta/2$, then $\tilde{U}=U_\f + v >0$ in $\N_1\cap \O$. 

\item[(b)] Suppose that $\tilde U>0$ in $\O$. Set $\O^\sharp:=S_T(\O_\f)$ and
  \[
  \hat U:=(S_T^{-1})^*(\Phi_\f^{-1})^*\tilde U\ \text{ in }\O^\sharp,\qquad \hat U:=0\ \text{ in }\R^{n+1}\setminus\O^\sharp .
  \]
  Then $\hat U$ is a classical solution of \eqref{FBP} in $\R^{n+1}$ with positive phase $\O^\sharp$, and its  free boundary $S_T(\Phi_\f(\G))$ is of class $C^\infty$. If $\f$ and $v$ are $G(1)$-invariant, then $\hat U$   is $G(T)$-invariant; in particular, it is $T$-periodic and even in $t$.
\end{enumerate}
\end{lemma}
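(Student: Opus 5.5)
For (a), we work in the Fermi coordinates $X=p-z\nu(p)$ of $\N_1$ and compare $\tilde U$ with its boundary value $0$ along each normal line. By Lemma \ref{lem:deformation}(a), $\Phi_\f$ maps the point $(p,z)$ to $(p,\z_\f(p,z))$, so
\[
U_\f(p-z\nu(p))=U(p-\z_\f(p,z)\nu(p)).
\]
Since $N_3=0$, we have $\tilde U(p)=0$ for $p\in\G$. The plan is to write
\[
\tilde U(p-z\nu)=\int_0^z \frac{d}{ds}\tilde U(p-s\nu)\,ds
\]
and show that the integrand is positive for $0<s<4\th\r(p)$.

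\emph{Derivative of $U_\f$.} We have $\frac{d}{ds}U_\f=-\p_s\z_\f\,\n U(p-\z_\f\nu)\cdot\nu(p)$, with $\p_s\z_\f\ge 1/2$ by the proof of Lemma \ref{lem:deformation}(b). At the boundary, $-\n U(p)\cdot\nu(p)=|\n u(p)|=1$. By \eqref{eq:theta-small}, $|D^2U|\le (16\th\r(p))^{-1}$ along the segment. The point $p-\z_\f\nu$ lies at Fermi distance $|\z_\f|\le \max(4\th\r(p),\|\f\|_\infty)\le 4\th\r(p)$ from $p$, using $\|\f\|_\infty\le\th/2\le 4\th\r$. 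Hence
\[
-\n U(p-\z_\f\nu)\cdot\nu\ \ge\ 1-\tfrac{4\th\r}{16\th\r}=\tfrac34,
\]
so $\frac{d}{ds}U_\f\ge \tfrac12\cdot\tfrac34=\tfrac38$.

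\emph{Derivative of $v$.} Since $|\n v|\le 1/4$, we get $\frac{d}{ds}\tilde U\ge \tfrac38-\tfrac14>0$. Therefore $\tilde U(p-z\nu)\ge z/8>0$ for $0<z<4\th\r(p)$, which is all of $\N_1\cap\O$. The main care needed is to check that the segment stays inside $\mathcal D\cap\N_1$, so that \eqref{eq:theta-small} applies; this follows from $\z_\f\ge-\f(p)>-\d_0$ and the bijectivity statement in Lemma \ref{lem:deformation}(b).

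For (b), we transport the solution back by isometries. The composition $\Psi:=S_T\circ\Phi_\f$ is an isometry from $(\ol\O,g_{T,\f})$ onto $(\ol{\O^\sharp},g_0)$. Hence, on $\O^\sharp$,
\[
\D\hat U=(\Psi^{-1})^*\D_{g_{T,\f}}\tilde U=0,
\]
using $N_1=0$ and $\D_{g_{T,\f}}U_\f=0$. On $\p\O^\sharp=\Psi(\G)$ we have $\hat U=0$ by $N_3=0$ and $|\n\hat U|=|d\tilde U|_{g_{T,\f}}=1$ by $N_2=0$. Together with $\hat U>0$ inside and $\hat U:=0$ outside, this gives $\de_\nu\hat U=-1$, so $\hat U$ is a classical solution.

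The free boundary $\Phi_\f(\G)$ is the graph $p+\f(p)\nu(p)$, which is a priori only $C^{2,\a}$. Since $\hat U$ is harmonic, vanishes on it and has nonvanishing gradient there, the standard higher regularity for the one-phase FBP (Kinderlehrer--Nirenberg \cite{KindNiren}) upgrades it to $C^\infty$, indeed analytic.

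For the symmetry, Lemma \ref{lem:deformation}(c) shows that $\Phi_\f$ commutes with $G(1)$. Conjugation by $S_T$ carries $G(1)$ to $G(T)$, since $S_T\circ(t\mapsto t+1)\circ S_T^{-1}=(t\mapsto t+T)$ and reflections are preserved. Also $U$ is $G$-invariant. Hence a $G(1)$-invariant $\tilde U$ yields a $G(T)$-invariant $\hat U$, and in particular $\hat U$ is $T$-periodic and even in $t$.
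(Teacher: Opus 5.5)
Your proposal is correct and follows essentially the same route as the paper. In (a) both arguments integrate along the normal line from $p\in\Gamma$, where $\tilde U=0$, and both rely on:
\begin{itemize}
\item the Hessian bound \eqref{eq:theta-small} on $U$;
\item the monotonicity of $z\mapsto\zeta_\varphi(p,z)$;
\item the gradient bound $|\nabla v|\le 1/4$.
\end{itemize}
In (b) both use that $S_T\circ\Phi_\varphi$ is an isometry, the Kinderlehrer--Nirenberg regularity upgrade, and conjugation of $G(1)$ to $G(T)$ by $S_T$.

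The only difference is bookkeeping in (a). You bound the derivative pointwise via $\partial_s\zeta_\varphi\ge 1/2$ and the sharper $-\nabla U\cdot\nu(p)\ge 3/4$ along the segment, which gives the uniform bound $\tilde U\ge z/8$. The paper instead uses $f_p'\ge 1/2$ together with the explicit increment $\zeta_\varphi(p,z)-\zeta_\varphi(p,0)=z+(1-\chi^2)\varphi(p)$, and then splits into the cases $z\le 2\theta\rho(p)$ and $z>2\theta\rho(p)$.
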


\begin{proof}
(a) We will abuse notation by writing $f(p,z)=f(X)$ for functions $f$ on $\N_1$, expressed through the Fermi coordinates $(p,z)\mapsto X=p-z\nu(p)\in \N_1$. 
Set $f_p(s):=U(p,s)$ for $s\in I = (-\d_0,4\th\r(p))$.
In this way, by Lemma \ref{lem:deformation}, 
$$U_\f(p,z) = U(p, \zeta_\f(p,z)) = f_p(\zeta_\f(p,z)), \quad \text{where }\z_\f(p,z):=z-\chi^2\big(\tfrac{z}{\th\r(p)}\big)\f(p).$$ 
Since $\tilde{U}=0$ on $\G$, we can write
\begin{align}
\tilde{U}(p, z) &= \tilde{U}(p,z) - \tilde{U}(p,0) = U_\f(p,z) + v(p,z)) - (U_\f(p,0) + v(p,0)) \notag \\
& = f_p(\zeta_\f(p,z))-f_p(\zeta_\f(p,0))+v(p,z)-v(p,0).\label{lem:zeros-to-solutions:eq1}
\end{align}
Noting that $f_p(0)=0$ and $f_p'(0)=-\p_\nu U(p)=1$, we get from the Hessian bound \eqref{eq:theta-small}, $\d_0\le\th$ and $\rho(p)\geq 1$ that
$|f_p'(s)-1|\le \frac{1}{16\theta \rho(p)}(\d_0 + 4\theta \rho(p)) \leq \frac12 \quad \text{for }s\in I,$ 
so that 
\[
f'_p(s)\geq \frac{1}{2} \quad \text{for all } s\in I. 
\]
Combining this estimate with $\|\n v\|_{L^\infty(\O)}\leq 1/4$, $\|\f\|_{L^\infty(\G)}\leq \theta/2$ and recalling that $z\to \zeta_\f(p,z)$ is increasing in $z$, \eqref{lem:zeros-to-solutions:eq1} readily yields
\begin{equation}\label{lem:zeros-to-solutions:eq2}
\tilde{U}(X) \geq \left(z - (1-\chi^2)|\f(p)| \right)/2 - z/4 \geq  z/4 - (1-\chi^2)\theta/4 \quad \text{for } X\in \N_1.
\end{equation}
For $z\in [0, 2\theta \rho(p)]$, $\chi=1$, so the right-hand side of \eqref{lem:zeros-to-solutions:eq2} is $\geq z/4>0$. For $z\in (2\theta \rho(p), 4\theta \rho(p)]$, the right-hand side can be bounded from below by $2\theta \rho(p)/4 - \theta/4 \geq \theta/4>0$  as $\rho(p)\geq 1$. We deduce that $\tilde{U}>0$ in $\N_1\cap \O$.

(b) Changing variables back by applying $\Phi_\f^{-1}$ and then $S_T^{-1}$, it is easy to see that $\hat{U}$ solves \eqref{FBP}. Its free boundary is $C^{2,\a}$ because
$\f\in C^{2,\a}_{\rm loc}(\G)$, and therefore $C^\infty$ (in fact analytic) by the higher regularity theory of the one-phase FBP (\cite{KindNiren}). If $\f$ and $v$ are
$G$-invariant, then $\tilde U$ and $\O_\f$ are $G$-invariant by Lemma~\ref{lem:deformation}(c). Finally, $S_T$ conjugates $t\mapsto t+1$ and $t\mapsto -t$ to $t\mapsto t+T$ and $t\mapsto-t$, so $\hat{U}(x,t)$ is even and $T$-periodic in $t$.
\end{proof}

\begin{rem}\label{rem:positivity}
Part (a) shows that the constructed solutions are automatically positive in the band $\N_1\cap \O$. Positivity in $\O\setminus \N_1$, which is needed in part (b), is verified at the end of the bifurcation argument in Section \ref{sec:bif}, where $v$ and $\f$ are small.
\end{rem}

The main result of this section is the following.

\begin{theorem}\label{thm:N}
Assume that we are in Case A or Case B, with $\X$, $\Y$, $\Z$ as in
\eqref{eq:spacesA}--\eqref{eq:spacesB} according to the case, and $V\subset\Y$ as in \eqref{eq:V}. Then
\begin{equation}\label{eq:N-affine}
  N(T,\f,v)=N^0(\f,v)+T^{-2}\, N^1(\f,v),\qquad (T,\f,v)\in(0,\infty)\times V\times\X,
\end{equation}
where $N^0:V\times\X\to\Z\times\Y$ and
$N^1:V\times\X\to\Z\times\{0\}$ are smooth maps, independent of $T$. In particular,
$N\in C^\infty((0,\infty)\times V\times\X;\,\Z\times\Y)$ and
$N(T,0,0)=(0,0,0)$ for all $T>0$. The partial derivatives at $(\f,v)=(0,0)$ are
\begin{align}
  D_v N(T,0,0)[w]&=\big(-(\D_x+T^{-2}\p_t^2)w, \p_\nu w,  w|_\G\big),\label{eq:DvN}\\
  D_\f N(T,0,0)[\phi]&=\big(0, -H\phi, -\phi\big).\label{eq:DphiN}
\end{align}
Finally, $N(T,\cdot,\cdot)$ is equivariant: for $\g\in G(\t)$, $\t>0$ arbitrary,
$N(T,\f\circ\g,v\circ\g)=N(T,\f,v)\circ\g$. In particular,
$N(T,\cdot,\cdot)$ maps $(V\cap\Y_{G(1)})\times\X_{G(1)}$ into $\Z_{G(1)}\times\Y_{G(1)}$.
\end{theorem}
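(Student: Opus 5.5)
The plan is to write everything explicitly in the Euclidean coordinates on $\ol{\O}$ through the formulas \eqref{eq:metric-formulas-main} of Lemma~\ref{lem:metric}. With $h=h_\f$ we have $\sqrt{|g_{T,\f}|}=T|M_\f|$, and the factor $T$ cancels in the Laplacian:
\[
\D_{g_{T,\f}}v=|M_\f|^{-1}\p_i\big(|M_\f|(p_\f^{ij}+T^{-2}q_\f^{ij})\p_jv\big).
\]
Similarly,
\[
|d(U_\f+v)|^2_{g_{T,\f}}=(p_\f^{ij}+T^{-2}q_\f^{ij})\,\p_i(U_\f+v)\,\p_j(U_\f+v).
\]
This gives the affine decomposition \eqref{eq:N-affine} at once. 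We set
\[
N^0=\Big(-|M_\f|^{-1}\p_i(|M_\f|p_\f^{ij}\p_jv),\ \tfrac12\big(1-p_\f^{ij}\p_i\tilde U\p_j\tilde U\big)\big|_\G,\ \tilde U|_\G\Big),
\]
and let $N^1$ be the $q_\f$-parts, with third component $0$. We then need $q_\f^{ij}\p_i\tilde U\p_j\tilde U|_\G=(\xi_\f\cdot\n\tilde U)^2|_\G$. Since $h$ is horizontal and $t$-independent, $M_h$ has last row $e_{n+1}$ and last column $\p_th$. Hence $\xi_\f=M_\f^{-1}e_{n+1}=(-(I+D_xh')^{-1}\p_th',1)$, so this term genuinely depends on $\f$ through $\p_t\f$.

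For smoothness, I will treat $\f\mapsto h_\f$ as a bounded linear map $\Y\to C^{2,\a}_0$ (Lemma~\ref{lem:hphi}), with $Dh_\f\in C^{1,\a}_1$ in Case B. Then $M_\f-I$ is small in $C^{1,\a}_1\emb C^{1,\a}_0$, so $|M_\f|$, $M_\f^{-1}$, $p_\f$, $q_\f$ are analytic (Neumann series) functions of $Dh_\f$ with values in $C^{1,\a}_0$. Their deviations from the identity lie in $C^{1,\a}_1$. The product estimate \eqref{eq:product} then shows that the interior components map $\X$ into $C^{0,\a}_{\b+2}$ and are smooth, in fact polynomial in $v$ and analytic in $\f$.

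The delicate term is $U_\f=U\circ\Phi_\f$, since $U$ grows linearly and we cannot place it in any $C_\b$ space. I will write
\[
U_\f-U=\int_0^1\n U(X+sh_\f)\cdot h_\f\,ds,
\]
and for the boundary components expand
\[
\tfrac12\big(1-p_\f^{ij}\p_iU_\f\p_jU_\f\big)
\]
by Taylor's formula around $\f=0$, using \eqref{eq:U-bounds}: $|D^kU|\le c_k\la x\ra^{1-k}$. In rescaled coordinates $U_R$ has uniformly bounded $C^{k}$ norms on the $\d_0$-neighbourhood, so composition with $\Phi_\f$ is a smooth map between the local $C^{2,\a}$ spaces uniformly in $R$. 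Each term is then a product of $\f$ or $D\f$ with a bounded (scale-invariant) coefficient.

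\textbf{Main obstacle.} The main difficulty is checking that the boundary components land in $C^{1,\a}_{\b+1}(\G)\times\Y$ with the right weights. $N_3=(U_\f+v)|_\G$ should be in $\Y$: this holds since $U_\f|_\G=U(p+\f\nu)$ equals $\f$ times a coefficient in $C^{2,\a}_0$, by $U|_\G=0$ and the integral formula. $N_2$ should be in $C^{1,\a}_{\b+1}$. For this I use $|\n U|=1$ on $\G$ and note that the first-order $\f$ term is $\f\,\p_\nu|\n\bar u|^2/2=-H\f$ up to sign, with $H\in C^{1,\a}_1$ by \eqref{eq:H-B}. Higher-order terms involve $D^2U\in C_1$ multiplied by $\f^2$ or by $D\f\in C^{1,\a}_{\b+1}$. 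In Case A, $\f$ contains a constant $c$, which does not decay. There I will use the improved bounds \eqref{eq:H-A} and $|D^2U|\lesssim\la x\ra^{-2}$ (Corollary~\ref{coro:doublehpbounds}): the terms $c^k D^2U$ and $cH$ then lie in $C^{1,\a}_2\emb C^{1,\a}_{\b+1}$ for $\b\le1$. In Case B this issue does not arise.

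\textbf{Derivatives and equivariance.} Differentiating at $(0,0)$ gives $M_0=I$, $p_0^{ij}=\d^{ij}$ (horizontal), $q_0=e_{n+1}\otimes e_{n+1}$. This yields $D_vN_1=-(\D_x+T^{-2}\p_t^2)w$. For the gradient condition, $D_vN_2=-\n U\cdot\n w=\p_\nu w$, because $\n U=-\nu$ on $\G$. For the Dirichlet condition, $D_vN_3=w|_\G$, proving \eqref{eq:DvN}. For \eqref{eq:DphiN}, $D_\f N_1=0$ since $v=0$. Also $D_\f N_3=\n U\cdot\phi\nu=-\phi$. Finally, the derivative of $-\tfrac12|dU_\f|^2$ in $\f$ gives $-\phi\,\nu\cdot\n\tfrac12|\n\bar u|^2=-\phi\,\bar u_{\nu\nu}\cdot(\pm1)=-H\phi$, using $H=-u_{\nu\nu}$ and $\n U=-\nu$; the $Dh$-terms from $p_\f$ contribute $-2\p_ih^j\p_iU\p_jU$, and these must be tracked and combined with the previous term to confirm the sign. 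Equivariance follows from Lemma~\ref{lem:deformation}(c), since $\g\in G(\t)$ is an isometry of $g_T$ preserving $U$.
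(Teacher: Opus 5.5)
Your treatment of $N_1$, the affine $T^{-2}$-structure, $D_vN$, $D_\f N_1$, $D_\f N_3$ and the equivariance is the paper's. The gap is in the gradient component $N_2$.

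You propose a term-by-term Taylor expansion of $\tfrac12\big(1-p_\f^{ij}\p_iU_\f\p_jU_\f\big)$ in $(\f,D\f)$, but the first-order bookkeeping is wrong and then left open. Your derivative $-\phi\,\nu\cdot\n\tfrac12|\n\bar u|^2$ of $-\tfrac12|dU_\f|^2$ only accounts for the displacement of the base point. You then add the $p_\f$-contribution $-2\p_ih^j\p_iU\p_jU$, but omit the chain-rule contribution $2\p_iU\,\p_kU\,\p_ih^k$ coming from $\p_iU_\f=(\p_kU\circ\Phi_\f)(M_\f)^k_i$. (If your $|dU_\f|^2$ is the $g_{T,\f}$-norm, you are instead double counting.) You explicitly leave the sign unconfirmed. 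So the second component of \eqref{eq:DphiN} is not proved, and the weights of your unspecified ``higher-order terms'' are not checked.

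The missing idea is the exact identity the paper uses. From $g_{T,\f}=M_\f^T g_T M_\f$ one gets $(M_\f)^k_i\,g_{T,\f}^{ij}=g_T^{kl}(M_\f^{-1})^j_l$. Since $\p_tU=0$, this gives
\[
g_{T,\f}^{ij}\,\p_iU_\f\,\p_jU_\f=|\n_xU|^2\circ\Phi_\f,\qquad g_{T,\f}^{ij}\,\p_iU_\f\,\p_jv=\mathcal{B}_\f^j\,\p_jv,\quad \mathcal{B}_\f^j:=\sum_{k=1}^n(M_\f^{-1})^j_k\,(\p_kU\circ\Phi_\f),
\]
which is just naturality of the pullback. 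Consequently:
\begin{itemize}
\item Every $D\f$-term in the $U_\f$-part cancels identically; in particular the $p_\f$-term and the chain-rule term cancel at first order.
\item On $\G$, Taylor's formula for $E:=|\n U|^2-1$, with $E|_\G=0$ and $\p_\nu E=-2U_{\nu\nu}=2H$, gives $E\circ\Phi_\f|_\G=2H\f+\f^2\mathcal{W}_\f$. Here $\mathcal{W}_\f$ is smooth into $C^{2,\a}_2(\G)$ by Lemma~\ref{lem:composition}, since $\p^2E\in\mathcal{S}_2$.
\item $D_\f N_2(T,0,0)[\phi]=-H\phi$ is then immediate. The only delicate weight is $H\f\in C^{1,\a}_{\b+1}(\G)$, which in Case A needs $H\in C^{1,\a}_2(\G)$.
\item The same identity gives $\xi_\f^i\p_iU_\f=\p_tU\circ\Phi_\f=0$. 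So, contrary to your remark, $U_\f$ does not enter the $T^{-2}$-part at all: $N_2^1=-\tfrac12(\xi_\f^i\p_iv)^2$, which is quadratic in $v$.
\end{itemize}

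Two smaller points. First, in Case A, ``$U_\f|_\G$ is $\f$ times a coefficient in $C^{2,\a}_0$'' does not give $N_3\in\Y$. If $\f$ has a nonzero constant part $c$, then $c$ times a $C^{2,\a}_0(\G)$-function need not lie in $C^{2,\a}_\b(\G)\oplus\R$. You need the second-order expansion $U_\f|_\G=-\f+\f^2\mathcal{V}_\f$ with $\mathcal{V}_\f\in C^{2,\a}_1(\G)$, which lands in $\Y$ because $\b\le 1$.

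Second, $U_R(Y)=U(RY)$ does not have uniformly bounded $C^k$-norms: its gradient is of size $R$. The scale-invariant quantities are $R^{j-1}(D^jU)(RY)$, $j\ge 1$. This is why the paper composes only derivatives of $U$ (the classes $\mathcal{S}_\g$) with $\mathrm{id}+s\f\nu$ on $\G$, never $U$ itself.
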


\begin{rem}\label{rem:mu}
The neighbourhood $V$ does not depend on $T$, and $T$ enters \eqref{eq:N-affine} only through the scalar factor $T^{-2}$. In particular, the joint
regularity in $(T,v)$ required by the Crandall--Rabinowitz theorem, including the continuity of $D_TD_v$, is obvious.
\end{rem}

\subsection*{Proof of Theorem~\ref{thm:N}}\label{ss:proof-N}

The boundary components of $N$ involve $U$ and its derivatives at displaced boundary points; these are handled by Lemma~\ref{lem:composition} in Appendix~\ref{app:technical}, whose notation $\mathcal C^s_\Psi$ and classes $\mathcal{S}_\g$ we use below. For ease of reference, $\mathcal{C}^s_\Psi$ is the map on vector fields $h$ on $\G$:
\[
 \mathcal{C}^s_\Psi:\ h\mapsto\Psi\circ(\id+s h)=\Big(p\mapsto\Psi\big(p+s h(p)\big)\Big),\qquad p\in\G,
\]
where $\Psi$ will stand for smooth derivative quantities, such as the Hessian components $\Psi=\de^2_{kl} U$ or $\Psi=|\n U|^2-1$, which belong for some $\g\in \R$ to the class $\mathcal{S}_\g$, decaying as $\la x\ra^{-\g}$ in the cross-sectional variables and whose $k$-th order derivatives decay as $\la x\ra^{-\g-k}$.

We will apply Lemma~\ref{lem:composition} with $h=\f\nu$, $\f\in V$.
Note that $\Phi_\f|_\G=\id+\f\nu$, so that $\Psi\circ\Phi_\f|_\G=\mathcal C^1_\Psi(\f\nu)$.

\begin{lemma}[The interior equation]\label{lem:N1}
For $\f\in V$ and $v\in\X$,
\[
  N_1(T,\f,v)=-\mathcal L^0_\f v-T^{-2}\mathcal L^1_\f v,
\]
where, with $M=M_{h_\f}$, $p=p_{h_\f}$ and $q=q_{h_\f}$ as in Lemma~\ref{lem:metric},
\begin{align*}
  \mathcal L^0_\f v&:=\frac1{|M|}\,\p_i\big(|M|\,p^{ij}\,\p_jv\big)
      =p^{ij}\p^2_{ij}v+\big(\p_ip^{ij}+p^{ij}\,\p_i\log|M|\big)\p_jv,\\
  \mathcal L^1_\f v&:=\frac1{|M|}\,\p_i\big(|M|\,q^{ij}\,\p_jv\big)
      =q^{ij}\p^2_{ij}v+\big(\p_iq^{ij}+q^{ij}\,\p_i\log|M|\big)\p_jv .
\end{align*}
The maps $(\f,v)\mapsto\mathcal L^i_\f v$, $i=0,1$, are smooth from $V\times\X$ into
$C^{0,\a}_{\b+2}(\ol{\O})$ and linear in $v$, with $\mathcal L^0_0=\D_x$ and $\mathcal L^1_0=\p_t^2$. In particular,
\[
  D_\f N_1(T,0,0)=0,\qquad D_v N_1(T,0,0)[w]=-(\D_x+T^{-2}\p_t^2)w .
\]
\end{lemma}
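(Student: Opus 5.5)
The plan is to compute the Laplace--Beltrami operator of the pulled-back metric explicitly, and then verify smoothness and decay term by term in the weighted spaces. By \eqref{eq:metric-formulas-main}, $g_{T,\f}^{ij}=p^{ij}+T^{-2}q^{ij}$ and $\sqrt{|g_{T,\f}|}=T|M|$, with $p,q,M$ independent of $T$. The coordinate formula
\[
\D_g v=|g|^{-1/2}\p_i\big(|g|^{1/2}g^{ij}\p_jv\big)
\]
then gives, after the factor $T$ cancels, $\D_{g_{T,\f}}v=\mathcal L^0_\f v+T^{-2}\mathcal L^1_\f v$. The expanded forms of $\mathcal L^i_\f$ follow from the product rule.

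At $\f=0$ we have $M=I$, $p^{ij}=\sum_{k\le n}\d^i_k\d^j_k$ (the horizontal identity) and $\xi^i=\d^i_{n+1}$. Hence $q^{ij}=\d^i_{n+1}\d^j_{n+1}$, which gives $\mathcal L^0_0=\D_x$ and $\mathcal L^1_0=\p_t^2$. Since $N_1$ is linear in $v$ and vanishes at $v=0$, $D_\f N_1(T,0,0)=0$, and $D_vN_1(T,0,0)=-\mathcal L^0_0-T^{-2}\mathcal L^1_0$.

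For regularity, I would use that $\f\mapsto h_\f$ is bounded linear $\Y\to C^{2,\a}_0(\ol\O)$ (Lemma \ref{lem:hphi}). Thus $\f\mapsto Dh_\f$ is bounded linear into $C^{1,\a}_1$, and $\f\mapsto D^2h_\f$ into $C^{0,\a}_2$. The entries of $M^{-1}$ and $\log|M|$ are real-analytic functions of $Dh_\f$ near $0$; the smallness is ensured by $C_1\varrho<\e_0$. I would show that composition with such analytic functions is smooth on a small ball of $C^{1,\a}_0$. Concretely, expand in power series and use the algebra property \eqref{eq:product} with $\b_1=\b_2=0$: the series converges in $C^{1,\a}_0$ when the ball is small relative to $C_*$, and an analytic map is smooth. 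This gives $p^{ij},q^{ij}\in C^{1,\a}_0$ smoothly in $\f$. Their first derivatives, and $\p_i\log|M|$, depend smoothly on $\f$ with values in $C^{0,\a}_1$. This uses that a derivative gains one unit of weight, and that $\p_i\log|M|$ is an analytic function of $Dh$ times $D^2h$.

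Finally, $v\in\X$ gives $\p^2 v\in C^{0,\a}_{\b+2}$ and $\p v\in C^{0,\a}_{\b+1}$. In Case A the constant part of $\X$ is annihilated by derivatives, so the same holds there. Applying \eqref{eq:product} again yields $\mathcal L^i_\f v\in C^{0,\a}_{\b+2}$. The map is bilinear-continuous in (coefficient, $v$), hence smooth in $(\f,v)$ and linear in $v$.

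The main obstacle is the bookkeeping in the weighted spaces. Each coefficient must carry exactly the weight needed: zero for the principal coefficients, and one for the first-order ones, coming from $D^2h_\f\in C^{0,\a}_2$. Checking this requires the rescaled local norms on the connected components $[S^R]_{1/4}$ to behave well under differentiation and composition. I would handle this by working in the rescaled picture on each piece, where all statements reduce to standard $C^{k,\a}$ facts with $R$-independent constants.
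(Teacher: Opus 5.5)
Your proposal is correct and follows essentially the same route as the paper. Both proofs use the coordinate formula for $\D_{g_{T,\f}}$, in which the factor $T$ cancels, then evaluate at $\f=0$. Both obtain smoothness from Lemma~\ref{lem:hphi}, the analytic dependence of $M^{-1}$, $p$, $q$ and $\log|M|$ on $Dh_\f$ (the paper packages this in Lemma~\ref{lem:metric}), and the product estimate \eqref{eq:product}. The only minor difference is that the paper uses $p-p_0,\ q-q_0,\ \log|M|\in C^{1,\a}_1(\ol{\O})$ to place the first-order coefficients in $C^{0,\a}_2$, whereas your weight-one bound in $C^{0,\a}_1$ is weaker but still suffices, since $\p_jv\in C^{0,\a}_{\b+1}$.
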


\begin{proof}
By \eqref{eq:metric-formulas-main},
\[
  \D_{g_{T,\f}}v=\frac{1}{\sqrt{|g_{T,\f}|}}\,\p_i\Big(\sqrt{|g_{T,\f}|}\,g_{T,\f}^{ij}\,\p_jv\Big)
  =\frac1{|M|}\,\p_i\Big(|M|\,\big(p^{ij}+T^{-2}q^{ij}\big)\,\p_jv\Big),
\]
since the factor $T$ in $\sqrt{|g_{T,\f}|}=T|M|$ cancels. By Lemmas~\ref{lem:hphi} and \ref{lem:metric}, the maps
$\f\mapsto p-p_0$, $\f\mapsto q-q_0$ and $\f\mapsto \log|M|$ are smooth $V\to C^{1,\a}_1(\ol{\O})$. Hence the coefficients $p^{ij}$, $q^{ij}$
lie in $C^{0,\a}_0(\O)$, and $(\p_ip^{ij}+p^{ij}\p_i\log|M|)$ and $(\p_iq^{ij}+q^{ij}\p_i\log|M|)$ lie in
$C^{0,\a}_2(\O)$, smoothly in $\f$. For $v\in\X$ (in Case A, $v=c+v_0$ and only $v_0\in C^{2,\a}_\b(\O)$ is
differentiated), $\p^2_{ij}v\in C^{0,\a}_{\b+2}(\O)$ and $\p_jv\in C^{0,\a}_{\b+1}(\O)$. Therefore, by \eqref{eq:product}, $\mathcal L^i_\f v\in C^{0,\a}_{\b+2}(\O)$ depend bilinearly on $v$ and the coefficients of $\mathcal L^i_\f$. Since $|M_0|=1$,
$p_0^{ij}=\sum_{k\le n}\d^i_k\d^j_k$ and $q_0^{ij}=\d^i_{n+1}\d^j_{n+1}$, we get $\mathcal L^0_0=\D_x$ and
$\mathcal L^1_0=\p_t^2$. Finally, $N_1(T,\f,0)=0$ for all $\f$, so $D_\f N_1(T,0,0)=0$.
\end{proof}

\begin{lemma}[The Dirichlet condition]\label{lem:N3}
For $\f\in V$ and $v\in\X$,
\begin{equation}\label{eq:N3}
  N_3(\f,v)=v|_\G-\f+\f^2\,\mathcal V_\f,\qquad
  \mathcal V_\f:=\int_0^1(1-s)\,\mathcal C^s_{\p^2_{kl}U}(\f\nu)\,ds\;\nu^k\nu^l,
\end{equation}
and $N_3$ does not depend on $T$. The map $\f\mapsto\mathcal V_\f$ is smooth from $V$ into
$C^{2,\a}_1(\G)$, and $N_3$ is smooth from $V\times\X$ into $\Y$ with
\begin{equation}\label{eq:N3:ders}
  D_\f N_3(0,0)[\phi]=-\phi,\qquad D_v N_3(0,0)[w]=w|_\G .
\end{equation}
\end{lemma}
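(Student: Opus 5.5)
We write $N_3(\f,v)=U_\f|_\G+v|_\G$ and use $\Phi_\f|_\G=\id+\f\nu$ (Lemma \ref{lem:deformation}(b)), so that $U_\f|_\G(p)=U(p+\f(p)\nu(p))$. The plan is to Taylor expand $s\mapsto U(p+s\f(p)\nu(p))$ on $[0,1]$ with integral remainder. Lemma \ref{lem:deformation} and $\|\f\|_{L^\infty}<\varrho\le\d_0/2$ guarantee that the whole segment stays inside $\mathcal D$. Because $U=0$ on $\G$ and $\n U=-\nu$ there (from $\p_\nu u=-1$ and $|\n u|=1$), we get $U(p)=0$ and $\frac{d}{ds}U(p+s\f\nu)|_{s=0}=\f\,\n U(p)\cdot\nu(p)=-\f(p)$. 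The second derivative in $s$ equals $\f^2\,\p^2_{kl}U(p+s\f\nu)\nu^k\nu^l$. The integral form of Taylor's theorem therefore gives exactly
\[
U_\f|_\G=-\f+\f^2\int_0^1(1-s)\,\mathcal C^s_{\p^2_{kl}U}(\f\nu)\,ds\,\nu^k\nu^l,
\]
which is \eqref{eq:N3}. The metric never enters, so $N_3$ does not depend on $T$.

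For the regularity, we note that $\p^2_{kl}U\in\mathcal S_1$ by \eqref{eq:U-bounds}. Lemma \ref{lem:composition} should then give that $(s,h)\mapsto\mathcal C^s_{\p^2_{kl}U}(h)$ is smooth from $[0,1]\times\{h=\f\nu:\f\in V\}$ into $C^{2,\a}_1(\G)$, uniformly in $s$. The map $\f\mapsto\f\nu$ is bounded and linear from $\Y$ into $C^{2,\a}_0$, since $\nu\in C^{2,\a}_0(\G)$ by \eqref{eq:H-B} (with the analogous bound in Case A). Integrating in $s$ preserves smoothness, because the derivatives are continuous and uniformly bounded in $s$ and we can differentiate under the integral in the Banach space. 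Multiplying by $\nu^k\nu^l\in C^{2,\a}_0$ and applying \eqref{eq:product}, we conclude that $\mathcal V_\f\in C^{2,\a}_1(\G)$ depends smoothly on $\f$.

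The step I expect to need the most care is that $\f^2\mathcal V_\f$ lands in $\Y$, particularly in Case A, where $\f=c+\f_0$ carries a constant part. Writing $\f^2=c^2+2c\f_0+\f_0^2$, the constant parts multiply $\mathcal V_\f\in C^{2,\a}_1\emb C^{2,\a}_\b$ (as $\b\le1$), and the product estimate \eqref{eq:product} handles the remaining terms. So $\f^2\mathcal V_\f\in C^{2,\a}_\b(\G)\subset\Y$, and it is a smooth (polynomial times smooth) function of $\f$. In Case B this is immediate, since $\f^2\mathcal V_\f\in C^{2,\a}_{2\b+1}$. The map $v\mapsto v|_\G$ is bounded linear $\X\to\Y$. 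Hence $N_3$ is smooth, and since the quadratic term vanishes to second order at $\f=0$, we obtain $D_\f N_3(0,0)[\phi]=-\phi$ and $D_vN_3(0,0)[w]=w|_\G$.
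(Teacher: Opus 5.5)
Your proposal is correct and follows the paper's proof essentially step by step. The argument runs the same way: the integral-remainder Taylor expansion of $s\mapsto U(p+s\f\nu)$ using $U=0$ and $\p_\nu U=-1$ on $\G$, then smoothness of $\mathcal V_\f$ from Lemma \ref{lem:composition} with $m(s)=1-s$, then the same weight bookkeeping for $\f^2\mathcal V_\f$. In Case A you split off the constant, whereas the paper simply uses $\f^2\in C^{2,\a}_0(\G)$ and $C^{2,\a}_1\emb C^{2,\a}_\b$ for $\b\le 1$, which amounts to the same thing.

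The only slip is a citation. The membership $\p^2_{kl}U\in\mathcal S_1$ is \eqref{eq:classes}, which comes from the local harmonic extension in Proposition \ref{prop:asymp}, not from the pointwise bounds \eqref{eq:U-bounds}. The reason is that $\mathcal S_\g$ requires an extension to the whole rescaled ball $B_{2\d}(p^R)$, while $\N_1$ reaches only a fixed distance $\d_0$ into the zero phase.
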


\begin{proof}
For $p\in\G$ we have $U_\f(p)=U(p+\f(p)\nu(p))$. Since $U(p)=0$ and $\p_\nu U(p)=-1$, Taylor's formula
yields \eqref{eq:N3}. No metric enters $N_3$, so there is no dependence on $T$. By \eqref{eq:classes} and Lemma~\ref{lem:composition}
(with $\g=1$, $h=\f\nu$ and $m(s)=1-s$), together with $\nu\in C^{2,\a}_0(\G)$, we obtain that the map
$\f\mapsto\mathcal V_\f\in C^{2,\a}_1(\G)$ is smooth. In Case A, $\f^2\in \X^{2,\a}_\b(\G)\emb C^{2,\a}_0(\G)$, so
$\f^2\mathcal V_\f\in C^{2,\a}_1(\G)\emb C^{2,\a}_\b(\G)\subset\Y$ because $\b\le1$.
In Case B, $\f^2\in C^{2,\a}_{2\b}(\G)$, so $\f^2\mathcal V_\f\in C^{2,\a}_{2\b+1}(\G)\emb\Y$.
Since $v\mapsto v|_\G$ is bounded $\X\to\Y$, $N_3$ is smooth. The derivatives follow
because the last term in \eqref{eq:N3} is quadratic in $\f$.
\end{proof}

\begin{lemma}[The Neumann condition]\label{lem:N2}
Let $E:=|\n U|^2-1$, and for $\f\in V$ write $M=M_{h_\f}$, $p=p_{h_\f}$, $\xi=\xi_{h_\f}$. Then, on $\G$,
\begin{equation}\label{eq:gradient-identity}
  |d(U_\f+v)|^2_{g_{T,\f}}=1+E\circ\Phi_\f+2\,\mathcal B_\f^j\,\p_jv+p^{ij}\,\p_iv\,\p_jv
      +T^{-2}\big(\xi^i\p_iv\big)^2 ,
\end{equation}
where $\mathcal B_\f^j:=\sum_{k=1}^n(M^{-1})^j_k\,(\p_kU\circ\Phi_\f).$
Consequently,
\begin{align}
N_2(T,\f,v)&= \frac{1}{2}\left(1 - |d (U_\f + v)|_{g_{T,\f}}^2\right) = N_2^0(\f,v)+T^{-2}N_2^1(\f,v), \quad \text{with} \notag \\
  N_2^0(\f,v)&=-H\f-\tfrac12\f^2\,\mathcal W_\f-\mathcal B_\f^j\,\p_jv-\tfrac12\,p^{ij}\,\p_iv\,\p_jv,\label{eq:N20}\\
  N_2^1(\f,v)&=-\tfrac12\big(\xi^i\p_iv\big)^2,\label{eq:N21}
\end{align}
all restricted to $\G$, where
\[
  \mathcal W_\f:=\int_0^1(1-s)\,\mathcal C^s_{\p^2_{kl}E}(\f\nu)\,ds\;\nu^k\nu^l .
\]
The maps $N_2^0,N_2^1$ are smooth from $V\times\X$ into $C^{1,\a}_{\b+1}(\G)$, and
\[
  D_\f N_2(T,0,0)[\phi]=-H\phi,\qquad D_v N_2(T,0,0)[w]=\p_\nu w .
\]
\end{lemma}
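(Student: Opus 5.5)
The proof splits into three parts: a pointwise identity for the metric gradient, a Taylor expansion of the unperturbed terms, and a membership check in the weighted spaces.

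\emph{Step 1: the identity \eqref{eq:gradient-identity}.} Write $\tilde U=U_\f+v$ and use \eqref{eq:metric-formulas-main}:
\[
|d\tilde U|^2_{g_{T,\f}}=(p^{ij}+T^{-2}q^{ij})\p_i\tilde U\,\p_j\tilde U .
\]
By the chain rule, $\p_i U_\f=(M)^k_i\,(\p_kU\circ\Phi_\f)$, summed over horizontal $k$, since $h_\f$ is horizontal. Two consequences follow from the definitions $p^{ij}=\sum_k (M^{-1})^i_k(M^{-1})^j_k$ and $q^{ij}=\xi^i\xi^j$:
\begin{itemize}
\item $p^{ij}\p_iU_\f\,\p_jU_\f=|\n U|^2\circ\Phi_\f=1+E\circ\Phi_\f$;
\item $\xi^i\p_iU_\f=(M^{-1})^i_{n+1}M^k_i(\p_kU\circ\Phi_\f)=\d^k_{n+1}(\dots)=0$, because $U$ does not depend on $t$.
\end{itemize}
Likewise, $p^{ij}\p_iU_\f=\sum_k (M^{-1})^j_k(\p_kU\circ\Phi_\f)=\mathcal B^j_\f$. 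Expanding the quadratic form then gives \eqref{eq:gradient-identity}, and $N_2^0$, $N_2^1$ are read off directly.

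\emph{Step 2: Taylor expansion of $E\circ\Phi_\f$.} On $\G$ we have $E\circ\Phi_\f=E(p+\f\nu)$ and $E=0$ on $\G$. Differentiating $|\n U|^2$ along $\nu$ gives
\[
\p_\nu E=2\,\n U\cdot D^2U\,\nu=-2U_{\nu\nu}=2H ,
\]
using $\n U=-\nu$. Taylor's formula with integral remainder yields
\[
E\circ\Phi_\f=2H\f+\f^2\mathcal W_\f ,
\]
which accounts for the terms $-H\f-\tfrac12\f^2\mathcal W_\f$ in \eqref{eq:N20}.

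\emph{Step 3: smoothness into $C^{1,\a}_{\b+1}(\G)$.} This is the expected main obstacle, particularly in Case A, where $\f,v$ may carry constants. Each term is handled separately.
\begin{itemize}
\item $H\f$: we have $H\in C^{1,\a}_1$ (Case B) or $C^{1,\a}_2$ (Case A). Since $\f\in C^{2,\a}_\b$ in Case B, and $\f\in C^{2,\a}_0$ with $\b\le1$ in Case A, the product rule \eqref{eq:product} gives $H\f\in C^{1,\a}_{\b+1}$.
\item $\f^2\mathcal W_\f$: $E\in\mathcal S_0$ and its Hessian lies in $\mathcal S_2$, by \eqref{eq:U-bounds}. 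Lemma~\ref{lem:composition} then gives $\mathcal W_\f\in C^{1,\a}_2$ smoothly in $\f$. In Case A the bound is sharper: the $HP$ Hessian decay makes $\p^2E$ of class $\mathcal S_3$. In either case $\f^2\mathcal W_\f\in C^{1,\a}_{\b+1}$.
\item $\mathcal B^j_\f\p_jv$ and $p^{ij}\p_iv\p_jv$: Lemma~\ref{lem:metric} gives $M^{-1}\in C^{1,\a}_0$, and Lemma~\ref{lem:composition} gives $\p_kU\circ\Phi_\f\in C^{1,\a}_0$. Since $\p v\in C^{1,\a}_{\b+1}$ (constants are annihilated by differentiation), both terms lie in $C^{1,\a}_{\b+1}$; the quadratic term even lies in $C^{1,\a}_{2\b+2}$. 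The same argument covers $N_2^1$.
\end{itemize}
Smoothness of all these maps follows because they are polynomial compositions of smooth maps.

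\emph{Step 4: the derivatives at $(0,0)$.} Here $M=I$ and $\mathcal B^j_0=\p_jU=-\nu^j$. Hence
\[
D_vN_2(T,0,0)[w]=-\mathcal B_0^j\p_jw=\p_\nu w ,
\]
while the quadratic terms contribute nothing. Similarly, $D_\f N_2(T,0,0)[\phi]=-H\phi$, since the terms $\f^2\mathcal W_\f$ and $\mathcal B_\f\p v$ vanish to second order at $v=0$.
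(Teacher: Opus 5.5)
Your proposal is correct and follows the paper's proof essentially step for step: the same chain-rule computation of the gradient identity, the same Taylor expansion $E\circ\Phi_\f|_\G=2H\f+\f^2\mathcal W_\f$ with $\p_\nu E=2H$, the same term-by-term weight check, and the same derivative computation via $\mathcal B^j_0=-\nu^j$. You contract with $p^{ij}$ and $\xi^i$ using $M^k_i(M^{-1})^i_l=\delta^k_l$, whereas the paper uses $M^k_i g^{ij}_{T,\f}M^l_j=g_T^{kl}$; this is the same identity.

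Two small remarks. Your claim that $\p^2E\in\mathcal S_3$ in Case A is not supplied by the available bounds (Corollary~\ref{coro:doublehpbounds} only gives uniform $C^{k,\a}$ bounds on $\bar u_R$, hence $\mathcal S_2$). It is also unnecessary, since $\f^2\mathcal W_\f\in C^{1,\a}_2\hookrightarrow C^{1,\a}_{\b+1}$ already holds because $\b\le 1$. And in Step 4, the term $\f^2\mathcal W_\f$ drops out because it is quadratic in $\f$, not because it vanishes at $v=0$.
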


\begin{proof}
By the chain rule, $\p_iU_\f=(\p_kU\circ\Phi_\f)\,M^k_i$. Moreover, $g_{T,\f}=M^Tg_TM$ gives
$M^k_i\,g_{T,\f}^{ij}\,M^l_j=g_T^{kl}$ and $M^k_i\,g_{T,\f}^{ij}=g_T^{kl}(M^{-1})^j_l$. Since $\p_{n+1}U=0$ and
$g_T^{kl}=\d^{kl}$ for $k,l\le n$, we obtain
\begin{align*}
  g_{T,\f}^{ij}\,\p_iU_\f\,\p_jU_\f&=\sum_{k=1}^n(\p_kU\,\p_kU)\circ\Phi_\f=1+E\circ\Phi_\f,\\
  g_{T,\f}^{ij}\,\p_iU_\f\,\p_jv&=\sum_{k=1}^n(\p_kU\circ\Phi_\f)(M^{-1})^j_k\,\p_jv=\mathcal B_\f^j\,\p_jv,
\end{align*}
while $g_{T,\f}^{ij}\p_iv\,\p_jv=p^{ij}\p_iv\,\p_jv+T^{-2}(\xi^i\p_iv)^2$ by \eqref{eq:metric-formulas-main}. This proves
\eqref{eq:gradient-identity}. On $\G$ we have $\Phi_\f(p)=p+\f(p)\nu(p)$ and $E=0$, and
\[
  \p_\nu E=2\,\p^2_{kl}U\,\p_kU\,\nu^l=-2U_{\nu\nu}=2H .
\]
So Taylor's formula gives $E\circ\Phi_\f|_\G=2H\f+\f^2\mathcal W_\f$, and \eqref{eq:N20}--\eqref{eq:N21} follow.

\emph{Regularity.} By Lemma~\ref{lem:composition} (with $h=\f\nu$) and \eqref{eq:classes}, $\f\mapsto\mathcal W_\f$ is smooth
$V\to C^{2,\a}_2(\G)$, and $\f\mapsto\p_kU\circ\Phi_\f|_\G=\mathcal C^1_{\p_kU}(\f\nu)$ is smooth $V\to C^{2,\a}_0(\G)$.
By Lemmas~\ref{lem:hphi} and \ref{lem:metric}, $\f\mapsto M^{-1}|_\G$, $p|_\G$ and $\xi|_\G$ are smooth $V\to C^{1,\a}_0(\G)$.
We now check the weights term by term.
\begin{itemize}
\item $H\f$: in Case A, $H\in C^{1,\a}_2(\G)$ by \eqref{eq:H-A} and $\f\in C^{2,\a}_0(\G)$, so
  $H\f\in C^{1,\a}_2(\G)\emb C^{1,\a}_{\b+1}(\G)$ as $\b\le1$. In Case B,
  $H\in C^{1,\a}_1(\G)$ by \eqref{eq:H-B} and $\f\in C^{2,\a}_\b(\G)$, so $H\f\in C^{1,\a}_{\b+1}(\G)$.
\item $\f^2\mathcal W_\f$: it lies in $C^{1,\a}_2(\G)$ in Case A and in $C^{1,\a}_{2\b+2}(\G)$ in Case B,
  both contained in $C^{1,\a}_{\b+1}(\G)$.
\item $\mathcal B_\f^j\p_jv$, $p^{ij}\p_iv\,\p_jv$ and $N_2^1$: in both cases $\p_jv|_\G\in C^{1,\a}_{\b+1}(\G)$, and the coefficients lie in $C^{1,\a}_0(\G)$. So these terms lie in
  $C^{1,\a}_{\b+1}(\G)$ and in $C^{1,\a}_{2\b+2}(\G)\emb C^{1,\a}_{\b+1}(\G)$, respectively.
\end{itemize}
Each term is a continuous multilinear expression in $v$ with coefficients depending smoothly on $\f$, so
$N_2^0$ and $N_2^1$ are smooth. 

For the partial derivatives, note first that $N_2^1$ and the last term of
\eqref{eq:N20} are quadratic in $v$. The term $\mathcal B_\f^j\p_jv$ vanishes for $v=0$, and
$\mathcal B_0^j=\p_jU=-\nu^j$ on $\G$, whence $D_v(-\mathcal B_\f^j\p_jv)|_{(0,0)}[w]=\nu^j\p_jw=\p_\nu w$.
Finally, $-\tfrac12\f^2\mathcal W_\f$ is quadratic in $\f$, so $D_\f N_2(T,0,0)[\phi]=-H\phi$.
\end{proof}

\begin{proof}[Proof of Theorem~\ref{thm:N}]
The decomposition \eqref{eq:N-affine}, the smoothness of its coefficients and the formulas
\eqref{eq:DvN}--\eqref{eq:DphiN} are collected from Lemmas~\ref{lem:N1}, \ref{lem:N3} and \ref{lem:N2}, with $N^0:=(-\mathcal L^0_\f v,N_2^0,N_3)$ and $N^1:=(-\mathcal L^1_\f v,N_2^1,0)$. Joint smoothness in $(T,\f,v)$ follows immediately.

For the equivariance, let $\g\in G(\t)$. It is an isometry of $g_T$ that preserves $\O$, $\G$ and $U$, and
$\Phi_{\f\circ\g}=\g^{-1}\circ\Phi_\f\circ\g$ by Lemma~\ref{lem:deformation}(c). Hence
$g_{T,\f\circ\g}=\g^*g_{T,\f}$ and $U_{\f\circ\g}=U_\f\circ\g$, and each component of
$N$ is natural under isometries.
\end{proof}

\subsection{The reduced operator \texorpdfstring{$M(T,v)$}{M(T,v)}}\label{ss:reduction}

Since $D_\f N_3(0,0)=-\mathrm{id}_{\Y}$ is an isomorphism of $\Y$ and $N_3$ does not
depend on $T$, the Dirichlet condition determines the normal perturbation in terms of $v$,
simultaneously for all $T$.

\begin{prop}\label{prop:reduction}
Assume Case A or Case B, with $\X$, $\Y$, $\Z$ as in \eqref{eq:spacesA}--\eqref{eq:spacesB}.
There exist balls $\mathcal U\subset\X$ and $V'\subset V$ centred at $0$ and a smooth map
$F:\mathcal U\to V'$ with $F(0)=0$ and $DF(0)[w]=w|_\G$, such that for $(\f,v)\in V'\times\mathcal U$
\[
  N_3(\f,v)=0\iff\f=F(v).
\]
$F$ does not depend on $T$ and is equivariant: $F(v\circ\g)=F(v)\circ\g$ for $\g\in G(\t)$, $\t>0$.
Define, for $T>0$ and $v\in\mathcal U$,
\begin{equation}\label{eq:Mbar}
  M(T,v):=\Big(N_1\big(T,F(v),v\big), N_2\big(T,F(v),v\big)\Big).
\end{equation}
Then
\begin{enumerate}
\item[(a)] $M(T,v)=M^0(v)+T^{-2}M^1(v)$ with $M^0,M^1\in C^\infty(\mathcal U;\Z)$ independent of $T$.
  In particular $M\in C^\infty((0,\infty)\times\mathcal U;\Z)$, and $M(T,0)=0$ for all $T>0$.
\item[(b)] For every $T>0$,
  \begin{equation}\label{eq:DvMbar}
    D_vM(T,0)[w]=\big(-(\D_x+T^{-2}\p_t^2)w, \p_\nu w-Hw|_\G\big),\qquad w\in\X .
  \end{equation}
\item[(c)] The mixed second partial derivative
  \begin{equation}\label{eq:DTDvMbar}
    D_TD_vM(T,0)[w]=\big(2T^{-3}\p_t^2w,\ 0\big),\qquad w\in\X .
  \end{equation}
\item[(d)] For every $T>0$, $M(T,\cdot)$ maps $\mathcal U\cap\X_{G(1)}$ into $\Z_{G(1)}$.
\item[(e)] If $v\in\mathcal U$ satisfies $M(T,v)=0$, then $N(T,F(v),v)=0$, and
  Lemma~\ref{lem:zeros-to-solutions} applies to $(\f,v)=(F(v),v)$.
\end{enumerate}
\end{prop}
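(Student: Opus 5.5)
The plan is to apply the implicit function theorem in Banach spaces to $N_3:V\times\X\to\Y$ at $(0,0)$. By Lemma \ref{lem:N3}, $N_3$ is smooth, does not depend on $T$, satisfies $N_3(0,0)=0$, and $D_\f N_3(0,0)=-\mathrm{id}_\Y$, which is an isomorphism. The implicit function theorem therefore gives balls $\mathcal U\subset\X$ and $V'\subset V$ and a smooth map $F:\mathcal U\to V'$ with $F(0)=0$ such that, for $(\f,v)\in V'\times\mathcal U$, $N_3(\f,v)=0$ holds if and only if $\f=F(v)$. Differentiating $N_3(F(v),v)=0$ at $0$ and using \eqref{eq:N3:ders} gives $-DF(0)[w]+w|_\G=0$, that is, $DF(0)[w]=w|_\G$.

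For equivariance, take $\g\in G(\t)$ and set $\f=F(v)$. Theorem \ref{thm:N} gives $N_3(\f\circ\g,v\circ\g)=N_3(\f,v)\circ\g=0$. Composition with $\g$ is an isometry of $\X$ and of $\Y$, since $\r$ is $G(\t)$-invariant and the weighted norms are built from $\la x\ra$ and local rescalings, which are preserved by Euclidean isometries fixing the structure. Hence $v\circ\g\in\mathcal U$ and $\f\circ\g\in V'$, and uniqueness yields $F(v\circ\g)=F(v)\circ\g$.

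Parts (a)--(e) then follow by composition.
\begin{enumerate}
\item[(a)] Since $F$ is independent of $T$, substituting it into the decomposition \eqref{eq:N-affine} gives $M^0(v)=(N^0_1,N^0_2)(F(v),v)$ and $M^1(v)=(N^1_1,N^1_2)(F(v),v)$. Both are smooth as compositions of smooth maps, and $M(T,0)=0$ because $N(T,0,0)=0$.
\item[(b)] By the chain rule, $D_vM(T,0)[w]=D_vN_{1,2}[w]+D_\f N_{1,2}[w|_\G]$. Inserting \eqref{eq:DvN} and \eqref{eq:DphiN} yields $(-(\D_x+T^{-2}\p_t^2)w,\ \p_\nu w-Hw)$.
\item[(c)] By (a), $D_vM(T,0)=DM^0(0)+T^{-2}DM^1(0)$. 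From (b) we read off $DM^1(0)[w]=(-\p_t^2w,0)$. Differentiating in $T$ then gives $(2T^{-3}\p_t^2w,0)$.
\item[(d)] The equivariance of $F$ and of $N$ (Theorem \ref{thm:N}) gives $M(T,v\circ\g)=M(T,v)\circ\g$. In particular, $G(1)$-invariant $v$ are mapped to $G(1)$-invariant elements of $\Z$.
\item[(e)] If $M(T,v)=0$, then $N_1=N_2=0$, while $N_3(F(v),v)=0$ by the construction of $F$. So $N(T,F(v),v)=0$ with $F(v)\in V'\subset V$, and Lemma \ref{lem:zeros-to-solutions} applies.
\end{enumerate}

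The only step requiring care is the equivariance in the second paragraph. One has to check that the balls $\mathcal U$ and $V'$ are invariant under composition with $\g$, which amounts to showing that $\g$ preserves the weighted norms. One must also check that the uniqueness clause of the implicit function theorem applies on these balls. Everything else is a routine chain-rule computation.
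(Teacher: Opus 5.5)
Your proposal is correct and follows essentially the same route as the paper. Like the paper, you apply the implicit function theorem to $N_3$ at $(0,0)$ using $D_\f N_3(0,0)=-\mathrm{id}_\Y$, get equivariance of $F$ from the $G(\t)$-invariance of the norms (hence of the balls $\mathcal U$, $V'$) plus uniqueness, and obtain (a)--(e) by the chain rule from Theorem~\ref{thm:N}; your reading of $DM^1(0)$ off the affine dependence on $T^{-2}$ in (c) is just a more explicit version of the paper's ``differentiating in $T$''.
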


\begin{proof}
The existence, uniqueness and smoothness of $F$ follow from the Implicit Function Theorem applied to
$N_3:V\times\X\to\Y$ at $(0,0)$ (Lemma~\ref{lem:N3}). Differentiating
$N_3(F(v),v)=0$ at $v=0$ and using \eqref{eq:N3:ders} yields $$-DF(0)[w]+w|_\G=0.$$ Since the norms are $G(\t)$-invariant, the balls
$\mathcal U$ and $V'$ are $G(\t)$-invariant. The equivariance of $F$ then follows from that of $N_3$ and the uniqueness in the Implicit Function Theorem.

Statement (a) follows from Theorem~\ref{thm:N} and the chain rule, with
$$M^i(v):=(N^i_1(F(v),v),N^i_2(F(v),v)), \quad i=0,1.$$ 
For (b), we obtain by the chain rule, \eqref{eq:DvN}, \eqref{eq:DphiN}
and $DF(0)[w]=w|_\G$ that
\begin{align*}
  D_vM(T,0)[w] & =D_\f(N_1,N_2)(T,0,0)[w|_\G]+D_v(N_1,N_2)(T,0,0)[w] \\
   & =\big(-(\D_x+T^{-2}\p_t^2)w, -Hw+\p_\nu w\big). 
\end{align*}
Differentiating then in $T$, we obtain (c). Part (d) is the equivariance of $N$ and $F$, and (e) is immediate.
\end{proof}

\subsection{The linearized operator and the $T$-periodic problem}\label{ss:AT}

The Fredholm theory of Sections \ref{sec:linHP} and \ref{sec:lingen} will be formulated for the Euclidean operator $\mathcal{L}:=(-\D,\ \p_\nu-H)$ acting on $T$-periodic ($G(T)$-invariant) functions. It transfers to the linearized operator $D_vM(T,0)$ through conjugations by the pull-backs $S_T^*$ which, for fixed $T>0$, are bounded automorphisms on $\X$ and $\Z$, mapping $\X_{G(T)}$ onto $\X_{G(1)}$, and $\Z_{G(T)}$ onto $\Z_{G(1)}$.

\begin{coro}\label{cor:conjugation}
Let $\mathcal{L}:=(-\D,\ \p_\nu-H)$. For every $T>0$,
\[
  D_vM(T,0)=S_T^*\circ\mathcal L\circ(S_T^*)^{-1}\quad\text{as operators } \X_{G(1)}\to\Z_{G(1)},
\]
where $\mathcal L:\X_{G(T)}\to\Z_{G(T)}$. In particular, $D_vM(T,0):\X_{G(1)}\to\Z_{G(1)}$ is Fredholm of index $0$ whenever $\mathcal L:\X_{G(T)}\to\Z_{G(T)}$ is. 
\end{coro}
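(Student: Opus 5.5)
The identity is a chain-rule computation. I will verify it on smooth $G(1)$-invariant functions and then check that $S_T^*$ is an isomorphism between the relevant spaces. The Fredholm statement then follows because conjugation by isomorphisms preserves kernel and cokernel dimensions.

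\emph{Step 1 (mapping properties of $S_T^*$).} $S_T(x,t)=(x,Tt)$ fixes the cross-sectional variable $x$. Hence $S_T$ maps $\O$ and $\G$ onto themselves and preserves the weight $\r=\la x\ra$, so $S_T^*$ respects the decay rates in every weighted space. The only subtlety is the local H\"older norms on the rescaled components $[S^R]_{1/4}(X^R)$. The map $S_T$ distorts these balls only in the $t$-direction, by a factor between $\min(1,T)$ and $\max(1,T)$. Covering a distorted ball by boundedly many (depending on $T$) balls of radius $1/4$, and using that $t$-derivatives of order $b$ pick up $T^{b}$, I obtain
\[
\|S_T^*f\|_{C^{k,\a}_\b(S)}\le C(T,k)\|f\|_{C^{k,\a}_\b(S)} .
\]
In Case A the constant summand is untouched. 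The same bound holds for $(S_T^*)^{-1}=S_{1/T}^*$, so $S_T^*$ is a bounded automorphism of $\X$, $\Y$ and $\Z$. Moreover, $S_T$ conjugates $t\mapsto t+1$ to $t\mapsto t+T$, preserves $t\mapsto -t$, and commutes with $G_0$. Therefore $S_T^*$ maps $G(T)$-invariant functions onto $G(1)$-invariant ones, as already noted in Subsection~\ref{ss:setting}.

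\emph{Step 2 (the identity).} Let $w\in\X_{G(1)}$ and set $W:=(S_T^*)^{-1}w$, i.e.\ $W(x,t)=w(x,t/T)$. Then $\D W=(\D_x w+T^{-2}\p_t^2w)\circ S_T^{-1}$. The outer normal $\nu$ is horizontal and $S_T$ acts trivially on $x$. It follows that $(\p_\nu W)\circ S_T=\p_\nu w$ and $(HW)\circ S_T=Hw$, since $H$ depends only on $x$. Applying $S_T^*$ therefore gives
\[
S_T^*\mathcal L (S_T^*)^{-1}w=\big(-(\D_x+T^{-2}\p_t^2)w,\ \p_\nu w-Hw\big)=D_vM(T,0)[w]
\]
by \eqref{eq:DvMbar}.

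\emph{Step 3 (Fredholm conclusion).} $D_vM(T,0)=A\circ\mathcal L\circ B$ with $A,B$ bounded isomorphisms, namely $A=S_T^*:\Z_{G(T)}\to\Z_{G(1)}$ and $B=(S_T^*)^{-1}:\X_{G(1)}\to\X_{G(T)}$. Hence the two operators have isomorphic kernels, and the ranges correspond under $A$, so the cokernels are isomorphic and closedness of the range is preserved. Consequently $\mathcal L$ is Fredholm of index $0$ if and only if $D_vM(T,0)$ is.

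The only mildly delicate point is the norm equivalence in Step 1, because of the component-wise definition of the local norms. Since $S_T$ preserves the connected-component structure in the $x$-variables and is a linear bi-Lipschitz map, this is routine, with constants depending on $T$.
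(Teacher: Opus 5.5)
Your proposal is correct and takes the same route as the paper. The paper's proof is exactly your Step 2: the chain-rule identity $S_T^*(\p_t^2 W)=T^{-2}\p_t^2 w$, with $\D_x$, $\p_\nu$ and multiplication by $H$ commuting with $S_T^*$ because $\nu$ and $H$ are $t$-independent. This is combined with the fact that $S_T^*$ is a bounded automorphism of $\X$ and $\Z$ carrying $G(T)$-invariant functions onto $G(1)$-invariant ones, which the paper simply asserts and which your Step 1 and Step 3 justify in somewhat more detail.
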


\begin{proof}
For $w\in\X_{G(1)}$, let $W:=(S_T^*)^{-1}w\in\X_{G(T)}$. Then $S_T^*(\p_t^2W)=T^{-2}\p_t^2w$, while $\D_x$, $\p_\nu$ and
multiplication by $H$ commute with $S_T^*$ because $\nu$ and $H$ do not depend on $t$. Hence
$S_T^*\mathcal LW=D_vM(T,0)w$ by \eqref{eq:DvMbar}.
\end{proof}

\section{Fredholm properties of the linearized operator in the case of the double hairpin solution}\label{sec:linHP}

In this section we will focus solely on the \textbf{Case A} of the double hairpin solution $u=HP$, $\o=\{HP>0\}$, in $n=2$ dimensions. Fix $T>0$ and denote  $G:=G_0\times G_1(T)$.  The goal is to show that the operator 
\begin{equation}\label{eq:Mlin}
\begin{aligned}
\mathcal{L} :\X_{G}=\mathcal{X}^{2,\a}_{\b,G}(\overline{\O}) &\to \Z_G=C^{0,\a}_{\b+2,G}(\overline{\O})\times C^{1,\a}_{\b+1,G}(\G),\\
\mathcal{L}w &:= (-\D w, \de_\nu w - H w|_\G),
\end{aligned}
\end{equation}
is a Fredholm operator of index $0$, for an appropriate choice of the symmetry group $G_0$ and the decay rate $\b\in (0,1]$. For the remainder of this section we will assume that $G_0\subset O(2)$ leaves $HP$ invariant and \emph{contains the reflection} $x_1\to - x_1$.

\begin{theorem}\label{thm:Fredhp} Let $T>0$, $\b\in (0,1)$ and $G$ be as above. Then the linearized operator $\mathcal{L}$ from \eqref{eq:Mlin} is a Fredholm operator of index $0$, i.e.\ its kernel and cokernel have the same finite dimension.
\end{theorem}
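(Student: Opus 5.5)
The plan is to write $\mathcal{L}=L-K$, with $Lw:=(-\D w,\de_\nu w)$ the Neumann operator and $Kw:=(0,Hw|_\G)$. I will show that $L:\X_G\to\Z_G$ is Fredholm of index $0$ and that $K$ is compact; stability of the Fredholm index under compact perturbations then gives the theorem.

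\emph{Compactness of $K$.} By Corollary \ref{coro:doublehpbounds}, $H\in C^{1,\a}_2(\G)$. For $w=w_0+c\in\X^{2,\a}_\b(\ol\O)$, the product $Hw$ lies in $C^{1,\a}_{2}(\G)$, using $\|Hw_0\|_{C^{1,\a}_{2+\b}}$ and $|c|\,\|H\|_{C^{1,\a}_2}$. The target weight is $\b+1<2$, so there is a gain both in decay ($2>\b+1$) and in regularity ($C^{2,\a}$ traces against $C^{1,\a}$). On a period cell, a bounded sequence therefore has a subsequence converging in $C^{1,\a}$ on compact sets by Arzel\`a--Ascoli. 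The tails outside $\{|x|\le R\}$ are uniformly small in $C^{1,\a}_{\b+1}$, of size $R^{\b-1}$. $G$-invariance, in particular $t$-periodicity, reduces everything to a compact $t$-interval, so $K$ is compact.

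\emph{Fredholm property of $L$.} This is the step I expect to be the main obstacle. I would prove three claims.

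\begin{enumerate}
\item $\ker L=\R$. If $w=w_0+c$ solves $\D w=0$ and $\de_\nu w=0$, test against $w_0\eta_R$ with logarithmic cutoffs $\eta_R$ in $x$ on a period cell. The decay $|\n w_0|\lesssim\la x\ra^{-1-\b}$, together with the linear area growth of $\G$ (Proposition \ref{prop:areagrowth}) and the two-dimensionality of the cross-section, makes the boundary and cutoff terms vanish. Hence $\n w_0=0$ and $w_0=0$.
\item The range is closed and lies in $\ker l$. The inclusion $\operatorname{ran}L\subset\ker l$ follows from the flux identity obtained by testing against logarithmic cutoffs approximating $1$, as described in the introduction.
\item The range is all of $\ker l$. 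Given $(f,g)$ with $l(f,g)=0$, I would first solve variationally. Using $t$-periodicity, decompose into Fourier modes in $t$. For the modes $k\neq0$ the operator $-\D_x+(2\pi k/T)^2$ is coercive on $H^1(\o)$, so Lax--Milgram applies, and the weighted Schauder estimates of Appendix \ref{app:apriori}, combined with barriers, give decay $\la x\ra^{-\b}$ (in fact exponential). For the zero mode one must solve the two-dimensional Neumann problem $-\D_x w=\bar f$, $\de_\nu w=\bar g$ on $\o$ with compatibility $\int\bar f+\int\bar g=0$. Here I would use the conformal map $\Phi:\o\to\mathbb D$ from Remark \ref{rem:disk-confcoords}. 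It turns the problem into a Neumann problem on the disk with data $\bar f|\Phi'|^{-2}$ and $\bar g/H$, singular only at $\pm1$. The solution is bounded, with a constant limit at $\pm1$, and the rate $|x|^{-\b}$ is recovered from the conformal factor: $|x|$ is comparable to $\log(1/|\zeta\mp1|)$-type scales near $\pm1$, and a local expansion of the solution near $\pm1$ gives $w-c=O(\la x\ra^{-\b})$ for $\b<1$. Finally, the weighted Schauder estimates upgrade the solution to $C^{2,\a}_\b$.
\end{enumerate}

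Together these show that $L$ is surjective onto the codimension-one subspace $\ker l$, with a one-dimensional kernel, so $L$ has index $1-1=0$.

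The evenness in $x_1$ contained in $G_0$ enters in the zero-mode analysis. The two ends at $\zeta=\pm1$ are exchanged by $x_1\mapsto -x_1$, so the constants at the two ends coincide. This is exactly what allows a single added $\R$ summand, rather than two, in $\X$. Without this symmetry, the kernel-versus-cokernel count would be off by one: the solutions would tend to two different constants at the two ends, which a single constant summand cannot absorb.
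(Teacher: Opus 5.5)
Your architecture matches the paper's: $\mathcal L=L-K$ with $K$ compact because $H\in C^{1,\a}_2(\G)$; $\ker L=\R$ (your energy argument is a fine substitute for the maximum principle); $\mathrm{im}\,L=\ker l$; splitting off the $t$-average; and using $x_1\mapsto-x_1$ to force one constant at both ends.

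\textbf{The gap.} The step that carries the theorem is the $\la x\ra^{-\b}$ decay of the zero-mode solution, and you do not establish it. The heuristic you give for it is also wrong. In the strip coordinates $z=F(y)$ one has $\Phi=U'(z)=\tanh(y/2)$, hence $|\zeta\mp1|\simeq e^{-|y_1|}\simeq\la x\ra^{-1}$. It is $y_1$, not $|x|$, that is logarithmic in $|\zeta\mp1|$.

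Consequently $|\Phi'|\simeq\la x\ra^{-2}$ and $H\simeq\la x\ra^{-2}$. Your transplanted data therefore blow up at $\pm1$: like $|\zeta\mp1|^{\b-2}$ in $\mathbb D$ and like $|\zeta\mp1|^{\b-1}$ on $\de\mathbb D$. The conclusion $w-c=O(\la x\ra^{-\b})$ is then equivalent to the sharp bound $|\tilde w(\zeta)-\tilde w(\pm1)|\lesssim|\zeta\mp1|^{\b}$.

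Boundedness of $\tilde w$ and existence of limits at $\pm1$, which is all you assert, do not give this. Under your logarithmic scaling any modulus of continuity would have seemed enough, which hides exactly where $\b<1$ is needed.

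\textbf{How to close it.} Estimate the Neumann-function representation of $\tilde w$ near $\pm1$. On $\{|\eta\mp1|>2|\zeta\mp1|\}$, use $|N(\zeta,\eta)-N(\pm1,\eta)|\lesssim|\zeta\mp1|/|\eta\mp1|$ together with $\int_{2|\zeta\mp1|}^1 r^{\b-2}\,dr\lesssim|\zeta\mp1|^{\b-1}$; this is where $\b<1$ enters. On the complementary region, use the scale invariance of $\log(|\zeta-\eta|/|\eta\mp1|)$.

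This is precisely the content of the bounds for $A$ and $B$ in Proposition~\ref{prop:keyN}. The paper carries them out in the strip $\I$, with target $e^{-\b|y_1|}$, after first removing $\bar f$ by a Dirichlet solve using the barrier $e^{-\b|y_1|}\cos(\b_0y_2)$ of Lemma~\ref{lem:supersol}. Once this estimate is supplied, your disk picture is an equivalent variant, since $\zeta=\tanh(y/2)$ maps $\I$ onto $\mathbb D$.

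\textbf{The modes $k\neq0$.} Your treatment here also needs repair. Solving each mode by Lax--Milgram and bounding it with barriers does not sum to a solution in $C^{2,\a}_\b(\ol\O)$, because mode-wise Schauder and barrier bounds are not summable for merely H\"older data.

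Treat all $t$-mean-zero data at once, as the paper does. Work in the space $\H$ of $t$-mean-zero functions in $H^1_{\rm per}(\O)$, using the Poincar\'e inequality in $t$ and the trace inequality of Proposition~\ref{prop:tracePer}. Then local boundedness on pieces of size $R$, together with the periodicity bound $\|w_0\|_{L^2(S')}\lesssim R^{1/2}$, gives $|w_0|\lesssim R^{-\b}+R^{-1}\lesssim\la x\ra^{-\b}$ for $\b\le1$. Lemma~\ref{lem:Neum-apriori-prod} upgrades this to $C^{2,\a}_\b$.

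Incidentally, these modes decay like the data, not exponentially.
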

\begin{proof}
Let us decompose the formula for $\mathcal{L}$ into 
\begin{equation}\label{eq:decomp}
\mathcal{L} = L - K, \quad \text{where} \quad L w:= (-\D w, \de_\nu w)  \quad\text{and}\quad  Kw = (0,Hw), 
\end{equation}
the Neumann problem operator part $L$ and the perturbation $K$. To establish the theorem, it suffices to prove that, when the decay rate $\b\in(0,1)$, $L$ is a Fredholm operator of index $0$, while $K$ is a compact operator. This will be achieved in Theorem \ref{thm:Fred} and Lemma \ref{lem:compact} below, respectively.
\end{proof}

\begin{lemma}\label{lem:compact} Let $\b\in (0,1]$. 
The linear operator $K:\mathcal{X}^{2,\a}_{\b,G}(\overline{\O}) \to C^{0,\a}_{\b+2,G}(\overline{\O})\times C^{1,\a}_{\b+1,G}(\G)$ is compact.
\end{lemma}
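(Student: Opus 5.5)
The plan is to split $K$ into a rank-one part, coming from the constants in $\X^{2,\a}_{\b}(\ol{\O})=C^{2,\a}_\b(\ol{\O})\oplus\R$, and a part that gains both one derivative and one full power of decay. I will then prove compactness of the second part by a local Arzel\`a--Ascoli argument plus a uniform tail estimate. Write $w=w_0+c$ with $w_0\in C^{2,\a}_{\b,G}(\ol{\O})$ and $c\in\R$. Then $Kw=(0,Hw_0|_\G)+c\,(0,H)$. By \eqref{eq:H-A}, $H\in C^{1,\a}_2(\G)\emb C^{1,\a}_{\b+1}(\G)$ for $\b\le 1$, and $H$ is $G$-invariant because $G_0$ leaves $HP$ invariant. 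Hence $w\mapsto c\,(0,H)$ is bounded and of rank one, so it is compact. It therefore suffices to show that $K_0:w_0\mapsto (0,Hw_0|_\G)$ is compact from $C^{2,\a}_{\b,G}(\ol{\O})$ into $C^{1,\a}_{\b+1,G}(\G)$.

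\textbf{Tail estimate.} Take $X=(x,t)\in\G$ with $R=\la x\ra$. By Corollary \ref{coro:doublehpbounds}, the rescaled curvature satisfies $\|H_R\|_{C^{1,\a}([\G^R]_{1/4}(X^R))}\le C R^{-2}$. Covering the component of radius $1/4$ by boundedly many of the $\d$-pieces, and using the uniform point-comparability $\la y\ra\sim R$ there, gives this bound. By the definition of the weighted norm, $\|(w_0)_R\|_{C^{2,\a}([\G^R]_{1/4}(X^R))}\le R^{-\b}\|w_0\|_{C^{2,\a}_\b}$. The product estimate then gives
\[
R^{\b+1}\|(Hw_0)_R\|_{C^{1,\a}([\G^R]_{1/4}(X^R))}\le C R^{\b+1}R^{-2}R^{-\b}\|w_0\|_{C^{2,\a}_\b}=C R^{-1}\|w_0\|_{C^{2,\a}_\b}.
\]
The exponent $-1$ comes entirely from the improved decay $H=O(\la x\ra^{-2})$ and does not depend on $\b$. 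So the estimate holds uniformly for every $\b\in(0,1]$, including $\b=1$. Consequently, the part of the $C^{1,\a}_{\b+1}(\G)$-norm of $Hw_0$ coming from points with $\la x\ra\ge \rho$ is at most $C\rho^{-1}\|w_0\|$.

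\textbf{Compact part and diagonal argument.} Let $(w_k)$ be bounded in $C^{2,\a}_{\b,G}(\ol{\O})$. Since every $w_k$ is invariant under $t\mapsto t+T$, it suffices to control the sup over $X$ in the period slab $\G\cap\{|t|\le T\}$. Fix $\rho$. On the compact set $\G\cap\{\la x\ra\le 2\rho,\ |t|\le 2T\}$, the traces $w_k|_\G$ are bounded in $C^{2,\a}$ with respect to the uniformly smooth local graph charts of Corollary \ref{coro:doublehpbounds}. By Arzel\`a--Ascoli, a subsequence converges in $C^{2}$, hence in $C^{1,\a}$ (since bounded $C^{2}$ sets are precompact in $C^{1,\a}$ on compact sets). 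Multiplying by the fixed smooth $H$ preserves this, and so $Hw_k$ is Cauchy in the weighted norm restricted to $\la x\ra\le\rho$; the weights $R^{\b+1}$ there are bounded by $\rho^{\b+1}$. Taking $\rho=\rho_j\to\infty$ and extracting a diagonal subsequence, we obtain
\[
\limsup_{k,l}\|H(w_k-w_l)\|_{C^{1,\a}_{\b+1}(\G)}\le C\rho_j^{-1}\sup_k\|w_k\|
\]
for every $j$. Thus $(Hw_k)$ is Cauchy, and its limit is $G$-invariant because $C^{1,\a}_{\b+1,G}$ is closed.

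The main technical point is the tail estimate, specifically checking that the local norms on the components $[\G^R]_{1/4}(X^R)$ are controlled by the curvature bounds of Corollary \ref{coro:doublehpbounds}, even though the zero phase of $HP$ is thin; this is why the norms are defined via connected components (Remark \ref{rem:conncomp}). Once this is settled, the decay gain $R^{-1}$ is independent of $\b$, and the argument covers the whole range $\b\in(0,1]$.
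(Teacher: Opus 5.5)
Your proof is correct, but the compactness comes from a different factor than in the paper.

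The paper first asserts that the trace $v\mapsto v|_\G$ is compact from $C^{2,\a}_{\b,G}(\ol{\O})$ into the \emph{unweighted} space $C^{1,\a}_{0,G}(\G)$. There the uniform tail comes from the decay $\b>0$ of $v$ itself. It then only needs that multiplication by $H\in C^{1,\a}_2(\G)$ is bounded from $C^{1,\a}_0(\G)$ into $C^{1,\a}_2(\G)\emb C^{1,\a}_{\b+1}(\G)$, which is where $\b\le 1$ enters. It handles the constants via $c_k\to c$, which is equivalent to your rank-one splitting.

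You instead keep $w_0$ merely bounded and take the tail from the surplus decay of $H$. The product $Hw_0$ lands in weight $\b+2$, one power more than the target $\b+1$, so the gain $R^{-1}$ is independent of $\b$. As a result, your argument for $K_0$ works for every $\b\ge 0$, and $\b\le 1$ is needed only for the constant part. This is the same mechanism the paper uses in Case B (Proposition \ref{prop:compactop}), where the smallness of $\|H\|_{C^{1,\a}_1}$ at infinity supplies the tail.

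The paper's version is shorter because it packages the local Arzel\`a--Ascoli and diagonal argument into the stated, but unproved, compactness of the trace embedding. Proving that embedding requires exactly the local-plus-tail argument you wrote out.

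One small point needs a word. For centres with $\la x\ra$ up to $\rho$, the neighbourhoods $[\G^R]_{1/4}(X^R)$ have $t$-extent of order $\rho/4$ in unscaled variables. So the window $|t|\le 2T$ does not contain them once $\rho\gg T$. Either enlarge the window to $|t|\le T+\rho$, or note that, by $T$-periodicity, H\"older quotients of pairs far apart in $t$ are controlled by the converging sup norms of $H(w_k-w_l)$ and its derivative.
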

\begin{proof}
First, we note that the restriction operator $v\to v|_\G$ is compact from $C^{2,\a}_{\b,G}(\overline{\O})$ into $C^{1,\a}_{0,G}(\G)$, as $\b>0$. Hence, taking a bounded sequence $\{w_k = c_k + v_k\}$ in $\mathcal{X}^{2,\a}_{\b,G}(\ol{\O})$, we can extract a subsequence (relabelling indices) such that $c_k\to c$ in $\R$ and  $v_k|_\G \to \f$ in $C^{1,\a}_{0, G}(\G)$, for some $c \in \R$ and $\f\in C^{1,\a}_{\b, G}(\G)$. Since $H\in C^{1,\a}_{2,G}(\G)$ because of \eqref{eq:H-A}, we see that $H c_k \to Hc$ in $C^{1,\a}_{2, G}(\G)$ and $H v_k|_\G \to H\f$ in $C^{1,\a}_{2, G}(\G)$. As $\b+1\leq 2$ for $\b\in (0,1]$, we deduce that
\begin{equation*}
H w_k = H c_k + H v_k|_\G \to H(c+\f) \quad \text{in } C^{1,\a}_{\b+1, G}(\G),
\end{equation*}
concluding the proof that $K$ is compact. 
\end{proof}

We will now address the Fredholm property of $L$. For the purpose, denote by 
$$
\O_\text{per} := \o \times (-T/2, T/2), \qquad \G_\text{per}:= \G\times (-T/2, T/2),
$$
the periodicity cells for $\O$ and $\G$, associated to the $T$-translation in the axial variable. Denote further the truncations $$\O_{\text{per},R}:=\{(x,t)\in \O_\text{per}: |x|<R\} \quad \text{and} \quad \G_{\text{per},R}=\{(x,t)\in \G_\text{per}: |x|<R\}.$$ We remark that a function $f\in C^{0,\a}_{\b+2}(\overline{\O})$ belongs to $L^1(\O_\text{per})$, since $|f(x,t)|\lesssim|x|^{-2-\b}$ while the Lebesgue measure $|\O_{\text{per}, R}| \lesssim T R^2$, grows (at most) quadratically in $R$. Similarly, $g\in C^{1,\a}_{\b+1}(\G)$ belongs to $L^1(\G_\text{per}, d\s)$, since $|g(x,t)|\lesssim |x|^{-1-\b}$, while the Euclidean area $\sigma(\G_{\text{per},R})\lesssim T R$ on account of Proposition \ref{prop:areagrowth}. Hence, 
\begin{equation}\label{eq:constraint}
l(f,g):=\int_{\O_\text{per}} f\,dX + \int_{\G_\text{per}} g\, dS
\end{equation}
defines a bounded linear functional on $C^{0,\a}_{\b+2,G}(\overline{\O})\times C^{1,\a}_{\b+1,G}(\G)$. This section's principal result states:

\begin{theorem}\label{thm:Fred} Let $\b\in (0,1)$ and $G$ be as in Theorem \ref{thm:Fredhp}. The operator $$L = (-\D, \de_\nu):  C^{2,\a}_{\b, G}(\overline{\O}) \oplus \R \to C^{0,\a}_{\b+2,G}(\overline{\O})\times C^{1,\a}_{\b+1,G}(\G)$$
has a one-dimensional kernel spanned by the constants $\R$ and its image 
$$
\text{im}\, L = \text{ker }l:= \{(f,g)\in C^{0,\a}_{\b+2,G}(\overline{\O})\times C^{1,\a}_{\b+1,G}(\G): l(f,g)=0\}
$$
is closed of codimension $1$. As a consequence, $L$ is a Fredholm operator of index $0$. 
\end{theorem}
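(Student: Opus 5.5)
The proof has three parts: the kernel, the inclusion $\im L\subset\ker l$, and the reverse inclusion. The third is the real work.

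\emph{Kernel.} Let $w=c+v$ with $v\in C^{2,\a}_{\b,G}(\ol\O)$ and $Lw=0$. Then $v$ is harmonic in $\O$, has zero Neumann data on $\G$, is $T$-periodic in $t$, and decays like $\la x\ra^{-\b}$. Test the equation against $v\eta_R$ on the period cell, where $\eta_R$ is a cutoff in $x$. The boundary term vanishes and the cutoff error is bounded by $\int_{\O_{\text{per}}}|\n v||v||\n\eta_R|\lesssim R^{-1-2\b}\cdot TR^2\cdot R^{-1}\to 0$. Hence $\n v\equiv0$, so $v$ is constant, and the decay forces $v=0$. The kernel is therefore exactly $\R$.

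\emph{$\im L\subset\ker l$.} Take $(f,g)=Lw$ and integrate by parts on $\O_{\text{per},R}$; the lateral faces $t=\pm T/2$ cancel by periodicity. This gives $l(f,g)=\lim_R\int_{\{|x|=R\}\cap\O_{\text{per}}}\de_r w$. The integrand is $O(R^{-1-\b})$ and the surface measure of $\{|x|=R\}\cap\O$ is $O(TR)$, so the limit is $0$. Since $l$ is bounded, $\ker l$ is closed of codimension $1$ ($l\neq0$, e.g.\ on $f$ positive and compactly supported). Once the reverse inclusion is proved, $\dim\ker L=1=\operatorname{codim}\im L$, so $L$ is Fredholm of index $0$.

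\emph{$\ker l\subset\im L$.} Given $(f,g)\in\ker l$, I would proceed in two steps.

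First, I would obtain a weak solution. Decompose $f,g$ in Fourier modes $e^{2\pi i kt/T}$; evenness reduces these to cosines. For $k\neq0$, the operator $-\D_x+(2\pi k/T)^2$ with Neumann data is coercive on $H^1(\o)$. For $k=0$ the problem is the two-dimensional Neumann problem on $\o$, and I would pull it back to the disk via the conformal map $\Phi$ of Remark \ref{rem:disk-confcoords}. Dirichlet energy is conformally invariant, the boundary measure transforms by $H$ through \eqref{eq:transform}, and the interior density by $|\Phi'|^2$. The condition $l=0$ is exactly the compatibility condition for the Neumann problem on $\mathbb D$, so Lax--Milgram on $H^1(\mathbb D)/\R$ gives a finite-energy solution. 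Using the decay of $f$ and $g$ together with $|\Phi'|\sim\la x\ra^{-2}$, I would check that the transformed data lie in the dual of $H^1(\mathbb D)$, for instance through the trace inequalities of Appendix \ref{app:Sobolev}. The $G_0$- and $G_1$-symmetry is kept by averaging the solution over the group.

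Second, I would upgrade the weak solution to the weighted H\"older class, showing $w=c+v$ with $v\in C^{2,\a}_{\b}$. For the modes $k\neq0$, exponential decay in $|x|$ is expected. The main obstacle is the $k=0$ mode: one must show that the solution of the disk Neumann problem is H\"older continuous at the two punctures $\pm1$. A limit value $c$ there, with the same value at both punctures because of the $x_1\mapsto -x_1$ symmetry in $G_0$, corresponds to the constant at infinity, and the rate $|\zeta\mp1|^{\b/2}$ translates back into $|x|^{-\b}$. I would first try explicit barriers near $\pm1$ of the form $\mathrm{Re}((1\mp\zeta)^{\g})$ for suitable $\g$, with $\b<1$ leaving room below the critical exponent. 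These give $|v|\lesssim\la x\ra^{-\b}$, after which the a priori rescaled Schauder/Neumann estimates of Appendix \ref{app:apriori}, combined with Corollary \ref{coro:doublehpbounds}, yield the full $C^{2,\a}_\b$ bound.
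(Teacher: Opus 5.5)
Your kernel argument is correct: testing against $v\eta_R$ leaves an error of order $O(TR^{-2\b})$, and it is a valid substitute for the paper's maximum principle/Hopf argument. Your proof of $\im L\subset\ker l$ is also correct. The gap is in the decisive step, the decay of the $k=0$ mode at the punctures $\pm1$: comparison with barriers cannot produce it.

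For the Neumann problem there is no function $h\geq0$ near a puncture with $-\D h\geq0$ in $\mathbb{D}$, $\de_\nu h\geq0$ on $\de\mathbb{D}$ and $h\to0$ at the puncture, other than $h\equiv0$. To see this, work in strip coordinates $y=F^{-1}(x)$, take $s_0>0$ and set $a:=\min_{\{y_1=s_0\}}h$. The function $h-a+\e y_1$ is superharmonic, has $\de_\nu\geq0$ on $\{y_2=\pm\pi/2\}$, is nonnegative on $\{y_1=s_0\}$ and tends to $+\infty$. The minimum principle and the Hopf lemma then give $h\geq a-\e y_1$. Letting $\e\to0$ gives $h\geq a$ on $\{y_1>s_0\}$, so $h\to0$ forces $a=0$ for every $s_0$, and the strong minimum principle and Hopf lemma give $h\equiv0$.

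Now apply this to $h=\Psi+\Psi'$, where $-\Psi'\leq v-c\leq\Psi$ is any pair of upper and lower barriers vanishing at the puncture. It shows such barriers must coincide with $v-c$ itself. Concretely, $\mathrm{Re}((1\mp\zeta)^{\g})$ with $\g\in(0,1)$ is positive and harmonic, but its outer normal derivative is $\approx-\g\sin(\g\pi/2)|1\mp\zeta|^{\g-1}<0$, which is the wrong sign. This is the two-dimensional flux obstruction, and it is why the paper uses no Neumann barrier in Case A.

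There is also an exponent slip. Since $\Phi=\tanh(y/2)$ and $x=y+\sinh y$, one has $|\zeta\mp1|\sim\la x\ra^{-1}$, so the rate you need is $|\zeta\mp1|^{\b}$, not $|\zeta\mp1|^{\b/2}$. The transformed data have size $|\zeta\mp1|^{\b-1}$ on $\de\mathbb{D}$ and $|\zeta\mp1|^{\b-2}$ in $\mathbb{D}$, which is exactly critical for $C^{\b}$ at the puncture.

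What is needed is a representation or regularity argument instead of comparison. The paper first removes $f$ by solving the Dirichlet problem on $\o$, where decaying barriers do exist: $e^{-\b|y_1|}\cos(\b_0y_2)$ with $\b<\b_0<1$ in strip coordinates. It then solves the harmonic Neumann problem with data $g_2=g-\de_\nu w_1$ explicitly, using the Neumann Green's function of the strip. The zero-flux condition removes the constant part of its linear growth, and evenness in $y_1$ removes the term $\frac{1}{2\pi}\int y_1'g_*\,dy_1'$, leaving an $O(e^{-\b|y_1|})$ remainder.

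In your disk picture, after the same Dirichlet step, a short alternative is to pass to the harmonic conjugate $\tilde v$ of $v$. Since $\de_\theta\tilde v=\de_r v=\tilde g$ on $\de\mathbb{D}$, the boundary values $\tilde v=\int^\theta\tilde g$ are single-valued by compatibility. They are of class $C^{\b}$ because $|\tilde g|\lesssim|\zeta\mp1|^{\b-1}$. Privalov's theorem then gives $v\in C^{\b}(\ol{\mathbb{D}})$, hence $|v-v(\pm1)|\lesssim\la x\ra^{-\b}$. Moreover $v(1)=v(-1)$ by the symmetry $\zeta\mapsto-\bar\zeta$ induced by $x_1\mapsto-x_1$, as you correctly anticipated.

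Finally, your treatment of the modes $k\neq0$ is too loose. Forced modes do not decay exponentially, only like the data, and summing them requires bounds uniform in $k$. The paper handles the whole zero-mean part at once in $\H$, using the Poincar\'e inequality in $t$, the trace inequality, local boundedness and periodicity.
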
 
\begin{proof}
Clearly, constant functions belong to the kernel of $L$. That  $\text{im}\, L\subset \text{ker }l$ is a consequence of the Divergence theorem. Indeed, applying it to $-\D w$  in the expanding domains $\O_{\text{per},R}$, where $w=c+v\in \mathcal{X}^{2,\a}_{\b,G}(\overline{\O})$, we get due to periodicity:
\begin{equation}\label{thm:Fred:eq:div}
\int_{\O_{\text{per},R}} -\D (c+v) \,dX = -\int_{\de\O_{\text{per},R}} \de_{\nu} v \, dS  = - \int_{\G_{\text{per},R}} \de_{\nu} v \, dS - \int_{S_R}  \de_{\nu} v \, dS,
\end{equation}
where $S_R:=\{(x,t)\in \O_{\text{per}}: |x|=R\}$. Since $\int_{S_R}  \de_{\nu}  v\, d\s \lesssim R\times R^{-1-\b} = R^{-\b}\to 0$ as $R\to \infty$, and $(-\D v, \de_\nu v)\in C^{0,\a}_{\b+2}(\ol{\O})\times C^{1,\a}_{\b+1}(\G) \subset L^1(\O_\text{per})\times L^1(\G_\text{per}),$ we obtain that 
\[
\int_{\O_{\text{per}}} -\D (c+v) \,dX + \int_{\G_{\text{per}}} \de_{\nu} (c + v) \, dS = 0,
\]
by taking the limit $R\to \infty$ in \eqref{thm:Fred:eq:div} above. Since $l$ is a bounded linear functional, $\text{ker } l$ is a closed subspace of $C^{0,\a}_{\b+2,G}(\overline{\O})\times C^{1,\a}_{\b+1,G}(\G)$ of codimension $1$. 

Therefore, we will establish the statement of the theorem once we prove that the restricted operator $\bar{L}:=L|_{C^{2,\a}_{\b, G}(\overline{\O})}$ is a topological isomorphism onto $\text{ker }l$. Since $\bar{L}$ is a bounded linear operator, the Open Mapping Theorem tells us that we only need to show that $\bar{L}$ is both injective and surjective. This will be accomplished in Propositions \ref{prop:inj} (injectivity) and \ref{prop:surj} (surjectivity) below. 
\end{proof}

\begin{prop}\label{prop:inj} Let $\b>0$. If $w(x,t)\in C^{2,\a}_{\b}(\ol{\O})$ is $T$-periodic in the axial variable $t$ and solves
\begin{equation}\label{prop:inj:eq}
\left\{
\begin{aligned}
-\Delta w &= 0 && \text{in } \Omega,\\
\de_\nu w &= 0 && \text{on } \G,
\end{aligned}
\right.
\end{equation}
then  $w\equiv 0$. In particular, the linear operator $\bar{L}:=(-\D, \de_\nu): C^{2,\a}_{\b, G}(\overline{\O})\to \text{ker }l$ is injective.
\end{prop}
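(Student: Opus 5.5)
The plan is an energy argument on a period cell: test the equation against $w$ itself with a cutoff in the cross-sectional variable, and use the decay $w\in C^{2,\a}_\b$ to kill the error terms. Let $\eta_R(x)$ be a cutoff in $x$ only, with $\eta_R\equiv 1$ on $B_R$, $\eta_R\equiv 0$ outside $B_{2R}$, and $|\n\eta_R|\le C/R$. Multiply $-\D w=0$ by $w\eta_R^2$ and integrate over $\O_{\text{per}}$. By $T$-periodicity in $t$, the lateral contributions at $t=\pm T/2$ cancel, and the Neumann condition $\de_\nu w=0$ removes the boundary integral over $\G_{\text{per}}$. This leaves
\[
\int_{\O_{\text{per}}}|\n w|^2\eta_R^2\,dX=-2\int_{\O_{\text{per}}}\eta_R\, w\,\n w\cdot\n\eta_R\,dX .
\]

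The right-hand side is supported in $\{R\le|x|\le 2R\}\times(-T/2,T/2)$. There $|w|\lesssim R^{-\b}$, $|\n w|\lesssim R^{-1-\b}$ and $|\n\eta_R|\lesssim R^{-1}$. The cross-sectional area $|\o\cap B_{2R}|\lesssim R^2$ since $n=2$. Hence the right-hand side is $O(T R^{2}\cdot R^{-\b}R^{-1-\b}R^{-1})=O(R^{-2\b})\to 0$. Letting $R\to\infty$ gives $\int_{\O_{\text{per}}}|\n w|^2=0$, so $\n w\equiv 0$ on each connected component of $\O$.

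Since $\o$ (the double hairpin domain) is connected, $\O=\o\times\R$ is connected and $w$ is constant. A constant in $C^{2,\a}_\b(\ol\O)$ with $\b>0$ must vanish, because $|w(x,t)|\lesssim\la x\ra^{-\b}\to 0$ along the unbounded set $\o$. For the injectivity of $\bar L$, any $w$ in its kernel is $G$-invariant and in particular $T$-periodic, so the first part applies.

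There is no real obstacle; the only point to check is that the decay beats the quadratic growth of the cross-section. Here it does so comfortably, with a margin of $R^{-2\b}$, exactly because $n=2$. In higher dimensions the same argument would need $2\b>n-2$. Some care is needed that the integration by parts over the piecewise smooth truncated cell is legitimate. It is, since $w\in C^{2,\a}_{\rm loc}$ up to $\G$ and the cutoff has compact support in $x$.
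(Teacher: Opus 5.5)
Your energy argument is correct for the statement as posed (Case A, $n=2$), but it takes a different route from the paper.

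The paper uses the maximum principle. A nontrivial $w$, say positive somewhere, is continuous, $T$-periodic and tends to $0$ as $|x|\to\infty$, so it attains a positive maximum on $\ol{\O}$. If the maximum is interior, the strong maximum principle makes $w$ constant, which contradicts the decay. If it lies on $\G$, the Hopf lemma gives $\de_\nu w(X_0)>0$, which contradicts the Neumann condition.

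That argument uses only $w\to 0$ at infinity. It therefore works for every $\b>0$ and in every dimension, with no volume-growth bookkeeping. This is why the paper can reuse it verbatim for injectivity in Theorem \ref{thm:iso-higher}, where $n\ge 3$ and $\b\in\left(\frac{n-3}{2},\min\left(\frac n2,n-2\right)\right)$.

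As you note yourself, your test-function argument needs $2\b>n-2$ to beat the $R^{n}$ growth of the cross-section. So it would not cover, for instance, $n=3$ with $\b\in(0,1/2]$, or $n=4$ with $\b\in(1/2,1]$.

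In exchange, your argument needs neither attainment of a maximum nor the Hopf lemma, hence no interior ball condition on $\G$. It exploits the gradient decay built into the weighted norm. It is also the kind of argument that survives for weak solutions or operators without a maximum principle. For $n=2$ it is a fully valid alternative.
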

\begin{proof}
Assume that $w(x,t)\in C^{2,\a}_{\b}(\overline{\O})$ is a nontrivial solution to \eqref{prop:inj:eq} that is $T$-periodic in $t$. Without loss of generality, we may assume that it attains a positive value somewhere in $\ol{\O}$. Since $w(x,t)$ is continuous in $\overline{\O}$, $T$-periodic in $t$ and $w(x,t)\lesssim |x|^{-\b} \to 0$ as $x\to \infty$, then $w$ attains a positive maximum over $\overline{\O}$, say at a point $X_0\in \overline{\O}$. If $X_0\in \O$, then by the strict maximum principle, $w(X) = w(X_0)>0$ for all $X\in \O$, which contradicts the decay of $w(x,t)$ as $x\to \infty$. If $X_0\in \G$, then the Hopf Lemma implies that $\de_\nu w(X_0)>0$, which violates the zero Neumann condition for $w$. Hence, it must be that $w\equiv 0$. 
\end{proof}

The proof of the surjectivity of the operator $\bar{L}$, i.e.\ that the inhomogeneous problem
\begin{equation}\label{eq:inhom}
\left\{
\begin{aligned}
-\D w &= f_1 && \text{in } \Omega,\\
\de_\nu w &= g_1 && \text{on } \G,
\end{aligned}
\right.
\end{equation}
is solvable within $C^{2,\a}_\b(\ol{\O})$ for every choice of data $(f_1,g_1)\in \text{ker }l$ and $\b\in(0,1)$, is more intricate. First, we will decompose each $(f_1,g_1)\in \text{ker }l$ into a part $(\bar{f}, \bar{g})$, that is constant in $t$, and a remainder that is mean zero in $t$:
\begin{equation*}
(f_1,g_1) = (\bar{f}, \bar{g}) + (f_0, g_0),
\end{equation*}
where $\bar{f}(x,t)=f(x)$ and $\bar{g}(p,t)=g(p)$ are effectively functions, defined on $\o$ and $\de\o$, respectively: 
\begin{equation*}
f(x):=\frac{1}{T}\int_{-T/2}^{T/2} f_1(x,t)\, dt  \quad\text{for } x\in \o \quad \text{and}\quad g(p) := \frac{1}{T}\int_{-T/2}^{T/2} g_1(p,t)\,dt \quad\text{for } p\in \de\o.
\end{equation*}
We will then separately explore the solvability of \eqref{eq:inhom} with a right hand side $(\bar{f},\bar{g})$ and a right-hand side $(f_0, g_0)$. Note that since $\bar{f}(x,t), \bar{g}(x,t)$ are constant in $t$, the Neumann problem in the former case reduces to one on the double hairpin domain $\o$.   

\subsection{The solvability of the Neumann problem on the double hairpin domain.}

Recall that the symmetry group is $G=G_0\times G_1(T)$. Taking averages of $(f_1,g_1)\in \text{ker }l$, we clearly get $(f,g)\in \bar{C}^{0,\a}_{\b+2, G_0}(\overline{\o})\times \bar{C}^{1,\a}_{\b+1, G_0}(\de\o)$. Moreover, by Fubini's theorem the pair $(f,g)$ satisfies the natural compatibility condition
\begin{equation}\label{eq:compatible}
\int_\o f \, dx + \int_{\de\o} g\, d\s = \frac{1}{T}\left( \int_{\O_\text{per}} f_1 \,dX + \int_{\G_\text{per}} g_1\,dS \right) = 0.
\end{equation}

We would like to show that the Neumann boundary value problem on $\o \subset \R^2$,
\begin{equation}\label{eq:inhomhp}
\left\{
\begin{aligned}
-\Delta w &= f && \text{in } \o,\\
\de_\nu w &= g && \text{on } \de\o,
\end{aligned}
\right.
\end{equation}
is solvable in $\bar{C}^{2,\a}_{\b,G_0}(\o)$ for any data $(f,g)\in \bar{C}^{0,\a}_{\b+2, G_0}(\overline{\o})\times \bar{C}^{1,\a}_{\b+1, G_0}(\de\o)$, satisfying \eqref{eq:compatible}, for decay rates $\b \in (0,1)$. Namely, we will establish the following theorem.

\begin{theorem}\label{thm:solv} Let $\b\in(0,1)$. Assume that $(f,g)\in \bar{C}^{0, \a}_{\b+2, G_0}(\overline{\o})\times \bar{C}^{1,\a}_{\b+1, G_0}(\de\o)$ satisfies the compatibility condition \eqref{eq:compatible}. Then the problem \eqref{eq:inhomhp} has a unique solution $w\in \bar{C}^{2,\a}_{\b, G_0}(\overline{\o})$.
\end{theorem}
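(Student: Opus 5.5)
The natural route is the conformal change of variables of Remark \ref{rem:disk-confcoords}, which turns the Neumann problem on the double hairpin domain into a Neumann problem on the unit disk. Let $y=\Phi(z)=2\de_z u$, mapping $\ol{\o}$ onto $\ol{\mathbb{D}}\setminus\{\pm1\}$. Since $\Phi$ is conformal, $w\mapsto v:=w\circ\Phi^{-1}$ transforms \eqref{eq:inhomhp} into
\[
-\D_y v = \tilde f:=f\circ\Phi^{-1}\,|(\Phi^{-1})'|^2 \ \text{ in } \mathbb{D},\qquad \de_r v=\tilde g:=g\circ\Phi^{-1}\,|(\Phi^{-1})'| \ \text{ on } \de\mathbb{D},
\]
where the boundary factor uses \eqref{eq:transform}: $|(\Phi^{-1})'|=1/H$ on $\de\mathbb{D}$. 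The compatibility condition \eqref{eq:compatible} is conformally invariant and becomes $\int_{\mathbb{D}}\tilde f+\int_{\de\mathbb{D}}\tilde g=0$. Uniqueness follows from Proposition \ref{prop:inj} applied to $t$-independent functions, or directly from the maximum principle and the Hopf lemma as there. The real work is existence together with the weighted bound.

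\emph{Step 1 (asymptotics of the conformal map).} First I will quantify the behaviour of $\Phi$ near the punctures. From $z=y+\sinh y$ on the strip and $\Phi=\sinh y/(1+\cosh y)=\tanh(y/2)$, we have $1-\Phi\approx 2e^{-y}$ as $\text{Re}\, y\to+\infty$. Hence $|x|\approx |e^{y}|/2\approx |1-\Phi|^{-1}$ near $+1$, and similarly near $-1$. It follows that $|(\Phi^{-1})'(\z)|\approx |\z\mp1|^{-2}$. The data then satisfy
\[
|\tilde f(\z)|\lesssim |\z\mp1|^{\b+2}|\z\mp1|^{-4}=|\z\mp1|^{\b-2},\qquad |\tilde g|\lesssim |\z\mp1|^{\b+1-2}=|\z\mp1|^{\b-1}.
\]
Both are integrable in $\mathbb{D}$ and on $\de\mathbb{D}$ respectively, since $\b>0$.

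\emph{Step 2 (solving on the disk).} I will write $v=v_1+v_2$. Here $v_1=\int_{\mathbb{D}}G_N(\cdot,\eta)\tilde f(\eta)\,d\eta+\int_{\de\mathbb{D}}G_N\tilde g$, using the explicit Neumann Green's function of the disk, $G_N(\z,\eta)=-\frac{1}{2\pi}(\log|\z-\eta|+\log|\,|\eta|\z-\eta/|\eta|\,|)$ plus a constant. This is legitimate because of the compatibility condition. The constant is fixed so that $v$ vanishes at the punctures, which is needed since $w$ should decay. The key estimate is a pointwise bound $|v(\z)|\lesssim|\z\mp1|^{\b}$ near the punctures, so that $|w(x)|\lesssim|x|^{-\b}$. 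I will obtain it by a dyadic decomposition of the data around $\pm1$. Let $A_j$ be the annuli $|\eta-1|\sim 2^{-j}$. The mean-free part of the data on $A_j$ contributes a dipole-type term bounded by $2^{-j\b}\min(1,(2^{-j}/|\z-1|))$. The total mass within the ball $B_r(1)$ is $O(r^{\b})$, because compatibility lets us rewrite it as minus the mass outside that ball; this is what makes $v(1)$ well defined. Summing over $j$ gives $|v(\z)-v(1)|\lesssim|\z-1|^{\b}$, where the condition $\b<1$ is used to sum the dipole tails. \textbf{This step is the main obstacle}, since $\b\in(0,1)$ must be exactly the range where the geometric series converge, and both punctures have to be handled simultaneously.

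\emph{Step 3 (back to $\o$ and H\"older norms).} Pulling back, $|w(x)|\lesssim\la x\ra^{-\b}$. To upgrade this to the $\bar C^{2,\a}_\b$ norm I will use the scaled local Schauder estimates for the Neumann problem (Appendix \ref{app:apriori}), on the pieces $[\o^R]_{1/4}(x^R)$ from Corollary \ref{coro:doublehpbounds}, which are uniformly $C^{2,\a}$ graphs. Rescaling gives
\[
R^\b\|w_R\|_{C^{2,\a}}\lesssim R^\b\|w_R\|_{L^\infty}+R^{\b+2}\|f_R\|_{C^{0,\a}}+R^{\b+1}\|g_R\|_{C^{1,\a}},
\]
which bounds the full weighted norm. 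Finally, the $G_0$-invariance of $w$ follows from uniqueness, since $(f,g)$ is $G_0$-invariant and the solution is unique.
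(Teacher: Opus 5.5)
Step 2 has a genuine gap. You fix ``the constant so that $v$ vanishes at the punctures'', but there are two punctures $\pm1$ and only one additive constant. Your local analysis shows at best that $v$ has limits $v(\pm1)$ with $|v(\zeta)-v(\pm1)|\lesssim|\zeta\mp1|^{\b}$.

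(Incidentally, the $O(r^{\b})$ bound on the mass in $B_r(\pm1)$ follows directly from $|\tilde f|\lesssim|\zeta\mp1|^{\b-2}$ and $|\tilde g|\lesssim|\zeta\mp1|^{\b-1}$. Compatibility is only needed for the Green representation to solve the Neumann problem at all.)

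Nothing forces $v(1)=v(-1)$, and in general it fails. Take $f=0$ and $g=Hu_{x_1}$:
\begin{itemize}
\item it lies in $\bar C^{1,\a}_{\b+1}(\de\o)$, because $H$ and its derivatives decay like $\la x\ra^{-2}$ and faster;
\item it satisfies \eqref{eq:compatible}, by oddness in $x_1$;
\item on the disk it becomes $\tilde f=0$, $\tilde g=\cos\theta$, since $u_{x_1}=\text{Re}\,\Phi$ and $|(\Phi^{-1})'|=1/H$ on $\de\mathbb{D}$.
\end{itemize}
The solutions are $v=\zeta_1+c$, i.e.\ $w=u_{x_1}+c$, which tends to $c\pm1$ as $x_1\to\pm\infty$. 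So without further input the statement is false. Since your argument uses $G_0$ only at the very end, to get invariance of $w$, it cannot be complete.

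The missing step is to use the reflection $x_1\mapsto -x_1$ in $G_0$. Since $u$ is even in $x_1$, one has $\Phi(-\bar z)=-\overline{\Phi(z)}$. So this reflection acts on the disk as $\zeta\mapsto-\bar\zeta$, which interchanges $+1$ and $-1$. The data $(\tilde f,\tilde g)$ are invariant under it, and so is the Neumann Green's function of $\mathbb{D}$ (the map is an isometry of the disk). Hence your $v$ is invariant and $v(1)=v(-1)$. Subtracting this common value gives $|v(\zeta)|\lesssim|\zeta\mp1|^{\b}$ near both punctures, after which your Step 3 applies.

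This is exactly where the paper uses the symmetry. It first solves a Dirichlet problem with a barrier built on the strip. It then solves the remaining Neumann problem on the strip $\I$ with the explicit strip Green's function. There, evenness of $g_*$ in $y_1$ cancels $\mu=\frac{1}{2\pi}\int_{\de\I}y_1'g_*\,dy_1'$, the term that would otherwise make the solution tend to $\pm\mu$ at the two ends.

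Once repaired, your disk route is a reasonable alternative. It treats $f$ and $g$ together with a single Green's function and reduces the decay estimate to H\"older continuity at two boundary points. The paper's strip computation instead turns the decay into an explicit exponential convolution bound.
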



We will find a solution to \eqref{eq:inhomhp} of the form $w=w_1+w_2$, where $w_1$ solves the Poisson equation with homogeneous Dirichlet conditions
\begin{equation}\label{eq:homDir}
\left\{
\begin{aligned}
-\Delta w_1 &= f && \text{in } \o,\\
w_1 &= 0 && \text{on } \de\o,
\end{aligned}
\right.
\end{equation}
whereas $w_2$ solves the Neumann problem for the Laplace equation:
\begin{equation}\label{eq:harmNeu}
\left\{
\begin{aligned}
-\Delta w_2 &= 0 && \text{in } \o,\\
\de_\nu w_2 &= g_2:=g - \de_\nu w_1 && \text{on } \de\o.
\end{aligned}
\right.
\end{equation}

We first treat the solvability of the Dirichlet problem \eqref{eq:homDir}.

\begin{prop}\label{prop:Dir} Let  $\b\in (0,1)$ and $f\in \bar{C}^{0,\a}_{\b+2, G_0}(\overline{\o})$. Then there exists a unique solution $w_1\in \bar{C}^{2,\a}_{\b}(\overline{\o})$ of the  Dirichlet problem \eqref{eq:homDir}. Moreover, $w_1\in \bar{C}^{2,\a}_{\b, G_0}(\overline{\o})$. 
\end{prop}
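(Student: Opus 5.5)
We solve the Dirichlet problem \eqref{eq:homDir} with a barrier and then upgrade to weighted H\"older bounds by local rescaled Schauder estimates. Write $\|f\|:=\|f\|_{\bar C^{0,\a}_{\b+2}(\ol\o)}$, so that $|f(x)|\le \|f\|\la x\ra^{-\b-2}$. The domain $\o$ lies in the slab-like region $|x_2|<\pi/2+\cosh x_1$ and contains the strip $|x_2|<\pi/2$. We want a positive supersolution $\psi$ on $\o$ with $-\D\psi\ge \la x\ra^{-\b-2}$ and $\psi\lesssim \la x\ra^{-\b}$.

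\textbf{Step 1: the barrier.} The natural choice comes from the conformal structure. In the strip coordinates $y=F^{-1}(x)$, with $x=y+\sinh y$ and $|F'(y)|=|1+\cosh y|\sim \la x\ra$, the equation $-\D_x\psi=\tilde f$ becomes $-\D_y\tilde\psi=|F'|^2\tilde f$. The right-hand side is then bounded by $|F'|^{-\b}\sim e^{-\b|y_1|}$. On the strip $\I=\{|y_2|<\pi/2\}$ one checks that
\[
\tilde\psi(y)=A\,e^{-\b|y_1|}\cos(\g y_2)\quad\text{with } \b<\g<1,
\]
suitably smoothed near $y_1=0$ (for instance replacing $|y_1|$ by $\sqrt{1+y_1^2}$), satisfies
\[
-\D_y\tilde\psi=(\g^2-\b^2)\tilde\psi>0,
\]
and $\tilde\psi>0$ on $\ol\I$. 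The point $\cos(\g\pi/2)>0$ is where $\g<1$ is used, and $\g>\b$ is where $\b<1$ is used.

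\textbf{Step 2: this barrier is too weak near the boundary.} Near $\de\I$, i.e.\ at $|y_2|=\pi/2$, the lower bound for $-\D_y\tilde\psi$ is only comparable to $e^{-\b|y_1|}$, which is exactly the size of the forcing, so the comparison still closes. In $x$-variables this gives
\[
\psi\sim \la x\ra^{-\b}\quad\text{and}\quad -\D\psi\gtrsim \la x\ra^{-\b-2}.
\]

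\textbf{Step 3: existence, uniqueness and the $L^\infty$ bound.} Solve \eqref{eq:homDir} on the exhausting domains $\o\cap B_R$ with zero boundary data, and compare against $\pm C\|f\|\psi$ by the maximum principle. This gives $|w_1^R|\le C\|f\|\psi$ uniformly in $R$. Interior and boundary Schauder estimates then let us pass to the limit $R\to\infty$, producing a solution with
\[
|w_1|\le C\|f\|\la x\ra^{-\b}.
\]
Uniqueness follows from the maximum principle, since the difference of two solutions is harmonic, vanishes on $\de\o$ and tends to $0$ at infinity. By uniqueness, $w_1\circ g$ is again a solution for every $g\in G_0$, hence $w_1$ is $G_0$-invariant.

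\textbf{Step 4: the weighted H\"older norm.} Rescale at $x$ with $R=\la x\ra$. By Corollary \ref{coro:doublehpbounds}, the boundary pieces $[\o^R]_{1/2}(x^R)$ are uniformly $C^{2,\a}$ graphs, whether they are of interior type or boundary type. The function $(w_1)_R$ solves $-\D (w_1)_R=R^2 f_R$, with $\|R^2f_R\|_{C^{0,\a}}\lesssim R^{-\b}\|f\|$, and has zero Dirichlet data. Standard local Schauder estimates (as in Appendix \ref{app:apriori}) give
\[
\|(w_1)_R\|_{C^{2,\a}([\o^R]_{1/4}(x^R))}\lesssim R^{-\b}\|f\|,
\]
which is the required bound $w_1\in\bar C^{2,\a}_\b(\ol\o)$.

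\textbf{Main obstacle.} The hardest step is the barrier construction, specifically its behaviour near the free boundary and across the central region where $|y_1|$ is small. These two places are where the restriction $\b<1$ enters, and where the thin zero phase could cause trouble. The connected-component localization in the definition of the norms handles that thinness in Step 4.
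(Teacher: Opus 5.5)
Your argument is correct, but it takes a different route from the paper's.

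\textbf{The paper's route.} The paper uses the barrier $e^{-\b|y_1|}\cos(\b_0 y_2)$ only on $\{|x|\ge r_0\}$ (Lemma \ref{lem:supersol}), so it cannot compare on all of $\o_{R_n}$. It obtains the uniform bound $\|w_{1,n}\|_{\bar C_\b(\o_{R_n})}\le C\|f\|$ by contradiction:
\begin{enumerate}
\item it normalizes and extracts a $C^2_{\mathrm{loc}}$-limit via Lemma \ref{lem:Dir-apriori};
\item it shows the limit vanishes by uniqueness;
\item it combines the exterior barrier with $\sup_{\o_{r_0}}|\bar v_n|\to 0$ to contradict the normalization.
\end{enumerate}

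\textbf{Your route.} You remove the kink at $y_1=0$ by replacing $|y_1|$ with $s(y_1)=\sqrt{1+y_1^2}$, which gives a supersolution on the whole strip. Precisely, $-\D_y\tilde\psi=\tilde\psi\,(\g^2-\b^2 s'^2+\b s'')\ge(\g^2-\b^2)\tilde\psi$; this is an inequality, not the identity you wrote, but it is all you need. Because $|1+\cosh y|\ge 1$ on $\ol{\I}$, the comparabilities $|F'(y)|\sim e^{s(y_1)}\sim\la x\ra$ hold on all of $\ol{\I}$, not only for large $|y_1|$. Hence $-\D\psi\gtrsim\la x\ra^{-\b-2}$ and $\psi\lesssim\la x\ra^{-\b}$ hold globally on $\ol{\o}$. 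Direct comparison with $\pm C\|f\|\psi$ on $\o\cap B_R$ then gives the uniform weighted $L^\infty$ bound immediately, with an explicit constant and no blow-up argument.

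\textbf{What each route buys.} Yours is shorter, but it depends on a barrier defined on the whole domain, which here comes from the explicit conformal parametrization of the double hairpin. The paper's contradiction scheme needs only a barrier near infinity. That is exactly what makes it reusable in Case B (Theorem \ref{thm:LM_higherD}), where the barrier of Lemma \ref{lem:Neumann_barrier} exists only on $\{|x|\ge r_0\}$.

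\textbf{Minor points.} Your ``Step 2'' heading is misleading: the lower bound near $\de\I$ is uniform because $\cos(\g y_2)\ge\cos(\g\pi/2)>0$. That is also the only place $\b<1$ enters, not the central region $|y_1|\lesssim 1$.
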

\begin{proof}
Uniqueness follows from the maximum principle and the fact that $w_1(x)\to 0$ when $|x|\to \infty$, since the decay rate $\b>0$. As a result, the $G_0$-invariance of $f$ implies the $G_0$-invariance of $w_1$. 

To show the existence of a solution, consider the solutions $w_{1,n}$, $n\in \mathbb{N}$, of the Dirichlet problem
\begin{equation*}
\left\{
\begin{aligned}
-\Delta w_{1,n} &= f && \text{in } \o_{R_n},\\
w_{1,n} &= 0 && \text{on } \de\o_{R_n},
\end{aligned}
\right.
\end{equation*}
in the truncated domains $\o_{R}:=\o\cap \{|x|<R\}$, as they expand $R_n\nearrow \infty$ to all of $\o$. We claim that 
\begin{equation}\label{prop:Dir:eq:claim}
\|w_{1,n}\|_{\bar{C}_\b(\o_{R_n})}\leq C \|f\|_{\bar{C}^{0,\a}_{\b+2}(\o_{R_n})} 
\end{equation}
for a constant $C>0$ independent of $n$. Assume not; then there is a subsequence (call it $R_n$ again) such that 
\[
\|w_{1,n}\|_{\bar{C}_\b(\o_{R_n})}\geq n \|f\|_{\bar{C}^{0,\a}_{\b+2}(\o_{R_n})}.
\]
Normalizing $\bar{f}_n:= f/\|w_{1,n}\|_{\bar{C}_\b(\o_{R_n})}$, $\bar{v}_{n}:=w_{1,n}/\|w_{1,n}\|_{\bar{C}_\b(\o_{R_n})}$, we get a solution $\bar{v}_n$ of
\begin{equation*}
\left\{
\begin{aligned}
-\Delta \bar{v}_{n} &= \bar{f}_n && \text{in } \o_{R_n},\\
\bar{v}_{n} &= 0 && \text{on } \de\o_{R_n},
\end{aligned}
\right.
\end{equation*}
with $\| \bar{f}_n\|_{\bar{C}^{0,\a}_{\b+2}(\o)}\leq 1/n$, so that by the a priori local boundary Schauder estimate of Lemma \ref{lem:Dir-apriori}, we have 
\[
\|\bar{v}_{n}\|_{\bar{C}^{2,\a}_\b(\o_{R_n/2})}  \leq C \left(  \|\bar{f}_n\|_{\bar{C}^{0,\a}_{\b+2}(\o_{R_n})}  +  \|\bar{v}_n\|_{\bar{C}_\b(\o_{R_n})} \right) \leq c(1/n+1)\leq 2C.
\]
Up to taking another subsequence (and relabeling indices), we see that $(\bar{v}_n)_n$ converges in $C^2_\text{loc}$ to a $\bar{C}^{2,\a}_\b(\overline{\o})$-function $\bar{v}$, which solves 
\begin{equation*}
\left\{
\begin{aligned}
-\Delta \bar{v} &= 0  && \text{in } \o,\\
\bar{v} &= 0 && \text{on } \de\o.
\end{aligned}
\right.
\end{equation*}
By the uniqueness of solutions, it must be that $\bar{v}\equiv 0$. 

Now let $h$ be the positive supersolution for the Dirichlet problem, provided by Lemma \ref{lem:supersol} (to be proved below), for which
\[
 0<h(x)\leq \frac{c}{|x|^{\b}}, \quad -\D h(x) \geq \frac{1}{|x|^{\b+2}}, \quad \text{for } |x|>r_0.
\]
Given that $\bar{v}_n\to \bar{v}\equiv 0$ uniformly on compacts, $s_n:=\sup_{\omega_{r_0}} |\bar{v}_n|\to 0$. Therefore, for $\mu_n:=\max(s_n/\min_{\ol{\o}\cap \de B_{r_0}} h, 1/n) \to 0$, we have
\begin{align*}
\pm \bar{v}_n + \mu_n h & \geq 0 \quad \text{on}  \quad \de (\o_{R_n}\setminus \overline{\o_{r_0}}), \\
-\D (\pm \bar{v}_n + \mu_n h) & \geq 0 \quad \text{in} \quad \o_{R_n}\setminus \overline{\o_{r_0}},
\end{align*}
so that the maximum principle yields
\[
|\bar{v}_n|\leq \mu_n h \quad \text{in } \o_{R_n}\setminus \overline{\o_{r_0}}. 
\]
Combining this with the fact that $\|\bar{v}_n\|_{\bar{C}_\b(\o_{r_0})} \leq s_n r_0^\b$, we conclude that 
\[
\|\bar{v}_n\|_{\bar{C}_\b(\o_{R_n})}\leq \max\left(s_n r_0^\b, \mu_n \|h\|_{\bar{C}_\b(\o\setminus \overline{\o_{r_0}})} \right) \to 0 \quad \text{as } n\to \infty,
\]
which contradicts the fact that $\|\bar{v}_n\|_{\bar{C}_\b(\o_{R_n})}=1$ and completes the proof of \eqref{prop:Dir:eq:claim}. As a result, the a priori estimate \eqref{lem:Dir-apriori:eq:est} gives us 
\[
\|w_{1,n}\|_{\bar{C}^{2,\a}_\b(\o_{R_n/2})}  \leq C (\|f\|_{\bar{C}^{0,\a}_{\b+2}(\o_{R_n})} + \|w_{1,n}\|_{\bar{C}_\b(\o_{R_n})})  \leq  \bar{C} \|f\|_{\bar{C}^{0,\a}_{\b+2}(\o)},
\]
for some constant $\bar{C}>0$, independent of of $n$. Now a standard compactness argument produces a global solution $w_1\in \bar{C}^{2,\a}_\b(\overline{\o})$ of \eqref{eq:homDir}. 
\end{proof}

In the next lemma, we construct the necessary barrier for the proof of Proposition \ref{prop:Dir} above.
\begin{lemma}\label{lem:supersol} Let $\b\in (0,1)$. There exist $r_0>0$, $c>0$ and a function $h\in C^2(\overline{\o}\cap \{|x|\geq r_0\})$ such that
\[
 0<h(x)\leq \frac{c}{|x|^{\b}}, \quad -\D h(x) \geq \frac{1}{|x|^{\b+2}}, \quad  \text{for all } x\in \overline{\o}\cap \{|x|\geq r_0\}.
\] 
\end{lemma}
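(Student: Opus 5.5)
The barrier will be a separated-variables function $h=r^{-\b}\Theta(\th)$ in polar coordinates $x=(r\cos\th,r\sin\th)$, built separately on the two halves of $\o$ far from the origin. For such functions the Laplacian factorizes as
\[
-\D\big(r^{-\b}\Theta(\th)\big)=r^{-\b-2}\big(-\b^2\Theta-\Theta''\big),
\]
so the whole problem reduces to choosing a positive angular profile with $-\Theta''-\b^2\Theta\geq c>0$. Positivity forces $\Theta$ to live on an arc of length less than $\pi/\b$. The point is that, far out, $\o$ does not see the full circle of directions: it misses the $x_2$-axis.

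The first step is the geometric observation that for $r_0$ large, $\ol{\o}\cap\{|x|\geq r_0\}$ splits into two pieces $\ol{\o}^{\pm}:=\ol{\o}\cap\{|x|\geq r_0,\ \pm x_1>0\}$ with disjoint closures. Indeed, a point of $\ol{\o}$ on the $x_2$-axis satisfies $|x_2|\leq \pi/2+\cosh 0=\pi/2+1$, so for $r_0>\pi/2+1$ the set $\ol{\o}\cap\{|x|\geq r_0\}$ does not meet $\{x_1=0\}$. Thus it suffices to define $h$ of class $C^2$ on each piece separately. On $\ol{\o}^+$, every point has polar angle $\th\in(-\pi/2,\pi/2)$.

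Next, fix $\g\in(\b,1)$, which is possible since $\b<1$, and set
\[
h(x):=A\,r^{-\b}\cos(\g\th)\quad\text{on }\ol{\o}^+,\qquad h(x_1,x_2):=h(-x_1,x_2)\quad\text{on }\ol{\o}^-,
\]
with $A>0$ to be fixed. For $|\th|<\pi/2$ we have $\g|\th|<\g\pi/2<\pi/2$, hence $\cos(\g\th)\geq \cos(\g\pi/2)>0$. This gives $0<h\leq A r^{-\b}$. The identity above yields
\[
-\D h=A\,r^{-\b-2}(\g^2-\b^2)\cos(\g\th)\geq A(\g^2-\b^2)\cos(\g\pi/2)\,r^{-\b-2}.
\]
Choosing $A:=\big((\g^2-\b^2)\cos(\g\pi/2)\big)^{-1}$ gives $-\D h\geq |x|^{-\b-2}$. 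The same bounds hold on $\ol{\o}^-$, because the reflection $x_1\mapsto -x_1$ preserves $\D$, $|x|$ and the domain $\o$.

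Smoothness is immediate, since $r^{-\b}\cos(\g\th)$ is real-analytic on $\{r>0,\ |\th|<\pi/2\}$. This gives the lemma with $c=A$. The barrier is even in $x_1$ and $x_2$, although this is not needed.

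The only delicate point is the geometric splitting. A single profile on the whole circle cannot work: with $\b>0$ there is no positive, smooth $2\pi$-periodic $\Theta$ with $-\Theta''-\b^2\Theta>0$, since integrating over the circle gives $-\b^2\int\Theta>0$, which is impossible for $\Theta>0$. So one has to exploit that the zero phase of $HP$ separates the left and right halves of $\o$ at large radii. This is exactly what the catenary description of $\de\o$ provides, and it also explains why the restriction $\b<1$ appears.
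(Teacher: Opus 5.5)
Your proof is correct, and it takes a different, though closely related, route from the paper's.

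The paper pulls the problem back to the strip $\mathcal{I}$ through the conformal map $F(y)=y+\sinh y$. There it takes $\bar h(y)=e^{-\beta|y_1|}\cos(\beta_0 y_2)$ with $\beta_0\in(\beta,1)$, so that $-\Delta_y\bar h=(\beta_0^2-\beta^2)\bar h$. It then transfers the bounds to $\omega$ via $-\Delta_x h=|F'(y)|^{-2}(-\Delta_y\bar h)$ and the asymptotic equivalences $|F'(y)|\sim e^{|y_1|}\sim|x|$, adjusting constants at the end.

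You stay in the $x$-plane instead. You use only the fact that $\overline{\omega}$ meets the $x_2$-axis in a bounded set, and you place the standard half-plane barrier $r^{-\beta}\cos(\gamma\theta)$ on each open half-plane, where $-\Delta h$ is given by an exact identity.

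The two barriers are asymptotically the same function. Since $F(y)=\tfrac12 e^{y}(1+o(1))$ uniformly in $y_2$ as $y_1\to+\infty$, one has $r\approx\tfrac12 e^{y_1}$ and $\theta\approx y_2$. So the paper's $\bar h$ is essentially $(2r)^{-\beta}\cos(\beta_0\theta)$: the paper adapts it to $\omega$, you adapt it to the enclosing half-planes.

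What your version buys:
\begin{itemize}
\item It is more elementary and fully explicit ($r_0>\pi/2+1$, $c=A$) and avoids the ``$\sim$'' comparisons.
\item It applies essentially verbatim to any planar domain that lies in $\{x_1\neq 0\}$ outside a compact set.
\item It rightly exploits that the Dirichlet comparison in Proposition~\ref{prop:Dir} only needs a positive supersolution on $\overline{\omega}\cap\{|x|\ge r_0\}$. No boundary inequality for $h$ on $\partial\omega$ is required, so a barrier on a larger domain suffices.
\end{itemize}

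What the paper's conformal route buys is consistency with the strip coordinates it needs anyway for the Neumann problem in Proposition~\ref{prop:keyN}. There the actual boundary $\partial\omega$ carries the data, and a comparison with half-planes would not be available.
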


\begin{proof}
Identifying $\R^2\simeq \C$, let us first change variables $x=x_1 + i x_2 = F(y_1+i y_2)$ via the conformal diffeomorphism \eqref{HHPconfdifeo}
\[
F(y)=y+\sinh(y), \quad F: \mathcal{I}:=\{y \in \C: |y_2|<\pi/2\} \to \o.
\] 
We note that $|x| = |F(y)|\sim e^{|y_1|}$ for large $x\in \o\subset \C$. Denote by $h:\omega \to \R$ the function that we are looking for and let $\bar{h}(y):=h(F(y))$ for $y\in \mathcal{I}$. Since $F$ is conformal, we have
$$-\D_x h  = -\D_y \bar{h} |F'(y)|^{-2} = -\D_y  \bar{h} |1+ \cosh(y)|^{-2} \sim  (-\D_y \bar{h}) e^{-2|y_1|} \sim  (-\D_y \bar{h})|x|^{-2}, $$
for all large $|x|$, so it suffices to find a function $\bar{h}:\overline{\mathcal{I}}\to \R$ such that 
\begin{equation}\label{lem:supersol:eq:main}
0<\bar{h}\leq e^{-\b|y_1|} \quad \text{and} \quad -\D_y\bar{h}\geq \bar{c} e^{-\b |y_1|} \quad \text{for all } y\in \ol{\mathcal{I}} \text{ with } |y_1|\geq 1,
\end{equation}
for some constant $\bar{c}>0$. Take $\b_0\in (\b,1)$ and let us verify that $$\bar{h}(y_1,y_2)=e^{-\b|y_1|}\cos(\b_0 y_2)$$ is precisely such a function. Since $\b_0\in (0,1)$,  $c_1:=\min_{|y_2|\leq \pi/2} \cos(\b_0 y_2) >0$, so that $$c_1 e^{-\b |y_1|}\leq\bar{h}\leq e^{-\b|y_1|} \quad \text{on } \overline{\mathcal{I}}.$$ Now, for $\bar{c} := c_1  (\b_0^2 - \b^2)>0$,
\[
-\D_y \bar{h} = (\b_0^2 - \b^2) \bar{h} \geq c_1  (\b_0^2 - \b^2) e^{-\b|y_1|} = \bar{c} e^{-\b |y_1|} \quad \text{for all } y \in \overline{\mathcal{I}} \text{ with } y_1\neq 0. 
\]
Multiplying $h=\bar{h}\circ F^{-1}$ by an appropriate positive constant, if necessary, we can ensure that both inequalities in \eqref{lem:supersol:eq:main} are met. 
\end{proof}

We now proceed to address the Neumann problem \eqref{eq:harmNeu}: 
\begin{equation*}
\left\{
\begin{aligned}
-\Delta w_2 &= 0 && \text{in } \o,\\
\de_\nu w_2 &= g_2:=g - \de_\nu w_1 && \text{on } \de\o,
\end{aligned}
\right.
\end{equation*}
with $g_2=g - \de_\nu w_1\in \bar{C}^{1,\a}_{\b+1, G_0}(\de\o)$, where $w_1\in \bar{C}^{2,\a}_{\b, G_0}(\overline{\o})$ solves the Dirichlet problem \eqref{eq:homDir}. Given the decay of $|g|+ |\de_\nu w_1|\lesssim |x|^{-\b-1}$ and $|f|=|\D w_1|\lesssim |x|^{-\b-2}$ at infinity, we know $g, \de_\nu w_1 \in L^1(\de\o, d\s)$ and $f\in L^1(\o)$, so an application of the Divergence theorem yields
\begin{equation}\label{eq:compat1}
\int_{\de\o} g_2 \, d\s =  \int_{\de\o} g \, d\s - \int_{\de\o} \de_\nu w_1 \, d\s  =  \int_{\de\o} g \, d\s +  \int_{\o} (-\D w_1)  \, dx = \int_{\de\o} g \, d\s +  \int_{\o} f  \, dx  = 0,
\end{equation}
by the compatibility condition \eqref{eq:compatible}. We will now prove the following key proposition.

\begin{prop}\label{prop:keyN} Let $\b \in (0,1)$ and suppose that $g_2\in \bar{C}^{1,\a}_{\b+1, G_0}(\de\o)$ satisfies \eqref{eq:compat1}. Then the Neumann problem \eqref{eq:harmNeu} has a solution $w_2\in \bar{C}^{2,\a}_{\b}(\ol{\o})$.
\end{prop}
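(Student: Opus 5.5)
The plan is to transfer \eqref{eq:harmNeu} to the strip $\I=\{|y_2|<\pi/2\}$ through the conformal diffeomorphism $F(y)=y+\sinh y$ of \eqref{HHPconfdifeo}, as in the proof of Lemma \ref{lem:supersol}, and to solve it there by separation of variables in $y_2$. Set $\bar w(y):=w_2(F(y))$. Harmonicity is preserved. The Neumann condition becomes
\[
\de_n \bar w = G(y):=g_2(F(y))\,|F'(y)| \quad\text{on } \de\I=\{y_2=\pm\pi/2\},
\]
where $n$ is the outer normal of the strip. Since $|F'(y)|=|1+\cosh y|\sim e^{|y_1|}\sim |x|$ and $|g_2|\lesssim |x|^{-1-\b}$, the data satisfy $|G_\pm(y_1)|\lesssim e^{-\b|y_1|}$, where $G_\pm(y_1):=G(y_1,\pm\pi/2)$. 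Because $|d\zeta|=|F'||dy|$ on the boundary, the compatibility condition \eqref{eq:compat1} becomes
\[
\int_\R (G_++G_-)\,dy_1=0.
\]
We also need the matching of norms. A ball of radius $\la x\ra/4$ in $x$ corresponds, up to bounded distortion, to a ball of unit size in $y$, with $F$ having uniformly bounded $C^k$ distortion after rescaling by $|F'|$ (Corollary \ref{coro:doublehpbounds}). Hence the $\bar C^{k,\a}_{\b+k}$ norms on $\o$ become ordinary $C^{k,\a}$ norms weighted by $e^{\b|y_1|}$ on unit pieces of $\ol{\I}$. Thus it suffices to produce a harmonic $\bar w$ on $\I$ with $e^{\b|y_1|}\|\bar w\|_{C^{2,\a}(\I\cap\{|y_1-s|<1\})}\lesssim 1$ uniformly in $s$.

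The first step is to homogenize the boundary data. Let
\[
\psi(y):=\frac{1}{2\pi}\Big(G_+(y_1)\big(y_2+\tfrac{\pi}{2}\big)^2-G_-(y_1)\big(y_2-\tfrac{\pi}{2}\big)^2\Big),
\]
so that $\de_n\psi=G$ on $\de\I$. Since $G\in C^{1,\a}$ only, I would first mollify $G_\pm$ at unit scale in $y_1$ and split off the rough remainder. Alternatively, I would take $\psi$ to be a bounded-strip harmonic-type extension so that $\D\psi\in C^{0,\a}$ with decay $e^{-\b|y_1|}$. Then $\bar w=\bar v-\psi$, where $\bar v$ solves
\[
-\D\bar v=f:=\D\psi \ \text{ in } \I,\qquad \de_n\bar v=0 \ \text{ on } \de\I.
\]
Here $|f|\lesssim e^{-\b|y_1|}$, and $\int_\I f=\int_\R(G_++G_-)\,dy_1=0$.

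The second step expands $f$ in the Neumann cosine basis $\{\cos(k(y_2+\pi/2))\}_{k\geq0}$ of $(-\pi/2,\pi/2)$.

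For the zero mode $f_0(y_1)$, solve the ODE $-\bar v_0''=f_0$ by
\[
\bar v_0(y_1):=-\int_{y_1}^{\infty}\!\int_s^\infty f_0\,d\tau\, ds \quad\text{for } y_1>0.
\]
For $y_1<0$ use the symmetric formula. The two formulas match in $C^1$ precisely because $\int_\R f_0=0$, and the double integral of $e^{-\b|\tau|}$ tails decays like $e^{-\b|y_1|}$. This is the one place where the flux condition enters.

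The remainder $f^\perp=f-f_0$ is mean-zero in $y_2$ on every cross-section, and the Neumann spectral gap in $y_2$ equals $1$. For modes $k\geq1$ the kernel $e^{-k|y_1-s|}/(2k)$ of $-\de_{y_1}^2+k^2$ preserves the decay $e^{-\b|y_1|}$ because $\b<1\leq k$. To avoid summing the series in Hölder norms, I would obtain $\bar v^\perp$ at once by a weighted energy estimate. Test against $e^{2\b'|y_1|}\eta_R^2\bar v^\perp$ with $\b<\b'<1$ and use the cross-sectional Poincar\'e inequality with constant $1>\b'^2$ to absorb the weight derivatives. Combined with truncation $R\to\infty$ and uniqueness, this gives $\int_\I e^{2\b|y_1|}(|\n\bar v^\perp|^2+|\bar v^\perp|^2)<\infty$. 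Local boundary Schauder estimates for the Neumann problem on unit pieces, namely the a priori Neumann estimates of Appendix \ref{app:apriori}, then upgrade this to the pointwise weighted $C^{2,\a}$ bound.

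Finally I would set $\bar w=\bar v_0+\bar v^\perp-\psi$, put $w_2:=\bar w\circ F^{-1}$, and translate back to $\bar C^{2,\a}_\b(\ol\o)$ using the norm correspondence from the first paragraph. I expect the main obstacle to be this norm bookkeeping. One part is making rigorous that the weighted H\"older spaces on $\o$, defined via connected components of rescaled balls (Remark \ref{rem:conncomp}), match the unit-scale exponentially weighted spaces on the strip; the thin zero phase of $HP$ makes the use of components essential. The other part is handling the regularity loss in $\psi$, which needs one derivative more than $G\in C^{1,\a}$ provides. If the mollification trick is awkward, I would instead solve a Neumann problem on a fixed half-strip model directly by the boundary Schauder theory in Appendix \ref{app:apriori}.
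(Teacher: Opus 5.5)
Your zero-mode step has a genuine gap, and it is exactly the point on which the paper's proof turns. The one-sided solutions $\bar v_0^+(y_1)=-\int_{y_1}^\infty\int_s^\infty f_0\,d\tau\,ds$ and $\bar v_0^-(y_1)=-\int_{-\infty}^{y_1}\int_{-\infty}^s f_0\,d\tau\,ds$ have the same derivative at $y_1=0$ because $\int_\R f_0=0$. However, their values there are $-\int_0^\infty \tau f_0(\tau)\,d\tau$ and $\int_{-\infty}^0\tau f_0(\tau)\,d\tau$, which coincide only if $\int_\R \tau f_0(\tau)\,d\tau=0$. The flux condition does not give this. Since the cross-sectional integral of $\psi$ decays together with its derivatives, this first moment is a multiple of $\int_\R y_1(G_++G_-)\,dy_1$.

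This is not an artifact of your construction. For any bounded solution $\bar w$ with limits $c_\pm$ as $y_1\to\pm\infty$, the function $m(y_1):=\int_{-\pi/2}^{\pi/2}\bar w\,dy_2$ satisfies $m''=-(G_++G_-)$. Integrating twice gives $\pi(c_+-c_-)=\int_\R y_1(G_++G_-)\,dy_1$. So an argument that uses only the flux condition would prove a false statement.

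The missing ingredient is a hypothesis you never invoke: $g_2$ is $G_0$-invariant and $x_1\mapsto -x_1\in G_0$. Hence $G_\pm$, and therefore $f_0$, are even in $y_1$, so the moment vanishes. The even extension of $\bar v_0^+$ is then $C^1$ at $0$ by the flux condition. The paper uses both conditions in exactly this way. It writes $v=\int_{\de\I}G_N(y,y')g_*(y')\,dy_1'$ with the explicit Neumann Green's function of the strip and subtracts the linear asymptote $\frac{\ln 2-(y_1-y_1')}{2\pi}$ of $G_N$. That asymptote contributes $\frac{\ln 2-y_1}{2\pi}\int g_*+\frac1{2\pi}\int y_1'g_*=0$, by the flux condition and evenness. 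The exponentially small remainder is then bounded via the splitting $A+B$; see the remark after its proof. The paper also only needs $|w_2(x)|\lesssim|x|^{-\b}$, since Lemma \ref{lem:Neum-apriori} upgrades this to $\bar C^{2,\a}_\b(\ol\o)$. Most of the norm bookkeeping you anticipate is therefore unnecessary.

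A second, more technical flaw is the weighted energy estimate for $\bar v^\perp$. Testing with $e^{2\b'|y_1|}\eta_R^2\bar v^\perp$ forces you to control $\int_\I f^\perp\bar v^\perp e^{2\b'|y_1|}\eta_R^2$. This requires $e^{\b'|y_1|}f^\perp\in L^2(\I)$, i.e.\ $\b'<\b$ for data of size $e^{-\b|y_1|}$, and then yields only the rate $\b'<\b$. Your claimed conclusion $\int_\I e^{2\b|y_1|}|\bar v^\perp|^2<\infty$ is in fact false in general. For $f^\perp=e^{-\b|y_1|}\cos(y_2+\frac\pi2)$ the solution is $\frac12\int_\R e^{-|y_1-s|}e^{-\b|s|}\,ds\cdot\cos(y_2+\frac\pi2)$, which is comparable to $e^{-\b|y_1|}$.

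The sharp rate must be obtained pointwise. One way: cut $f^\perp$ into unit pieces in $y_1$, which preserves zero cross-sectional means. Use the spectral gap to show each piece produces a response decaying like $e^{-\g|y_1-j|}$ for some $\g\in(\b,1)$, and then sum.

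Also check your signs. The condition $\de_n\psi=G$ on $\{y_2=-\pi/2\}$ requires $+G_-(y_2-\frac\pi2)^2$ in $\psi$. With $-\D\bar v=\D\psi$, the solution is $\bar w=\bar v+\psi$, not $\bar v-\psi$.

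With these repairs, your separation-of-variables scheme is a workable alternative to the paper's explicit Green's function computation. The latter, however, delivers the sharp rate $e^{-\b|y_1|}$ in one stroke.
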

\begin{proof}
As  $g_2\in \bar{C}^{1,\a}_{\b+1}(\de\o)$, Lemma \ref{lem:Neum-apriori} suggests that it is enough to prove the existence of a classical solution $w_2\in C^2(\o)\cap C^1_\text{loc}(\ol{\o})$ that has the correct decay rate at infinity:
\[
|w_2(x)| \lesssim |x|^{-\b} \quad \text{as } |x|\to \infty \quad \text{in }\o.
\]
For the purpose, we will once again change variables $x=x_1 + i x_2 = F(y_1+i y_2)$ via the conformal diffeomorphism $F$ from \eqref{HHPconfdifeo}, mapping the problem back to the strip $\mathcal{I}$. Setting $$v(y):=w_2(F(y)) \quad \text{for  } y\in \ol{\mathcal{I}} \quad\text{and} \quad g_*(p):=g_2(F(p))|F'(p)| \quad \text{for } p\in \de\mathcal{I},$$ we are looking for a solution $v\in  C^2(\I)\cap C^1_\text{loc}(\ol{\I})$ of 
\begin{equation}\label{prop:keyN:eq:pullback}
\left\{
\begin{aligned}
-\Delta_y v &= 0 && \text{in } \I,\\
\de_\nu v &= g_* && \text{on } \de\I,
\end{aligned}
\right.
\end{equation}
such that 
\begin{equation*}
|v(y)|\lesssim |x|^{-\b} \sim e^{-\b|y_1|} \quad \text{as } y=(y_1, y_2)\to \infty \quad \text{in } \mathcal{I}.
\end{equation*}
 Note that since $|F'(y)|\sim e^{|y_1|}$ and $|g_2(F(y))|\lesssim |F(y)|^{-(\b + 1)} \lesssim e^{-(\b + 1)|y_1|}$, as $y\to \infty$ in $\de\I,$ we have
\begin{equation}\label{prop:keyN:eq:decay}
|g_*(y)|  = |g_2(F(y))||F'(y)| \lesssim e^{-\b|y_1|}. 
\end{equation}
Moreover, since  $g_2$ is invariant under $x_1\to -x_1$ by the assumed $G_0$-invariance, we know that $g_*(y_1,y_2)$ is even in $y_1$. Changing variable in the boundary integral, $ds(x) = |F'(y)|dy_1$,  the compatibility condition \eqref{eq:compat1} takes the form
\begin{equation}\label{prop:keyN:eq:mean}
\int_{\de\I} g_* \, dy_1 = \int_{\de\o} g_2 \, ds = 0.
\end{equation}

To construct a solution of \eqref{prop:keyN:eq:pullback}, we will employ the Green's function for the Neumann Laplacian in $\I$ (\cite{NeumannStrip}):
\begin{equation}\label{prop:keyN:eq:Green}
G_N(y,y')=-\frac{1}{4\pi} \left( \ln(\cosh(y_1-y_1')-\cos(y_2-y_2')) + \ln(\cosh(y_1-y_1')+\cos(y_2+y_2'))\right).
\end{equation}
Note that for $y'=(y_1', \pm \pi/2) \in \de\I$ and $\xi:=y_1-y_1'$, the formula \eqref{prop:keyN:eq:Green} simplifies to 
\[
G_N(y,(y_1',\pm\pi/2))= - \frac{1}{2\pi} \ln\left(\cosh \xi \mp \sin(y_2)\right),
\]
and we can get the following linear growth estimate
\begin{align}\label{prop:keyN:eq:Greendefect}
\left|G_N(y,(y_1',\pm\pi/2)) - \frac{\ln 2 - |\xi|}{2\pi}\right| &= \frac{1}{2\pi} \left|\ln \left(1+e^{-2|\xi|} \mp 2 \sin (y_2) e^{-|\xi|}\right) \right| \notag \\
& \leq C e^{-|\xi|} \quad \text{whenever } |\xi|\geq 1,
\end{align}
for some absolute constant $C>0$. Given that $g_*(y')$ has exponential decay \eqref{prop:keyN:eq:decay}, while the Green's function on the boundary $G_N(y,(\xi,\pm \pi/2)$ grows linearly as $|y_1'|\to\infty$, the function 
\begin{equation}\label{prop:keyN:eq:sol}
v(y_1,y_2):=\int_{\de\I} G_N(y,y') g_*(y') \, dy_1' \quad \text{for } y=(y_1,y_2)\in \I,
\end{equation}
is well defined. It is a standard computation to check that $v\in C^2(\I)\cap C^1_\text{loc}(\ol{\I})$ and solves \eqref{prop:keyN:eq:pullback}. 

We claim that 
\begin{equation}\label{prop:keyN:eq:claim}
|v(y_1,y_2)|\leq C(g_*, \b) e^{-\b |y_1|} \quad \text{for all } y=(y_1,y_2)\in \I \quad \text{with } |y_1|\geq 1,
\end{equation}
which will imply the desired decay estimate $|w_2(x)|\lesssim |x|^{-\b}$, given that $|x|=|F(y)|\sim e^{|y_1|}$. The evenness of $g_*(y_1, \pm\pi/2)$ in $y_1$, implies that the function $v$, defined by \eqref{prop:keyN:eq:sol}, will also be even in $y_1$. Hence, it suffices to check that $|v(y_1,y_2)|\leq C_2 e^{-\b y_1}$ for all $y_1\geq 1$. 

Set 
\[
R_N(y,y'):= G_N(y,y')- \frac{\ln 2 - (y_1-y_1')}{2\pi} \quad \text{for } (y,y')\in \I\times \de\I.
\]
and note that because of \eqref{prop:keyN:eq:Greendefect}, we have 
\begin{equation}\label{prop:keyN:eq:Rest}
|R_N(y,y')|\leq \begin{cases}
C e^{-\xi} & \text{when } \xi=y_1-y_1' > 1, \\
C (1+ |\xi| + |\log |\xi||) & \text{when } \xi = y_1-y_1' \leq 1,\\
\end{cases}
\quad \text{for } (y,y')\in \I\times \de\I.
\end{equation}
Since $g_*(y')$ is even with respect to $y_1'$ and has integral $0$ over $\de\I$ by \eqref{prop:keyN:eq:mean}, we can write
\begin{align}\label{prop:keyN:eq:vexp}
v(y) &= \int_{\de\I} G_N(y,y') g_*(y') \,dy_1' - \int_{\de\I} \frac{\ln 2 - (y_1-y_1')}{2\pi} g_*(y') \, dy_1' \\
& = \int_{\de\I} R_N(y,y') g_*(y')\,dy_1' = A + B, \quad \text{where} \notag \\
A & := \int_{\de\I\cap \{y_1'<y_1-1\}} R_N(y,y') g_*(y')\,dy_1'  \quad \text{and}\quad  B:=\int_{\de\I \cap \{y_1'\geq y_1-1\}} R_N(y,y') g_*(y')\,dy_1'. \notag
\end{align}
Now, let us bound  $A$ and $B$ separately. Because of \eqref{prop:keyN:eq:Rest} and the exponential decay \eqref{prop:keyN:eq:decay} of $g_*$, we obtain
\begin{equation}\label{prop:keyN:eq:A}
\begin{aligned}
|A| & \leq 2 \int_{-\infty}^{y_1-1} e^{-(y_1-y_1')} C e^{-\b |y_1'|}  \, dy_1' \leq  2C e^{-y_1} \int_{-\infty}^{y_1} e^{y_1'-\b|y_1'|}  \, dy_1'  \\
& = 2C e^{-y_1}\left(\int_{-\infty}^0 e^{(1+\b)y_1'} \, dy_1' + \int_0^{y_1} e^{(1-\b)y_1'} \,dy_1'\right) \leq 2C e^{-y_1} (c_1 + c_2 e^{(1-\b) y_1}) \\
& \leq C_2 e^{-\b y_1}. 
\end{aligned}
\end{equation}
To bound $B$, we use \eqref{prop:keyN:eq:Rest} again and change variables $y_1'\to \xi = y_1-y_1'$ in the integral 
\begin{align}
|B| & \leq 2 \int_{y_1-1}^\infty C(1+|y_1-y_1'| + |\log |y_1-y_1'||) e^{-\b y_1'}\, dy_1' = \int_{-\infty}^1 2C(1+ |\xi| + |\log|\xi||) e^{-\b(y_1-\xi)}\, d\xi \notag  \\
& \leq 2Ce^{-\b y_1} \int_{-\infty}^1 (1+|\xi| + |\log|\xi||) e^{\b \xi} \,d\xi = C_3 e^{-\b y_1}, \label{prop:keyN:eq:B}
\end{align} 
where we used in the last line that $\log|\xi|$ is integrable near the $\xi=0$ and the exponential decay $e^{\b\xi}$ ensures integrability as $\xi\to -\infty$. Combining \eqref{prop:keyN:eq:A} and \eqref{prop:keyN:eq:B}, we get the desired bound \eqref{prop:keyN:eq:claim} for $v$.
\end{proof}

\begin{rem} We crucially used the symmetry $x_1\to -x_1$ of $G_0$ to cancel the term $$\mu = \frac{1}{2\pi}\int_{\de \I} y_1' g_*(y') \, dy_1'$$ in \eqref{prop:keyN:eq:vexp}. Without it, $v$ tends to different constants $\pm \mu$ at the two ends of the strip, so that the solvability theory of \eqref{eq:harmNeu} in $C^{2,\a}_\b(\ol{\o})$ would fail. 
\end{rem}

We finally possess all the ingredients to give the proof of Theorem \ref{thm:solv}.
\begin{proof}[Proof of Theorem \ref{thm:solv}]
If $w, w'\in \bar{C}^{2,\a}_\b(\ol{\o})$ solve \eqref{eq:inhomhp}, then $v=w-w'$ is harmonic with zero Neumann data on $\de\o$ and a positive decay rate at infinity. Arguing as in Proposition \ref{prop:inj}, we establish that $w=w'$. As a result, if $w\in \bar{C}^{2,\a}_\b(\ol{\o})$ solves \eqref{eq:inhomhp} with $G_0$-invariant data $(f,g)$, then $w$ is automatically $G_0$-invariant.

To prove the existence of a solution, first find the unique solution $w_1\in \bar{C}^{2,\a}_{\b, G_0}(\overline{\o})$ for the Dirichlet problem \eqref{eq:homDir} and then consider the Neumann problem \eqref{eq:harmNeu} with data $g_2:= g- \de_\nu w_1 \in \bar{C}^{1,\a}_{\b+1, G_0}(\ol{\o})$. By Proposition \ref{prop:keyN}, \eqref{eq:harmNeu} possesses a solution $w_2\in \bar{C}^{2,\a}_\b(\ol{\o})$. Hence, $w=w_1+w_2\in \bar{C}^{2,\a}_\b(\ol{\o})$ solves \eqref{eq:inhomhp} and by the argument in the first paragraph, $w\in \bar{C}^{2,\a}_{\b,G_0}(\ol{\o})$.  

\end{proof}

\subsection{The solvability of the Neumann problem on the product domain.}

Let us now consider the solvability of the Neumann problem on $\O = \o\times \R$ with data $(f_0, g_0)$ that have zero mean in the axial $t$-direction: 
\begin{equation}\label{eq:inhomzeromean}
\left\{
\begin{aligned}
-\D w &= f_0 && \text{in } \Omega,\\
\de_\nu w &= g_0 && \text{on } \G,
\end{aligned}
\right.
\end{equation}
where 
\begin{align*}
f_0(x,t) &= f_1(x,t) - \frac{1}{T}\int_{-T/2}^{T/2} f_1(x,t)\,dt \quad \text{for }(x,t)\in \O, \\
g_0(p,t) &=g_1(p,t) -\frac{1}{T}\int_{-T/2}^{T/2} g_1(p,t)\,dt \quad \text{for }(p,t)\in \G=\de\O.
\end{align*}
Obviously $(f_0, g_0)\in C^{0,\a}_{\b+2, G}(\ol{\O})\times C^{1,\a}_{\b+1, G}(\G)$ once $f_1\in C^{0,\a}_{\b+2, G}(\ol{\O})$ and $g_1\in C^{1,\a}_{\b+1, G}(\G)$.

We will first formulate the problem \eqref{eq:inhomzeromean} weakly in a suitable Hilbert space and obtain the existence of a weak solution via the Riesz Representation Theorem. For measurable functions $\f:\O\to \R$ and $\phi:\G \to \R$ that are $T$-periodic in the $t$-variable, set 
\begin{align*}
\|\f\|^2_{L^2_\text{per}(\O)} &:= \int_{\O_\text{per}} |\f|^2 \, dX, \qquad \|\phi\|^2_{L^2_\text{per}(\G)} := \int_{\G_\text{per}} |\phi|^2 \, dS, 
\end{align*}
where we recall the notation $\O_\text{per} = \o \times (-T/2,T/2)$ and $\G_\text{per} = \de\o \times  (-T/2,T/2)$ for the periodicity cells. Define the Sobolev space of functions
\begin{equation}\label{eq:H1per}
\begin{aligned}
H^1_\text{per}(\O) &:=\{\f \in H^1_\text{loc}(\O): \f(x,t+T)=\f(x,t) \quad \forall (x,t)\in \O, ~\text{with } \|\f\|_{H^1_\text{per}(\O)} <\infty \},   \\
 \|\f\|_{H^1_{\text{per}}(\O)}^2 &:= \|D \f\|_{L^2_\text{per}(\O)}^2 +  \|\f\|_{L^2_\text{per}(\O)}^2 
\end{aligned}
\end{equation}
and let 
\begin{equation}\label{eq:Hilbert}
\mathcal{H}:= \{\f \in H^1_\text{per}(\O): \int_{-T/2}^{T/2} \f(x,t) \, dt  = 0 \quad \text{for a.e. } x\in \o\}
\end{equation}
be the subspace of zero-mean functions along the axial direction. It is a closed subspace of $H^1_\text{per}(\O)$, since if we take a sequence $\f_k\in \mathcal{H}$ converging to $\f$ in $H^1_{\text{per}}(\O)$, then by the Cauchy--Schwarz inequality and the Fubini--Tonelli theorem, we will obtain that
\begin{align*}
\int_\o \left|\int_{-T/2}^{T/2} \f(x,t)\, dt\right|^2  dx &= \int_\o \left|\int_{-T/2}^{T/2} (\f(x,t)-\f_k(x,t))\, dt\right|^2  dx  \\
& \leq T \int_\o \int_{-T/2}^{T/2}  |\f(x,t)-\f_k(x,t)|^2 \,dt \, dx = T\|\f-\f_k\|^2_{L^2_\text{per}(\O)} \to 0,
\end{align*}
as $k\to \infty$, implying $\int_{-T/2}^{T/2} \f(x,t)\, dt =0$ for a.e.\ $x\in \o$, so that $\f\in \mathcal{H}$. Hence, $\H$ is complete with respect to the $H^1_{\text{per}}(\O)$-norm. Furthermore, the Poincar\'e inequality implies that for every $\f \in \mathcal{H}$,
\begin{equation}\label{thm:solvezeromean:poincare}
\int_{\O_\text{per}} |D \f|^2 \, dX \geq  \int_{\o}\int_{-T/2}^{T/2} (\de_{t}\f)^2  \, dt  \, dx \geq \frac{c_0^2}{T^2} \int_{\o}\int_{-T/2}^{T/2} |\f|^2\, dt\, dx = c_1^2 \|\f\|^2_{L^2_\text{per}(\O)}.
\end{equation}
Therefore, 
\[
\langle \psi, \f\rangle_\H:= \int_{\O_\text{per}} D \psi \cdot D \f \, dX
\]
defines an inner product on $\H$, whose induced norm $\|\cdot\|_{\H}$ on $\H$ is equivalent to the $\|\cdot\|_{H^1_\text{per}(\O)}$-norm, thereby turning $\mathcal{H}$ into a Hilbert space. We are now ready to establish the necessary solvability result.

\begin{theorem}\label{thm:solvezeromean}
Let $\b\in (0,1]$. Suppose that $(f_0, g_0)\in C^{0,\a}_{\b+2, G}(\ol{\O})\times C^{1,\a}_{\b+1, G}(\G)$ and that 
$$
\int_{-T/2}^{T/2} f_0(x,t)\, dt = 0 \quad \text{for every } x\in \o \quad \text{and}\quad \int_{-T/2}^{T/2} g_0(p,t)\, dt  =0 \quad \text{for every } p\in\de\o.
$$
Then \eqref{eq:inhomzeromean} possesses a unique solution $w_0\in C^{2,\a}_{\b, G}(\ol{\O})$. 
\end{theorem}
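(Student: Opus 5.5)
We first obtain a weak solution in $\H$ via Riesz, then upgrade it to the weighted H\"older class. Consider the functional
\[
\Lambda(\psi):=\int_{\O_\text{per}} f_0\psi\,dX+\int_{\G_\text{per}} g_0\psi\,dS,\qquad \psi\in\H .
\]
To see that $\Lambda$ is bounded on $\H$, note that $f_0\in L^2(\O_\text{per})$, since $|f_0|\lesssim\la x\ra^{-2-\b}$ and $|\O_{\text{per},R}|\lesssim TR^2$, so that $\int\la x\ra^{-4-2\b}dx<\infty$. Similarly, $g_0\in L^2(\G_\text{per})$ by Proposition~\ref{prop:areagrowth}. For the boundary term we use the trace inequality \eqref{prop:trace:L1} of Proposition~\ref{prop:trace} (with $L=1$), applied to $\psi^2$ on the product domain, which gives $\|\psi\|_{L^2_\text{per}(\G)}\lesssim\|\psi\|_{H^1_\text{per}(\O)}$. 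Combined with the Poincar\'e inequality \eqref{thm:solvezeromean:poincare}, this yields $|\Lambda(\psi)|\lesssim\|\psi\|_\H$. Riesz then gives $w_0\in\H$ with $\la w_0,\psi\ra_\H=\Lambda(\psi)$ for all $\psi\in\H$.

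This identity extends to all periodic $\psi\in H^1_\text{per}$ with compact support in $x$. Indeed, write $\psi=\bar\psi+(\psi-\bar\psi)$ with $\bar\psi$ the $t$-average. The pairing of $w_0$ with $\bar\psi$ vanishes because $\int \de_t w_0\,dt=0$ and $\int\n_xw_0\,dt=\n_x\int w_0\,dt=0$, and $\Lambda(\bar\psi)=0$ by the zero-mean hypotheses on $f_0,g_0$. Hence $w_0$ is a weak solution of \eqref{eq:inhomzeromean}. Local elliptic regularity for the Neumann problem (interior and boundary Schauder, as in Appendix~\ref{app:apriori}) then shows that $w_0$ is classical and $C^{2,\a}_\text{loc}$. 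By uniqueness (see below), $w_0$ inherits the $G$-invariance of the data.

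For uniqueness in $C^{2,\a}_{\b}$: a solution with zero data decays in $x$ and is periodic, so Proposition~\ref{prop:inj} applies directly. Note that the $t$-mean-zero condition is automatic here.

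\textbf{The main step is the decay} $|w_0(x,t)|\lesssim\la x\ra^{-\b}$. Once this holds, Lemma~\ref{lem:Neum-apriori} (the weighted Neumann a priori estimate) upgrades it to $w_0\in C^{2,\a}_{\b}(\ol\O)$. The key point is that modes with nonzero frequency $k$ feel a mass term $(2\pi k/T)^2\geq c_1^2>0$, so decay comes from the spectral gap rather than from the geometry. We plan to obtain it in two stages.

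\emph{Stage 1: weighted $L^2$ bounds.} Test the equation with $\psi=w_0\la x\ra^{2\mu}\eta_R^2$, which stays in $\H$ because the weight is $t$-independent. The boundary term is controlled by $\|g_0\la x\ra^{\mu}\|_{L^2(\G_\text{per})}\,\|w_0\la x\ra^{\mu}\|_{L^2(\G_\text{per})}$, with the trace inequality applied again to the latter. Using Poincar\'e in $t$ (valid for each fixed $x$), we can absorb the commutator terms $|\n\la x\ra^{\mu}|\lesssim\mu\la x\ra^{\mu-1}$ into $c_1^2\int w_0^2\la x\ra^{2\mu}$. This gives $\int_{\O_\text{per}}(|Dw_0|^2+w_0^2)\la x\ra^{2\mu}<\infty$ for every $\mu<\b+1/2$, up to the dimension-dependent integrability of $f_0\la x\ra^{\mu}$ and $g_0\la x\ra^{\mu}$; for $n=2$ this means any $\mu<\b+1$ and $\mu<\b+1/2$, respectively. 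We could also iterate in $\mu$ as in Theorem~\ref{thm:eigdecay}, but the weights are capped by the data.

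\emph{Stage 2: pointwise decay.} Local boundary estimates on unit pieces, as in Step~3 of Theorem~\ref{thm:eigdecay} (unit scale rather than scale $\la x\ra$, which is fine thanks to Corollary~\ref{coro:doublehpbounds}), give
\[
|w_0(X)|\lesssim\|w_0\|_{L^2(B_{3\d}(X))}+\|f_0\|_{L^\infty}+\|g_0\|_{C^{0,\a}}\ \ \text{locally}\ \lesssim\la x\ra^{-\mu}+\la x\ra^{-1-\b},
\]
which is $\lesssim\la x\ra^{-\b}$ once we take $\mu=\b$.

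We expect the delicate point to be exactly this weighted energy estimate with the boundary term. If it fails, the alternative is a barrier: the function $h(x,t)=\la x\ra^{-\b}$ is not a supersolution by itself, but since $(-\D_x+c_1^2)$-type coercivity holds only in the mean-zero class, a maximum principle is unavailable. For that reason we would stick with the energy method, which is natural here.
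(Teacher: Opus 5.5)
Your proposal is correct. It matches the paper on the Riesz step, the extension of the weak formulation to general periodic test functions, uniqueness via Proposition \ref{prop:inj}, and inheritance of $G$-invariance. The decay step, however, takes a genuinely different route.

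\emph{The paper's route.} The paper proves no weighted energy bounds. It uses only the unweighted estimate $\|w_0\|_{H^1_{\mathrm{per}}(\O)}\lesssim \|f_0\|_{L^2(\O_{\mathrm{per}})}+\|g_0\|_{L^2(\G_{\mathrm{per}})}$ from Riesz. It then applies the local boundedness principle at the large scale $R=\la x\ra$, on boxes $\ol{\o}_{2\d R}(y)\times(-TR,TR)$ that contain about $R$ periods. Periodicity gives $\|w_0\|_{L^2(S')}\lesssim R^{1/2}$, so the $L^2$ term contributes $R^{-3/2}\cdot R^{1/2}=R^{-1}\le R^{-\b}$. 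This is exactly where the hypothesis $\b\le 1$ enters.

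\emph{Your route.} You use the spectral gap $c_1^2$ from Poincar\'e in $t$ to absorb the commutator terms created by the $t$-independent weight. This gives $\la x\ra^{\mu}$-weighted $H^1$ bounds for $\mu<\b+1/2$. You then read off pointwise decay from unit-scale local estimates, as in Step 3 of Theorem \ref{thm:eigdecay}.

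\emph{Trade-offs.} Your route costs an extra weighted estimate. In return, for $n=2$ it yields $|w_0|\lesssim\la x\ra^{-\b}$ for every $\b>0$, so the cap $\b\le 1$ disappears. The paper's scaling argument is shorter and carries over verbatim to Theorem \ref{thm:solvezm_higher}. Your unit-scale version would not reach $\mu=\b$ when $n\ge 3$: with boundary area growing like $R^{n-1}$, $g_0\la x\ra^{\mu}$ is only guaranteed to lie in $L^2(\G_{\mathrm{per}})$ for $\mu<\b-\frac{n-3}{2}$.

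Two small points. First, the upgrade to $C^{2,\a}_\b(\ol{\O})$ should cite the product-domain estimate, Lemma \ref{lem:Neum-apriori-prod}, not Lemma \ref{lem:Neum-apriori}. Second, in Stage 1 the cutoff term $\int w_0^2|\nabla\eta_R|^2\la x\ra^{2\mu}$ is harmless only because $\mu=\b\le 1$ bounds it by $\|w_0\|^2_{L^2_{\mathrm{per}}(\O)}$. For larger $\mu$, use a truncated weight $\min(\la x\ra,\Lambda)^{2\mu}$ or induct in $\mu$.
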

\begin{proof}
Claim that 
\[
l(\f):=\int_{\O_\text{per}} f_0 \f \, dX + \int_{\G_\text{per}} g_0 \f \,  dS
\]
defines a bounded linear functional on $\mathcal{H}$. Indeed, as $|f_0(x,t)| \lesssim |x|^{-\b-2}$ and $|g_0|(x,t)\lesssim |x|^{-\b-1}$,  we have $f_0\in L^2_\text{per}(\O)$ and $g_0\in L^2_\text{per}(\G)$. Then an application of Cauchy--Schwarz  yields
\begin{equation}\label{thm:solvezeromean:func}
|l(\f)| \leq \|f_0\|_{L^2(\O_\text{per})}  \|\f \|_{L^2(\O_\text{per})} + \|g_0\|_{L^2(\G_\text{per})} \|\f\|_{L^2(\G_\text{per})}.
\end{equation}
At this stage we combine the trace inequality \eqref{prop:tracePer:ineq} and \eqref{thm:solvezeromean:poincare} to bound
\[
\|\f\|_{L^2(\G_\text{per})} \leq (2 \|\f\|_{L^2(\O_\text{per})}  \|D \f\|_{L^2(\O_\text{per})})^{1/2} \leq c_2 \|\f\|_{\H},
\]
so that \eqref{thm:solvezeromean:func} implies 
\[
|l(\f)| \leq C_0 (\|f_0\|_{L^2(\O_\text{per})} + \|g_0\|_{L^2(\G_\text{per})}) \|\f\|_\H.
\]
Hence, by the Riesz Representation Theorem there exists a unique $w_0\in \H$ such that $\langle w_0, \f \rangle_\H = l(\f)$ for all $\f \in \H$, i.e. that 
\begin{equation}\label{thm:solvezeromean:eq:dual}
\int_{\O_\text{per}} D w_0 \cdot D \f \, dX  = \int_{\O_\text{per
}} f_0 \f \, dX + \int_{\G_\text{per}} g_0 \f  \, dS
\end{equation}
for all $\f \in \H$. Moreover, 
\begin{equation}\label{thm:solvezeromean:LMbd}
\|w_0\|_{H^1_\text{per}(\O)} \leq c \|w_0\|_\H = c\|l\|_{\H^*} \leq C(\|f_0\|_{L^2(\O_\text{per})} + \|g_0\|_{L^2(\G_\text{per})}).
\end{equation}
Using the fact that $f_0$, $g_0$, $w_0$ and $D_x w_0$ have zero mean along $t$, it is not hard to check that \eqref{thm:solvezeromean:eq:dual} actually holds for every smooth test function $\psi(x,t)$, that is $T$-periodic in $t$ and compactly supported in $x$. 
Thus, $w_0\in \H \subset H^1_\text{loc}(\O)$ is a weak solution to \eqref{eq:inhomzeromean} that is $T$-periodic in $t$. Taking into account the regularity of $f_0$, $g_0$ and the smoothness of $\G$, it must be that $w_0\in C^{2,\a}_\text{loc}(\ol{\O})$ by the elliptic regularity theory for the Neumann problem. To establish $w_0\in C^{2,\a}_{\b}(\ol{\O})$, it will be enough to show $w_0\in C_\b(\ol{\O})$
and then invoke Lemma \ref{lem:Neum-apriori-prod}. 

Let $X=(x,t)\in \ol{\O}$, $R:=\la x \ra\geq 1$ and let $\d>0$ be the constant from Proposition \ref{prop:asymp}. By a covering argument, it suffices to prove that there is a constant $C$, independent of $R$, such that
\begin{equation}\label{thm:solvezeromean:eq:w0}
 \|w_0\|_{L^\infty(S)} \leq C R^{-\b}, 
\end{equation}
\begin{itemize}
\item when $S=\ol{\o}_{\d R}(y)\times (-T R/2, TR/2)$ for some free boundary point $y\in B_{R/4}(x)\cap \de\o$. Note that $S\subset S':=\ol{\o}_{2\d R}(y)\times (-T R, TR)$;
\item when $S = B_{\d R /4}(y)\times (-TR/2, TR/2)$  for some $y\in B_{R/4}(x)$, such that $S':=B_{\d R/2}(y)\subset \o$.
\end{itemize}
In the first case, since $S\subset S'$ are uniformly Lipschitz in $R$, we can apply the local boundedness principle for the oblique derivative problem (\cite[Theorem 5.36]{Liebermanbook}) to obtain
\begin{align}
\|w_0\|_{L^\infty(S)} &\leq C \left(R^2 \|f_0\|_{L^\infty(S')} + R \|g_0\|_{L^\infty(\de S'\cap \G)} + R^{-3/2}\|w_0\|_{L^2(S')} \right) \notag \\
& \leq C R^{-\b}\left(\|f_0\|_{C^{0,\a}_{\b+2}(\O)} + \|g_0\|_{C^{1,\a}_{\b+1}(\G)} \right) + C R^{-3/2}\|w_0\|_{L^2(S')} \label{thm:solvezeromean:eq:locmax}. 
\end{align}
On the other hand, we can bound $\|w\|_{L^2(S')}$ using the periodicity of $w_0$ in $t$:
\begin{align}
\|w_0\|^2_{L^2(S')} &\leq \int_{\o\times [-T\lceil R \rceil, T \lceil R\rceil ]} w_0^2 \, dx \, dt \leq 4 R \int_{\o\times [-T/2, T/2]} w_0^2 \, dx \, dt = 4 R \|w_0\|_{L^2_\text{per}(\O)}^2 \notag \\ 
 & \leq 4 C^2 (\|f_0\|_{L^2(\O_\text{per})} + \|g_0\|_{L^2(\G_\text{per})})^2 R = C_1(f_0,g_0, T) R, \label{thm:solvezeromean:eq:periodest} 
\end{align}
where in the penultimate inequality we used \eqref{thm:solvezeromean:LMbd}. Combining \eqref{thm:solvezeromean:eq:locmax} and \eqref{thm:solvezeromean:eq:periodest}, we get that 
\[
\|w_0\|_{L^\infty(S)} \leq C_2(f_0, g_0, T) (R^{-\b} + R^{-3/2+1/2}) \leq 2C_2(f_0,g_0, T) R^{-\b},
\]
as $\b \in (0,1]$ and confirm \eqref{thm:solvezeromean:eq:w0}. The second case when $S=B_{\d R /4}(y)\times (-TR/2, TR/2)$ follows in the same fashion, directly from the local boundedness principle for the Poisson equation. 

Finally, by Proposition \ref{prop:inj}, this solution is the unique one among functions $C^{2,\a}_\b(\ol{\O})$ that are $T$-periodic in the $t$-variable. Since $(f_0,g_0)$ is $G$-invariant, we have $w_0\in C^{2,\a}_{\b,G}(\ol{\O})$.
\end{proof}

Putting together Theorems \ref{thm:solv} and \ref{thm:solvezeromean}, we finally establish the surjectivity of the operator $\bar{L}$:
\begin{prop}\label{prop:surj}  Let $\b\in (0,1)$. Then the operator $$\bar{L} = (-\D, \de_\nu): C^{2,\a}_{\b, G}(\overline{\O}) \to \text{ker }l$$
is surjective.
\end{prop}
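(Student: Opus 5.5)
Given $(f_1,g_1)\in\ker l$ in $C^{0,\a}_{\b+2,G}(\ol\O)\times C^{1,\a}_{\b+1,G}(\G)$, I split the data into its $t$-average and a zero-mean remainder, as described before Theorem \ref{thm:solv}:
\[
(f_1,g_1)=(\bar f,\bar g)+(f_0,g_0),
\]
where $\bar f(x,t)=f(x)$ and $\bar g(p,t)=g(p)$ are the averages over a period. I will solve the two problems separately and add the solutions. By linearity and the uniqueness statements already proved, the sum is the desired preimage.

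\emph{Averaged part.} First I check that $(f,g)\in\bar C^{0,\a}_{\b+2,G_0}(\ol\o)\times\bar C^{1,\a}_{\b+1,G_0}(\de\o)$. Averaging in $t$ preserves $G_0$-invariance, since $G_0$ acts only on $x$. It also preserves the weighted H\"older bounds, because the rescaled H\"older quotients of $f_1$ over $[\ol{\O^R}]_{1/4}$ dominate those of the average over $[\ol{\o^R}]_{1/4}$. The compatibility condition \eqref{eq:compatible} follows from $l(f_1,g_1)=0$ and Fubini. Theorem \ref{thm:solv} then yields $w\in\bar C^{2,\a}_{\b,G_0}(\ol\o)$ solving \eqref{eq:inhomhp}. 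I set $\bar w(x,t):=w(x)$. This function is harmonic in $\O$ since $\D\bar w=\D_x w$, it satisfies $\de_\nu\bar w=g$ because $\nu$ is horizontal, and it is $G$-invariant. Its $C^{2,\a}_\b(\ol\O)$ norm is controlled by the $\bar C^{2,\a}_\b(\ol\o)$ norm of $w$, since a function constant in $t$ has the same rescaled local H\"older norms on the product components $[\ol{\O^R}]_{1/4}(X^R)\subset[\ol{\o^R}]_{1/4}(x^R)\times\R$.

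\emph{Zero-mean part.} The data $(f_0,g_0)$ lie in $C^{0,\a}_{\b+2,G}\times C^{1,\a}_{\b+1,G}$: each is a difference of two such functions, and both have zero $t$-mean by construction. Theorem \ref{thm:solvezeromean}, applicable for $\b\in(0,1)\subset(0,1]$, gives $w_0\in C^{2,\a}_{\b,G}(\ol\O)$ solving \eqref{eq:inhomzeromean}. Then $\bar w+w_0\in C^{2,\a}_{\b,G}(\ol\O)$ satisfies $\bar L(\bar w+w_0)=(f_1,g_1)$, which proves surjectivity.

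The only potentially delicate point is the bookkeeping in the averaged step: checking that the $t$-average inherits the $\bar C$-weighted H\"older regularity, and that lifting to a $t$-independent function stays in $C^{2,\a}_\b(\ol\O)$. Both reduce to the observation that the connected components used in the norms are compatible with the product structure $\O=\o\times\R$. All the analytic difficulty was already handled in Theorems \ref{thm:solv} and \ref{thm:solvezeromean}.
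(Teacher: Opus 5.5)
Your proposal is correct and matches the paper's proof: split the data into the $t$-average and the zero-mean remainder, solve the averaged problem via Theorem \ref{thm:solv} (compatibility coming from $l(f_1,g_1)=0$ and Fubini), solve the zero-mean problem via Theorem \ref{thm:solvezeromean}, and add the two solutions. Your extra bookkeeping is accurate and simply fills in details the paper leaves implicit. This covers the behaviour of the weighted norms under averaging and under lifting to $t$-independent functions, via the inclusion of connected components. The appeal to uniqueness is not needed for surjectivity.
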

\begin{proof}
Decompose any $(f_1,g_1)\in \text{ker }l$ into 
\[
(f_1,g_1)=(\bar{f}, \bar{g}) + (f_0, g_0)
\]
where $\bar{f}, \bar{g}$ are  the averages of $f_1, g_1$ over a single period in the axial variable $t$, and are effectively $G_0$-invariant functions on $\o$ and $\de\o$, respectively, satisfying the compatibility condition \eqref{eq:compatible}. Using Theorem \ref{thm:solv}, we find a solution $\bar{w}\in C^{2,\a}_{\b, G}(\ol{\O})$ to $\bar{L}(\bar{w})=(\bar{f}, \bar{g})$, which is constant in the axial direction. Then, Theorem \ref{thm:solvezeromean} gives us a solution $w_0\in C^{2,\a}_{\b, G}(\ol{\O})$ to $\bar{L}(w_0)=(f_0,g_0)$. Their sum $w_1:=\bar{w}+w_0\in  C^{2,\a}_{\b, G}(\ol{\O})$ solves $\bar{L}(w_1) = (f_1, g_1).$ 
\end{proof}

\section{Fredholm properties of the linearized operator in the general case} \label{sec:lingen}

In this section, we will treat the operator $\mathcal{L}$ for \textbf{Case B}:
\begin{equation}\label{eq:MlinB}
\begin{aligned}
\mathcal{L} : \X_G=C^{2,\a}_{\b,G}(\overline{\O}) &\to \Z_G=C^{0,\a}_{\b+2,G}(\overline{\O})\times C^{1,\a}_{\b+1,G}(\G),\\
\mathcal{L}[w] &= (-\D w, \de_\nu w - H w|_\G).
\end{aligned}
\end{equation}
in $n\geq 3$ dimensions. Here $G=G_0\times G_1(T)$, $T>0$, where $G_0\subset O(n)$ leaves the base solution $u:\R^n \to \R_{\geq 0}$ invariant and $G_1(T)$ is generated by translations $t\to t+T$ and reflections $t\to -t$. In addition to hypotheses (\ref{H1}) and (\ref{H2}), we will assume 
\begin{itemize}
\item[(H3)]  The free boundary of $u$ satisfies
\[
\lim_{R\to \infty} \sup_{x\in \text{FB}(u), |x|\geq R} (x/|x|)\cdot \nu \leq 0.
\]
\end{itemize}
and prove the following analogue of Theorem \ref{thm:Fredhp}:
\begin{theorem}\label{thm:Fredgen}
Let $n \geq 3$ and let $u:\R^n \to [0,\infty)$ be a solution of \eqref{FBP}, satisfying (\ref{H1})--(\ref{H3}). Let $G$ be as above. Then the operator $\mathcal{L}$ from \eqref{eq:MlinB} is a Fredholm operator of index $0$, provided  $\b \in \left(\frac{n-3}{2}, \min(\frac{n}{2}, n-2)\right)$.
\end{theorem}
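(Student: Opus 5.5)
We follow the scheme of Theorem \ref{thm:Fredhp}: decompose $\mathcal L=L-K$ as in \eqref{eq:decomp}, with $Lw=(-\D w,\de_\nu w)$ and $Kw=(0,Hw|_\G)$. We then show that $K$ is compact and that $L:C^{2,\a}_{\b,G}(\ol\O)\to\Z_G$ is an isomorphism for $\b$ in the stated range. This is the content of Theorem \ref{thm:iso-higher} announced in the introduction.

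\emph{Compactness of $K$.} By \eqref{eq:H-B} and (\ref{H1}), $H\in C^{1,\a}_1(\G)$, and Corollary \ref{coro:curv} gives the stronger statement $|\n^l H(p)|\le C\k(p)\la p\ra^{-1-l}$ with $\k(p)\to0$. Let $w_k$ be bounded in $C^{2,\a}_\b$. By Arzel\`a--Ascoli, a subsequence has $w_k|_\G$ converging in $C^{1,\a}_{\rm loc}$. On $|x|>R$ the product $Hw_k$ has $C^{1,\a}_{\b+1}$-norm at most $C\sup_{|p|>R}\k(p)$, which is uniformly small. Hence $Hw_k$ is Cauchy in $C^{1,\a}_{\b+1}(\G)$.

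\emph{Injectivity of $L$.} Since $\b>0$, this is exactly the maximum principle/Hopf argument of Proposition \ref{prop:inj}, which did not use $n=2$.

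\emph{Surjectivity of $L$.} Split the data into the $t$-average $(\bar f,\bar g)$ and a zero-mean part $(f_0,g_0)$.

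For the zero-mean part, run the argument of Theorem \ref{thm:solvezeromean} verbatim. We get a weak solution in $\H$ by Riesz, using the trace inequality of Appendix \ref{app:Sobolev} and the Poincar\'e inequality in $t$. This needs $f_0\in L^2_{\rm per}$, i.e.\ $2(\b+2)>n$, and $g_0\in L^2_{\rm per}(\G)$, i.e.\ $2(\b+1)>n-1$ using Proposition \ref{prop:areagrowth}. Both hold precisely because $\b>\frac{n-3}{2}$. Local boundedness then gives $|w_0|\lesssim R^{-\b}+R^{-(n+1)/2}\|w_0\|_{L^2(S')}$ with $\|w_0\|_{L^2(S')}\lesssim R^{1/2}$ (the $S'$ now have volume $\sim R^{n+1}$). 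The result is $R^{-n/2}$, which is at most $R^{-\b}$ since $\b<n/2$. Lemma \ref{lem:Neum-apriori-prod} then upgrades this to $C^{2,\a}_\b$.

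For the averaged part, solve the Neumann problem on $\o\subset\R^n$, first the Dirichlet problem $-\D w_1=\bar f$, $w_1=0$, and then the harmonic Neumann problem with data $g_2=\bar g-\de_\nu w_1$. Here $n\ge3$ removes the compatibility obstruction: the flux $\int_\o f+\int_{\de\o}g$ is absorbed by a term of order $|x|^{2-n}$, which lies in $C_\b$ since $\b<n-2$.

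\emph{Barrier.} Both problems require a barrier $h\sim|x|^{-\b}$ on $\{|x|\ge r_0\}\cap\ol\o$ satisfying
\[
-\D h\ge|x|^{-\b-2},\qquad \de_\nu h\ge c|x|^{-\b-1}\ \text{on }\de\o,
\]
so that a contradiction/compactness argument as in Proposition \ref{prop:Dir} gives uniform $C_\b$ bounds on truncated solutions. The Neumann version must also handle the boundary condition. We take $h=|x|^{-\b}$. Then
\[
-\D h=\b(n-2-\b)|x|^{-\b-2}>0 \quad\text{for }\b<n-2,
\]
and
\[
\de_\nu h=-\b|x|^{-\b-1}(x/|x|)\cdot\nu\ge -o(1)|x|^{-\b-1}
\]
by (\ref{H3}). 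To get a strictly positive boundary sign, modify the barrier to $h=|x|^{-\b}-\e|x|^{-\b}\,\eta(d(x)/|x|)$, with $\eta$ supported near the free boundary and using the Fermi coordinates of Remark \ref{rem:norm_coords}. This trades a small amount of interior superharmonicity for a positive normal derivative. The resulting truncated problems (Neumann on $\de\o$, Dirichlet on $|x|=R_k$) are then bounded in $C_\b$. Passing to the limit, and using injectivity of the limiting problem, yields a solution. Lemma \ref{lem:Neum-apriori} then gives the $C^{2,\a}_\b$ bound. By uniqueness the solution is $G$-invariant.

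\emph{Conclusion and main obstacle.} The open mapping theorem makes $L$ an isomorphism, and Fredholm index $0$ for $\mathcal L=L-K$ follows. The main obstacle is this barrier and the resulting a priori $C_\b$ estimate for the Neumann problem in $\o$. Unlike Case A there is no conformal model, the only control comes from (\ref{H1}) and (\ref{H3}), and the boundary correction must be made uniformly across scales despite possibly thin zero phase.
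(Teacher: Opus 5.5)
Your proposal is correct and follows the paper's architecture step for step. The matching steps are the splitting $\mathcal L=L-K$, compactness of $K$ from the $o(1)$ factor in Corollary \ref{coro:curv}, injectivity by the maximum principle and Hopf lemma, and the zero-mean part by Riesz in $\H$ plus local boundedness (giving $\frac{n-3}{2}<\b\le\frac n2$). The averaged part is likewise handled by truncated mixed Dirichlet--Neumann problems controlled by a barrier.

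The one genuine difference is the barrier of Lemma \ref{lem:Neumann_barrier}. The paper takes
\[
h(x)=a|x|^{-\b}-2u(x)|x|^{-\b-1},
\]
using the solution $u$ itself as a global substitute for the distance function. Since $u$ is harmonic, $0\le u\le 2|x|$ and $|\n u|\le 1$ (Proposition \ref{prop:grad}(b)), the correction costs only $O(|x|^{-\b-2})$ in $-\D h$, which is absorbed by taking $a$ large. Meanwhile $u=0$ and $u_\nu=-1$ on $\de\o$ give $\de_\nu h=(2-a\b\,(x/|x|)\cdot\nu)|x|^{-\b-1}\ge |x|^{-\b-1}$ by (\ref{H3}).

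Your correction $|x|^{-\b}\bigl(1-\e\eta(d/|x|)\bigr)$ also works, once you make the following explicit:
\begin{itemize}
\item $\eta\ge 0$ with $\eta'(0)>0$, so that $\de_\nu h\ge(\e\eta'(0)-o(1))|x|^{-\b-1}$.
\item The support of $\eta$ is small enough that $d$ coincides with the Fermi coordinate $z$ of Remark \ref{rem:norm_coords}. There $|\n d|=1$ and $|\D d|\lesssim |x|^{-1}$ by the curvature bounds.
\item $\e$ is small, which preserves $-\D h\gtrsim |x|^{-\b-2}$ because $\b<n-2$.
\end{itemize}
So your route relies on the positive-side tubular neighbourhood and curvature control, whereas the paper's barrier needs only (\ref{H3}) and the sharp Lipschitz bound.

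Two minor remarks:
\begin{itemize}
\item The preliminary Dirichlet/harmonic-Neumann split is unnecessary. The truncated mixed problem with data $(\bar f,\bar g)$ and the barrier treat both at once, as in the paper.
\item Your flux explanation is only heuristic. For $\b<n-2$ the integrals $\int_\o \bar f\,dx$ and $\int_{\de\o}\bar g\,d\s$ need not converge. What actually removes the obstruction is the superharmonicity of $|x|^{-\b}$ in that range.
\end{itemize}
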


The proof of Theorem \ref{thm:Fredgen} will proceed along the same basic lines as for Theorem \ref{thm:Fredhp}. For this reason, we will omit certain repetitive details and focus on the aspects that are different.

We decompose again
\[
\mathcal{L} = L - K, \quad \text{where } L w := (-\D w, \de_\nu w) \quad \text{and} \quad K w := (0, H w), \quad \text{for } w\in \X_G.
\]
As before, $K$ is a compact operator due to the faster than natural decay of the Hessian (\ref{H1}). 
\begin{prop}\label{prop:compactop} Let $\b>0$. Then $K: \X_G \to \Z_G$ is a compact operator. 
\end{prop}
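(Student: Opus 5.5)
We plan to prove compactness by showing that $K$ factors as a compact restriction followed by multiplication by a coefficient that decays faster than the natural rate. The proof of Lemma \ref{lem:compact} is the template. The only change is that in Case B we no longer know $H\in C^{1,\a}_2$. Instead we use (\ref{H1}) in the form of Corollary \ref{coro:curv}, which gives $|\n^l H(p)|\leq C_l\,\k(p)\la p\ra^{-1-l}$ with $\k(p)\to 0$ as $|p|\to\infty$.

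Take a bounded sequence $w_k$ in $\X_G=C^{2,\a}_{\b,G}(\ol\O)$ with $\|w_k\|\le 1$. Then the traces $\f_k:=w_k|_\G$ are bounded in $C^{2,\a}_{\b}(\G)$. Since $\G=\de\o\times\R$ and the $w_k$ are $T$-periodic in $t$, it suffices to work on a single period, where the $t$-direction is compact. By Arzel\`a--Ascoli on the compact sets $\G\cap\{|x|\le R\}$ and a diagonal argument, we extract a subsequence with $\f_k\to\f$ in $C^{1,\a'}_{\rm loc}(\G)$ for some $\a'\in(\a,1)$ (or, after interpolation, in $C^{1,\a}_{\rm loc}$, using the extra derivative available). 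The limit $\f$ lies in $C^{2,\a}_\b(\G)$ with norm at most $1$, and it is $G$-invariant.

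We then show that $H\f_k\to H\f$ in $C^{1,\a}_{\b+1}(\G)$. Put $\psi_k:=\f_k-\f$ and split $\G$ at $|x|=R$. On the far region $\{|x|\ge R\}$, the product estimate \eqref{eq:product} applied locally at scale $\la x\ra$ gives
\[
\la x\ra^{\b+1}\|(H\psi_k)_{\la x\ra}\|_{C^{1,\a}}\lesssim \sup_{|p|\ge R/2}\k(p)\,\|\psi_k\|_{C^{2,\a}_\b(\G)}\lesssim \sup_{|p|\ge R/2}\k(p),
\]
because the rescaled coefficient $\la x\ra H(\la x\ra\,\cdot)$ has $C^{1,\a}$-norm bounded by $C\k$ on the relevant component. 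This is small uniformly in $k$ once $R$ is large. On the near region $\{|x|\le 2R\}$, the weights are bounded and $H$ is smooth with bounded derivatives, so the weighted norm is controlled by $C_R\|\psi_k\|_{C^{1,\a}(\G\cap\{|x|\le 2R\})}$. This tends to $0$ by the local convergence above, and periodicity in $t$ makes the convergence uniform in $t$. Taking $\limsup_k$ and then $R\to\infty$ yields the convergence. Hence $Kw_k=(0,H\f_k)$ converges in $\Z_G$, and $K$ is compact.

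The main obstacle is making the far-field estimate precise. The weighted norm uses rescaled connected components, so we must check that the rescaled $H_R(y)=R\,H(Ry)$ satisfies $\|H_R\|_{C^{1,\a}([\G^R]_{1/4})}\le C\k(x)$. This follows from Proposition \ref{prop:asymp} and Corollary \ref{coro:curv}, with $\k$ evaluated at nearby points, and it is exactly where we need $K(x)\to0$ rather than mere boundedness. We also need the local convergence to be in $C^{1,\a}$ and not just $C^{1,\a'}$ with $\a'<\a$. If that is problematic, we will use the compact embedding $C^{2,\a}\hookrightarrow C^{1,\a}$ on compact pieces directly, which is available because we lose a full derivative.
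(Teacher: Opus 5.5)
Your proposal is correct and follows essentially the same argument as the paper. You extract a $C^{1,\a}_{\rm loc}$-convergent subsequence of the traces by Arzel\`a--Ascoli, and you control the far region $\{|x|>R\}$ uniformly in $k$ using $\|H\|_{C^{1,\a}_1(\G\cap\{|x|>R\})}\to 0$ (from (\ref{H1}) and Corollary~\ref{coro:curv}) together with the product estimate \eqref{eq:product}. You then handle the near region by local convergence. Your explicit appeal to $t$-periodicity, to make the near-region convergence uniform in $t$, is a detail the paper leaves implicit.
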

\begin{proof}
Denote the norm of the restriction operator $w\to w|_\G$ by $C$ and let $(w_k)_k$ be a sequence in $C^{2,\a}_{\b, G}(\ol{\O})$, bounded by $1$. Then $\|w_k|_\G\|_{C^{2,\a}_\b(\G)}\leq C$ and by Arzel\`a-Ascoli, we can find a subsequence (relabelling indices), such that $w_k|_\G \to \f$ in $C^{1,\a}_\text{loc}(\G)$, for some $\f \in C^{2,\a}_{\b, G}(\G)$ with $\|\f\|_{C^{2,\a}_{\b}(\G)}\leq C$. Because of the improved Hessian decay (\ref{H1}), Corollary \ref{coro:curv} tells us that
\begin{equation*}
\|H\|_{C^{1,\a}_1(\G\cap \{|x|>R\})} \to 0 \quad \text{as } R\to \infty.
\end{equation*}
Given $\e>0$, we can then find $R\geq 1$ large enough such that for all $k\in \mathbb{N}$,
\begin{align}
\|H \f - H w_k|_\G\|_{C^{1,\a}_{\b+1}(\G\cap \{|x|>R\})} & \leq C_*\|H\|_{C^{1,\a}_1(\G\cap \{|x|>R\})} (\|\f\|_{C^{1,\a}_{\b}(\G)}+ \|w_k|_\G\|_{C^{1,\a}_{\b}(\G)}) \notag \\ 
& \leq CC_* \|H\|_{C^{1,\a}_1(\G\cap \{|x|>R\})} \leq \e. \label{prop:compactop:eq1}
\end{align}
Now, by the local convergence $w_k|_\G \to \f$ in $C^{1,\a}_\text{loc}(\G)$, we have for all large enough $k\in \mathbb{N}$,  
\begin{equation}\label{prop:compactop:eq2}
\|H \f - H w_k|_\G\|_{C^{1,\a}_{\b+1}(\G\cap \{|x|<4R\})} \leq C_*\|H\|_{C^{1,\a}_1(\G)} \|\f - w_k|_\G\|_{C^{1,\a}_\b(\G\cap \{|x|< 4R\})} \leq \e.
\end{equation}
By \eqref{prop:compactop:eq1} and \eqref{prop:compactop:eq2}, we deduce that $\|H \f - H w_k|_\G\|_{C^{1,\a}_{\b+1}(\G)} \leq \e$ for all large enough $k$ and, thus, the compactness of the operator $K$.
\end{proof}

Hence, it suffices to prove that $L$ has Fredholm index $0$. In fact, we will show that $L$ is a topological isomorphism for appropriate decay rates $\b>0$.

\begin{theorem}\label{thm:iso-higher} 
Assume that we are in the setting of Theorem \ref{thm:Fredgen}. Then $$L = (-\D, \de_\nu): C^{2,\a}_{\b, G}(\ol{\O})\to C^{0,\a}_{\b+2, G}(\ol{\O})\times C^{1,\a}_{\b+1, G}(\G)$$ is a topological isomorphism for $\b \in \left(\frac{n-3}{2}, \min(\frac{n}{2}, n-2)\right)$, i.e.\ for given data $(f, g) \in C^{0,\a}_{\b+2, G}(\ol{\O})\times C^{1,\a}_{\b+1, G}(\G)$ the Neumann problem
 \begin{equation}\label{thm:iso-higher:eq:Neum}
\left\{
\begin{aligned}
-(\D_x + \de_t^2) w &= f && \text{in } \Omega,\\
\de_\nu w &= g && \text{on } \G.
\end{aligned}
\right.
\end{equation}
has a unique solution $w\in C^{2,\a}_{\b, G}(\ol{\O})$.
\end{theorem}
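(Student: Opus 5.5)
We follow the Case A scheme, adapted to $n\geq 3$. In that dimension no flux obstruction appears: $|\O_{\text{per},R}|\lesssim R^n$ and $\s(\G_{\text{per},R})\lesssim R^{n-1}$ by Proposition \ref{prop:areagrowth}, so the constants are no longer in the kernel once $\b>0$.

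\textbf{Injectivity.} This is exactly Proposition \ref{prop:inj}: a $T$-periodic decaying harmonic function with zero Neumann data vanishes by the maximum principle and the Hopf lemma. Boundedness of the inverse then follows from the Open Mapping Theorem, so everything reduces to surjectivity.

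\textbf{Surjectivity, splitting.} Split the data into the $t$-average $(\bar f,\bar g)$ and the mean-zero remainder $(f_0,g_0)$.

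For the mean-zero part we repeat Theorem \ref{thm:solvezeromean} verbatim. The Poincar\'e inequality in $t$ makes $\H$ a Hilbert space. The data lie in $L^2$ of a period cell precisely when $2(\b+2)>n$ for $f$ and $2(\b+1)>n-1$ for $g$, i.e.\ when $\b>\frac{n-3}{2}$. This is the origin of the lower bound on $\b$.

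For the upgrade from $L^2$ to pointwise decay, the local boundedness estimate on slabs of size $R\times TR$ gives
\[
\|w_0\|_{L^\infty(S)}\lesssim R^{-\b}+R^{-(n+1)/2}\|w_0\|_{L^2(S')}\lesssim R^{-\b}+R^{-n/2}.
\]
This yields the rate $R^{-\b}$ provided $\b\le n/2$. The a priori estimate of Lemma \ref{lem:Neum-apriori-prod} then gives $w_0\in C^{2,\a}_\b$.

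\textbf{Surjectivity, cross-sectional problem.} For the averaged part we must solve $-\D_x w=f$ in $\o$, $\partial_\nu w=g$ on $\de\o$, with $(f,g)\in \bar C^{0,\a}_{\b+2}\times\bar C^{1,\a}_{\b+1}$ and no compatibility condition. We proceed by exhaustion: solve on truncated domains $\o_{R_k}$, with Neumann data on $\de\o\cap B_{R_k}$ and Dirichlet data $0$ on $\o\cap\de B_{R_k}$, which is uniquely solvable. We then prove a uniform bound
\[
\|w_k\|_{\bar C_\b}\le C\big(\|f\|+\|g\|\big)
\]
by the blow-up/contradiction argument of Proposition \ref{prop:Dir}. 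This needs two ingredients.
\begin{itemize}
\item The limiting homogeneous Neumann problem has only the trivial decaying solution, which holds by the same maximum principle and Hopf argument.
\item A positive barrier $h$ on $\{|x|\ge r_0\}$ with
\[
h\lesssim |x|^{-\b},\qquad -\D h\ge |x|^{-\b-2},\qquad \partial_\nu h\ge |x|^{-\b-1}\ \text{ on }\de\o .
\]
\end{itemize}
Given these, the comparison $\pm w_k\le\mu_k h$ closes exactly as before. Local Schauder estimates, using Lemma \ref{lem:Neum-apriori} with the uniform geometry from Proposition \ref{prop:asymp}, give $C^{2,\a}_\b$ bounds, and a diagonal limit produces $w$. $G$-invariance follows from uniqueness.

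\textbf{Main obstacle: the barrier.} Try first $h=|x|^{-\b}$. It satisfies
\[
-\D h=\b(n-2-\b)|x|^{-\b-2}>0 \quad\text{for } \b<n-2,
\]
which is the upper bound $n-2$. On the boundary,
\[
\partial_\nu h=-\b|x|^{-\b-1}\,(x/|x|)\cdot\nu .
\]
By (H3) this is $\ge -o(1)\,|x|^{-\b-1}$, which is almost but not strictly positive. To gain a strict sign, perturb the barrier to
\[
h=|x|^{-\b}-\e\,\bar h,
\]
where the correction $\bar h\approx |x|^{-\b}(u/|x|)$ uses the base solution. Since $\partial_\nu u=-1$, the correction contributes $\partial_\nu(-\e\bar h)\approx \e|x|^{-\b-1}$, while keeping $-\D h$ comparable to $|x|^{-\b-2}$. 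The Hessian bound (H1) and $|\n u|\le1$ control the error terms; the $o(1)$ decay from (H1) and (H3) is absorbed for $|x|\ge r_0$ large once $\e$ is fixed small. Alternatively one may take
\[
h=|x|^{-\b}\big(1+\e\,\psi(d(x)/|x|)\big),
\]
using the Fermi coordinates of Remark \ref{rem:norm_coords} near $\de\o$, where $d$ is the distance to $\de\o$. Checking these sign conditions uniformly, including where the zero phase is thin, is the delicate step; (H2) and Proposition \ref{prop:asymp} are what make the Fermi-coordinate computation uniform.
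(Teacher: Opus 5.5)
Your proposal is correct and follows the paper's proof essentially step by step. It uses the same splitting into the $t$-mean-zero part, solved in $\H$ for $\b\in(\frac{n-3}{2},\frac n2]$, and the cross-sectional part, solved by exhaustion plus a barrier for $\b\in(0,n-2)$. Your barrier $|x|^{-\b}-\e\,u\,|x|^{-\b-1}$ is exactly the paper's $h=a|x|^{-\b}-2u|x|^{-\b-1}$ from Lemma \ref{lem:Neumann_barrier} with $\e=2/a$. Its verification is in fact not delicate: it uses only $u=0$ and $u_\nu=-1$ on $\de\o$, $|\n u|\le 1$ (hence $u\lesssim|x|$ for large $|x|$), and (H3). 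So neither (H1), (H2), nor the Fermi-coordinate alternative is needed.
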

\begin{proof}
The argument for the injectivity of $L$ is verbatim the one in Proposition \ref{prop:inj}. To prove the surjectivity of $L$, we  break up the problem again into a mean-zero part in the axial variable $t$ and a part that lives on $\o\subset \R^n$: $(f,g) = (f_0, g_0) + (\bar{f}, \bar{g})$, where
\begin{align*}
\bar{f}(x) &= \frac{1}{T} \int_{-T/2}^{T/2} f(x,t)\, dt, \quad \text{for } x\in \o, \quad \text{and} \quad \bar{g}(p) = \frac{1}{T} \int_{-T/2}^{T/2} g(p,t)\, dt, \quad \text{for } p\in \de\o.
\end{align*}
It then suffices to find solutions $w_0\in C^{2,\a}_{\b, G}(\ol{\O})$ and $\bar{w} \in \bar{C}^{2,\a}_{\b, G_0}(\ol{\o})$ to the Neumann problems $L w_0 = (f_0, g_0)$ and $\bar{L} \bar{w} = (\bar{f}, \bar{g})$, respectively, where $\bar{L} = (-\D_x, \de_\nu)$ is the operator on the base $\o$. The solvability of $L w_0 = (f_0, g_0)$ will be established in Theorem \ref{thm:solvezm_higher} in the range $\b\in ((n-3)/2,n/2]$, while the one for  $\bar{L} \bar{w} = (\bar{f}, \bar{g})$ -- in Theorem \ref{thm:LM_higherD}, for the range $\b\in (0,n-2)$. As a result $w(x,t):=w_0(x,t) + \bar{w}(x)$ belongs to $C^{2,\a}_{\b,G}(\ol{\O})$ and solves \eqref{thm:iso-higher:eq:Neum}, provided $\b\in \left((n-3)/2, \min(n-2, n/2)\right)$.
\end{proof}

\begin{theorem}\label{thm:solvezm_higher} Let $\b \in \left(\frac{n-3}{2}, \frac{n}{2}\right]$, $n\geq 3$. Suppose that $(f_0, g_0)\in C^{0,\a}_{\b+2, G}(\ol{\O})\times C^{1,\a}_{\b+1, G}(\G)$ and that
$$
\int_{-T/2}^{T/2} f_0(x,t)\, dt = 0 \quad \text{for every } x\in \o \quad \text{and}\quad \int_{-T/2}^{T/2} g_0(q,t)\, dt  =0 \quad \text{for every } q\in\de\o.
$$
Then there exists a solution $w_0\in C^{2,\a}_{\b,G}(\ol{\O})$ to \eqref{thm:iso-higher:eq:Neum} with right-hand side $(f_0, g_0).$
\end{theorem}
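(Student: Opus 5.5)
We follow the proof of Theorem \ref{thm:solvezeromean} step by step. The variational part carries over almost verbatim; the only genuinely new work is the weighted $L^\infty$ decay estimate, and that is where the range $\b\in\left(\frac{n-3}{2},\frac n2\right]$ comes from.

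\textbf{Weak solution.} We work in the Hilbert space $\H$ of \eqref{eq:Hilbert}, with the inner product $\la\psi,\f\ra_\H=\int_{\O_\text{per}}D\psi\cdot D\f$. The Poincar\'e inequality \eqref{thm:solvezeromean:poincare} holds in any dimension, because it only uses the zero mean in $t$. For the functional $l(\f)=\int_{\O_\text{per}} f_0\f+\int_{\G_\text{per}} g_0\f$ to be bounded on $\H$, we need $f_0\in L^2(\O_\text{per})$ and $g_0\in L^2(\G_\text{per})$.
\begin{itemize}
\item Since $|f_0|\lesssim\la x\ra^{-\b-2}$ and $|\o\cap B_R|\lesssim R^n$, the first condition requires $2\b+4>n$.
\item Since $|g_0|\lesssim\la x\ra^{-\b-1}$ and $\s(\de\o\cap B_R)\lesssim R^{n-1}$ by Proposition \ref{prop:areagrowth}, the second requires $2\b+2>n-1$, i.e.\ $\b>\frac{n-3}{2}$.
\end{itemize}
This is exactly the lower bound of the range, and it also implies the first condition. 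The trace inequality \eqref{prop:tracePer:ineq} together with Poincar\'e then gives $|l(\f)|\le C\|\f\|_\H$. Riesz representation produces $w_0\in\H$ satisfying the energy bound \eqref{thm:solvezeromean:LMbd}. As in Theorem \ref{thm:solvezeromean}, the zero-mean structure upgrades the weak formulation to all periodic test functions. Local Neumann regularity gives $w_0\in C^{2,\a}_{\rm loc}(\ol\O)$. Uniqueness (Proposition \ref{prop:inj}) gives $G$-invariance once the decay is known.

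\textbf{Decay.} It remains to show $\|w_0\|_{C_\b(\ol\O)}<\infty$; Lemma \ref{lem:Neum-apriori-prod} then upgrades this to $C^{2,\a}_\b$. Use the same covering by boxes $S\subset S'$ of cross-sectional size $\sim\d R$ and axial length $\sim TR$, which are uniformly Lipschitz after scaling by $R$. Local boundedness for the oblique derivative problem in dimension $n+1$ gives
\[
\|w_0\|_{L^\infty(S)}\le C\big(R^{-\b}(\|f_0\|+\|g_0\|)+R^{-(n+1)/2}\|w_0\|_{L^2(S')}\big).
\]
Periodicity gives $\|w_0\|_{L^2(S')}^2\lesssim R\|w_0\|^2_{L^2_\text{per}}$, so the last term is $O(R^{-n/2})$. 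This is $\lesssim R^{-\b}$ exactly when $\b\le n/2$, which is the upper bound of the range.

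\textbf{Main obstacle.} In this crude version the main obstacle is only bookkeeping: the global $L^2$ energy controls $w_0$ locally with the factor $R^{-n/2}$ and no better. If we wanted the sharper conclusion for larger $\b$, we would first try to exploit the exponential decay in $|x|$ coming from the $t$-oscillation, for example a barrier $e^{-c|x|}$-type comparison for each Fourier mode. This is not needed for the stated range. We also have to check that the local boundedness constants are uniform in $R$; this holds by Proposition \ref{prop:asymp}, since the rescaled domains $\o^R$ are uniformly $C^{1}$ graphs at scale $\d$, which is exactly how the $n=2$ case was handled.
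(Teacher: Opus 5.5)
Your proposal is correct and follows the paper's proof essentially step by step. It uses Riesz representation in $\H$, with the $L^2$-integrability of the data forcing $\b>\frac{n-3}{2}$, and then local boundedness on boxes of size $\sim R$ combined with the periodic bound $\|w_0\|_{L^2(S')}\lesssim R^{1/2}$, which gives $R^{-\b}+R^{-n/2}$ and hence needs $\b\le\frac n2$. The only difference is cosmetic: you get $G$-invariance from Proposition \ref{prop:inj} once the decay is known, while the paper gets it directly from uniqueness of the Riesz representative in $\H$; both work.
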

\begin{proof}
We proceed just like in the proof of Theorem \ref{thm:solvezeromean}, so we will omit some of the details. Define the Hilbert space $\H$ as in \eqref{eq:Hilbert} and consider the weak formulation of $Lw = (f_0, g_0)$ in $\H$:
\begin{equation}\label{thm:solvezm_higher:LM}
\la w, \f\ra_\H = l(\f):= \int_{\O_\text{per}} f_0 \f \, dX + \int_{\G_\text{per}} g_0 \f\, dS \quad \text{for all } \f\in \H.
\end{equation}
We observe that $|f_0(x, t)|^2\lesssim |x|^{-2(\b+2)}$ for large $x\in \o$, and $|g_0(q,t)|^2\lesssim |q|^{-2(\b+1)}$ for large $q\in\de\o$. Given that $2(\b+2)>n$ and $2(\b+1)>n-1$ when $\b> \frac{n-3}{2}$, we have $f_0\in L^2(\O_\text{per})$ and $g_0\in L^2(\G_\text{per})$, taking into account the volume/area growth of $\O_\text{per}$ and $\G_\text{per}$, respectively. Applying the Cauchy--Schwarz, the Poincar\'e and the trace inequality \eqref{prop:tracePer:ineq}, we again deduce that $l$ defines a bounded linear functional on $\H$. The Riesz Representation Theorem then yields a unique  function $w=w_0\in \H\subset H^1_\text{per}(\O)$, solving \eqref{thm:solvezm_higher:LM}, which is ultimately a weak solution of $Lw = (f_0, g_0)$ in $\O$. By uniqueness, $w_0$ must be $G$-invariant, since $(f_0, g_0)$ are.

By elliptic regularity, we get that $w_0\in C^{2,\a}_\text{loc}(\ol{\O})$ is a classical solution of the problem. To prove that $w_0\in C^{2,\a}_\b(\ol{\O})$ it suffices to show $w_0\in C_\b(\ol{\O})$ and invoke the a priori estimates of Lemma \ref{lem:Neum-apriori-prod}. To accomplish the latter, we prove $\|w_0\|_{L^\infty(S)} \leq \bar{C} R^{-\b}$, for the uniformly Lipschitz domains $S\subset S'$ as in Theorem \ref{thm:solvezeromean}.

Applying the local boundedness principle \cite[Theorem 5.36]{Liebermanbook} gives 
\begin{align}
\|w_0\|_{L^\infty(S)} &\leq C \left(R^2 \|f_0\|_{L^\infty(S')} + R \|g_0\|_{L^\infty(\de S'\cap \G)} + R^{-(n+1)/2}\|w_0\|_{L^2(S')} \right) \notag \\
& \leq C R^{-\b}\left(\|f_0\|_{C^{0,\a}_{\b+2}(\O)} + \|g_0\|_{C^{1,\a}_{\b+1}(\G)} \right) + C R^{-(n+1)/2}\|w_0\|_{L^2(S')} \label{thm:solvezm_higher:locmax}. 
\end{align}
As before, we have that $\|w_0\|_{L^2(S')}\leq c   \|w_0\|_\H R^{1/2} \leq C_1(f_0, g_0, T) R^{1/2}$, so that  \eqref{thm:solvezm_higher:locmax} entails
\[
\|w_0\|_{L^\infty(S)} \leq C_2(f_0, g_0, T)(R^{-\b} + R^{-n/2}) \leq \bar{C} R^{-\b},
\]
since $\b\leq n/2$. 
\end{proof}

The next theorem addresses the solvability of the Neumann problem on the base $\o$.

\begin{theorem}\label{thm:LM_higherD}
Let $\b\in (0, n-2)$, $n\geq 3$. Suppose that $(\bar{f}, \bar{g})\in \bar{C}^{0,\a}_{\b+2, G_0}(\ol{\o})\times \bar{C}^{1,\a}_{\b+1, G_0}(\de\o)$. Then there exists a unique solution $\bar{w}\in \bar{C}^{2,\a}_{\b, G_0}(\ol{\o})$ of 
 \begin{equation*}
\left\{
\begin{aligned}
-\D_x \bar{w} &= \bar{f} && \text{in } \omega,\\
\de_\nu \bar{w} &= \bar{g} && \text{on } \de\o.
\end{aligned}
\right.
\end{equation*}
\end{theorem}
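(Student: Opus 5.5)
Uniqueness comes first, since it is the easy part. If $\bar w\in\bar C^{2,\a}_\b(\ol\o)$ is harmonic with zero Neumann data and $\b>0$, then $\bar w\to0$ at infinity. A positive maximum would therefore be attained at some point of $\ol\o$. The strong maximum principle rules out an interior maximum, and the Hopf lemma rules out a boundary maximum, exactly as in Proposition \ref{prop:inj}. Uniqueness then gives $G_0$-invariance of the solution automatically. For existence I will follow the scheme of Theorem \ref{thm:solv}: write $\bar w=w_1+w_2$, where $w_1$ solves the Dirichlet problem $-\D w_1=\bar f$, $w_1|_{\de\o}=0$, and $w_2$ is harmonic with Neumann data $g_2:=\bar g-\de_\nu w_1\in\bar C^{1,\a}_{\b+1}(\de\o)$. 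No compatibility condition is needed here. For $n\ge3$ the constants are not in the decaying class, so the cokernel disappears, in contrast with $n=2$.

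For the Dirichlet part I will repeat the proof of Proposition \ref{prop:Dir}: exhaustion by $\o_R$, the blow-up/normalization contradiction argument, and the a priori estimates of Lemma \ref{lem:Dir-apriori}. The only new ingredient is a barrier. The natural candidate is $h(x)=|x|^{-\b}$, for which
\[
-\D h=\b(n-2-\b)|x|^{-\b-2}>0 \quad\text{when } 0<\b<n-2 .
\]
This barrier works in all of $\R^n\setminus B_{r_0}$, hence on $\o\cap\{|x|\ge r_0\}$, and the range $\b<n-2$ is exactly what it requires. The decay $|w_1|\lesssim|x|^{-\b}$ then follows, and Lemma \ref{lem:Dir-apriori} upgrades it to $w_1\in\bar C^{2,\a}_\b(\ol\o)$.

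The Neumann part is the main obstacle, because no explicit Green's function is available. I will again exhaust $\o$ and prove an a priori bound $\|w\|_{\bar C_\b}\le C\|g_2\|_{\bar C^{1,\a}_{\b+1}}$ by contradiction. To make the approximate problems solvable, I will solve mixed problems on $\o_R$: Neumann data on $\de\o\cap B_R$ and zero Dirichlet data on $\o\cap\de B_R$. The blow-up argument needs a limit Liouville statement, and uniqueness provides it. It also needs a barrier near infinity for the Neumann problem, namely $h$ with
\[
-\D h\ge0 \ \text{in } \o,\qquad \de_\nu h\ge c|x|^{-\b-1} \ \text{on } \de\o .
\]
This is where (H3) enters. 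With $h=|x|^{-\b}$ we get
\[
\de_\nu h=-\b|x|^{-\b-1}\,\frac{x}{|x|}\cdot\nu .
\]
This is nonnegative only asymptotically, and (H3) gives only $\frac{x}{|x|}\cdot\nu\le o(1)$, which is not a strict sign. I would therefore modify the barrier to
\[
h=|x|^{-\b}-|x|^{-\b'}\quad\text{or}\quad h=|x|^{-\b}\bigl(1+\e\,\psi\bigr),
\]
with $\b'$ slightly larger than $\b$ (still below $n-2$), or add a corrector solving a Dirichlet problem. The goal is to gain a strictly positive normal derivative of order $|x|^{-\b-1}$ while keeping superharmonicity, using the strict slack $\b(n-2-\b)>0$ in $-\D h$. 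Plausibly one should also combine with $w_1$-type Dirichlet solutions: first flatten $g_2$ using a harmonic function whose normal derivative is comparable to $|x|^{-\b-1}$.

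With the barrier available, a comparison principle for the mixed problem on $\o_R\setminus\o_{r_0}$ gives $|w_2|\le\mu h$ away from a compact set. The compact part is controlled by the local convergence to the trivial limit, which yields the contradiction and hence uniform $\bar C_\b$ bounds. Lemma \ref{lem:Neum-apriori} then upgrades these to $\bar C^{2,\a}_\b$ bounds, and passing to the limit $R\to\infty$ produces $w_2$. The hard step I expect is designing the Neumann barrier under the merely one-sided asymptotic condition (H3), using (H1) to control the geometry of $\de\o$ near infinity.
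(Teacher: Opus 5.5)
You have the right scheme: the uniqueness argument, the splitting $\bar w=w_1+w_2$, and the Dirichlet step with the barrier $|x|^{-\b}$ (which is where $\b<n-2$ enters) all work. So does the exhaustion, blow-up and comparison argument with mixed problems on $\o_R$. The paper runs the same scheme without splitting, using one barrier for $(\bar f,\bar g)$ together.

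\textbf{The gap.} It is the step you flag yourself: the Neumann barrier is never constructed, and it is the real content of the theorem. Your concrete candidate $|x|^{-\b}-|x|^{-\b'}$ cannot work, and neither can any radial function $\phi(|x|)$. Its normal derivative is $\phi'(|x|)\,(x/|x|)\cdot\nu$, and (H3) allows $(x/|x|)\cdot\nu\to0$. For the axially symmetric profiles in Corollary \ref{coro:FRFSS}, for example, $x\cdot\nu\to-b$. For $\phi\sim|x|^{-\b}$ this gives only $\de_\nu\phi=O(|x|^{-\b-2})$, one power too weak to dominate Neumann data of size $|x|^{-\b-1}$. Your other suggestions either restate the problem or need a scale-invariant quantitative Hopf lemma that you do not supply. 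These are the unspecified $\psi$, the Dirichlet corrector, and ``flattening'' $g_2$ with a harmonic function whose normal derivative is $\sim|x|^{-\b-1}$.

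\textbf{The missing idea.} Use the base solution $u$ itself as the corrector. It is harmonic, vanishes on $\de\o$ with $\de_\nu u=-1$ \emph{exactly}, and satisfies $|\n u|\le1$ by Proposition \ref{prop:grad}(b). Hence $u\le 2|x|$ for $|x|\ge r_0\ge u(0)$. The paper takes $h=a|x|^{-\b}-2u|x|^{-\b-1}$; up to a positive factor, this is your template $|x|^{-\b}(1+\e\psi)$ with $\psi=-u/|x|$. Then:
\begin{itemize}
\item $h\ge(a-4)|x|^{-\b}>0$;
\item on $\de\o$ one has $\de_\nu h=\bigl(2-a\b\,(x/|x|)\cdot\nu\bigr)|x|^{-\b-1}$;
\item the Laplacian of the correction term is bounded by $4(\b+1)(1+|n-3-\b|)|x|^{-\b-2}$, which the slack $a\b(n-2-\b)|x|^{-\b-2}$ absorbs once $a$ is large.
\end{itemize}
Hypothesis (H3) is then needed only in the weak form $a\b\,(x/|x|)\cdot\nu\le1$ for $|x|\ge r_0$, so no strict sign is required. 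This yields $-\D h\ge|x|^{-\b-2}$ and $\de_\nu h\ge|x|^{-\b-1}$. With this $h$ your comparison argument closes, and the splitting into $w_1,w_2$ becomes unnecessary.
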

The solvability of the Neumann problem on the base $\o$ will follow from the existence of an appropriate barrier for the inhomogeneous Neumann problem on $\o$ in Lemma \ref{lem:Neumann_barrier}, which itself is due to hypothesis (\ref{H3}).  
\begin{lemma}\label{lem:Neumann_barrier}
For every $\b\in (0, n-2)$, there exist $C>0$, $r_0>0$ and a function $h\in C^2(A_{r_0})$, where $A_{r_0}:=\ol{\o}\cap \{|x|\geq r_0\}$, such that $0<h(x)\leq C/|x|^\b$ for all $x\in A_{r_0}$ and 
 \begin{equation*}
\left\{
\begin{aligned}
-\D h & \geq |x|^{-\b-2} && \text{in } A_{r_0},\\
\de_\nu h & \geq |x|^{-\b-1} && \text{on } \de\o \cap \{|x|\geq r_0\}.
\end{aligned}
\right.
\end{equation*}
\end{lemma}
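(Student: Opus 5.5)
The plan is to take the explicit radial-type barrier
\[
h(x):=|x|^{-\b}-\e|x|^{-\b'} \quad\text{or simply}\quad h(x):=|x|^{-\b}\,\big(1+\text{correction}\big),
\]
and exploit two facts. First, in $\R^n$ with $n\geq 3$ the function $|x|^{-\b}$ is strictly superharmonic exactly when $0<\b<n-2$:
\[
-\D |x|^{-\b}=\b(n-2-\b)|x|^{-\b-2}.
\]
Second, the Neumann derivative of $|x|^{-\b}$ on $\de\o$ is
\[
\de_\nu |x|^{-\b}=-\b|x|^{-\b-1}\,\frac{x}{|x|}\cdot\nu.
\]
By (\ref{H3}), the factor $-(x/|x|)\cdot\nu$ is asymptotically nonnegative, so this derivative is $\geq -\b\e_R|x|^{-\b-1}$ for $|x|\geq R$, with $\e_R\to 0$. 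This is not yet a strict lower bound of size $|x|^{-\b-1}$, so I would add a second term that produces a positive boundary flux while only slightly perturbing the interior inequality.

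For the correction I would use the distance-like Fermi coordinate $z$ from Remark \ref{rem:norm_coords}, measured into $\o$, through a term of the form
\[
-\,|x|^{-\b-1}\,z\,\chi\!\big(z/(\th\la x\ra)\big).
\]
Its normal derivative on $\de\o$ equals $+|x|^{-\b-1}$, since $\de_\nu z=-1$. So the total Neumann derivative is at least $(1-\b\e_R)|x|^{-\b-1}\geq \tfrac12|x|^{-\b-1}$ for $r_0$ large. It remains to check the Laplacian of the correction. Differentiating $z$ gives $\D z=-H$ up to $O(z\,|A|^2)$, and by Proposition \ref{prop:asymp} and Corollary \ref{coro:curv} with (\ref{H1}) this is $o(\la x\ra^{-1})$. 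The cutoff derivatives and the derivatives of $|x|^{-\b-1}$ contribute $O(|x|^{-\b-2})$, with a constant $C(\th,\b)$ that does not depend on the small parameter.

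To beat this $O(|x|^{-\b-2})$ error I would scale: set $h=A|x|^{-\b}-\text{correction}$ with $A$ large. Then
\[
-\D h\geq \big(A\b(n-2-\b)-C\big)|x|^{-\b-2},
\]
which is $\geq |x|^{-\b-2}$ once $A$ is large. On the boundary,
\[
\de_\nu h\geq \big(1-A\b\e_{r_0}\big)|x|^{-\b-1},
\]
and choosing $r_0$ large after $A$ makes $A\b\e_{r_0}\leq 1/2$. Scaling everything by $2$ then gives the stated constants. Positivity holds because the correction is supported in $z\leq 3\th\la x\ra$ and is bounded by $3\th|x|^{-\b}$; taking $A\geq 1$ and $\th<1/6$ gives $h\geq \tfrac12 A|x|^{-\b}>0$. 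The upper bound $h\leq C|x|^{-\b}$ is immediate.

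The main obstacle is controlling $\D z$ and the Hessian of the Fermi coordinate uniformly on the band $\{0\le z\le 3\th\la x\ra\}$. This requires (\ref{H1}) via Proposition \ref{prop:asymp}, because the normal-coordinate geometry must be uniformly controlled at scale $\la x\ra$. A second subtlety is that the order of constants must be $A$ first, then $r_0$. This ordering works only because the interior error from the correction is $O(|x|^{-\b-2})$ with a constant independent of $A$, while the boundary error is $A\cdot o(1)$.
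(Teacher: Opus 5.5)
\textbf{Verdict: correct, but a different route from the paper.} Your barrier $A|x|^{-\b}-|x|^{-\b-1}z\,\chi\big(z/(\th\la x\ra)\big)$ works. It uses the Fermi coordinate $z$ from Remark~\ref{rem:norm_coords}, a cutoff, and the bound $|\D z|\lesssim |A|\lesssim \la x\ra^{-1}$ on the band. The constants must be fixed in the order you give: $A$ first, then $r_0$ via (\ref{H3}).

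The paper's construction is
\[
h=a|x|^{-\b}-2u(x)|x|^{-\b-1},
\]
which uses the solution $u$ itself in place of your cut-off distance function. This is cheaper for three reasons:
\begin{itemize}
\item $u$ is harmonic, so $\D(u|x|^{-\b-1})=u\,\D|x|^{-\b-1}+2\n u\cdot\n |x|^{-\b-1}$ has no curvature term to control.
\item $u=0$ and $u_\nu=-1$ on $\de\o$, so the boundary flux $+2|x|^{-\b-1}$ comes out exactly.
\item The sharp Lipschitz bound $|\n u|\le 1$ (Proposition~\ref{prop:grad}) gives $u\le 2|x|$. This makes every error term $O(|x|^{-\b-2})$ and yields positivity once $a>4$.
\end{itemize}
Consequently the paper needs no tubular neighbourhood, no cutoff and no curvature bounds. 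The only asymptotic input is (\ref{H3}).

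Your version needs the uniform inner tubular neighbourhood of width $\sim\th\la p\ra$ and a check that the cutoff vanishes before its edge. That check holds because $\la x\ra$ and $\la p\ra$ are comparable there when $\th$ is small. You also need the curvature estimate for $\D z$ and $D^2 z$ on the band. Only the natural bound $|A|=O(\la x\ra^{-1})$ is needed for this, not the $o(\la x\ra^{-1})$ decay you invoke from (\ref{H1}), since that error is absorbed by taking $A$ large anyway.

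What your approach buys is that it is purely geometric. It would give the same barrier for the Neumann problem on any domain satisfying (\ref{H3}) with scale-invariant curvature bounds and a uniform inner tubular neighbourhood, without using that $\o$ is the positive phase of a one-phase solution. The paper's argument is shorter precisely because $u$ is a ready-made harmonic defining function for $\de\o$ with unit normal derivative.
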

\begin{proof}
Let us show that 
\[
h(x):= a |x|^{-\b} - 2 u(x) |x|^{-\b-1}.
\]
satisfies the properties of the lemma when $a$ and $r_0\geq u(0)$ are chosen large enough. Indeed, by the Lipschitz bound $|\n u|\leq 1$ of Proposition \ref{prop:grad}, we know that
\begin{equation}\label{lem:Neumann_barrier:eq:1}
u(x)\leq u(0)+ |x| \leq r_0 + |x| \leq 2|x| \quad \text{whenever } x\in A_{r_0}
\end{equation}
Thus, $(a-4)|x|^{-\b}\leq h(x)\leq a|x|^{-\b}$ in $A_{r_0}$, so $h>0$ in $A_{r_0}$ whenever $a> 4$. Now, using \eqref{lem:Neumann_barrier:eq:1} and $\|\n u \|_{L^\infty(\o)}=1$, we can bound:
\begin{align*}
-\D h &\geq a \b (n-2 -\b) |x|^{-\b-2} - 4 (\b+1) (\nabla u \cdot x) |x|^{-\b-3} - 2 u(x) (\b+1)(n-3-\b) |x|^{-\b-3} \\
& \geq a \b (n-2 -\b) |x|^{-\b-2} - 4(\b+1) \|\n u\|_{L^\infty(\o)} |x|^{-\b-2} - 4 (\b+1)|n-3-\b| |x|^{-\b-2} \\
& \geq \left(a \b (n-2 -\b)  - 4 (\b+1) (1 + |n-3-\b|)\right) |x|^{-\b-2} \quad \text{in } A_{r_0},
\end{align*}
so that $-\D h \geq |x|^{-\b-2}$ in $A_{r_0}$ for values of $a\geq c(\b,n):=\frac{1+ 4 (\b+1) (1 + |n-3-\b|)}{\b(n-2-\b)}>0$. Choosing $a=\max \left(5, c(\b,n)\right)$, we satisfy the required bounds for $h$ and $[-\D h]$ and it remains to estimate $\de_\nu h$ on $\de\o\cap \{|x|\geq r_0\}$.  Taking into account that $u=0$ and $u_\nu = -1$ on $\de \o$, we obtain:
\begin{align*}
\de_\nu h &= -\b a|x|^{-\b-1} (x/|x|)\cdot \nu - 2 u_\nu |x|^{-\b-1} \\
& = (2- \b a  (x/|x|)\cdot \nu )|x|^{-\b-1}  \geq |x|^{-\b-1} \quad \text{on } \de\o\cap \{|x|\geq r_0\},
\end{align*}
provided that $r_0$ is large enough on account of hypothesis (H3).
\end{proof}

\begin{proof}[Proof of Theorem \ref{thm:LM_higherD}]
That the equation $L \bar{w} = (0,0)$ has  $\bar{w}=0$ as the unique solution among $w\in \bar{C}^{2,\a}_\b(\ol{\o})$ follows from the maximum principle and the Hopf lemma, as $w(x)\to 0$ as $|x|\to\infty$.

The proof of the existence of a solution uses the same scheme as Proposition \ref{prop:Dir}. One considers the solutions $\bar{w}_n$ of the mixed Dirichlet--Neumann boundary value problem in the expanding truncated domain $\o_{R_n}=\o\cap B_{R_n}$, $R_n\nearrow \infty$,
 \begin{equation*}
\left\{
\begin{aligned}
-\D_x \bar{w}_n &= \bar{f} && \text{in } \omega_{R_n},\\
\de_\nu \bar{w}_n &= \bar{g} && \text{on } \de\o\cap B_{R_n}. \\
\bar{w}_n &= 0 && \text{on } \o\cap \de B_{R_n},
\end{aligned}
\right.
\end{equation*}
(which can be found by Lax--Milgram) and uses the a priori estimate \eqref{lem:Neum-apriori:eq:est} of Lemma \ref{lem:Neum-apriori} and, crucially, the barrier $h$ from Lemma \ref{lem:Neumann_barrier} with the weak maximum principle and Hopf Lemma, in order to arrive at the stronger estimate 
\[
\|\bar{w}_n\|_{\bar{C}^{2,\a}_\b(\o_{R_n/2})} \leq C \left(\|\bar{f}\|_{\bar{C}^{0,\a}_{\b+2}(\o)} + \|\bar{g}\|_{\bar{C}^{1,\a}_{\b+1}(\de\o)}\right),
\]
for a constant $C$ independent of $n$. Then a standard limiting argument produces the global solution $\bar{w}\in \bar{C}^{2,\a}_\b(\ol{\o})$. Since the data is $G_0$-invariant, so is $\bar{w}$ by uniqueness.
\end{proof}

\begin{proof}[Proof of Theorem \ref{thm:Fredgen}]
It is a direct consequence of Theorem \ref{thm:iso-higher} and Proposition \ref{prop:compactop}. 
\end{proof}

\section{The bifurcation argument}\label{sec:bif}

In this section we verify that the reduced operator $M(T,v)$ of Proposition~\ref{prop:reduction} meets the hypotheses of the Crandall--Rabinowitz theorem at an explicit value $T=T_0$ of the period, and we deduce Theorems \ref{thm:mainHP} and \ref{thm:maingen}. We use the Crandall--Rabinowitz theorem in the following form (see \cite[Theorem~I.5.1]{Kiel_bifbook}).

\begin{theorem}[Crandall--Rabinowitz]\label{thm:CR}
Let $E_1$, $E_2$ be real Banach spaces, $J\subset(0,\infty)$ an open interval, $\mathcal{U}_1\subset E_1$ an open neighbourhood of $0$, and let $\mathcal{M}\in C^\infty(J\times \mathcal{U}_1;E_2)$ satisfy $\mathcal{M}(T,0)=0$ for all $T\in J$. Assume that for some $T_0\in J$:
\begin{enumerate}[label=(CR\arabic*), ref=CR\arabic*]
\item\label{CR1} $D_v\mathcal{M}(T_0,0):E_1\to E_2$ is a Fredholm operator of index $0$ whose kernel is one-dimensional, spanned by some $W\in E_1$;
\item\label{CR2} $D_TD_v\mathcal{M}(T_0,0)[W]\notin \im D_v\mathcal{M}(T_0,0)$.
\end{enumerate}
Then there exist $\e>0$ and a smooth curve $(-\e,\e)\ni s\mapsto (T_s,v_s)\in J\times \mathcal{U}_1$, passing through $(T_0,0)$ at $s=0$, such that
\[
\mathcal{M}(T_s,v_s)=0\quad\text{and}\quad v_s=sW+\psi(s), \quad\text{with } \|\psi(s)\|_{E_1}=o(s) \text{ as } s\to 0.
\]
Moreover, there is a neighbourhood of $(T_0,0)$ in $J\times\mathcal{U}_1$ in which every zero of $\mathcal{M}$ is either of the form $(T,0)$ or lies on this curve.
\end{theorem}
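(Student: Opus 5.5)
This is the classical Crandall--Rabinowitz theorem, quoted from \cite[Theorem~I.5.1]{Kiel_bifbook}. I would prove it by a Lyapunov--Schmidt reduction followed by the implicit function theorem applied to a desingularized scalar equation.

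\emph{Step 1: splitting.} Write $A:=D_v\mathcal{M}(T_0,0)$ and $R:=\im A$. By (\ref{CR1}), $R$ is closed of codimension one. Choose a closed complement $X_0$ with $E_1=\mathrm{span}\{W\}\oplus X_0$, and a vector $Z$ with $E_2=R\oplus\mathrm{span}\{Z\}$. Let $P:E_2\to R$ be the continuous projection along $Z$. Every $v$ near $0$ is written uniquely as $v=sW+\psi$, with $s\in\R$ and $\psi\in X_0$. Then $\mathcal{M}(T,v)=0$ is equivalent to the pair of equations
\[
P\mathcal{M}(T,sW+\psi)=0,\qquad (I-P)\mathcal{M}(T,sW+\psi)=0.
\]

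\emph{Step 2: auxiliary equation.} Consider the map $(T,s,\psi)\mapsto P\mathcal{M}(T,sW+\psi)$ with values in $R$. Its derivative in $\psi$ at $(T_0,0,0)$ is $A|_{X_0}:X_0\to R$. This is a bijective bounded operator, since $\ker A=\mathrm{span}\{W\}$, so it is an isomorphism by the open mapping theorem. The implicit function theorem then gives a smooth $\psi(T,s)$, defined near $(T_0,0)$, which is the unique small solution of the auxiliary equation. Since $\mathcal{M}(T,0)=0$, uniqueness forces $\psi(T,0)=0$ for all $T$. Differentiating in $s$ at $(T_0,0)$ gives $PA[W+\partial_s\psi]=A\,\partial_s\psi=0$, so $\partial_s\psi(T_0,0)=0$.

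\emph{Step 3: bifurcation equation.} It remains to solve the scalar equation $(I-P)\mathcal{M}(T,sW+\psi(T,s))=0$. This holds trivially for $s=0$. To remove the trivial branch, divide by $s$: define
\[
\Phi(T,s):=\int_0^1 (I-P)\,D_v\mathcal{M}\big(T,\tau(sW+\psi(T,s))\big)\Big[W+\tfrac{\psi(T,s)}{s}\Big]\,d\tau .
\]
This formula is justified by the fundamental theorem of calculus applied to $\tau\mapsto\mathcal{M}(T,\tau v)$. The quotient $\psi(T,s)/s=\int_0^1\partial_s\psi(T,\sigma s)\,d\sigma$ is smooth, so $\Phi$ is smooth. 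I expect this desingularization, and keeping all quantities smooth through it, to be the main (if modest) obstacle.

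\emph{Step 4: solving $\Phi=0$.} At the base point, $\Phi(T_0,0)=(I-P)A[W]=0$. For the $T$-derivative, the term $(I-P)A[\partial_s\partial_T\psi(T_0,0)]$ vanishes because it lies in the range $R$. What remains is
\[
\partial_T\Phi(T_0,0)=(I-P)D_TD_v\mathcal{M}(T_0,0)[W],
\]
which is nonzero by (\ref{CR2}). The implicit function theorem therefore yields a smooth function $T_s$ with $T_0$ at $s=0$ and $\Phi(T_s,s)=0$. Setting $v_s:=sW+\psi(T_s,s)$ gives a smooth curve with $\mathcal{M}(T_s,v_s)=0$. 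Moreover $\|\psi(T_s,s)\|_{E_1}=o(s)$, because $\psi(T,0)=0$ and $\partial_s\psi(T_0,0)=0$.

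\emph{Step 5: local uniqueness.} Take any small zero $(T,v)$ and write $v=sW+\psi$. The uniqueness in Step 2 forces $\psi=\psi(T,s)$. Then either $s=0$, in which case $v=0$, or $\Phi(T,s)=0$. In the latter case the uniqueness in Step 4 forces $T=T_s$, so the zero lies on the curve.
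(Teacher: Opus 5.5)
Your proof is correct. The Lyapunov--Schmidt splitting, the implicit function theorem for the auxiliary equation (giving $\psi(T,0)=0$ and $\partial_s\psi(T_0,0)=0$), the desingularized function $\Phi$ with $\partial_T\Phi(T_0,0)=(I-P)D_TD_v\mathcal{M}(T_0,0)[W]\neq 0$ by (CR2), and the local uniqueness argument all go through. The paper does not prove this result itself; it quotes it from \cite[Theorem~I.5.1]{Kiel_bifbook}, and your argument is essentially the standard reduction given there.
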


Throughout this section we are in one of the following two cases (cf.\ Subsection~\ref{ss:setting}), in which we also fix the symmetry group $G_0$ and the decay rate $\b$:
\begin{description}
\item[Case A] $u=HP:\R^2\to[0,\infty)$ is the double hairpin solution, $G_0$ is the group generated by \emph{both} coordinate reflections $x_i\mapsto -x_i$, $i=1,2$, and $\b\in(0,1)$;
\item[Case B] $u:\R^n\to[0,\infty)$, $n\geq 3$, satisfies hypotheses (\ref{H1})--(\ref{H5}), $G_0$ is the group from (\ref{H5}), and $\b\in\left(\frac{n-3}{2},\min\left(\frac n2,n-2\right)\right)$.
\end{description}
In both cases we set $G:=G(1)=G_0\times G_1(1)$, where $G_1(1)$ is generated by $t\mapsto t+1$ and $t\mapsto -t$, and work with the invariant subspaces
\[
\X_1:=\X_{G},\qquad \Y_1:=\Y_{G},\qquad \Z_1:=\Z_{G},
\]
of the spaces $\X,\Y,\Z$ from \eqref{eq:spacesA}--\eqref{eq:spacesB}:
\begin{align*}
  &(\textbf{Case A})\quad \X:=\X^{2,\a}_\b(\ol{\O}),\quad \Y:=\X^{2,\a}_\b(\G),\quad \Z= C^{0,\a}_{\b+2}(\ol{\O})\times C^{1,\a}_{\b+1}(\G); \\
  &(\textbf{Case B})\quad \X:=C^{2,\a}_\b(\ol{\O}),\quad \Y:=C^{2,\a}_\b(\G),\quad \Z= C^{0,\a}_{\b+2}(\ol{\O})\times C^{1,\a}_{\b+1}(\G) .
\end{align*}
By Proposition \ref{prop:reduction}(d), $M(T,\cdot)$ maps $\mathcal{U}_1:=\mathcal{U}\cap\X_1$, an open neighbourhood of $0$ in $\X_1$, into $\Z_1$ for every $T>0$ and its linearized operator $D_v M(T,0): \X_1 \to \Z_1$ is given by
\begin{equation}\label{eq:linM}
D_v M(T,0)[w] =\left(-(\D_x+T^{-2}\p_t^2)w, \p_\nu w - Hw \right).
\end{equation}
From Corollary \ref{cor:conjugation}, Theorem \ref{thm:Fredhp} in Case A and Theorem \ref{thm:Fredgen} in Case B, we know that  $D_v M(T,0)$ is a Fredholm operator of index $0$ for every $T>0$.

In both cases $u$ satisfies (\ref{H1}) (in Case A by Lemma \ref{lem:Hessdecay}) and has finite nontrivial Morse index. Hence, by Corollary \ref{coro:grounddecay}, the lowest $L^2(\o)$-eigenvalue $\l<0$ of the Robin problem
\begin{equation}\label{eq:Jacobi}
\left\{
\begin{aligned}
- \Delta w &= \l w && \text{ in } \o, \\
\de_\nu w - Hw &= 0 && \text{ on } \de\o,
\end{aligned}
\right.
\end{equation}
associated to the second variation form $Q$, is simple, and it has a unique positive $L^2(\o)$-normalized eigenfunction $w_\l$, which is $G_0$-invariant. By Theorem~\ref{thm:eigdecay}, $w_\l\in \bar{C}^{2,\a}_{\mu}(\ol{\o})$ for every $\mu\geq 0$. We define
\begin{equation}\label{eq:T0W}
T_0:=\frac{2\pi}{\sqrt{|\l|}},\qquad W(x,t):=w_\l(x)\cos(2\pi t),\quad (x,t)\in\ol{\O},
\end{equation}
so that $(2\pi/T_0)^2=-\l$. Since $w_\l$ decays faster than any power and is $G_0$-invariant, while $\cos(2\pi t)$ is even and $1$-periodic, we have $W\in C^{2,\a}_{\b,G}(\ol{\O})\subset \X_1$ in both cases. We also note that $w_\l>0$ on $\ol{\o}$: if $w_\l(p)=0$ at some $p\in\de\o$, then the Hopf lemma, applied to $w_\l$, which satisfies $(-\D+|\l|) w_\l=0$ and attains its minimum $0$ at $p$, would give $\de_\nu w_\l(p)<0$, contradicting $\de_\nu w_\l(p)=H(p)w_\l(p)=0$.

We will now show that $M$ meets the Fredholm (\ref{CR1}) and transversality (\ref{CR2}) properties required by the Crandall--Rabinowitz Theorem.
 
\begin{prop}\label{prop:Jackernel}
Assume that we are in Case A or Case B above, and let $\l$, $w_\l$, $T_0$ and $W$ be as in \eqref{eq:T0W}. Then the linearized operator \eqref{eq:linM} has a nontrivial kernel  when $T =T_0$; the kernel is one-dimensional and is spanned by $W$. Moreover,
\begin{equation}\label{prop:Jackernel:transv}
D_T D_v M(T_{0}, 0) [W] \notin \text{Im}(D_v M(T_{0}, 0)).
\end{equation}
\end{prop}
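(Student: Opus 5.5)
We plan to compute the kernel by a Fourier decomposition in $t$. Let $w\in\X_1$ with $D_vM(T_0,0)[w]=0$. Since $w$ is even and $1$-periodic in $t$, we expand $w(x,t)=\sum_{k\ge0} a_k(x)\cos(2\pi k t)$, where $a_k(x)=2\int_0^1 w(x,t)\cos(2\pi kt)\,dt$ for $k\geq 1$ and $a_0$ is the $t$-average. The coefficients inherit the $G_0$-invariance, the $\bar C^{2,\a}_\b(\ol\o)$ regularity, and the decay of $w$; in Case A they inherit in addition the splitting $c+v$, with $a_0=c+\bar v_0$ and $a_k$ decaying for $k\ge1$. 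Integrating the equations against $\cos(2\pi kt)$ (and integrating by parts twice in $t$), each $a_k$ solves the Robin problem \eqref{eq:Jacobi} with $\l=\l_k:=-(2\pi k/T_0)^2=k^2\l$.

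For $k\ge1$, the function $a_k$ decays like $\la x\ra^{-\b}$, and $\b>\frac{n-3}{2}$ in Case B, resp.\ $\b>0$ in Case A. Because $\l_k<0$, we expect $a_k$ to decay exponentially. Our first attempt is to reproduce the weighted energy argument of Theorem~\ref{thm:eigdecay}, Step 1, but with cut-offs applied directly to $a_k$. The term $\int a_k^2|\n\eta_R|^2\lesssim R^{-2}\int_{B_{2R}\setminus B_R}a_k^2$ is $O(R^{n-2-2\b})$, which is not small in general. For this reason we plan to go through an Agmon-type exponential weight. Specifically, we take the test function $a_k\eta_R^2 e^{\e\la x\ra}$ with $\e<\sqrt{|\l_k|}$ small, and absorb the boundary term using the trace inequality of Proposition~\ref{prop:trace} together with $H=O(\la x\ra^{-1})$ far out. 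This should give $a_k\in L^2(\o)$, so $a_k$ is an $L^2$ eigenfunction, and this is the step we expect to be the main obstacle. We then use that $\l$ is the lowest eigenvalue: for $k\ge2$ we have $k^2\l<\l$, so $a_k=0$, while for $k=1$ simplicity (Corollary~\ref{coro:grounddecay}) gives $a_1=cw_\l$. For $k=0$, $a_0$ is a $G_0$-invariant Jacobi field. In Case B it tends to $0$ at infinity, so (\ref{H5}) forces $a_0=0$. In Case A, $a_0=c+\bar v_0$ is bounded, and we need finite Dirichlet energy in order to apply Proposition~\ref{prop:kernondeg}. We obtain it through the conformal disk picture: $|\n a_0|\lesssim\la x\ra^{-1-\b}$ together with the area comparison gives $\int|\n a_0|^2<\infty$. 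Proposition~\ref{prop:kernondeg} then yields $a_0=c_1u_{x_1}+c_2u_{x_2}$, and both coefficients vanish because $u_{x_i}$ is odd under $x_i\mapsto-x_i$. Hence $\ker D_vM(T_0,0)=\mathrm{span}\{W\}$, and $W$ indeed lies in the kernel by the choice of $T_0$.

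For transversality, Proposition~\ref{prop:reduction}(c) gives $D_TD_vM(T_0,0)[W]=(2T_0^{-3}\p_t^2W,0)=(-8\pi^2T_0^{-3}w_\l\cos(2\pi t),0)$. Suppose this equals $D_vM(T_0,0)[w]$ for some $w\in\X_1$. We test the resulting identity against $W$ over a period cell, i.e.\ integrate over $\O_{\rm per}$ and use Green's formula. The symmetry of the operator $(-\D_x-T_0^{-2}\p_t^2,\p_\nu-H)$ then makes the left-hand pairing vanish: we have $\int_{\O_{\rm per}}(-\D_T w)W+\int_{\G_{\rm per}}(\p_\nu w-Hw)W=\int_{\O_{\rm per}}w(-\D_T W)+\int_{\G_{\rm per}}w(\p_\nu W-HW)=0$. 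The boundary terms at $|x|=R$ are harmless thanks to the rapid decay of $W$ and the polynomial bounds on $w$. The right-hand side, however, equals $-8\pi^2T_0^{-3}\int_{\O_{\rm per}}w_\l^2\cos^2(2\pi t)<0$, which is a contradiction and proves \eqref{prop:Jackernel:transv}.
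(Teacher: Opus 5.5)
Apart from one step, your argument is the paper's: the cosine expansion in $t$, the reduction of the modes $k\geq 1$ to $L^2$ Robin eigenfunctions followed by minimality and simplicity of $\lambda$, (H5) or Proposition~\ref{prop:kernondeg} for $k=0$, and the Green's identity pairing with $W$ for transversality. All of these are fine. The gap is the step you flag yourself: showing $a_k\in L^2(\omega)$ for $k\geq 1$.

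The Agmon argument, as you set it up, does not close. With the test function $a_k\eta_R^2e^{\varepsilon\langle x\rangle}$ and a compactly supported cut-off $\eta_R$, the error term $\int_\omega a_k^2|\nabla\eta_R|^2e^{\varepsilon\langle x\rangle}$ lives on $R<|x|<2R$. There the weight is as large as $e^{2\varepsilon R}$. Knowing only $|a_k|\lesssim\langle x\rangle^{-\beta}$, you can bound this term by $CR^{n-2-2\beta}e^{2\varepsilon R}$ and no better. The left-hand side controls $a_k$ on $B_R$ only with weight about $e^{\varepsilon R}$. Its annular part carries the factor $\eta_R^2$, which vanishes exactly where the error concentrates. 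So an error about which you have only an upper bound cannot be absorbed, and letting $R\to\infty$ yields nothing.

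The idea can be repaired by giving the weight an exponentially decaying tail instead of a compact cut-off. Take $e^{2\phi}$ with $\phi=\varepsilon\min(\langle x\rangle,R)-\gamma(\langle x\rangle-R)^+$ and $\max(\varepsilon,\gamma)<\sqrt{|\lambda_k|}$. Then $|\nabla\phi|^2<|\lambda_k|$ everywhere. All truncation errors vanish because of the polynomial bound on $a_k$. Using $|H|\lesssim\langle x\rangle^{-1}$ and the trace inequality far out, one gets $\int_\omega a_k^2e^{2\phi}\leq C$ uniformly in $R$. As written, however, your sketch does not establish the step.

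The missing observation is that no decay estimate is needed at all, because the eigenvalue equation gains two orders of decay for free. Since $-\Delta_x a_k=k^2\lambda a_k$ with $k^2\lambda\neq 0$, you have $a_k=(T_0/(2\pi k))^2\Delta_x a_k$. Also $\Delta_x a_k\in\bar{C}^{0,\alpha}_{\beta+2}(\overline{\omega})$, because $a_k\in\bar{C}^{2,\alpha}_{\beta}(\overline{\omega})$. Hence $|a_k(x)|\lesssim\langle x\rangle^{-\beta-2}$, which is square integrable on $\omega$ as soon as $2(\beta+2)>n$. This holds for every $\beta>0$ in Case A, and in Case B it is exactly where the lower bound $\beta>\frac{n-3}{2}$ is used. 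This one-line argument is what the paper does; after it, minimality of $\lambda$ and simplicity (Corollary~\ref{coro:grounddecay}) apply as you describe.

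A minor point: in Case A the finite Dirichlet energy of $a_0$ needs no conformal picture, since $\int_\omega\langle x\rangle^{-2-2\beta}\,dx<\infty$ in $\mathbb{R}^2$.
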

\begin{proof}
It is easy to check that $D_v M(T_0,0)[W] = (0,0)$. Conversely, suppose $\bar{v}\in \X_1$ belongs to the kernel of $D_v M(T_0,0)$. In Case A we write $\bar{v} = c + v$, where $c \in \R$ and $v\in C^{2,\a}_{\b,G}(\ol{\O})$; in Case B we simply set $c=0$. Since $v$ is even and $1$-periodic in $t$, we can expand it in Fourier cosine series in $t$:
\begin{equation*}
    v(x,t)= \sum_{k=0}^{\infty}v_k(x) \cos(2\pi k t),
\end{equation*}
where every Fourier coefficient $v_k\in \bar{C}^{2,\a}_{\b, G_0}(\ol{\o})$ must solve
\begin{equation*}
        \left\{
\begin{aligned}
-\Delta v_k &=  -\left(2\pi k/T_0\right)^2 v_k= (k^2 \l) v_k && \text{ in } \o,\\
\partial_\nu v_k &= Hv_k && \text{ on } \partial \o,
\end{aligned}
\right.
\end{equation*}
for $k\in \mathbb{N}$, while $\bar{v}_0(x) := c + v_0(x)$ solves:
\begin{equation*}
\left\{
\begin{aligned}
-\Delta \bar{v}_0 &= 0 && \text{ in } \o,\\
\partial_\nu \bar{v}_0 &= H\bar{v}_0  && \text{ on } \partial \o.
\end{aligned}
\right.
\end{equation*}

For $k\in \mathbb{N}$, we easily check that $v_k = (T_0/(2\pi k))^2 \D v_k \in \bar{C}^{0,\a}_{\b+2}(\ol{\o})\subset L^2(\o)$ since $2(\b+2)>n$ in both cases (in Case A, $n=2$ and $\b>0$; in Case B, $\b>\frac{n-3}{2}$). If $v_k\not \equiv 0$, then $v_k$ is an $L^2(\o)$-eigenfunction of \eqref{eq:Jacobi} with eigenvalue $k^2\l$. For $k\geq 2$ this is impossible, since $k^2 \l <\l$ and $\l$ is the lowest $L^2(\o)$-eigenvalue; thus, $v_k\equiv 0$ for $k\geq 2$. For $k=1$, the simplicity of $\l$ tells us that $v_1=c_0 w_\l$. 

When $k=0$, we see that
\begin{enumerate}[label=(\alph*)]
\item In Case A,  $\bar{v}_0$ is bounded and satisfies
\[
\int_\o |D \bar{v}_0|^2 \, dx = \int_\o |D v_0|^2 \, dx <\infty,
\] 
where the $L^2(\o)$ integrability of $D v_0$ is a consequence of $|D v_0(x)|\leq \|v_0\|_{C^{2,\a}_\b(\o)}\la x\ra^{-\b-1}$. We can thus invoke the kernel nondegeneracy Proposition \ref{prop:kernondeg} to deduce that
\[
\bar{v}_0 = c_1 u_{x_1} + c_2 u_{x_2}, \quad \text{for some } c_1, c_2 \in \R.
\]
However, by the $G_0$-invariance, $\bar{v}_0$ is even in both $x_1$ and $x_2$, whence both $c_1=c_2 = 0$ and $\bar{v}_0\equiv 0$. 

\item In Case B, $c=0$ and $\bar{v}_0(x)=v_0(x) \in C^{2,\a}_{\b, G_0}(\ol{\o})$ decays to $0$ as $|x|\to \infty$, so the nondegeneracy hypothesis (\ref{H5}) gives $v_0\equiv 0$. 
\end{enumerate}
In both cases we get that $\bar{v}(x,t) = v_1(x) \cos(2\pi t) = c_0 W(x,t)$. 

To recover the transversality condition \eqref{prop:Jackernel:transv}, we use \eqref{eq:DTDvMbar} and $\de_t^2W=-4\pi^2W$ to compute:
\[
D_TD_vM(T_0,0)[W]=\big(2T_0^{-3}\de_t^2W,\,0\big)=\big(-8\pi^2T_0^{-3}W,\,0\big).
\]
Therefore, satisfying \eqref{prop:Jackernel:transv} is equivalent to the problem
\begin{equation}\label{prop:transv:eq:prob}
    \begin{aligned}
        \left\{
\begin{aligned}
-  \D_{T_0} v &= W && \text{ in } \O, \quad \text{where } \D_{T_0}:= \D_x + T_0^{-2}\de_t^2, \\
\de_\nu v  - H v &= 0 && \text{ on } \partial \O,
\end{aligned}
\right.
    \end{aligned}
\end{equation}
not having a solution in $\X_1$. Assume that it does. After multiplying \eqref{prop:transv:eq:prob} by $W$, the rapid decay of $W$ at infinity, the boundedness of $v$ and its derivatives, and periodicity allows us to apply Green's identity over $\O_\text{per} = \o \times (-1/2,1/2)$, $\G_\text{per}=\de\o \times (-1/2,1/2)$, to deduce
\begin{align*}
\int_{\O_\text{per}} W^2 \, dX &= \int_{\O_\text{per}} (-\D_{T_0} v)  W \, dX \\
& = \int_{\O_\text{per}} v(-\D_{T_0} W) \, dX - \int_{\G_\text{per}} (\de_\nu vW  - v \de_\nu W) \, dS =0,
\end{align*}
where the last equality is due to $D_v M(T_0, 0) [W] = (- \D_{T_0} W, \de_\nu W - H W)=(0, 0)$. Since $W\neq 0$, we reach a contradiction. 
\end{proof}

Proposition \ref{prop:Jackernel} puts us exactly in the framework of the Crandall--Rabinowitz Theorem \ref{thm:CR}.

\begin{theorem}\label{thm:bif} Assume that we are in the setting of Proposition \ref{prop:Jackernel}. Then there exists $\e>0$, for which the following holds.
\begin{enumerate}[label = (\alph*)]
\item There is a smooth curve $(-\e,\e)\ni s\mapsto (T_s,v_s)\in(0,\infty)\times\mathcal{U}_1$, passing through $(T_0,0)$ at $s=0$, such that $M(T_s,v_s)=0$. Denoting $\f_s:=F(v_s)\in\Y_1$, where $F$ is the map of Proposition \ref{prop:reduction}, we have
\begin{equation}\label{thm:bifur:eq:exp}
v_s=sW+o(s)\ \text{ in }\X\qquad\text{and}\qquad \f_s=sW|_\G+o(s)\ \text{ in }\Y,\qquad\text{as } s\to 0.
\end{equation}
Moreover, in a neighbourhood of $(T_0,0)$ in $(0,\infty)\times\X_1$, the zeros of $M$ are either of the form $(T,0)$ or lie on this curve. \\

\item For every $s\in(-\e ,\e)$, the function $U^s:=\hat U$ defined in Lemma~\ref{lem:zeros-to-solutions}(b) with data $(T,\f,v)=(T_s,\f_s,v_s)$ is a classical global solution of \eqref{FBP} in $\R^{n+1}$, which is $G_0$-invariant in $x$, even and $T_s$-periodic in $t$, and $U^0=U$. Its positive phase is $S_{T_s}(\Phi_{\f_s}(\O))$ and its free boundary is the normal graph over $\G=\text{FB}(U)$
\begin{equation}\label{thm:bifur:eq:graph}
\text{FB}(U^s)=\big\{\big(p+\check\f_s(p,t)\,\nu_\o(p),\,t\big):\ (p,t)\in\de\o\times\R\big\},\qquad \check\f_s(p,t):=\f_s(p,t/T_s).
\end{equation}
For $s\neq 0$, $\check\f_s$ is nonconstant in $t$; in particular, $U^s$ is not a product solution.
\end{enumerate}
\end{theorem}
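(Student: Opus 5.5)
Part (a) comes directly from the Crandall--Rabinowitz Theorem \ref{thm:CR}. We take $E_1=\X_1$, $E_2=\Z_1$, $J=(T_0/2,2T_0)$, $\mathcal U_1=\mathcal U\cap\X_1$ and $\mathcal M=M$. By Proposition \ref{prop:reduction}(a),(d), $M$ is smooth on $J\times\mathcal U_1$ with values in $\Z_1$, and $M(T,0)=0$. Hypothesis (\ref{CR1}) has three parts. The Fredholm index $0$ property is Corollary \ref{cor:conjugation} combined with Theorem \ref{thm:Fredhp} (Case A) or Theorem \ref{thm:Fredgen} (Case B). The one-dimensional kernel spanned by $W$ and the transversality (\ref{CR2}) are Proposition \ref{prop:Jackernel}. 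This yields the curve $(T_s,v_s)$ with $v_s=sW+o(s)$ in $\X$, together with the local uniqueness statement. Since $F$ is smooth with $F(0)=0$ and $DF(0)[w]=w|_\G$ (Proposition \ref{prop:reduction}), Taylor expansion gives $\f_s=F(v_s)=sW|_\G+o(s)$ in $\Y$. Equivariance of $F$ places $\f_s$ in $\Y_1$.

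For (b), Proposition \ref{prop:reduction}(e) gives $N(T_s,\f_s,v_s)=0$. By Lemma \ref{lem:zeros-to-solutions}(b) it then suffices to show $\tilde U_s:=U_{\f_s}+v_s>0$ in $\O$. Shrinking $\e$, the norms $\|v_s\|_\X$ and $\|\f_s\|_\Y$ become arbitrarily small. Hence $\|\n v_s\|_{L^\infty}\le 1/4$ and $\|\f_s\|_{L^\infty}\le\theta/2$, and Lemma \ref{lem:zeros-to-solutions}(a) gives positivity on $\N_1\cap\O$.

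Positivity on $\O\setminus\N_1$ is the main obstacle. There $\Phi_{\f_s}=\id$, so $\tilde U_s(x,t)=u(x)+v_s(x,t)$. Every such point satisfies $z\ge 4\theta\rho(p)$, so $d(x,\de\o)\ge c\la x\ra$ for some $c>0$; this uses \eqref{eq:theta-small} and the Fermi structure to compare the distance to $\de\o$ with $\la x\ra$. The lower bound on $u$ then splits into two cases.
\begin{itemize}
\item If $\text{FB}(u)$ is noncompact, Theorem \ref{thm:nondeg}(b) gives $u(x)\ge c_u d(x)\ge c'\la x\ra\ge c'$. In Case B this is immediate from $|v_s|\le\|v_s\|_\X\la x\ra^{-\b}$. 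In Case A, $v_s=c_s+\tilde v_s$ with $|c_s|$ small, so $u+v_s\ge c'-|c_s|-\|\tilde v_s\|>0$.
\item If $\text{FB}(u)$ is compact (this can only happen in Case B, e.g.\ $u=L_n$), $u$ need not grow linearly. Instead we use that $u$ is positive, continuous and bounded below by a positive constant on $\ol{\o}\setminus\N_0$. To see this, note that near the compact free boundary the band structure holds, while far away $u$ is harmonic and positive; the compact-FB case of Theorem \ref{thm:maingen} assumes $\o\neq$ bounded, so a Harnack argument on the exterior with $\liminf_{|x|\to\infty}u>0$ applies. We would verify the last claim via the maximum principle: if $\liminf u=0$ along a sequence, the rescaled harmonic functions would contradict the Hopf/positivity structure.
\end{itemize}
In both cases, choosing $\e$ small enough gives $\tilde U_s>0$.

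The remaining claims follow directly. $G_0$-invariance, evenness and $T_s$-periodicity come from Lemma \ref{lem:zeros-to-solutions}(b). The graph description \eqref{thm:bifur:eq:graph} follows from $\Phi_\f(p)=p+\f(p)\nu(p)$ composed with $S_{T_s}$. Finally, $\check\f_s$ is nonconstant in $t$ for small $s\ne0$: its first Fourier cosine coefficient equals $s\,w_\l|_{\de\o}+o(s)$ in $C^{0}$, and $w_\l>0$ on $\de\o$. Since $U^s$ has nonconstant $t$-dependence of its free boundary, it is not a product solution.
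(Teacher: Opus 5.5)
Your proposal is correct and follows the paper's proof essentially step for step. The steps are: Crandall--Rabinowitz on $\X_1\to\Z_1$ with the Fredholm, kernel and transversality inputs; the Taylor expansion of $F$ for $\f_s$; positivity in the band from Lemma \ref{lem:zeros-to-solutions}(a); and, outside the band, Theorem \ref{thm:nondeg}(b) for noncompact $\text{FB}(u)$ or a uniform positive lower bound for compact $\text{FB}(u)$. Your bound $d(X,\G)\geq c\la x\ra$ is more than needed, since $d(X,\G)\geq 4\theta$ suffices.

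The one loosely argued point is $\liminf_{|x|\to\infty}u>0$ in the compact case, where the ``rescaling/Hopf'' remark is not an argument. It follows at once from the Harnack inequality on $\de B_r$ and the maximum principle on $\o\cap B_r$: if $u(x_k)\to 0$ with $|x_k|\to\infty$, then $\sup_{\de B_{|x_k|}}u\to 0$, which forces $u\le 0$ in $\o$. Alternatively, as in the paper, it follows from the expansion $u=c+O(|x|^{2-n})$ together with the maximum principle.
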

\begin{proof}
By Proposition \ref{prop:Jackernel}, Theorem \ref{thm:CR} applies to $\mathcal{M}(T,v):=M(T,v)$ at $T=T_0$, with $E_1:=\X_1$ and $E_2:=\Z_1$. It yields $\e>0$ and the smooth curve $s\mapsto(T_s,v_s)$ in $(0,\infty)\times\mathcal U_1$ with $M(T_s,v_s)=0$, the expansion
\[
v_s  = s W + o(s) \text{ in }\X,\quad\text{as } s\to 0, 
\]
as well as the local uniqueness statement. By Proposition~\ref{prop:reduction}, $F$ is smooth and equivariant with $F(0)=0$ and $DF(0)[w]=w|_\G$, so $\f_s=F(v_s)\in V'\cap\Y_1$ and $\|F(v)-v|_\G\|_{\Y}=O(\|v\|_{\X}^2)$ as $v\to 0$. As $w\mapsto w|_\G$ is bounded $\X\to\Y$, the second expansion in \eqref{thm:bifur:eq:exp} follows, and we complete the proof of Part (a).

By Proposition~\ref{prop:reduction}(e), $N(T_s,\f_s,v_s)=0$, so that $\tilde{U}_s := U\circ \Phi_{\f_s} + v_s$  solves the boundary value problem
\[
- \D_{g_{T_s, \f_s}} \tilde{U}_s = 0 \quad \text{in } \O, \qquad \tilde{U}_s = 1- |d \tilde{U}_s|^2_{g_{T_s, \f_s}} = 0 \quad \text{on } \G.
\]
To show that it corresponds to a genuine solution of the one-phase FBP, we have to additionally confirm that $\tilde{U}_s>0$ in $\O$. By reducing $\e$, we can ensure that for all $s\in (-\e,\e)$,
\[
\|v_s\|_{L^\infty(\O)}+\|\n v_s\|_{L^\infty(\O)}\leq 2\|v_s\|_{\X} \leq C |s| \leq 1/4 \quad \text{and} \quad \|\f_s\|_\Y \leq C|s| \leq \theta/2,
\]
so that Lemma \ref{lem:zeros-to-solutions}(a) implies that $\tilde{U}_s>0$ in $\N_1\cap \O=\{X = p-z\nu(p): 0< z <4\theta \rho(p)\}$. 
Beyond this band, for $X\in \O\setminus \N_1$, we have $d(X,\G)\geq 4\theta$ and $\Phi_{\f_s}(X)=X$. If $\text{FB}(u)$ is noncompact, the distance nondegeneracy bound for such finite Morse index solutions in Theorem \ref{thm:nondeg}(b) implies that $U(X)\geq c_u d(X,\G) \geq 4\theta c_u$ for all $X\in \O\setminus \N_1$. Therefore, 
\[
\tilde{U}(X) = U\circ \Phi_{\f_s}(X) + v_s(X) = U(X) + v_s(X) \geq 4\theta c_u  - \|v_s\|_{L^\infty} \geq 4\theta c_u - C |s|>0,
\]
for all $X\in \O\setminus \N_1$ if $\e$ is taken small enough. If $\text{FB}(u)$ is compact\footnote{In this case, $u(x)=L_n(x) = (1-|x|^{2-n})^+/(n-2)$ is the ball complement solution by a result of \cite{Reichel}, but we do not need the full classification for the argument at hand.} in Case B, then $u$ is positive harmonic with bounded gradient $\|\n u\|_{L^
\infty}\leq 1$ on a exterior domain in $\R^n$, $n\geq 3$, so $u=c + O(|x|^{2-n})$ with $c>0$ by the maximum principle. By compactness and another application of the maximum principle, $U(x,t) = u(x)\geq c_u$ for all $(x,t)\in \{X: d(X, \G)\geq 4\theta\}$, so that the same argument as above gives $\tilde{U}>0$ in all of $\O$ again.

Now Lemma~\ref{lem:zeros-to-solutions}(b) shows that $U^s$ is a classical solution of \eqref{FBP} in $\R^{n+1}$ with positive phase $S_{T_s}(\Phi_{\f_s}(\O))$. Since $\f_s$ and $v_s$ are $G(1)$-invariant, $U^s$ is $G(T_s)$-invariant, i.e.\ $G_0$-invariant in $x$, even and $T_s$-periodic in $t$. For $s=0$ we have $\f_0=0$, $v_0=0$, so $U^0=(S_{T_0}^{-1})^*U=U$, as $U$ does not depend on $t$. By Lemma~\ref{lem:deformation}(b), $\Phi_{\f_s}(p,\t)=(p+\f_s(p,\t)\nu_\o(p),\t)$ for $(p,\t)\in\G$, and applying $S_{T_s}$ gives \eqref{thm:bifur:eq:graph}.

Finally, since $\f_s(p,t) = sW(p,t) + o(s) = s w_\l(p)\cos(2\pi t) + o(s)$ and $w_\l(p)>0$ for all $p$, we see that $\check{\f}_s(p,t)=\f_s(p,t/T_s)$ is non-constant in $t$ for $s\neq 0$, so that $U^s$ is not a product solution for $s\neq 0$. This  completes the proof of the Theorem. 
\end{proof}

\begin{proof}[The proofs of Theorems \ref{thm:mainHP} and \ref{thm:maingen}]
Reading the conclusions of Theorem \ref{thm:bif} for Case A yields Theorem \ref{thm:mainHP}, while for Case B -- Theorem \ref{thm:maingen}.
\end{proof}

\appendix

\section{A priori estimates}\label{app:apriori}

In this appendix we give the a priori estimates underlying the solvability theory of the Dirichlet and Neumann problems for the Laplacian on $\o=\{u>0\}$ and $\O=\o\times \R$. We assume that $u:\R^n\to \R_{\geq 0}$ is a global solution of \eqref{FBP} that satisfies the natural Hessian bound:
\[
|D^2 u(x)| \leq \bar{K}/\la x\ra, \quad \text{for all } x\in \o,
\]
for some $\bar{K}>0$. Recall that for a set $S\subset \R^n$, and $y\in S$, we denote by $S_r(y)$ the connected component of $S\cap B_r(y)$ containing $y$. Denote $\o_r:=\o\cap B_r$.

\begin{lemma}\label{lem:Dir-apriori} Let $\a\in (0,1)$, $\b\geq 0$, $M\geq 1$, and $f\in \bar{C}^{0,\a}_{\b+2}(\ol{\o})$. If $w\in C^2(\o_{2M})\cap C_\text{loc}(\ol{\o_{2M}}\cap B_{2M})$ solves
\[
\left\{
\begin{aligned}
  -\D w = f & \quad \mbox{in } \o_{2M}, \\
    w = 0   & \quad \mbox{on }  \de\omega_{2M}\cap B_{2M},
  \end{aligned}\right.
\]
then
\begin{equation}\label{lem:Dir-apriori:eq:est}
\|w\|_{\bar{C}^{2,\a}_\b(\o_M)} \leq C(\|f\|_{\bar{C}^{0,\a}_{\b+2}(\o_{2M})} + \|w\|_{\bar{C}_\b(\o_{2M})}).
\end{equation}
In particular, if $w\in \bar{C}_{\b}(\ol{\o})\cap C^2(\o)$ solves the global problem $(M=\infty)$, then $w\in \bar{C}^{2,\a}_\b(\ol{\o})$.
\end{lemma}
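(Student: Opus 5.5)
The estimate is local and scale-invariant, so I will prove it by rescaling. For each $x\in\o_M$ with $R:=\la x\ra$, I will bound $R^\b\|w_R\|_{C^{2,\a}([\o^R]_{1/4}(x^R))}$ by the right-hand side of \eqref{lem:Dir-apriori:eq:est}. Put $\tilde w:=R^\b w_R$ and $\tilde f:=R^{\b+2}f_R$, so that $-\D\tilde w=\tilde f$ in $\o^R$. By the definition of the weighted norms, $\|\tilde f\|_{C^{0,\a}}$ on pieces of unit size near $x^R$ is controlled by $\|f\|_{\bar C^{0,\a}_{\b+2}(\o_{2M})}$, since $\la y\ra$ is comparable to $R$ for $|y-x|\lesssim R$. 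Likewise $\|\tilde w\|_{L^\infty}\lesssim\|w\|_{\bar C_\b(\o_{2M})}$ there. Two routine caveats: since $M\ge1$ and $|x|<M$, the relevant neighbourhoods of size $\sim\d R$ stay inside $B_{2M}$; and for $|x|\le 2$ one can simply take $R\sim1$.

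I will cover $[\o^R]_{1/4}(x^R)$ by finitely many pieces of two types, as in Step 3 of the proof of Theorem \ref{thm:eigdecay}. The number of pieces depends only on $n$ and $\d=\d(\bar K,n)$ from Proposition \ref{prop:asymp}.
\begin{itemize}
\item \emph{Interior pieces:} balls $B_{\d/2}(y)$ with $d(y,\text{FB}(u_R))\ge\d/2$. On these, the interior Schauder estimates give $\|\tilde w\|_{C^{2,\a}(B_{\d/4}(y))}\le C(\|\tilde f\|_{C^{0,\a}}+\|\tilde w\|_{L^\infty})$ on $B_{\d/2}(y)$.
\item \emph{Boundary pieces:} the components $[\ol{\o^R}]_{\d}(q)$ for $q\in\text{FB}(u_R)$. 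By Proposition \ref{prop:asymp}, applied at the free boundary point $Rq$, each of these is, in suitable coordinates, a supergraph of a function $g$ with $\|g\|_{C^{2,\a}}\le C_2\bar K$, uniformly in $R$ and $q$. The boundary Schauder estimate for the Dirichlet problem with zero data on such uniformly $C^{2,\a}$ graph domains (e.g.\ Gilbarg--Trudinger, Lemma 6.5/Theorem 6.6 after flattening) gives $\|\tilde w\|_{C^{2,\a}([\ol{\o^R}]_{\d/2}(q))}\le C(\|\tilde f\|_{C^{0,\a}([\ol{\o^R}]_{\d}(q))}+\|\tilde w\|_{L^\infty([\ol{\o^R}]_{\d}(q))})$, with $C$ independent of $R$ and $q$.
\end{itemize}

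The main subtlety is staying in the correct connected component when the zero phase is thin, as for the double hairpin (Remark \ref{rem:conncomp}). Proposition \ref{prop:asymp} states the graph property precisely for the component $[\ol{\o^R}]_{2\d}$, so I will always work with components rather than with $\o^R\cap B$. I also need to check that any two points in a piece are joined by a path of length $\lesssim$ their distance, so that Hölder seminorms glue across overlapping pieces. This holds because the boundary pieces are Lipschitz graphs with small constant.

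Once the pieces are glued and the supremum over $x\in\o_M$ is taken, \eqref{lem:Dir-apriori:eq:est} follows. For the global statement ($M=\infty$), apply the estimate for every $M$ and let $M\to\infty$; the right-hand side is finite by the assumptions on $f$ and $w$.
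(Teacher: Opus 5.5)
Your proposal is correct and follows essentially the same route as the paper's proof. Both rescale by $R=\la x\ra$, reduce by a covering argument to interior balls and to free-boundary components of size comparable to $\d$, and apply interior and boundary Schauder estimates for the homogeneous Dirichlet problem, with uniformity in $R$ coming from the $C^{2,\a}$ graph bounds of Proposition \ref{prop:asymp}. Your remarks on staying inside $B_{2M}$, working with connected components, gluing H\"older seminorms along short paths, and letting $M\to\infty$ for the global statement only spell out what the paper leaves to its ``standard covering argument''.
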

\begin{proof}
Take $x\in \ol{\o_M}$, set $R:= \la x\ra$ and rescale the problem by $R$. Setting
$x^R:=x/R$, $w_R(y):=w(Ry)$  and $f_R(y):=f(Ry)$, the rescaled problem reads
\[
\left\{
\begin{aligned}
  -\D w_R = R^2 f_R & \quad \mbox{in } \o_{2M}^R:=\o_{2M}/R, \\
    w_R = 0   & \quad \mbox{on }  \de\o_{2M}^R\cap B_{2M/R},
  \end{aligned}\right.
\]
Note that as $|x|\leq M$ and $R=\la x\ra \leq M+1\leq 2M$, we have the inclusions
\[
\ol{\o}_{R/4}(x)\subset \ol{\o}_{R/2}(x) = [\ol{\o}_{2M}]_{R/2}(x) \quad \Longrightarrow \quad [\ol{\o}^R]_{1/4}(x^R)\subset [\ol{\o}^R]_{1/2}(x^R) = [(\ol{\o}_{2M})^R]_{1/2}(x^R)
\]
In order to prove \eqref{lem:Dir-apriori:eq:est}, we have to show that 
$$R^\b \|w_R\|_{C^{2,\a}([\ol{\o}^R]_{1/4}(x^R))} \leq c(\|f\|_{\bar{C}^{0,\a}_{\b+2}(\o_{2M})} + \|w\|_{\bar{C}_\b(\o_{2M})}),$$
for a constant $c>0$ independent of $x$ and $R$. Let $\d>0$ be the constant from Proposition \ref{prop:asymp}. Then by a standard covering argument it is enough to prove that 
\begin{equation}\label{lem:Dir-apriori:eq1}
R^\b \|w_R\|_{C^{2,\a}(S)} \leq C(\|w\|_{\bar{C}_\b(\o_{2M})} +\|f\|_{\bar{C}^{0,\a}_{\b+2}(\o_{2M})})
\end{equation}
in the following two cases:
\begin{itemize}
\item when $S = B_{\d/4}(y)$  for some $y\in [\o^R]_{1/4}(x^R)$, such that $B_{\d/2}(y)\subset \o_{2M}^R$;
\item when $S=[\ol{\o}^R]_{\d}(y)$  for some free boundary point $y\in [\ol{\o}^R]_{1/4}(x^R) \cap \de\o^R$. Note that $[\ol{\o}^R]_{2\d}(y)\subset [\ol{\o}^R]_{1/2}(x^R) = [(\ol{\o}_{2M})^R]_{1/2}(x^R)$, as $\d\in (0,1/8)$ and $|y - x^R|<1/4$.
\end{itemize}
In the first case, interior Schauder estimates (\cite[Theorem 6.2]{GT}) give the desired bound \eqref{lem:Dir-apriori:eq1}:
\begin{align*}
R^\b \|w_R\|_{C^{2,\a}(B_{\d/4}(y))} &\leq c R^\b ( \|w_R\|_{L^\infty(B_{\d/2}(y))} + \|R^2 f_R\|_{C^{0,\a}(B_{\d/2}(y))}) \\
& =c  ( R^\b \|w_R\|_{L^\infty(B_{\d/2}(y))} + R^{2+\b}\| f_R\|_{C^{0,\a}(B_{\d/2}(y))})  \\
& \leq C(\|w\|_{\bar{C}_\b(\o_{2M})}+ \|f\|_{\bar{C}^{0,\a}_{\b+2}(\o_{2M})}).
\end{align*}
In the second case, \eqref{lem:Dir-apriori:eq1} follows in the same fashion from the local boundary Schauder estimates (\cite[Lemma 6.5]{GT}) for the homogeneous Dirichlet problem in $[\o^R]_{2\d}(y)$, taking into account that the $C^{2,\a}$-norm of the free boundary portion of the domain $[\o^R]_{2\d}(y)$, where $w_R=0$, is bounded uniformly in $R$, according to Proposition \ref{prop:asymp}.
\end{proof}

\begin{lemma}\label{lem:Neum-apriori} Let $\a\in (0,1)$, $\b\geq 0$ and let $f\in \bar{C}^{0,\a}_{\b+2}(\ol{\o})$, $g\in \bar{C}^{1,\a}_{\b+1}(\de\o)$. If $w\in C^2(\o_{2M})\cap C^1_\text{loc}(\ol{\o_{2M}}\cap B_{2M})$ solves
\[
\left\{
\begin{aligned}
  -\D w = f & \quad \mbox{in } \o_{2M}, \\
	w_\nu = g   & \quad \mbox{on }  \de\omega_{2M}\cap B_{2M}, \\
  \end{aligned}\right.
\]
then 
\begin{equation}\label{lem:Neum-apriori:eq:est}
\|w\|_{\bar{C}^{2,\a}_\b(\o_M)} \leq C(\|f\|_{\bar{C}^{0,\a}_{\b+2}(\o_{2M})} + \|g\|_{\bar{C}^{1,\a}_{\b+1}(\de\o_{2M}\cap B_{2M})}+\|w\|_{\bar{C}_\b(\o_{2M})}).
\end{equation}
In particular, if $w\in  C^2(\o)\cap C^1_{\text{loc}}(\ol{\o})$ solves the global problem $(M=\infty)$ and $w\in \bar{C}_\b(\ol{\o})$, then $w\in \bar{C}^{2,\a}_\b(\ol{\o})$. 
\end{lemma}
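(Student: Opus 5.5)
The plan is to mirror the proof of Lemma \ref{lem:Dir-apriori}. The Dirichlet boundary Schauder estimate is replaced by its Neumann (oblique derivative) counterpart, and the scaling is kept the same. Fix $x\in\ol{\o_M}$ and set $R:=\la x\ra\le 2M$. Rescale $w_R(y):=w(Ry)$, $f_R(y):=f(Ry)$, $g_R(y):=g(Ry)$. Then $w_R$ solves
\[
-\D w_R=R^2 f_R \ \text{ in } \o^R_{2M}, \qquad \de_\nu w_R = R\, g_R \ \text{ on } \de\o^R_{2M}\cap B_{2M/R}.
\]
The Neumann datum picks up exactly one factor of $R$. This matches the weight $\b+1$ in $\|g\|_{\bar C^{1,\a}_{\b+1}}$: on a piece of size $O(1)$ around $x^R$ one has $R^\b\|Rg_R\|_{C^{1,\a}}\le C\|g\|_{\bar C^{1,\a}_{\b+1}}$, because $\la Ry\ra$ is comparable to $R$ there. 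The same holds for $R^\b\|R^2f_R\|_{C^{0,\a}}$.

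Next, by the same covering argument as before, it suffices to bound $R^\b\|w_R\|_{C^{2,\a}(S)}$ for two types of pieces $S$. The first type is an interior ball $S=B_{\d/4}(y)$ with $B_{\d/2}(y)\subset\o^R_{2M}$. The second type is a boundary piece $S=[\ol{\o}^R]_\d(y)$ centred at a free boundary point $y\in[\ol\o^R]_{1/4}(x^R)\cap\de\o^R$, with $[\ol\o^R]_{2\d}(y)$ contained in the region where the equation holds. Here $\d$ comes from Proposition \ref{prop:asymp}. Interior pieces are handled verbatim by interior Schauder estimates. For boundary pieces, Proposition \ref{prop:asymp} shows that $[\ol\o^R]_{2\d}(y)$ is the supergraph of a function $g_y$ with $C^{k,\a}$ norms bounded independently of $R$ and $y$. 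After flattening, we apply the local boundary Schauder estimate for the Neumann problem (e.g.\ \cite[Theorem 6.26/Lemma 6.29]{GT}, or the oblique derivative theory in \cite{Liebermanbook}). This gives
\[
\|w_R\|_{C^{2,\a}([\ol\o^R]_\d(y))}\le C\big(\|w_R\|_{L^\infty([\o^R]_{2\d}(y))}+\|R^2f_R\|_{C^{0,\a}([\o^R]_{2\d}(y))}+\|Rg_R\|_{C^{1,\a}(\de\o^R\cap B_{2\d}(y))}\big),
\]
with $C$ uniform. Multiplying by $R^\b$ and using $\la Rz\ra\sim R$ on these sets yields the estimate \eqref{lem:Neum-apriori:eq:est}.

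The main point needing care, and the step I expect to be the main obstacle, is this uniformity of the boundary Schauder constant. It depends only on the $C^{2,\a}$ norm of the boundary graph and the size $\d$, both of which are controlled by Proposition \ref{prop:asymp} uniformly in $R$. One also has to check that the rescaled Neumann norm of $g$ on the boundary pieces is dominated by the weighted norm. This holds since the connected components used in the definition of $\bar C^{1,\a}_{\b+1}(\de\o)$ contain the relevant boundary portions after covering by finitely many balls of radius $1/4$ (see Remark \ref{rem:conncomp}). The final global statement follows by letting $M\to\infty$, since the constant $C$ is independent of $M$.
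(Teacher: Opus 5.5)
Your proposal is correct and follows the paper's proof essentially line for line. The shared steps are the rescaling by $R=\la x\ra$, under which the Neumann datum picks up one factor of $R$; the interior/boundary covering reused from Lemma \ref{lem:Dir-apriori}; and the local boundary Schauder estimate for the Neumann problem, with constants uniform in $R$ by Proposition \ref{prop:asymp}.
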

\begin{proof}
Picking any $x\in \o$, setting $R:=\la x\ra$ and rescaling the problem by $R$, we obtain
\[
\left\{
\begin{aligned}
  -\D w_R = R^2 f_R & \quad \mbox{in } \o^R_{2M}, \\
    \de_\nu w_R = R g_R   & \quad \mbox{on }  \de\omega_{2M}^R\cap B_{2M/R}.
  \end{aligned}\right.
\]
By the  covering argument from the proof of Lemma \ref{lem:Dir-apriori}, it suffices to consider the same interior and the boundary scenarios for $S$. The interior one is treated exactly as in Lemma \ref{lem:Dir-apriori}. We sketch the second case when  $S=[\ol{\o}^R]_{\d}(y)\subset [\ol{\o}^R]_{2\d}(y)$, for some free boundary point $y\in [\ol{\o}^R]_{1/4}(x_R) \cap \de\o^R$. Since the $C^{2,\a}$-norm of the free boundary portion $[\de \o^R]_{2\d}(y)$ is bounded uniformly in $R$ by Proposition \ref{prop:asymp}, the local boundary Schauder estimate (\cite[Section 6.7]{GT}) for the Neumann problem yields the desired estimate
\begin{align*}
R^\b \|w_R\|_{C^{2,\a}([\ol{\o}^R]_{\d}(y))} &\leq c R^\b ( \|w_R\|_{L^\infty([\ol{\o}^R]_{2\d}(y))} + \|R g_R\|_{C^{1,\a}([\de \o^R]_{2\d}(y))} + \|R^2 f_R\|_{C^{0,\a}([\ol{\o}^R]_{2\d}(y))}) \\
& =c  ( R^\b \|w_R\|_{L^\infty([\ol{\o}^R]_{2\d}(y))} + R^{1+\b} \| g_R\|_{C^{1,\a}([\de \o^R]_{2\d}(y))} + R^{2+\b}  \| f_R\|_{C^{0,\a}([\ol{\o}^R]_{2\d}(y))}) \\
& \leq C(\|w\|_{\bar{C}_\b(\o_{2M})}+ \|g\|_{\bar{C}^{1,\a}_{\b+1}(\de\o\cap B_{2M})} + \|f\|_{\bar{C}^{0,\a}_{\b+2}(\o_{2M})}),
\end{align*}
with a constant $C>0$ independent of $x$ and $R$. 
\end{proof}

An analogous result holds for the Neumann problem on the product domain $\O:=\o\times \R$ with data in the corresponding function spaces over $\O$. 

\begin{lemma}\label{lem:Neum-apriori-prod} Let $\a\in (0,1)$, $\b\geq 0$ and let $f\in C^{0,\a}_{\b+2}(\ol{\O})$, $g\in C^{1,\a}_{\b+1}(\G)$. If $w\in C^2(\O)\cap C^1_\text{loc}(\ol{\O})$ solves
\[
\left\{
\begin{aligned}
  -\D w = f & \quad \mbox{in } \O, \\
	w_\nu = g   & \quad \mbox{on }  \G, \\
  \end{aligned}\right.
\]
and $w\in C_\b(\ol{\O})$, then $w\in C^{2,\a}_\b(\ol{\O})$. 
\end{lemma}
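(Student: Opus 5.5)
The plan is to mimic the proof of Lemma \ref{lem:Neum-apriori}, now on the product domain $\O=\o\times\R$. The norm of $C^{2,\a}_\b(\ol{\O})$ only involves rescaled pieces of size comparable to $\la x\ra$ in all $n+1$ variables, so everything reduces to local Schauder estimates at unit scale after rescaling.

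Fix $X=(x,t)\in\ol{\O}$ and set $R:=\la x\ra$. Rescale $w_R(Y):=w(RY)$, $f_R$, $g_R$, so that $-\D w_R=R^2f_R$ in $\O^R$ and $\de_\nu w_R=Rg_R$ on $\G^R$. The boundary $\G^R=\de\o^R\times\R$ is a product. By Proposition \ref{prop:asymp}, near any boundary point $Y_0=(y,s)$ with $y\in\de\o^R$ and $|y-x^R|<1/4$, the set $[\ol{\O}^R]_{2\d}(Y_0)$ lies above the graph of $g_y(z')$, a function that is independent of the $t$-variable. Its $C^{2,\a}$ norm is bounded by $C_2\bar K$, uniformly in $R$, $y$ and $s$. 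We then cover $[\ol{\O}^R]_{1/4}(X^R)$ by a number of pieces bounded independently of $R$, taken as in Lemma \ref{lem:Dir-apriori}. The pieces are of two types: interior balls $B_{\d/4}(Y)$ with $B_{\d/2}(Y)\subset\O^R$, and boundary pieces $[\ol{\O}^R]_{\d}(Y_0)\subset[\ol{\O}^R]_{2\d}(Y_0)$.

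On interior balls, interior Schauder estimates give
\[
R^\b\|w_R\|_{C^{2,\a}(B_{\d/4})}\le c\,R^\b\big(\|w_R\|_{L^\infty}+R^2\|f_R\|_{C^{0,\a}}\big).
\]
On boundary pieces, the local boundary Schauder estimate for the Neumann problem (\cite[Section 6.7]{GT}), on a domain with uniformly $C^{2,\a}$ boundary portion, gives
\[
R^\b\|w_R\|_{C^{2,\a}}\le c\big(R^\b\|w_R\|_{L^\infty}+R^{1+\b}\|g_R\|_{C^{1,\a}}+R^{2+\b}\|f_R\|_{C^{0,\a}}\big).
\]
On these sets $\la\cdot\ra$ is comparable to $R$: in the rescaled picture $|y-x^R|<1/2$, so points have $\la Ry\ra\ge R/2$. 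Hence each term on the right is bounded by $C\big(\|w\|_{C_\b(\ol{\O})}+\|g\|_{C^{1,\a}_{\b+1}(\G)}+\|f\|_{C^{0,\a}_{\b+2}(\ol{\O})}\big)$. Taking the supremum over $X$ yields $w\in C^{2,\a}_\b(\ol{\O})$, together with the corresponding estimate.

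The only points needing care are checking that the Schauder constants are uniform, and that connected components are handled correctly when the zero phase is thin. Uniformity holds because the local geometry of $\G^R$ is a product of a uniformly controlled graph with a line. The thin-zero-phase issue is handled exactly as in Lemma \ref{lem:Dir-apriori}, by working only with the relevant components $[\cdot]_r$, which remain supergraphs by Proposition \ref{prop:asymp}. No other real obstacle is expected, since translation invariance in $t$ makes the estimate identical to the base case.
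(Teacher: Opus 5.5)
Your proposal is correct and follows the paper's approach. The paper's proof just says that the argument of Lemma \ref{lem:Neum-apriori} carries over verbatim, because the rescaled domains $\Omega^R=\omega^R\times\R$ have locally uniform $C^{2,\alpha}$ boundary bounds; you have written out that argument with the product structure made explicit.
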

\begin{proof}
The argument follows verbatim the proof above, given that for any $X= (x,t)\in \O$, $R:=\la x \ra$, the rescaled domain $\O^R =\O/R$ satisfies locally uniform $C^{2,\a}$ bounds.  
\end{proof}

\section{Sobolev inequalities}\label{app:Sobolev}
For the reader's convenience, we record the following Sobolev trace inequality from \cite{BK24}.

\begin{prop}[Proposition A.1 from \cite{BK24}]\label{prop:trace} Let $u$ be a classical solution of the one-phase FBP in a domain $D\subset \R^n$ and let $\o$ denote the positive phase $\{u>0\}$ or a connected component thereof. Assume that 
\begin{equation*}
|\n u| \leq L  \quad \text{in } \o.
\end{equation*}
Then 
\begin{equation}\label{prop:trace:L1}
\int_{\de \o\cap D} \psi \, d\s \leq L \|\n \psi \|_{L^1(\o)} \quad \text{for all } \psi \in C^1_c(\ol{\o}\cap D).
\end{equation}
\end{prop}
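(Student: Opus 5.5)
The plan is a divergence-theorem argument, the same mechanism as Proposition \ref{prop:areagrowth} but applied to the vector field $\psi\,\n u$ rather than $\n u$ alone.

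Fix $\psi\in C^1_c(\ol{\o}\cap D)$. We may assume $\psi\geq 0$: otherwise apply the argument to $|\psi|$, or to a smooth approximation $\sqrt{\psi^2+\e^2}-\e$ and let $\e\to 0$. In any case the left-hand side only increases when $\psi$ is replaced by $|\psi|$.

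The key identity uses $\D u=0$ in $\o$:
\[
\mathrm{div}(\psi\,\n u)=\n\psi\cdot\n u+\psi\,\D u=\n\psi\cdot\n u \quad \text{in } \o.
\]
Since $\psi$ has compact support in $\ol{\o}\cap D$ and the free boundary $\de\o\cap D$ is smooth (classical solution), the divergence theorem applies on the smooth portion of $\de\o$ meeting the support. We use $\de_\nu u=-1$ on $\text{FB}(u)$, with $\nu$ the outer normal. This gives
\[
-\int_{\de\o\cap D}\psi\,d\s=\int_{\de\o\cap D}\psi\,\de_\nu u\,d\s=\int_{\o}\n\psi\cdot\n u\,dx.
\]
Multiplying by $-1$ and bounding $|\n u|\leq L$, we get
\[
\int_{\de\o\cap D}\psi\,d\s=-\int_\o\n\psi\cdot\n u\,dx\leq L\|\n\psi\|_{L^1(\o)},
\]
which is \eqref{prop:trace:L1}. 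If $\o$ is a connected component of the positive phase, the same computation holds verbatim on that component.

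The only delicate point is justifying the divergence theorem. The support of $\psi$ may touch $\de\o\cap D$, and near $\de D$ it vanishes. Since $\mathrm{supp}\,\psi$ is a compact subset of $D$, only finitely many smooth boundary charts are involved and no boundary term arises from $\de D$. We also need $u\in C^1$ up to $\de\o\cap D$, which holds since the solution is classical. So this step is routine, and the statement follows directly.
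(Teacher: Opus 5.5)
Your proof is correct and follows the standard route behind this result, which the paper simply quotes from \cite{BK24}: apply the divergence theorem to $\psi\nabla u$ in $\omega$, using $\Delta u=0$ and $\partial_\nu u=-1$ on $\partial\omega\cap D$, the same mechanism as in Proposition \ref{prop:areagrowth}. The preliminary reduction to $\psi\geq 0$ is harmless but unnecessary, since the identity $\int_{\partial\omega\cap D}\psi\,d\sigma=-\int_\omega\nabla\psi\cdot\nabla u\,dx$ holds for signed $\psi$ and gives the bound directly.
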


As a consequence, we get the following trace inequality for functions in $H^1_{\text{per}}(\O)$  (see \eqref{eq:H1per} for the definition) on the product domain $\O=\o\times \R$.
\begin{prop}\label{prop:tracePer}
For every function $\f\in H^1_\text{per}(\O)$, it holds
\begin{equation}\label{prop:tracePer:ineq}
\|\f\|^2_{L^2_\text{per}(\G)} \leq 2 \|\f\|_{L^2_\text{per}(\O)} \|D\f\|_{L^2_\text{per}(\O)},
\end{equation}
where the values of $\f$ on $\G$ are understood in the trace sense. 
\end{prop}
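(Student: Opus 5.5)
We will derive \eqref{prop:tracePer:ineq} from the $L^1$ trace inequality \eqref{prop:trace:L1} of Proposition \ref{prop:trace}, applied to $\psi=\f^2$ times cut-offs on the product domain. The product solution $U(x,t)=u(x)$ is a classical solution of \eqref{FBP} in $\R^{n+1}$ with positive phase $\O$ and free boundary $\G$, and $|\n U|=|\n_x u|\leq 1$ by Proposition \ref{prop:grad}(b). Hence \eqref{prop:trace:L1} holds on $\O$ with $L=1$:
\[
\int_\G \psi\, dS \leq \int_\O |\n \psi|\, dX \quad \text{for all } \psi\in C^1_c(\ol{\O}).
\]

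Fix first $\f$ smooth up to $\G$, $T$-periodic in $t$, and compactly supported in $x$. This is a dense class in $H^1_\text{per}(\O)$, obtained by truncating in $x$ with the cut-offs $\eta_R$ and mollifying. Localize in $t$ with a partition of unity adapted to the period: take $\zeta\in C^\infty_c(\R)$, $\zeta\geq 0$, with $\sum_{k\in\mathbb{Z}}\zeta(t-kT)\equiv 1$. Applying the trace inequality to $\psi=\f^2\zeta(t)$ and summing over translates, periodicity gives
\[
\int_{\G_\text{per}} \f^2\, dS = \int_\G \f^2\zeta\, dS \leq \int_\O |\n(\f^2\zeta)|\,dX \leq \int_{\O_\text{per}} 2|\f||D\f|\, dX + \int_\O \f^2|\zeta'|\, dX.
\]
The last term is a nuisance, since we want the bound without any $L^2$ term.

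To remove it, spread $\zeta$ over $N$ periods. Take $\zeta_N(t):=\zeta_1(t/N)$, normalized so that $\sum_k \zeta_N(t-kNT)\equiv 1$. Working with period $NT$ then gives
\[
N\int_{\G_\text{per}}\f^2 \leq 2N\int_{\O_\text{per}}|\f||D\f| + \frac{C}{N}\cdot N \int_{\O_\text{per}}\f^2 .
\]
More precisely, $\|\zeta_N'\|_\infty\lesssim 1/(NT)$ while the support covers about $N$ periods. Dividing by $N$ and letting $N\to\infty$ yields
\[
\int_{\G_\text{per}}\f^2 \leq 2\int_{\O_\text{per}}|\f||D\f|.
\]
Cauchy--Schwarz then gives \eqref{prop:tracePer:ineq} for the dense class.

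Finally, extend to general $\f\in H^1_\text{per}(\O)$ by density: the inequality shows that the trace map is bounded $H^1_\text{per}(\O)\to L^2_\text{per}(\G)$, so it passes to limits.

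The main care is the approximation step. First, $\f\eta_R$ converges in $H^1_\text{per}$, because the error terms $\int \f^2|\n\eta_R|^2\to 0$. Second, smooth functions up to the boundary are dense locally, since $\G$ is smooth with locally uniform bounds (Proposition \ref{prop:asymp}).
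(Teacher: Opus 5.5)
Your proof is correct, but it handles the noncompact $t$-direction differently from the paper.

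\textbf{The paper's route.} The paper stays in the cross-section. For $\f\in C^1(\ol{\o}\times\R)$ that is $T$-periodic and compactly supported in $x$, each slice $\f_t:=\f(\cdot,t)$ already lies in $C^1_c(\ol{\o})$. So \eqref{prop:trace:L1} in $\o\subset\R^n$, with $L=1$ from Proposition \ref{prop:grad}(b), applied to $\psi=\f_t^2$ gives $\int_{\de\o}\f_t^2\,d\s\le 2\int_\o|\f_t||D_x\f_t|\,dx$. Integrating over $t\in(-T/2,T/2)$ and applying Cauchy--Schwarz finishes. There is no cut-off in $t$ and no limiting procedure, and one even gets $D_x\f$ in place of $D\f$.

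\textbf{Your route.} You apply the inequality to the product solution $U$ in $\R^{n+1}$. This forces the compactly supported $\zeta$, and then the spreading over $N$ periods to remove the $\f^2|\zeta'|$ term. Your bookkeeping there is right: $\sum_k|\zeta_N'(t-kNT)|\lesssim 1/(NT)$ on a window of $N$ periods, so after dividing by $N$ the error is $O(1/N)\int_{\O_{\text{per}}}\f^2$.

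\textbf{The nuisance term is avoidable.} It is an artifact of quoting \eqref{prop:trace:L1} with the full gradient. Running the divergence theorem directly, using $\D U=0$ and $\de_\nu U=-1$, gives $\int_\G\psi\,dS=-\int_\O\n U\cdot\n\psi\,dX$. Since $\n U=(\n_x u,0)$ is horizontal, $\n U\cdot\n(\f^2\zeta)=\zeta\,\n U\cdot\n\f^2$, so no $\zeta'$ term appears at all.

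\textbf{What your route buys.} It does not use the product structure. It only needs $\O$ to be the $T$-periodic positive phase of a classical solution $U$ with $|\n U|\le L$, which yields the constant $2L$. The slicing argument instead needs each section $\o\times\{t\}$ to be the positive phase of an $n$-dimensional solution, which is special to products.
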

\begin{proof}
By approximation, it suffices to show \eqref{prop:tracePer:ineq} for functions $\f=\f(x,t)\in C^1(\ol{\o}\times \R)$ that are $T$-periodic in $t$ and compactly supported in $x\in\ol{\o}$. Since for each fixed $t$, the slice $\f_t(x):=\f(x,t)\in C^1_c(\ol{\o})$ we can apply the trace inequality \eqref{prop:trace:L1} to $\psi=\f_t^2$ in each cross-section (noting that $L=1$ on account of Proposition \ref{prop:grad}) to obtain
\begin{align*}
\|\f\|^2_{L^2_\text{per}(\G)} &= \int_{-T/2}^{T/2} \int_{\de\o} \f_t^2(s)\,dS \, dt \leq  \int_{-T/2}^{T/2} 2 \int_\o |\f(x,t)| |D_x \f(x,t)|\, dx\, dt \\
& \leq 2 \int_{\O_{\text{per}}} |\f| |D\f| \, dX \leq 2 \|\f\|_{L^2_\text{per}(\O)} \|D\f\|_{L^2_\text{per}(\O)},
\end{align*}
where the last inequality follows from Cauchy--Schwarz.
\end{proof}

\section{Technical lemmas for Section~\ref{sec:setup}}\label{app:technical}

Throughout this appendix section we are in Case A or Case B of Section~\ref{sec:setup}, with the notation
introduced there. We use Euclidean coordinates $X=(X_1,\dots,X_{n+1})$ on $\R^{n+1}$, with
$X_{n+1}=t$, write $\p_i=\p_{X_i}$, and use the Einstein summation convention over repeated
indices ranging over $\{1,\dots,n+1\}$. Sums over the horizontal indices $\{1,\dots,n\}$ only are written
explicitly. For a square matrix $M$ we write $|M|:=\det M$.

\subsection{The normal deformation field}

\begin{lemma}\label{lem:hphi}
The map $\f\mapsto h_\f$ is bounded and linear from $C^{2,\a}_0(\G)$ into
$C^{2,\a}_0(\O;\R^n\times\{0\})$. In particular,
$\|h_\f\|_{C^{2,\a}_0(\ol{\O})}\le C_1\|\f\|_{\Y}$ for $\f\in\Y$, in both cases.
\end{lemma}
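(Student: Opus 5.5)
The plan is to write $h_\f$ in Fermi coordinates and estimate it scale by scale, as the definition of the weighted spaces requires. Linearity is immediate from \eqref{eq:extensions}, since $h_\f=\bar\f\,\bar\nu$ with $\bar\f$ linear in $\f$ and $\bar\nu$ independent of $\f$. Horizontality holds because $\nu(p)=(\nu_\o(p_x),0)$. Outside the band $\{0\le z\le 3\th\r(p)\}$ the field vanishes identically, so it suffices to bound, for each $X=(x,t)\in\ol{\O}\cap\N_1$ with $R=\la x\ra$, the quantity $\|(h_\f)_R\|_{C^{2,\a}([\ol{\O}^R]_{1/4}(X^R))}$ by $C\|\f\|_{C^{2,\a}_0(\G)}$, uniformly in $R$.

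After rescaling by $R$, the geometry is uniformly controlled. By Proposition \ref{prop:asymp}, near each free boundary point the rescaled $\G^R$ is a graph with $C^{k,\a}$ bounds independent of $R$, and $\nu$ (a derivative of this graph) lies in $C^{2,\a}_0(\G)$ by \eqref{eq:H-B}. Consequently the rescaled Fermi chart $(p,z)\mapsto p-z\nu(p)$, restricted to $\{0\le z/R< 4\th\}$ (recall $\r(p)\sim R$ by \eqref{eq:theta-small}), is a $C^{2,\a}$ diffeomorphism with inverse bounds uniform in $R$. Here I use $\th<\d/2$ and the injectivity from Remark \ref{rem:norm_coords}. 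In the rescaled variables, $(h_\f)_R(Y)=\chi\big(\tfrac{z}{\th\r(p)}\big)^2\f(p)\nu(p)$, with $z/R$ and $\r(p)/R$ as the natural rescaled quantities.

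Each factor is then bounded uniformly in rescaled $C^{2,\a}$. The factor $\f_R\circ\pi$, where $\pi$ is the nearest-point projection, is bounded by $\|\f\|_{C^{2,\a}_0}$ via composition with the uniformly $C^{2,\a}$ map $\pi_R$. The same argument bounds $\nu_R\circ\pi$. The cutoff $\chi\big((z/R)/(\th\,\r(p)/R)\big)$ has rescaled derivatives bounded because $\r_R(p)=\la Rp\ra/R$ has derivatives $O(1)$ and is bounded below by a constant, and because $z/R$ is a $C^{2,\a}$ coordinate. The product estimate \eqref{eq:product} with weights $0$ then gives the claim. The $t$-direction is trivial since everything is $t$-independent except $\f$.

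The final statement follows from $\Y\emb C^{2,\a}_0(\G)$ with norm at most $1$, which holds in Case A and Case B alike. The main obstacle is the uniform regularity of the Fermi chart and its inverse in the rescaled picture on the connected components $[\ol{\O}^R]_{1/4}(X^R)$, especially when the zero phase is thin. I would handle this using (\ref{H2}) and Remark \ref{rem:norm_coords}: points of the component lie over a single sheet, so $\pi$ is well defined and smooth there.
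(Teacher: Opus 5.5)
Your proposal is correct and follows essentially the same route as the paper. You rescale by $R=\la x\ra$, use Proposition \ref{prop:asymp} to obtain $R$-uniform $C^{2,\a}$ control of the rescaled Fermi chart and normal, bound the cutoff, $\nu$ and $\f$ factors separately, combine them with the product estimate, and take the supremum over $X$; the map you call the nearest-point projection should really be the foot-point map (inverse) of the Fermi chart on $\N_1$, which is what the argument actually uses.
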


\begin{proof}
Fix $X\in\O\cap \N_1$, write $X=p-z\nu(p)$, set $R:=\la x\ra$ and rescale by $R$.
By Proposition \ref{prop:asymp} the rescaled free boundary $\G^R$ is, near $X^R$, a union of
graphs with $C^{3,\a}$-norms bounded independently of $R$. So the rescaled Fermi chart
$(p,z)\mapsto p-z\nu(p)$ and the rescaled normal $\nu_R$ have $C^{2,\a}$-norms bounded
independently of $R$ on $[(\O\cap \N_1)^R]_{1/4}(X^R)$. By \eqref{eq:theta-small}, $\r(p)/R\in[1/2,2]$. So the rescaled cut-off
$\chi(Rz'/(\th\r(Rp'))),$ where $(p',z')$ are the rescaled Fermi coordinates, is bounded in $C^{2,\a}$ independently of $R$. Finally, $\|\f_R\|_{C^{2,\a}}\le\|\f\|_{C^{2,\a}_0(\G)}$ on the relevant components of $\G^R$. Since $(h_\f)_R$ is the product of these three quantities composed with the rescaled Fermi chart, we obtain
$$\|(h_\f)_R\|_{C^{2,\a}([\O^R]_{1/4}(X^R))}\le C_1\|\f\|_{C^{2,\a}_0(\G)}.$$
A finite covering takes care of the part of $[\O^R]_{1/4}(X^R)$ outside $\N_1^R$, where
$(h_\f)_R=0$. Taking the supremum over $X$ proves the lemma.
\end{proof}

\subsection{The pulled-back metric}

Let $h=h^k\p_k$ be a horizontal vector field on $\ol{\O}$, i.e.\ $h^{n+1}\equiv0$, and let
$\Phi_h:=\id+h$. Its Jacobian matrix $M_h:=D\Phi_h$ has entries
\[
  (M_h)^k_i=\d^k_i+\p_ih^k,\qquad\text{so that}\qquad (M_h)^{n+1}_i=\d^{n+1}_i .
\]
The components of the metric $g_T=dx^2+T^2dt^2$ and of its inverse are
\[
  (g_T)_{kl}=\d_{kl}+(T^2-1)\,\d_{k,n+1}\d_{l,n+1},\qquad
  g_T^{kl}=\d^{kl}+(T^{-2}-1)\,\d^{k,n+1}\d^{l,n+1}.
\]

\begin{lemma}\label{lem:metric}
There is $\e_0>0$ with the following property. Let $h$ be a horizontal vector field with
$\|h\|_{C^{2,\a}_0(\ol{\O})}<\e_0$. Then $|M_h|\ge1/2$ on $\ol{\O}$, and for every $T>0$ the components of
$g_{T,h}:=\Phi_h^*g_T$ are
\begin{equation}\label{eq:metric-components}
  (g_{T,h})_{ij}=(g_T)_{ij}+(a_h)_{ij},\qquad
  (a_h)_{ij}:=\p_ih^j+\p_jh^i+\sum_{k=1}^n\p_ih^k\,\p_jh^k ,
\end{equation}
where $a_h$ does not depend on $T$. The components of the inverse metric and the volume density are
\begin{equation}\label{eq:metric-formulas}
  g_{T,h}^{ij}=p_h^{ij}+T^{-2}q_h^{ij},\qquad \sqrt{|g_{T,h}|}=T\,|M_h|,
\end{equation}
where
\begin{equation}\label{eq:p-q}
  p_h^{ij}:=\sum_{k=1}^n(M_h^{-1})^i_k\,(M_h^{-1})^j_k,\qquad
  q_h^{ij}:=\xi_h^i\,\xi_h^j,\qquad \xi_h^i:=(M_h^{-1})^i_{n+1}.
\end{equation}
In particular $p_h^{i,n+1}=0$ and $\xi_h^{n+1}=1$. At $h=0$ we have $p_0^{ij}=\sum_{k=1}^n\d^i_k\d^j_k$
and $q_0^{ij}=\d^i_{n+1}\d^j_{n+1}$, so that $g_T^{ij}=p_0^{ij}+T^{-2}q_0^{ij}$. Moreover, the maps
\[
  h\mapsto a_h,\quad h\mapsto M_h^{-1}-I,\quad h\mapsto p_h-p_0,\quad h\mapsto q_h-q_0,\quad h\mapsto \log|M_h|
\]
are smooth (indeed real analytic) from $\{h:\|h\|_{C^{2,\a}_0(\ol{\O})}<\e_0\}$ into $C^{1,\a}_1(\ol{\O})$
and vanish at $h=0$.
\end{lemma}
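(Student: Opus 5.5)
The plan is to compute everything directly from the chain rule and then read off regularity from the weighted product estimate \eqref{eq:product}. Since $h$ is horizontal, $M_h$ has last row $e_{n+1}^T$, so $M_h^{-1}$ also has last row $e_{n+1}^T$; this gives $\xi_h^{n+1}=(M_h^{-1})^{n+1}_{n+1}=1$ and $(M_h^{-1})^{n+1}_k=0$ for $k\le n$, hence $p_h^{i,n+1}=0$. The metric formula \eqref{eq:metric-components} is the expansion of $(g_{T,h})_{ij}=(g_T)_{kl}M^k_iM^l_j$. In this expansion the $(T^2-1)$ contribution only involves $M^{n+1}_iM^{n+1}_j=\d^{n+1}_i\d^{n+1}_j$, so it reproduces $(g_T)_{ij}$ exactly, and the remaining horizontal terms give $a_h$.

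For the inverse metric, I will use $g_{T,h}^{-1}=M_h^{-1}g_T^{-1}M_h^{-T}$ with $g_T^{-1}=\sum_{k\le n}e_ke_k^T+T^{-2}e_{n+1}e_{n+1}^T$, which yields $p_h+T^{-2}q_h$ directly. The volume density follows from $\det g_{T,h}=\det(g_T)(\det M_h)^2=T^2|M_h|^2$ together with $|M_h|>0$.

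For the analytic part, the starting point is $\|\p h\|_{C^{1,\a}_1}\le \|h\|_{C^{2,\a}_0}$, since each derivative gains one unit of weight. Then $a_h$ is a polynomial in $\p h$ with no constant term, so by \eqref{eq:product} it is real analytic into $C^{1,\a}_1$. For $M_h^{-1}-I$, I will write $M_h=I+A$ with $A=\p h\in C^{1,\a}_1\subset C^{1,\a}_0$ and use the Neumann series $M_h^{-1}-I=\sum_{m\ge1}(-A)^m$. Each term satisfies $\|A^m\|_{C^{1,\a}_1}\le C_*^{m}\|A\|_{C^{1,\a}_1}\|A\|_{C^{1,\a}_0}^{m-1}$, so the series converges absolutely once $\e_0$ is small compared with $1/C_*$, and the sum is an analytic function of $h$ that vanishes at $h=0$.

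Then $p_h-p_0$ and $q_h-q_0$ are bilinear expressions in $M_h^{-1}-I$ and $I$, again with no constant term, so they are analytic by \eqref{eq:product}. For $\log|M_h|=\log\det(I+A)$, I will use the analytic expansion of $\log$ near $1$ applied to $\det(I+A)-1$, which is a polynomial in $A$ with no constant term; smallness of $\e_0$ gives both $|M_h|\ge1/2$ and convergence of the series.

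The only mildly delicate point is that the weighted norms are defined through connected components of rescaled balls. This is harmless, because \eqref{eq:product} already holds in these spaces and the embedding $C^{1,\a}_1\emb C^{1,\a}_0$ comes from $\la x\ra\ge1$.
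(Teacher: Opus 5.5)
Your proposal is correct and follows essentially the same route as the paper: the chain-rule expansion of $(g_T)_{kl}M^k_iM^l_j$, inversion via $M_h^{-1}g_T^{-1}M_h^{-T}$, the observation that $M_h^{-1}$ inherits the last row $e_{n+1}^T$, and analyticity from the fact that, by \eqref{eq:product}, $C^{1,\a}_0$ is a Banach algebra with $C^{1,\a}_1$ an ideal, together with the Neumann and logarithm power series. Your explicit tracking of the constant $C_*$ in the Neumann series, choosing $\e_0$ small relative to $1/C_*$ rather than only requiring $\|h\|<1/2$, is slightly more careful than the paper's version.
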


\begin{proof}
The components of the pull-back metric are
\[
  (g_{T,h})_{ij}=g_T\big(D\Phi_h(\p_i),D\Phi_h(\p_j)\big)=(g_T)_{kl}\,(M_h)^k_i\,(M_h)^l_j .
\]
The term with $k=l=n+1$ equals $T^2\d^{n+1}_i\d^{n+1}_j$, because $h^{n+1}=0$. The terms with $k,l\le n$ give
\[
  \sum_{k=1}^n(\d^k_i+\p_ih^k)(\d^k_j+\p_jh^k)
  =\sum_{k=1}^n\d^k_i\d^k_j+\p_ih^j+\p_jh^i+\sum_{k=1}^n\p_ih^k\,\p_jh^k .
\]
Mixed terms vanish, since $(g_T)_{kl}=0$ for $k\ne l$. This proves \eqref{eq:metric-components}. Inverting
$g_{T,h}=M_h^T\,g_T\,M_h$, we get
\[
  g_{T,h}^{ij}=(M_h^{-1})^i_k\,(M_h^{-1})^j_l\,g_T^{kl}
  =\sum_{k=1}^n(M_h^{-1})^i_k(M_h^{-1})^j_k+T^{-2}(M_h^{-1})^i_{n+1}(M_h^{-1})^j_{n+1},
\]
which is \eqref{eq:metric-formulas}; also $|g_{T,h}|=|M_h|^2|g_T|=T^2|M_h|^2$. Since the last row of $M_h$ is
$e_{n+1}$, the same holds for $M_h^{-1}$, i.e.\ $(M_h^{-1})^{n+1}_k=\d^{n+1}_k$. This gives $p_h^{i,n+1}=0$ and
$\xi_h^{n+1}=1$.

For the regularity, the entries $\p_ih^k$ of $Dh=M_h-I$ belong to $C^{1,\a}_1(\ol{\O})$ with norms at most
$\|h\|_{C^{2,\a}_0(\ol{\O})}$. By \eqref{eq:product}, $C^{1,\a}_0(\O)$ is a Banach algebra in which
$C^{1,\a}_1(\ol{\O})$ is an ideal. So $a_h\in C^{1,\a}_1(\ol{\O})$ depends polynomially on $h$. For
$\|h\|_{C^{2,\a}_0}<1/2$ the Neumann series
\[
  M_h^{-1}=\sum_{m\ge0}(-Dh)^m
\]
converges, and $M_h^{-1}-I\in C^{1,\a}_1(\ol{\O})$ depends analytically on $h$. The entries of $p_h-p_0$ and
$q_h-q_0$ are polynomials without constant term in the entries of $M_h^{-1}-I$. Also, $|M_h|-1$ is a polynomial
without constant term in the entries $\p_ih^k$. Choosing $\e_0$ so small that $||M_h|-1|\le1/2$, the function
$\log|M_h|=\log(1+(|M_h|-1))$ is given by a convergent power series.
\end{proof}

\subsection{Composition with displaced boundary points}

Let $\d\in(0,1/8)$ be the constant of Proposition~\ref{prop:asymp} and $\N_1$ the tubular neighbourhood \eqref{eq:N1vicinity} of $\G$. For $\g\in\R$ let $\mathcal S_\g$ denote the class of functions
$\Psi\in C^\infty(\N_1)$ (possibly vector- or matrix-valued) that do not depend on $t$ and have the following
\emph{local extension property}. For every $p\in\G$, with $R:=\la p_x\ra$, the rescaled function $\Psi^{(R)}(y):=R^\g\,\Psi(Ry)$
coincides, on the connected component of $\N_1^R\cap B_{2\d}(p^R)$ containing $p^R=p/R$, with a function
$\Psi_p\in C^\infty(B_{2\d}(p^R))$ satisfying
\[
  \|D^j\Psi_p\|_{L^\infty(B_{2\d}(p^R))}\le C_j\qquad\text{for all } j\in\mathbb{N}_0,
\]
with constants $C_j$ independent of $p$. Thus, near each $p\in\G$, $\Psi$ extends across the free boundary to a band of
width comparable to $\la p_x\ra$, with scale-invariant bounds.

By Proposition~\ref{prop:asymp}, we have that
\begin{equation}\label{eq:classes}
  \p_kU\in\mathcal S_0,\qquad \p^2_{kl}U\in\mathcal S_1,\qquad \p^2_{kl}E\in\mathcal S_2,\qquad\text{where } E:=|\n U|^2-1 .
\end{equation}
Moreover, $\Psi\in\mathcal S_\g$ implies $D^j\Psi\in\mathcal S_{\g+j}$.

\begin{lemma}\label{lem:composition}
Let $\g\ge0$, $\Psi\in\mathcal S_\g$. For $s\in[0,1]$ the map
\[
  \mathcal C^s_\Psi:\ h\mapsto\Psi\circ(\id+s h)=\Big(p\mapsto\Psi\big(p+s h(p)\big)\Big),\qquad p\in\G,
\]
is smooth from $\tilde{V}:=\{h\in C^{2,\a}_0(\G;\R^{n+1}):\|h\|_{C^{2,\a}_0(\G)}<\d_0/2\}$ into $C^{2,\a}_\g(\G)$. 
Consequently, for bounded measurable $m:[0,1]\to\R$, the map $h\mapsto\int_0^1m(s)\,\mathcal C^s_\Psi(h)\,ds$ is smooth from
$\tilde{V}$ into $C^{2,\a}_\g(\G)$.
\end{lemma}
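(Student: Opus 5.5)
The plan is to work entirely locally at the scale $R=\la p_x\ra$ and to reduce everything to the smoothness of composition operators between ordinary (unweighted) H\"older spaces on fixed-size pieces, with constants uniform in $p$. First fix $h\in\tilde V$ and $p\in\G$ and set $R:=\la p_x\ra$. Since $\|h\|_{L^\infty}<\d_0/2\leq\theta/2$ and $\d_0\le \th$, the displaced point $q+sh(q)$, for $q$ in the component $[\G^R]_{1/4}(p^R)$ rescaled back, stays in $\N_1$. After rescaling it lies in $B_{2\d}(q^R)$ for the relevant base points $q$, because $|sh|/R\le \d_0/(2R)$ is small. (If $1/4$ exceeds $2\d$, I would first cover $[\G^R]_{1/4}(p^R)$ by finitely many pieces $[\G^R]_{\d}(q^R)$, with the number of pieces depending only on $n$ and $\d$, as in the covering arguments of Appendix~\ref{app:apriori}.) On each such piece I would write
\[
R^\g\,\big(\mathcal C^s_\Psi(h)\big)_R(y)=\Psi_q\big(y+s\,h_R(y)/R\big),
\]
where $\Psi_q$ is the local extension from the definition of $\mathcal S_\g$, with all derivatives bounded by $C_j$.

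Next I would use the standard fact that for a fixed $\Psi_q\in C^\infty(B_{2\d})$ with bounded derivatives, and for a $C^{2,\a}$ graph piece $P$ with uniformly bounded geometry (guaranteed by Proposition~\ref{prop:asymp}), the map $k\mapsto\Psi_q\circ(\id+k)$ is $C^\infty$ from a small ball in $C^{2,\a}(P;\R^{n+1})$ into $C^{2,\a}(P)$. Its $m$-th derivative is $(k_1,\dots,k_m)\mapsto (D^m\Psi_q\circ(\id+k))[k_1,\dots,k_m]$. All the norms, together with the Taylor remainders, are controlled by finitely many $C_j$ and by the $C^{2,\a}$ norm of $k$. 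I would apply this with $k=s h_R/R$, using that $\|h_R/R\|_{C^{2,\a}}\le \|h_R\|_{C^{2,\a}}\le\|h\|_{C^{2,\a}_0}$ since $R\ge1$. Multiplying by $R^{-\g}$ and taking the supremum over $p$ then gives $\mathcal C^s_\Psi(h)\in C^{2,\a}_\g(\G)$. The same estimates give Fréchet differentiability to all orders: the candidate derivative $D^m\mathcal C^s_\Psi(h)[h_1,\dots,h_m]=s^m (D^m\Psi)(\id+sh)[h_1,\dots,h_m]$ satisfies the required remainder estimates uniformly in $p$, because $D^m\Psi\in\mathcal S_{\g+m}\subset$ the bounds needed. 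The extra factor $R^{-m}$ coming from rescaling is harmless, so uniformity in $p$ yields estimates in the weighted sup-norm. Continuity of the top derivative in $h$ follows from the same mean-value bounds with one more derivative of $\Psi_q$.

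The integral statement then follows because all of these bounds are uniform in $s\in[0,1]$. The family $s\mapsto\mathcal C^s_\Psi$ has derivatives in $h$ that are bounded uniformly in $s$, and they are measurable (indeed continuous) in $s$. So differentiation can be taken under the Bochner integral against the bounded weight $m$.

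The main obstacle is purely bookkeeping: one must ensure that displaced points remain inside the domain of a single local extension $\Psi_q$. This is needed when the zero phase is thin, and it is where the connected-component definitions matter. I would handle it by working with pieces of size $\d$ around free boundary points, together with the bound $\d_0/(2R)\ll\d$ on the displacement.
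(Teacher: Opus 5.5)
Your proposal is correct and follows essentially the same route as the paper. Both arguments rescale at $R=\la p_x\ra$, write $R^\g(\mathcal C^s_\Psi(h))_R=\Psi_{p_0}\circ G$ with $G=\id+sR^{-1}h_R$ on a piece of size $\d$, apply the smoothness of the left composition $f\mapsto\psi\circ f$ on unweighted $C^{2,\a}$ with uniform constants, take the supremum over $p_0$, and differentiate under the integral in $s$. The component issue you flagged is handled in the paper just as you suggest: each segment $[y,G(y)]$ has length $<\d_0/(2R)\le\d/2$, so it lies in $\N_1^R\cap B_{3\d/2}(p_0^R)$, and the connectedness of the piece then places $G(D)$ in the component where $\Psi^{(R)}=\Psi_{p_0}$.
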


\begin{proof}
Fix $p_0\in\G$, let $R:=\la(p_0)_x\ra$, and work on $D:=[\G^R]_{\d}(p_0^R)$; by a covering argument it suffices to bound
the $C^{2,\a}(D)$-norm. For $h\in \tilde{V}$ we have
\[
  R^\g\big(\mathcal C^s_\Psi(h)\big)_R=\Psi^{(R)}\circ G,\qquad G(y):=y+R^{-1}s\,h(Ry),\quad y\in D,
\]
and $\|G-\id\|_{C^{2,\a}(D)}\le R^{-1}\|h\|_{C^{2,\a}_0(\G)}<\d_0/2$. In particular, the segment from $y\in D$ to $G(y)$ has length
less than $\d_0/2R\leq \d/2$. So it lies in $\N_1^R$ and in $B_{3\d/2}(p_0^R)$. Hence $G(D)$ is
contained in the connected component of $\N_1^R\cap B_{2\d}(p_0^R)$ containing $p_0^R$, where $\Psi^{(R)}=\Psi_{p_0}$. Thus
\[
  R^\g\big(\mathcal C^s_\Psi(h)\big)_R=\Psi_{p_0}\circ G\qquad\text{on }D,
\]
with $\Psi_{p_0}$ smooth on the convex set $B_{2\d}(p_0^R)\supset G(D)$, with $C^j$-bounds independent of $p_0$.

We now use the classical fact that, for $\psi\in C^{k+3}(B)$ with $B$ a ball, the map $f\mapsto\psi\circ f$ is of class $C^k$ from
$\{f\in C^{2,\a}(D;\R^{n+1}):\ f(D)\subset B\}$ into $C^{2,\a}(D)$. The bounds on its derivatives depend only on $\|\psi\|_{C^{k+3}(B)}$,
$\|f\|_{C^{2,\a}(D)}$ and the $C^{2,\a}$-geometry of $D$, which is controlled by Proposition~\ref{prop:asymp}. Applied with $\psi=\Psi_{p_0}$ and $f=G$, this gives uniform bounds for all derivatives of $h\mapsto R^\g(\mathcal C^s_\Psi(h))_R$ from $\tilde{V}$ into $C^{2,\a}(D)$. Taking the supremum over $p_0$ proves the claims. The statement about the integral follows by differentiating under the integral sign.
\end{proof}

\section*{Acknowledgements and declarations}

\subsection*{Acknowledgements}
The authors were partially supported by Proyecto ANID Fondecyt Regular No.\ 1241588.

\subsection*{Declaration of competing interest}
The authors report that there are no competing interests to declare.

\subsection*{Declaration of generative AI and AI-assisted technologies in the manuscript preparation process} All mathematical content, results, proofs, and exposition in this paper are the work of the authors, unless otherwise stated. 

During the preparation of this manuscript, the authors used Claude Opus 5.5 to streamline the calculations in Section \ref{sec:setup} (plus the accompanying Appendix \ref{app:technical}) and review the final draft for errors and inconsistencies. The authors subsequently revised the manuscript and take full responsibility for the content of the published article.

\bibliography{fb_bif_bib}

\end{document}